\newtheorem{theorem}{Theorem}[section]
\newaliascnt{lemma}{theorem}
\newtheorem{lemma}[lemma]{Lemma}
\newaliascnt{conjecture}{theorem}
\newaliascnt{proposition}{theorem}
\newtheorem{proposition}[proposition]{Proposition}
\newaliascnt{corollary}{theorem}
\newtheorem{corollary}[corollary]{Corollary}
\newaliascnt{problem}{theorem}
\newaliascnt{question}{theorem}
\newtheorem{question}[question]{Question}
\newaliascnt{claim}{theorem}
\newtheorem{claim}[claim]{Claim}
\newtheorem*{PoincareProblem}{Poincar\'e Problem}
\newtheorem*{PainleveProblem}{Painlev\'e Problem}
\theoremstyle{definition}
\newaliascnt{definition}{theorem}
\newtheorem{definition}[definition]{Definition}
\newaliascnt{example}{theorem}
\newtheorem{example}[example]{Example}
\newaliascnt{assumption}{theorem}
\theoremstyle{remark}
\newaliascnt{remark}{theorem}
\newtheorem{remark}[remark]{Remark}
\newaliascnt{remarks}{theorem}
\numberwithin{equation}{section}
\numberwithin{figure}{section}
\def\wt{\widetilde}
\def\ol{\overline}
\def\lra{\longrightarrow}
\def\div{\text{\rm{div\,}}}
\def\({$($}
\def\){$)$}
\def\chit{\chi_{\rm top}}
\def\bbp{\mathbb P}
\def\cald{\mathcal D}
\def\call{\mathcal L}
\def\calo{\mathcal O}
\def\Pic{\text{{\rm Pic\,}}}
\DeclareMathOperator*{\vol}{\ensuremath{vol}}
\newcommand{\df}{\mathrm{d}f}
\newcommand{\tang}{\mathrm{tang}}
\newcommand{\sF}{\mathcal{F}}
\newcommand{\sG}{\mathcal{G}}
\newcommand{\cs}{\mathrm{CS}}
\def\kod{\mathrm{Kod}}
\begin{document}

	\title{The Poincar\'e Problem for a foliated surface}	
	%%%%%%%%%%%%%%%%%%%%%%%%%%%%%%%%%%%%%%%%%%%%%%%%%%%%%%%%%%%%%%%%%%%%%%%%%%%%%%%%%%%%%%%%%%%%%%%%%%%%%%%%
	
	%    Information for second author
	\author{Xin L\"u}
	
	\address{School of Mathematical Sciences,  Key Laboratory of MEA(Ministry of Education) \& Shanghai Key Laboratory of PMMP, East China Normal University, Shanghai 200241, China}
	
	\email{xlv@math.ecnu.edu.cn}

	%%%%%%%%%%%%%%
		\author{Sheng-Li Tan}
	
	\address{School of Mathematical Sciences,  Key Laboratory of MEA(Ministry of Education) \& Shanghai Key Laboratory of PMMP,  East China Normal University, Shanghai 200241, China}
	
	\email{sltan@math.ecnu.edu.cn}
	
	\thanks{This work is supported by Shanghai Pilot Program for Basic Research,
		National Natural Science Foundation of China,  Fundamental Research Funds for central Universities,
		and Science and Technology Commission of Shanghai Municipality (No. 22DZ2229014)}
	
	%   General info
	\subjclass[2020]{14J29; 14D06; 32S65}
	
	%14D06 Families, fibrations: Fibrations, degenerations
	%14H10 Curves: Families, moduli (algebraic)
	%14D99 Families, fibrations: None of the above, but in this section
	%14J29 Surfaces and higher-dimensional varieties: Surfaces of general type
	% 14J29: "Surfaces of general type" (MSC2020)
	% 32M25  Complex vector fields, holomorphic foliations, C-actions
	% 14J10: "Families, moduli, classification: algebraic theory" (MSC2020)
	% 32S65  Singularities of holomorphic vector fields and foliations
	% 14D06: "Fibrations, degenerations in algebraic geometry" (MSC2020)
	
	%\date{April 29, 2024}
	
	%\dedicatory{This paper is dedicated to our advisors.}
	
	\keywords{Poincar\'e problem, foliation, slope inequality, Noether inequality}
	
	%%%%%%%%%%%%%%%%%%%%%%%%%%%%%%%%%%%%%%%%%%%%%%%%%%%%%%%%%%%%%%%%%%%%%%%%%%%%%%%%%%%%%%%%%%%%%%

	\begin{abstract}
		Let $\sF$ be a foliation on a smooth projective surface $S$ over the complex number $\mathbb{C}$.
		We introduce three birational non-negative invariants $c_1^2(\mathcal F)$,
		$c_2(\mathcal F)$ and $\chi(\sF)$, called the Chern numbers.
		If the foliation $\sF$ is not of general type,
		the first Chern number $c_1^2(\mathcal F)=0$, and $c_2(\sF)=\chi(\sF)=0$ except when $\sF$ is induced by a non-isotrivial fibration of genus $g=1$.
		If $\sF$ is of general type,
		we obtain a slope inequality when $\sF$ is algebraically integral.
		As a corollary, $\sF$ is always transcendental if the slope is less than $2$.
		On the other hand, we also prove three sharp Noether type inequalities
		if $\sF$ is of general type.
		As applications, we obtain a criterion for foliations to be transcendental
		using Noether type inequalities,
		and we also give a partial positive answer to the question on the lower bound on the volume of a foliation of general type.
%		and that $c_1^2(\sF) \geq 2p_g(\sF)-4$ if moreover $\sF$ is algebraic.
%		As a corollary, any foliation of general type with $c_1^2(\sF) < 2p_g(\sF)-4$ is not algebraic.
%		
%		We establish the Noether type inequalities for $(S,\sF)$.
%		Namely, we prove that $\vol(\sF) \geq p_g(\sF)-2$, and that
%		$\vol(\sF) \geq 2p_g(\sF)-4$ if moreover the surface $S$ is also of general type.
%		Examples show that both of the Noether type inequalities are sharp.
	\end{abstract}
	
		\maketitle
	
	\setcounter{tocdepth}{2}
	\tableofcontents
	
	\section{Introduction}\label{sec-intro}
	The main purpose of this paper tries to solve the following problem of Poincar\'e
	on the algebraic integrability of a foliation $\sF$ on a smooth projective surface $S$ over the complex number $\mathbb{C}$.
	 \begin{PoincareProblem}
	 	Is it possible to decide whether a foliation $\sF$ on a smooth projective surface $S$ is algebraically integral or not?
	 \end{PoincareProblem}
	 
	 Historically, Darboux \cite{dar-78,dar-78-2}, Poincar\'e \cite{poi-85,poi-91-diff} and Painlev\'e \cite{pain-92} etc. studied the foliation given by a differential equation
	 of first order on the affine plane $\mathbb{A}^2$,
	 \begin{equation*}%\label{ODE}
	 \dfrac{dy}{dx}=\dfrac{p(x,y)}{q(x,y)}, \qquad \text{or equivalently,}\quad \omega:=p(x,y)dx-q(x,y)dy=0.
	 \end{equation*}
	 where $p$ and $q$ are relatively coprime polynomials.  Hilbert
	 listed this kind of study as {\it Hilbert's 16th Problem} in
	 \cite{hil-02}. One of the central problems is about the above Poincar\'e problem on the algebraic integrability
	 of the above differential equation, namely to find criterions so that the above differential
	 equation has a rational first integral.
	 If $f(x,y)$ is a rational first integral of the above differential equation, then the integral curves defined by
	 $$f(x,y)= \text{constant},$$
	 give a pencil of algebraic curves.
	 Because of this, the differential equation is {\it algebraically integral} if it admits a rational first integral; otherwise we call it {\it non-algebraic} or {\it transcendental}.
	 If $\sF$ is algebraically integral, then it gives a pencil of algebraic curves of the genus $g$, in which case we will call $g$ the genus of $\sF$ for convenience.
	 \begin{PainleveProblem}
	 Is it possible to recognize the genus $g$ of an algebraically integral foliation $\sF$?
      \end{PainleveProblem}
	 
	 Poincar\'e suggested to study the above two problems on the projective plane $\mathbb{P}^2$, or more generally on a smooth projective surface $S$.
	 By a {\it foliated surface $(S,\sF)$} we mean a foliation $\sF$ on a  smooth projective surface $S$.
	 It can be defined by a collection of one-forms
	 $\omega_i\in \Omega^1_S(U_i)$ with at most isolated zeros
	 and there exists $f_{ij} \in \mathcal{O}^*_S(U_i\cap U_j)$ whenever $U_i\cap U_j \neq \emptyset$ such that
	 \begin{equation*}
	 	\omega_i|_{U_i\cap U_j}=f_{ij}\omega_j|_{U_i\cap U_j}.
	 \end{equation*}
	 Equivalently, we can say that $\sF$ is defined by a twisted one form
	 $\omega \in H^0\big(S, \Omega_S^1(\mathcal{N}_{\sF})\big)$,
	 where $\mathcal{N}_{\sF}$ is the normal bundle of $\sF$, see \autoref{sec-pre} for details.
	 
	 Let $(S,\sF)$ be a foliated surface, a theorem of Darboux-Jouanolou-Ghys \cite{dar-78,jou-78,ghy-00} says roughly that if $\sF$ admits sufficiently many invariant irreducible algebraic curves, then the foliation $\sF$ is algebraically integral.
	 This gives a solution to the Poincar\'e problem in some sense.
	 As already noticed by Poincar\'e \cite{poi-91-diff}, it is difficult to obtain the algorithm of that invariant curves, even for foliations on $\bbp^2$.
	 In \cite{per-02}, Pereira obtained a nice solution to Poincar\'e's and Painlev\'e's problems by showing that the degree of a first integral for a plane foliation $(\bbp^2,\sF)$ is bounded from above by the degree and pluri-genera of $\sF$.
	 On the other hand, Lins Neto \cite{lin-02} constructed families of foliations on $\bbp^2$, where the analytic type of each singularity is constant but one cannot decide the algebraic integrability.
	 Furthermore, the genera of algebraically integral foliations in the families are either not
	 bounded.
	 This shows in particular that there might be some obstruction to Poincar\'e's and Painlev\'e's problems, and hence one has to find more invariants
	 (should be global) in order to solve these two problems.
	 
	 In the last several decades, the foliation theory has attracted more and more attention in algebraic geometry, especially in birational geometry.
	 One can even consider foliations on higher dimensional variety, not only on surfaces.
	 For a foliation $\sF$ on a normal variety $X$, it is possible to define its canonical divisor $K_{\sF}$.
	 The canonical divisor $K_{\sF}$ of a foliation plays a similar role in many aspects as the canonical divisor $K_X$ of a variety.
	 For instance, the criterion on the uniruledness of foliated varieties with non-pseudo-effective canonical divisors by Miyaoka and Campana-P\u{a}un, \cite{miy-87,cp-19},
	 the minimal model theory on foliated surfaces due to Brunella and McQuillan etc. \cite{bru-99,mcq-08},
	 and the deformation invariance of the pluri-genera by Cascini-Floris \cite{cf-18} and Liu \cite{liu-19}.
	 There have been also many works on the adjoint canonical divisors recently;
	 see for example \cite{ps-19,ss-23,chlmssx}.
	 In particular, Pereira-Svaldi \cite{ps-19} proved a cohomological characterization of foliations with non-pseudo-effective divisors.
	 Recently, the minimal model program for foliated varieties was introduced in the beautiful paper \cite{cas-21}.
	 It stimulates a new project in birational geometry to generalize everything to foliated varieties.
	 
	 The algebraic integrability of a foliated surface is clearly a birational property.
	 Hence it is reasonable to study the Poincar\'e problem
	 using the birational invariants of foliated surfaces.
	 In fact, the interaction of the birational geometry with surface foliations have also made great success in 
	 the Poincar\'e problem.
	 The following criterion of Miyaoka \cite{miy-87} (see also \cite{bm-16,cp-19})
	 solves the Poincar\'e problem in the case when $K_{\sF}$ is not pseudo-effective.
	 \begin{theorem}[Miyaoka]\label{thm-miyaoka}
	 	Let $(S,\sF)$ be a foliated surface with canonical divisor $K_{\sF}$.
	 	Suppose that $K_{\sF}$ is not pseudo-effective. Then $\sF$ is induced by a family of rational curves. In particular, it is always algebraically integral.
	 \end{theorem}
	 
	 %We may thus always consider foliations with pseudo-effective canonical divisors  in the following.
	 Motivated by the formulas for the modular invariants of families of
	 algebraic curves (cf. \cite{Tan1996, Tan2010}),
	 we will first introduce three new birational invariants $c_1^2(\sF), c_2(\sF)$ and $\chi(\sF)$ for a foliated surface $(S,\sF)$.
	 %, see \autoref{def-chern-numbers}.
	 The three invariants are all non-negative rational numbers satisfying the following properties.
	 \begin{theorem}[{=\autoref{def-chern-numbers}+\autoref{thm-4-2}+\autoref{cor-4-1}+\autoref{prop-invariant-c_2}+\autoref{thm813b}}]\label{thm-chern-1}
	 \mbox{~}	
	 Let $(S,\sF)$ be a foliated surface.
	 	We can define its Chern numbers $c_1^2(\sF), c_2(\sF)$ and $\chi(\sF)$,
	 	which are non-negative rational numbers.
	 	\begin{enumerate}[$($i$)$]
	 	    \item The three Chern numbers are birational invariants.
	 	    \item The three Chern numbers satisfy the Noether equality:
	 	    \begin{equation}\label{eqn-chern-1-1}
	 	    	12\chi(\sF) = c_1^2(\sF)+c_2(\sF).
	 	    \end{equation}
	 	    \item The first Chern number $c_1^2(\sF)=\vol(\sF)$, where $\vol(\sF)$ is the volume of the foliation $\sF$, whose definition is given in \autoref{def-birational}.
	 	    %defined by
	 	    %$$\vol({\sF})=\limsup_{n\to +\infty} \frac{\,\dim H^0(X, nK_{\sF})}{n^2/2}.$$
	 	    \item Suppose that the foliation $\sF$ is algebraically integral, i.e., $\sF$ is induced by a family $f:\,S \to B$ of curves. Then
	 	    \begin{equation}\label{eqn-chern-1-2}
	 	    	c_1^2(\sF)=\kappa(f),\qquad c_2(\sF)=\delta(f),\qquad \chi(\sF)=\lambda(f),
	 	    \end{equation}
	 	    where $\kappa(f),\,\delta(f)$ and $\lambda(f)$ are the modular invariants of $f$,
	 	    whose definitions will be recalled in \autoref{sec-alg}.
	 	\end{enumerate}	 	
	 \end{theorem}
 
Foliations of non-general type are classified, cf. \cite{bru-03,bru-04,mcq-08}.
 We determine the Chern numbers of foliations of non-general type as follows.
 \begin{theorem}\label{thm-chern-2}
 	Let $(S,\sF)$ be a foliated surface.
 	Suppose that $\sF$ is not of general type, i.e., its Kodaira dimension $\kod(\sF)\in \{-\infty,0,1\}$.
 \begin{enumerate}[$(i)$]
 	\item The first Chern number $c_1^2(\sF)=0$.
 	\item The other two Chern numbers $c_2(\sF)=\chi(\sF)=0$ except when $\sF$ is induced by a non-isotrivial fibration of genus $g=1$.
 \end{enumerate} 
 \end{theorem}

As a direct consequence, we obtain the following criterion on the algebraic integrability of a foliated surface $(S, \sF)$ of non-general type.
\begin{corollary}\label{cor-chern-1}
	Let $(S,\sF)$ be a foliated surface.
	Suppose that $\sF$ is not of general type, and that $c_2(\sF)>0$ or $\chi(\sF)>0$.
	Then $\sF$ is algebraically integral.
\end{corollary}

Lins Neto's \cite{lin-02} famous counterexamples with $d=2,3$ or $4$
are foliations of non-general type with $c_2(\sF)=\chi(\sF)=0$ by \cite{llt-23}.
In particular, we can not determine the algebraic integrability of foliated surfaces with $c_2(\sF)=\chi(\sF)=0$ using the Chern numbers.
Combined with the above corollary, the Poincar\'e problem is completely solved for foliated surface $(S, \sF)$ of non-general type.

We next turn to the case when $\sF$ is of general type.
In this case, $c_1^2(\sF)=\vol(\sF)>0$, and hence $\chi(\sF)>0$ due to the Noether equality \eqref{eqn-chern-1-1} and the non-negativity of $c_2(\sF)$.
We may thus define the {\it slope} of $\sF$ as
$$\lambda(\sF):=\frac{c_1^2(\sF)}{\chi(\sF)}=\frac{\vol(\sF)}{\chi(\sF)}.$$
According to the non-negativity of the Chern numbers and the Noether equality \eqref{eqn-chern-1-1},
it holds that $0<\lambda(\sF) \leq 12$.
We are interested in a positive lower bound on the slope $\lambda(\sF)$.
Cornalba-Harris-Xiao's slope inequality \cite{ch-88,xia-87} on fibred surface together with \eqref{eqn-chern-1-2} gives a positive lower bound on the slope $\lambda(\sF)$ when $\sF$ is algebraically integral.
\begin{theorem}\label{thm-chern-3}
	Let $(S,\sF)$ be a foliated surface.
	Suppose that $\sF$ is of general type and algebraically integral,  i.e., $\sF$ is induced by a non-isotrivial family $f:\,S \to B$ of curves of genus $g\geq 2$. Then
	\begin{equation}\label{eqn-chern-3}
		\lambda(\sF) \geq \frac{4(g-1)}{g},\qquad\text{or equivalently,}\qquad
		c_1^2(\sF)=\vol(\sF)\geq \frac{4(g-1)}{g} \chi(\sF).
	\end{equation}
%	In particular, if $\sF$ is algebraic, then
%	\begin{equation}\label{eqn-chern-5}
%	\lambda(\sF) \geq 2,\quad\text{or equivalently,}\quad
%	c_1^2(\sF)=\vol(\sF)\geq 2 \chi(\sF).
%	\end{equation}
\end{theorem}
As an application, we obtain a partial solution to both Poincar\'e's and Painlev\'e's problems for foliations of general type with lower slope.	 
\begin{corollary}\label{cor-chern-2}
	Let $(S,\sF)$ be a foliated surface, and suppose that $\sF$ is of general type.
	\begin{enumerate}[$($i$)$]
		\item Suppose that $\lambda(\sF)<4$ and that $\sF$ is algebraically integral,  i.e., $\sF$ is induced by a non-isotrivial family $f:\,S \to B$ of curves of genus $g\geq 2$. Then
		\begin{equation}\label{eqn-chern-4}
			g \leq \frac{4}{4-\lambda(\sF)}.
		\end{equation}
		\item If $\lambda(\sF)<2$, then $\sF$ is transcendental.
	\end{enumerate}
\end{corollary}

	It is well-known that the bound \eqref{eqn-chern-4} is sharp, and the equality can be achieved by semi-stable fibrations for every genus $g\geq 2$.
	Moreover, it is not difficult to construct algebraically integral foliations with $\lambda(\sF)=4$ but the genus $g$ can be arbitrarily large, cf. \cite{xia-87}.
	On the other hand, we will construct a transcendental foliation of general type (cf. \autoref{exam-slope<2}) with $\lambda(\sF)=\frac{12}{7}<2$.
	However, the sharp lower bound on the slope $\lambda(\sF)$ is still mysterious to us for a transcendental foliation $\sF$ of general type.
	We tend to guess $\lambda(\sF)\geq 1$, i.e., $c_1^2(\sF) \geq \chi(\sF)$.
	
	\autoref{cor-chern-2} shows in particular that the Poincar\'e problem is solved when the slope $\lambda(\sF)<2$ for foliations of general type.
	Moreover, if $\lambda(\sF)<4$ and $\sF$ is algebraically integral, one obtains an upper bound on the genus $g$, i.e., Painlev\'e's problem can be also solved.
	In fact, by \cite{lz-18}, any algebraically integral foliation with $\lambda(\sF)<4$ must be induced by a hyperelliptic fibration if $g\geq 16$.
	Hence such a foliation has a very special structure: it is a double cover of a $\bbp^1$-fibration.
	It is reasonable to wonder whether the Poincar\'e problem can be solved for foliations with $\lambda(\sF)<4$?
	Namely, besides the slope, can one construct more invariants, so that the algebraic integrability of $\sF$ can be completely determined if $\lambda(\sF)<4$?
	For instance, if $\lambda(\sF)<\frac83$, then $\sF$ is either transcendental or algebraically integral of genus $g=2$.
	The rich geometry of genus-$2$ fibrations (cf. \cite{xiao-85}) might help us to characterize the algebraic integrability of foliations with $\lambda(\sF)<\frac83$.
	On the other hand, Lins Neto's famous counterexamples \cite{lin-02} show that there indeed exist obstructions to determine the algebraic integrability in general.
	In fact, it has been shown in \cite{llt-23} that the foliations of general type constructed by Lins Neto have slope $\lambda(\sF)\geq 6$.
	It is natural to ask whether there exist counterexamples to Poincar\'e's problem (as well as Painlev\'e's problem) with $\lambda(\sF)<6$.
	More precisely, we would like to propose the following question.
	\begin{question}
		What is the critical slope $\lambda_0$, such that the algebraic integrability can be determined if $\lambda(\sF)<\lambda_0$, and that there exist counterexamples if $\lambda(\sF)\geq \lambda_0$?
	\end{question}
	
	\vspace{2mm}
	Another approach to Poincar\'e's problem for foliations of general type is the Noether type inequalities.
	The classical Noether inequality \cite{noether} asserts that
	\begin{equation}\label{eqn-1-1}
		\vol(S) \geq 2p_g(S)-4,
	\end{equation}
	for every complex smooth projective surface $S$ of general type.
	Here we recall that the volume $\vol(S)$ and the geometric genus $p_g(S)$ of $S$ are defined as follows.
	Let $K_S$ be the canonical divisor of $S$.
	Then
	$$\begin{aligned}
		p_g(S)&\,=\dim H^0(S, K_S);\\
		\vol(S)&\,=\limsup_{n\to +\infty} \frac{\dim H^0(S, nK_S)}{n^2/2}.
	\end{aligned}$$
	These are two important birational invariants of $S$.
	If $S$ is minimal, then the volume $\vol(S)$ is equal to the intersection number $K_S^2$,
	which is also the first Chern number of $S$.
	The Noether inequality \eqref{eqn-1-1} is one of the fundamental inequality in the surface theory.
	Minimal surfaces of general type with the equality are usually said on the Noether line, which has been systematically studied by Horikawa \cite{hor-76}.
	
	In a series of nice works \cite{kob-92,mchen-04,ccz-06,mchen-07,cc-15,ch-17,ccj-201,ccj-202},
	the following sharp Noether inequality has been established for every minimal $3$-fold $X$ of general type
	with $p_g(X)\leq 4$ or $p_g(X) \geq 11$:
	\begin{equation*}
		\vol(X) \geq \frac{4}{3}\,p_g(X)-\frac{10}{3}.
	\end{equation*}
	It is proved in \cite{cj-17} that the Noether type inequality holds also in higher dimension:
	there exist positive numbers $a_d$ and $b_d$, depending only on the dimension $d$ of the variety $X$,
	such that
	$$\vol(X) \geq a_d\,p_g(X)-b_d.$$
    Inspired by the Noether type inequalities for algebraic varieties,
    it is natural to ask the following question.
    \begin{question}\label{question-1}
    	Do there exist Noether type inequalities for a foliation of general type?
    \end{question} 
    
    We will settle the above question in dimension two.
    \begin{theorem}\label{thm-main}
    	Let $(S,\sF)$ be a foliated surface, and suppose that $\sF$ is of general type.
    	Let $p_g(\sF)$ be its geometric genus, and $\varphi$ be the canonical map as in \autoref{sec-2-3}.
    	\begin{enumerate}[$(i)$]
    	\item The following Noether inequality holds
    		\begin{equation}\label{eqn-noe1}
    			c_1^2(\sF)=\vol(\sF) \geq p_g(\sF)-2.
    		\end{equation}
    		Moreover, if the equality holds in \eqref{eqn-noe1}, then the canonical map $\varphi$ defines a birational map whose image is a surface of minimal degree $($equal to $p_g(\sF)-2)$ in $\bbp^{p_g(\sF)-1}$.
    		
    	\item Suppose that $c_1^2(\sF)=\vol(\sF) \neq p_g(\sF)-2$ with $p_g(\sF)>0$. Then
    	\begin{equation}\label{eqn-noe2}
    		c_1^2(\sF)=\vol(\sF) \geq p_g(\sF)-2+\frac{1}{p_g(\sF)}.
    	\end{equation}
    	Moreover, if the equality holds in \eqref{eqn-noe2}, then the image $\Sigma=\varphi(S)$ is of one-dimensional.
    	
    	\item Suppose moreover that the foliation $\sF$ is algebraically integral. Then
    	\begin{equation}\label{eqn-noe3}
    	c_1^2(\sF)=\vol(\sF) \geq p_g(\sF)-\frac32+\frac{3}{2\big(2p_g(\sF)+1\big)}.
    	\end{equation}
    	Moreover, if the equality holds in \eqref{eqn-noe3}, then the image $\Sigma=\varphi(S)$ is of one-dimensional.
    	\end{enumerate}
    \end{theorem}

Applying the third Noether type inequality \eqref{eqn-noe3} to the Poincar\'e problem, we get
    \begin{corollary}\label{cor-noether}
    	Let $(S,\sF)$ be a foliated surface, and suppose that $\sF$ is of general type.
    	If $$c_1^2(\sF)=\vol(\sF)< p_g(\sF)-\frac32+\frac{3}{2\big(2p_g(\sF)+1\big)},$$ then $\sF$ is transcendental.
    \end{corollary}

Based on the first two Noether type inequalities \eqref{eqn-noe1} and \eqref{eqn-noe2},
we can also give a partial answer to the question
on a positive lower bound on the volume of a foliation of general type,
cf. \cite[\S\,3]{cas-21} and \cite[Question\,4]{hl-21}.
\begin{corollary}\label{cor-volume}
	Let $(S,\sF)$ be a foliated surface.
	Suppose that $\sF$ is of general type with $p_g(\sF)\geq 2$. Then
	$$c_1^2(\sF)=\vol(\sF)\geq \frac12.$$
\end{corollary}
    
    \begin{remark}
	(i)\, We will construct examples in \autoref{sec-example}
	reaching the equalities in \eqref{eqn-noe1}, \eqref{eqn-noe2} and \eqref{eqn-noe3},
	which show that all the three Noether type inequalities in \autoref{thm-main} are sharp.
	In particular, taking $n=2$ in \autoref{exam-6-2}, one obtains
	an example reaching the equality in \autoref{cor-volume}.
%	Moreover, if the equality in \eqref{eqn-noe1} holds, then the canonical map $\varphi_{|K_{\sF}|}$ defines a birational map whose image is a surface of minimal degree $($equal to $p_g(\sF)-2)$ in $\bbp^{p_g(\sF)-1}$.
%	The structure of a surface of minimal degree in $\bbp^n$ is well-understood;
%	see for instance \cite[Chapter\,4.3]{gh-78}. In a forthcoming paper, we will characterize the geometry of foliated surfaces of general type satisfying the Noether type equalities.
%	
%	(ii). The inequality \eqref{eqn-noe3} seems not sharp.
%	Nevertheless, we will construct in \autoref{???} a sequence of algebraically integral foliations such that
%	$c_1^2(\sF)=\vol(\sF)= p_g(\sF)-\frac{3}{2}+\frac{3}{\,2(2p_g(\sF)+1)}$,
%	where the geometric genera can be arbitrarily large.
	
	(ii)\, The inequality \eqref{eqn-noe2} will be referred as the second Noether inequality for foliations of general type.
	This is slightly different from the second Noether inequality for
	algebraic varieties $X$ of general type.
	When $X$ is of dimension two, the volume is an integer, and hence
	$$\vol(X)\geq 2p_g(X)-3=(2p_g(X)-4)+1,\qquad\text{if $\vol(X) \neq 2p_g(X)-4$.}$$
	When $X$ is a threefold, Hu-Zhang \cite{hz-22} proved that if $p_g(X) \not \in [5,10]$ and $\vol(X) \neq \frac43p_g(X)-\frac{10}{3}$, then
	$$\vol(X)\geq \frac{4}{3}p_g(X)-\frac{19}{6}
	=\Big(\frac43p_g(X)-\frac{10}{3}\Big)+\frac16.$$	
	Such a phenomenon is different for foliations of general type.
	Indeed, we will construct in \autoref{exam-6-2} a sequence of foliated surfaces $(S_n,\sF_n)$ reaching the equality in \eqref{eqn-noe2}; namely 
	$\vol(\sF_n) > p_g(\sF_n)-2$, but the difference $\vol(\sF_n) - \big( p_g(\sF_n)-2 \big)$ can be arbitrarily close to zero.
\end{remark}

Both the slope inequalities and Noether type inequalities can give lower bounds on the volume $\vol(\sF)$ in terms of the Chern number $\chi(\sF)$ and the geometric genus $p_g(\sF)$ respectively.
It is natural to ask the relation between the two birational invariants $\chi(\sF)$ and $p_g(\sF)$.
Combining the Noether inequality \eqref{eqn-noe1} together with  \eqref{eqn-chern-1-1} and the non-negativity of $c_2(\sF)$, one obtains immediately the following.
\begin{corollary}
	Let $(S,\sF)$ be a foliated surface, and suppose that $\sF$ is of general type.
	Then
	\begin{equation}\label{eqn-1-4}
		p_g(\sF) \leq 12\chi(\sF)+2.
	\end{equation}
\end{corollary}

\noindent Conversely, the Chern number $\chi(\sF)$ (as well as the other two Chern numbers $c_1^2(\sF)$ and $c_2(\sF)$) can not be bounded from above by the geometric genus $p_g(\sF)$ in general; see \autoref{exam-6-4}.

	\vspace{2mm}
	The paper is organized as follows.
	In \autoref{sec-pre}, we review some basic facts about foliated surfaces.
	In \autoref{sec-zariski}, we recall the Zariski decomposition of the canonical divisor $K_{\sF}$ for a relatively minimal foliation, and prove in \autoref{prop-3-1} an upper bound on the coefficients $a_C$'s of the negative part $N=\sum a_CC$.
	In \autoref{sec-alg-foliation}, we restrict ourselves to the algebraically integral foliations, namely foliations with rational first integrals.
	In particular, we will present the resolution of non-reduced singularities on an algebraically integral foliation, which will be helpful in obtaining the Noether inequalities for an algebraically integral foliation.
	In \autoref{sec-chern-number}, we will introduce the Chern numbers of foliations with pseudo-effective canonical divisors, and prove some basic properties of these Chern numbers.
	In particular, \autoref{thm-chern-1} will be proved in this section.
	In \autoref{sec-non-general-type}, we compute the Chern numbers of foliations which are not of general type, and prove \autoref{thm-chern-2} and \autoref{cor-chern-1}.
	In \autoref{sec-slope}, we consider the slope of foliations of general type.
	In particular, we will prove \autoref{thm-chern-3} and \autoref{cor-chern-2}.
	We also construct an example of transcendental foliation of general type with slope $\lambda(\sF)=\frac{12}{7}<2$ in this section.
	In \autoref{sec-noether}, we prove the three Noether type inequalities in \autoref{thm-main}; while both \autoref{cor-noether} and \autoref{cor-volume}
	follow directly from the Noether type inequalities.
	Finally, we will construct in \autoref{sec-example} several examples reaching the equalities in the three Noether type inequalities in \autoref{thm-main},
	which illustrate the sharpness of those inequalities.

	\section{Preliminaries}\label{sec-pre}
	In this section, we recall some basic facts about the foliations on smooth surfaces. For more details we refer to \cite{bru-04,mcq-08}.
	
%	By a foliated variety $(X,\sF)$ we mean a foliation $\sF$ on the variety $X$.
%	\begin{definition}\label{def-foliation}
%		A foliation $\mathcal{F}$ of rank $r$ on a normal variety $X$ of dimension $n$ is defined by a rank $r$ coherent subsheaf $T_{\mathcal{F}} \subseteq T_X$ such that
%		\begin{enumerate}
%			\item[(i)] $T_{\mathcal{F}}$ is saturated, i.e., $T_X/T_{\mathcal{F}}$ is torsion free;
%			\item[(ii)] $T_{\mathcal{F}}$ is closed under the Lie bracket, i.e., $[\alpha,\beta] \in T_{\mathcal{F}}$ for any $\alpha, \beta \in T_{\mathcal{F}}$.
%		\end{enumerate}
%	\end{definition}
%	The singular locus $\sing(\mathcal{F})$ of $\mathcal{F}$ is the set of points $x\in X$
%	where either $x\in \sing(X)$ or the quotient sheaf $N_{\mathcal{F}}:=T_X/T_{\mathcal{F}}$
%	fails to be locally free at $x$.
%	Note that the first condition above implies that the codimension of $\sing(\mathcal{F})$ in $X$ is at least two.
%	The canonical divisor of $\mathcal{F}$ is a divisor $K_{\mathcal{F}}$ on $X$ such that
%	$\mathcal{O}_X(K_{\mathcal{F}}) \cong \det(T_{\mathcal{F}})^{*}.$
%	Locally in the Euclidean topology around a smooth point $x\in X \setminus \sing(\mathcal{F})$,
%	the foliation $\mathcal{F}$ can be defined by a fibration; that is, there exists an analytic open neighborhood $x\in U \subseteq X$
%	and a morphism $f:\,U \to \mathbb{C}^{n-r}$
%	such that
%	$$T_{\mathcal{F}}|_{U}=\ker(\df:\,T_U \to f^*T_{\mathbb{C}^{n-r}}).$$
%	Conversely, any morphism $f:\,X \to Y$ with connected fibers defines a foliation $\mathcal{F}$ by taking the saturation of
%	$\ker(\df:\,T_X \to f^*T_{Y})$ in $T_X$.
	
	\subsection{Basic definitions}
	We work over the complex number $\mathbb{C}$.
	By a {\it foliated surface} $(S,\sF)$ we mean a foliation $\sF$ on the smooth projective surface $S$ over the complex number $\mathbb{C}$.
		\begin{definition}\label{def-foliation}
			A foliation $\mathcal{F}$ on a smooth projective surface $S$ over the complex number $\mathbb{C}$ is defined by a saturated invertible subsheaf $T_{\mathcal{F}} \subseteq T_S$ in the tangent sheaf $T_{S}$,
			i.e.,  $T_S/T_{\mathcal{F}}$ is torsion free.
%			such that
%			\begin{enumerate}
%				\item[(i)] $T_{\mathcal{F}}$ is saturated, i.e., ;
%				\item[(ii)] $T_{\mathcal{F}}$ is closed under the Lie bracket, i.e., $[\alpha,\beta] \in T_{\mathcal{F}}$ for any $\alpha, \beta \in T_{\mathcal{F}}$.
%			\end{enumerate}
		\end{definition}
		
	Equivalently, a foliation $\sF$ on $S$ can be given by an exact sequence
	$$0\lra T_{\sF} \lra T_S  \lra I_{\Delta}(N_{\sF}) \lra 0,$$
	where $T_{\sF}$ and $N_{\sF}$ are respectively called the {\it tangent bundle} and {\it normal bundle} of $\sF$,
	and $I_{\Delta}$ is an ideal sheaf.
	In other words,
	a foliation on $S$ is given by the data $\{(U_i, v_i)\}_{i\in I}$,
	where $\{U_i\}_{i\in I}$ is an open covering of $S$,
	$v_i$ is a holomorphic vector field on $U_i$ with at most isolated zeros,
	and there exists $g_{ij} \in \mathcal{O}^*_S(U_i\cap U_j)$ whenever $U_i\cap U_j \neq \emptyset$ such that
	\begin{equation}\label{eqn-2-8}
		v_i|_{U_i\cap U_j}=g_{ij}v_j|_{U_i\cap U_j}.
	\end{equation}
	The cocycle $\{g_{ij}\}$ defines a line bundle which is nothing but the dual of the tangent bundle $T_{\sF}^*$.
    The bundle $T_{\sF}^*$ will be called the {\it canonical divisor} of $\mathcal{F}$, and we denote it by $K_{\mathcal{F}}:=T_{\sF}^*$.
	
	Alternatively, one can also define $\sF$ using one-forms instead of vector fields, as studied by Poincar\'e and Painlev\'e etc. over one hundred years ago.
	A foliation on $S$ is given by a collection of one-forms
	$\omega_i\in \Omega^1_S(U_i)$ with at most isolated zeros
	and there exists $f_{ij} \in \mathcal{O}^*_S(U_i\cap U_j)$ whenever $U_i\cap U_j \neq \emptyset$ such that
	\begin{equation}\label{eqn-2-9}
		\omega_i|_{U_i\cap U_j}=f_{ij}\omega_j|_{U_i\cap U_j}.
	\end{equation}
	The cocycle $\{f_{ij}\}$ defines a line bundle which is the conormal bundle $N_{\sF}^*$.
	One can also translate this into an exact sequence:
	$$0\lra N_{\sF}^* \lra \Omega_S^1  \lra I_{\Delta}(K_{\sF}) \lra 0.$$
	We remark that a foliation is classically defined by a
	nonzero global meromorphic 1-form $\alpha$ on $S$.
	 The one-dimensional zeros and poles of $\alpha$ define a divisor
	$$\textrm{div}(\alpha)=(\alpha)_0-(\alpha)_\infty\,.$$
	Let $s$ be the canonical nonzero meromorphic section of $\mathcal
	O_S(\textrm{div}(\alpha))$ such that
	$\textrm{div}(s)=\textrm{div}(\alpha)$, and let $\alpha=s\cdot
	\omega$. Then $\omega$ is a twisted holomorphic 1-form $\omega$ in
	$\Omega^1_S(-\textrm{div}(\alpha))$ with isolated zeros,
	which coincides with the above definitions satisfying $\mathcal{N}_{\sF}^*=\mathcal{O}_S\big(\div(\alpha)\big)$.
	For any surjective morphism $\Pi:\,\wt{S} \to S$ from another smooth projective surface $\wt{S}$,
	there is an induced foliation $\wt{\sF}=\Pi^*(\sF)$ defined by
	$\Pi^*(\alpha)$ if $\alpha$ is a nonzero global meromorphic $1$-form defining $\sF$.
	
	\vspace{2mm}
	An irreducible curve $C\subseteq S$ is said to be {\it $\sF$-invariant} if the inclusion
	$T_{\sF}|_{C} \hookrightarrow T_X|_{C}$ factors through $T_C$.
	By a curve $C\subseteq S$ we mean a reduced and compact algebraic curve.
	So it might be singular and reducible.
	Suppose that $C$ is not $\sF$-invariant, or more precisely every irreducible component of $C$ is not $\sF$-invariant.
	Then one defines the tangency of $\sF$ to $C$ as follows.
	Let $p\in C$ be any point.
	Around $p$, let $\{f=0\}$ be a local equation of $C$,
	and $v$ be a local holomorphic vector field defining $\sF$ around $p$.
	Then the tangency of $\sF$ to $C$ at $p$ is defined to be
	$$\tang(\sF,C,p)=\dim_{\mathbb{C}} \frac{\mathcal{O}_{S,p}}{\langle f, v(f) \rangle}.$$
	As $C$ is not $\sF$-invariant, $\tang(\sF,C,p)<+\infty$ and  $\tang(\sF,C,p)=0$ except for finitely many points.
	Hence one defines the tangency of $\sF$ to $C$.
	$$\tang(\sF,C) = \sum_{p\in C} \tang(\sF,C,p).$$
	\begin{proposition}[{\cite[Proposition\,2.2]{bru-04}}]\label{prop-2-1}
		Let $C$ be a curve on $S$ which is not $\sF$-invariant. Then
		$$\tang(\sF,C)=K_{\sF}C+C^2.$$
		In particular,
		\begin{equation*} %\label{eqn-2-11}
		K_{\sF}C+C^2\geq 0.
		\end{equation*}
	\end{proposition}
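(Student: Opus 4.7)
The plan is to interpret $\tang(\sF,C,p)$ as the local length of the zero scheme of a natural global section of a line bundle on $C$, and then read off the total degree.

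First, since every irreducible component of $C$ is not $\sF$-invariant, the composition
$$\phi:\,T_{\sF}|_C \,\hookrightarrow\, T_S|_C \,\twoheadrightarrow\, N_{C/S}$$
is a nonzero $\calo_C$-linear map. As $C$ is a Cartier divisor on the smooth surface $S$, both $T_{\sF}|_C$ and $N_{C/S}\cong\calo_C(C)$ are line bundles on $C$. Hence $\phi$ is a nonzero section of
$$\calh om(T_{\sF}|_C, N_{C/S})\,\cong\,\calo_C(K_{\sF}+C),$$
and its total vanishing order along $C$ equals $\deg_C\calo_C(K_{\sF}+C)=K_{\sF}\cdot C+C^2$.

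Next, I would do the local computation at an arbitrary point $p\in C$. Choose a local equation $f$ of $C$ at $p$ and a local vector field $v$ defining $\sF$ at $p$; then $v$ trivializes $T_{\sF}$ near $p$, while the quotient $T_S|_C\twoheadrightarrow N_{C/S}$ is locally the map $X\mapsto X(f)\bmod (f)$, with $N_{C/S}$ trivialized by $f$. Consequently $\phi$ is locally represented by $v(f)\bmod (f)$, so its length of vanishing at $p$ is
$$\dim_{\mathbb C}\calo_{C,p}\big/\bigl(v(f)|_C\bigr)\;=\;\dim_{\mathbb C}\calo_{S,p}\big/\langle f, v(f)\rangle\;=\;\tang(\sF,C,p).$$
Because $C$ is not $\sF$-invariant, $v(f)$ is not divisible by $f$, so this quantity is finite and vanishes away from a discrete set, confirming that $\tang(\sF,C)$ is well-defined. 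Summing the local contributions and equating with the global degree of $\phi$ gives $\tang(\sF,C)=K_{\sF}\cdot C+C^2$. The inequality $K_{\sF}\cdot C+C^2\geq 0$ is then immediate because each $\tang(\sF,C,p)\geq 0$ by definition.

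The main obstacle I anticipate is making the identification $\calh om(T_{\sF}|_C,N_{C/S})\cong \calo_C(K_{\sF}+C)$ cleanly when $C$ is reducible or singular, since then $N_{C/S}$ must be understood as a line bundle on a possibly non-smooth Gorenstein curve and the local trivialization of $T_{\sF}$ by $v$ must be matched against the cocycle $\{g_{ij}\}$ for $K_{\sF}=T_{\sF}^\ast$ so that the section $\phi$ really lands in the stated line bundle. Once that bookkeeping is done, additivity of both sides over the irreducible components of $C$ (intersection numbers on the right, tangencies on the left, with crossings at nodes contributing to both) finishes the case of general reduced $C$; alternatively, one can bypass the singular issue by first treating irreducible $C$ and then using the bilinearity of the intersection form together with the local identity $\tang(\sF,C_1+C_2,p)=\tang(\sF,C_1,p)+\tang(\sF,C_2,p)+(C_1\cdot C_2)_p$ at each intersection point.
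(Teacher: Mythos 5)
Your main argument is correct and is essentially the standard proof of this statement; the paper itself offers no proof but cites Brunella's Proposition 2.2, whose argument is exactly your computation: the local functions $v(f)|_C$ glue to a section of $\calo_C(K_{\sF}+C)$ (the cocycle bookkeeping works out since $v_i=g_{ij}v_j$ and $f_i=h_{ij}f_j$ give $v_i(f_i)\equiv g_{ij}h_{ij}\,v_j(f_j) \bmod (f)$), its zero scheme has total length $\tang(\sF,C)$, and its degree is $K_{\sF}\cdot C+C^2$; note also that your worry about $N_{C/S}$ is harmless, since $C$ is Cartier and $T_{\sF}$ is invertible, so no Gorenstein subtleties arise and the reducible, singular case is covered directly. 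One correction to your closing aside: the local additivity identity should be $\tang(\sF,C_1+C_2,p)=\tang(\sF,C_1,p)+\tang(\sF,C_2,p)+2\,(C_1\cdot C_2)_p$, not with coefficient $1$; indeed with $f=f_1f_2$ one has $v(f)=f_2\,v(f_1)+f_1\,v(f_2)$, and for $v=\partial_x+\partial_y$, $C_1=\{x=0\}$, $C_2=\{y=0\}$ the tangency of the union at the origin is $\dim_{\mathbb C}\calo_{S,0}/\langle xy,\,x+y\rangle=2$ while each branch contributes $0$, matching $2(C_1\cdot C_2)_0$ and, globally, $(C_1+C_2)^2=C_1^2+2C_1C_2+C_2^2$. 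This slip does not affect your main proof, which already treats reduced (possibly reducible) $C$ without recourse to that identity.
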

	
	We now suppose that $C$ is $\sF$-invariant, or more precisely every irreducible component of $C$ is $\sF$-invariant.
	Given any point $p\in C$, let $\{f=0\}$ be a local equation of $C$,
	and $\omega$ be a local holomorphic one-form defining $\sF$ around $p$.
	Because $C$ is $\sF$-invariant, we may write
	$$g\omega=hdf+f\eta,$$
	for some holomorphic one-form $\eta$ and holomorphic functions $g,h$ around $p$,
	such that $h$ and $f$ are coprime.
	We define
	$$\begin{aligned}
	Z(\sF,C,p)&\,=\text{vanishing order of $~\frac{h}{g}\Big|_C$ at $p$},\\
	\cs(\sF,C,p)&\,=\text{residue of $~-\frac{\eta}{h}\Big|_C$ at $p$}.
	\end{aligned}$$
	By definition, both $Z(\sF,C,p)$ and $\cs(\sF,C,p)$ are zero if $p$ is not a singular point of $\sF$.
	If $\sF$ is reduced, then $Z(\sF,C,p)\geq 0$ for any $p\in C$ \cite{bru-97}.
	Let
	$$\begin{aligned}
	Z(\sF,C) &\,= \sum_{p\in C} Z(\sF,C,p)=\sum_{p\in C\,\cap\, \text{Sing}(\sF)} Z(\sF,C,p),\\
	\cs(\sF,C) &\,= \sum_{p\in C} \cs(\sF,C,p)=\sum_{p\in C\,\cap\, \text{Sing}(\sF)} \cs(\sF,C,p).
	\end{aligned}$$
	\begin{proposition}[{\cite[Proposition\,2.2 and Theorem\,3.2]{bru-04}}]\label{prop-2-2}
		Let $C$ be a curve on $S$ which is $\sF$-invariant. Then
		\begin{eqnarray}
		Z(\sF,C)&=&K_{\sF}C+\chi(C), \quad \text{where~}\chi(C)=-K_SC-C^2; \nonumber\\
		\cs(\sF,C) &=&C^2. \label{eqn-2-7}
		\end{eqnarray}
		The second equality above is called the Camacho-Sad formula.
		The first equality implies in particular that if $\sF$ is reduced and $C$ is an $\sF$-invariant curve, then
		\begin{equation*} %\label{eqn-2-12}
		0\leq K_{\sF}C+\chi(C)=K_{\sF}C-K_SC-C^2=N_{\sF}C-C^2.
		\end{equation*}
	\end{proposition}
	
	\subsection{The singularities of a foliation}\label{sec-singularities}
	Let $(S,\sF)$ be a foliated surface.
	The singular locus of $\mathcal{F}$ is the set of points $p\in S$
	where the quotient sheaf $T_S/T_{\mathcal{F}}$
	fails to be locally free at $p$.
	The torsion-freeness of $T_S/T_{\mathcal{F}}$ implies that the codimension of the singular locus of $\sF$ in $S$ is at least two,
	i.e., it consists of finitely many points.
	Let $p$ be a singular point of $\sF$,
	and $v=A\frac{\partial}{\partial x}+B\frac{\partial}{\partial y}$ is a local vector field defining $\sF$.
	One defines the {\it multiplicity} of $\sF$ at $p$ as
	$$m_p(\sF)=\dim_{\mathbb{C}}\frac{\mathcal{O}_p}{\langle A,B \rangle}.$$
	The total number of singular points of $\sF$ is (cf. \cite[Proposition\,2.1]{bru-04}):
	\begin{equation}\label{eqn-2-4}
	m(\sF):=\sum_{p}m_p(\sF)=c_2(\Omega_S(N_\sF))=c_2(S)+N_\sF K_\sF.
	\end{equation}
	The two eigenvalues $\lambda_1,\lambda_2$ of the linear part $(Dv)(p)$
	at a singular point of $\sF$
	are well-defined up to multiplication by a non-zero constant.
	\begin{definition}\label{def-2-1}
		Let $p\in S$ be a singularity of $\sF$ and $v$ be a local vector field defining $\sF$.
		\begin{enumerate}[(i)]
			\item The singularity $p$ is called non-degenerate if both of the eigenvalues $\{\lambda_1,\lambda_2\}$ of $(Dv)(p)$ are non-zero;
			it is called a saddle-node if exactly one the two eigenvalues
			$\{\lambda_1,\lambda_2\}$ is non-zero.
			\item The singularity $p$ is called a {\it reduced singularity} if at least one of the two eigenvalues (say, $\lambda_2$) is not zero and the quotient $\lambda=\frac{\lambda_1}{\lambda_2}$ is not a positive rational number. In particular, a reduced singularity is either non-degenerate or a saddle-node.
			\item The foliation $\sF$ is said to be reduced if any singularity of $\sF$ is reduced.
		\end{enumerate}
	\end{definition}
	If $\lambda_2\neq 0$, then the quotient $\lambda=\frac{\lambda_1}{\lambda_2}$ is unchanged by multiplication of $v$ by a non-vanishing holomorphic function.
	Of course, if $\lambda_1\neq 0$, we could also consider the quotient
	$\lambda^{-1}=\frac{\lambda_2}{\lambda_1}$ instead of $\lambda$,
	but then $\lambda \not\in \mathbb{Q}^{+}$ iff $\lambda^{-1} \not\in \mathbb{Q}^{+}$.
	The complex number $\lambda=\frac{\lambda_1}{\lambda_2}$,
	with an inessential abuse due to the exchange $\lambda \leftrightarrow \lambda^{-1}$,
	is called the {\it eigenvalue} of $\sF$ at $p$ following \cite{bru-04}.
	For convenience, we always assume the eigenvalue of $\sF$ at a saddle-node is $0$.
	%A reduced singularity is called {\it non-degenerate} if both of the two eigenvalues $\lambda_1$ and $\lambda_2$ are non-zero; otherwise it is called a {\it saddle node}.
	Given any foliation, one can obtain a reduced one by a sequence of blowing-ups, cf. \cite{sei-68} or \cite[Theorem\,1.1]{bru-04}.
	\begin{theorem}[Seidenberg]\label{thm-seidenberg}
		Given any foliated surface $(S,\sF)$, there exists a sequence of blowing-ups $\pi:\,\wt{S} \to S$,
		such that the induced foliation $\wt\sF$ on $\wt{S}$ is reduced.
	\end{theorem}

We want to introduce two new invariants for a singularity of $\sF$.
First, for a complex number $u\in \mathbb C$, we define
\begin{equation}\label{eqn-def-beta}
\left\{	
\begin{aligned}
\beta(u)&\,=\begin{cases}\frac{\gcd(a,b)^2}{ab}, &\quad\textrm{~if~}
u=\dfrac{a}{b}\in\mathbb Q\setminus\{0\},
\\
0, &\quad\textrm{~otherwise}.
\end{cases} \\[2pt]
\chi(u)&\,=\dfrac{1}{12}
\left(u+\dfrac1{u}+\beta(u)-3\right), \hskip0.5cm
\text{~if~}u\neq0.
\end{aligned}\right.\end{equation}
%$\chi(u)$ is called the {\it Dedekind number} of $u$ (see the Dedekind reciprocity law).
We can check easily that
$$\left\{\begin{aligned}
&\beta(1)=1,\qquad \beta(-1)=-1;\\
&\beta(u^{-1})=\beta(u),\quad\forall\,u\neq 0.
\end{aligned}\right.\qquad\qquad
\left\{\begin{aligned}
&\chi(1)=0,\qquad \chi(-1)=-1/2;\\
&\chi(u^{-1})=\chi(u),\quad\forall\,u\neq 0.
\end{aligned}\right.$$
Moreover, if $u\neq 0,1$, then
\begin{equation}\label{eqn-2-11}
	\beta(u)=\beta(u+1)+\beta(u^{-1}+1),\qquad
	\chi(u)=\chi(u+1)+\chi(u^{-1}+1).
\end{equation}
\begin{definition}\label{def-beta-chi}
	Let $p\in S$ be either a non-degenerate singularity, or a saddle node of $\sF$.
	Let $\lambda_p$ be the eigenvalue of $\sF$ at $p$.
	Define
	\begin{equation}\label{eqn-def-beta-2}
		\beta_p(\sF)=\beta(-\lambda_p),\qquad\qquad
		\chi_p(\sF)=-\frac1{12}\big(BB_p(\sF)+m_p(\sF)-\beta(-\lambda_p)\big),
	\end{equation}
	where $BB_p(\sF)$ is the Baum-Bott index of $\sF$ at $p$,
	and $m_p(\sF)$ is the multiplicity of $\sF$ at $p$.
\end{definition}

With the help of the equalities above,
one checks easily that these two invariants are well-defined
(invariant under the exchange $\lambda \leftrightarrow \lambda^{-1}$).
Moreover, if $p$ is a non-degenerate singularity with eigenvalue $\lambda_p$,
then $m_p(\sF)=1$ and
$BB_p(\sF)=\lambda_p+\lambda_p^{-1}+2$, cf. \cite[\S\,3]{bru-04}. Thus we have
$$\chi_p(\sF)=\chi(-\lambda_p),\qquad\text{if $p$ is a non-degenerate singularity}.$$

\begin{lemma}\label{lem-2-1}
	Let $\sigma:\,S' \to S$ be a blowing-up centered at some point $p\in S$ with exceptional curve $E$,
	and $\sF'=\sigma^*(\sF)$ the induced foliation on $S'$.
	\begin{enumerate}[$(i)$]
		\item If $p$ is a regular point, then $E$ is $\sF'$-invariant and there is a unique singularity $q_1$ of $\sF'$ on $E$ with eigenvalue $\lambda_{q_1}=-1$.
		In particular,
		$$\beta_{q_1}(\sF')=1,\qquad \chi_{q_1}(\sF')=0.$$
		
		\item If $p$ is a non-degenerate singularity with eigenvalue $\lambda_p\neq 1$,
		then $E$ is $\sF'$-invariant, and there are two non-degenerate singularities $\{q_1,q_2\}$ of $\sF'$ on $E$.
		Moreover the eigenvalues of these two singularities are $\{\lambda_{q_1},\lambda_{q_2}\}=\{\lambda_p-1,\lambda_p^{-1}-1\}$.
		In particular,
		\begin{equation}\label{eqn-2-12}
			\beta_p(\sF)=\beta_{q_1}(\sF')+\beta_{q_2}(\sF'),
			\qquad\chi_p(\sF)=\chi_{q_1}(\sF')+\chi_{q_2}(\sF').
		\end{equation}
		
		\item If $p$ is a non-degenerate singularity with eigenvalue $\lambda_p=1$ (in particular, $p$ is a non-reduced singularity),
		then $E$ can be $\sF'$-invariant or not:
		\begin{enumerate}
			\item[$(iii$-$1)$] if $E$ is $\sF'$-invariant, then there is a unique singularity $q_1$ of $\sF'$ on $E$ which is a saddle-node of multiplicity $m_{q_1}=2$ and
			\begin{equation}\label{eqn-2-15}
			\beta_p(\sF)=\beta_{q_1}(\sF')-1,\qquad \chi_p(\sF)=\chi_{q_1}(\sF')-\frac{1}{12};
		\end{equation}
			
			\item[$(iii$-$2)$] if $E$ is not $\sF'$-invariant, then there is no singularity of $\sF'$ on $E$.
		\end{enumerate}		
		
		\item If $p$ is a saddle-node of multiplicity $m_p(\sF)$, then $E$ is $\sF'$-invariant, and there are two singularities $\{q_1,q_2\}$ of $\sF'$ on $E$.
		Moreover, one of these two singularities, say $q_1$, is a saddle-node of multiplicity $m_{q_1}(\sF')=m_p(\sF)$,
		and the other one is a non-degenerate singularity with eigenvalue $\lambda_{q_2}=-1$. In particular,
		\begin{equation}\label{eqn-2-14}
		\beta_p(\sF)=\beta_{q_1}(\sF')+\beta_{q_2}(\sF')-1,
		\qquad\chi_p(\sF)=\chi_{q_1}(\sF')+\chi_{q_2}(\sF')-\frac{1}{12}.
	\end{equation}		
	\end{enumerate}
\end{lemma}
\begin{proof}
	The first statement is clear.
	We consider next $p$ is a non-degenerate singularity with eigenvalue $\lambda_p\neq 1$.
	Then the local one-form $\omega$ defining $\sF$ has the form 
	$$\omega=x\big(1+o(1)\big)dy-\lambda_py\big(1+o(1)\big)dx.$$
	Using local equations defining the blowing-up $\sigma$:
	$$(x,t) \mapsto (x,y)=(x,xt), \qquad\text{and}\qquad
	(s,y) \mapsto (x,y)=(sy,y),$$
	one shows directly that the exceptional curve $E$ is $\sF'$-invariant and
	there are two non-degenerate singularities $\{q_1,q_2\}$ of $\sF'$ on $E$
	with eigenvalues $\{\lambda_{q_1},\lambda_{q_2}\}=\{\lambda_p-1,\lambda_p^{-1}-1\}$.
	The equalities in \eqref{eqn-2-12} follows from \eqref{eqn-2-11}.
	
	If $p$ is a non-degenerate singularity with eigenvalue $\lambda_p= 1$,
	then by Poincar\'e-Dulac Normal Form Theorem (cf. the end of Chapter 1 in \cite{bru-04}), $\sF$ is locally defined by a one-form
	$$\omega=(\epsilon x+y)dx-xdy,\qquad \text{where~}\epsilon\in\{0,1\}.$$
	If $\epsilon=0$ (resp. $\epsilon=1$), then again by a direct computation one shows that
	$E$ is not $\sF'$-invariant and that there is no singularity of $\sF'$ on $E$
	(resp. that $E$ is $\sF'$-invariant and that there is a unique singularity $q_1$ of $\sF'$ on $E$ which is a saddle-node of multiplicity $m_{q_1}=2$).
	Moreover, $\beta_p(\sF)=-1$, $\beta_{q_1}(\sF')=0$,
	$BB_{p}(\sF)=4$, and $BB_{q_1}(\sF')=BB_{p}(\sF)-1=3$ according to the Baum-Bott formula (cf. \cite[Theorem\,3.1]{bru-04}).
	Hence
	$$\chi_p(\sF)=-\frac{1}{2},\qquad \chi_{q_1}(\sF')=-\frac{5}{12}.$$
	This proves \eqref{eqn-2-15}.
	
	Finally, we consider the case when
	$p$ is a saddle-node of multiplicity $m_p(\sF)$.
	In this case, the blowing-up process is exhibited in \cite[\S\,1.2]{bru-04},
	and the conclusions follow from a direct computation as before.
	We just remark
	the formula $BB_p(\sF)=BB_{q_1}(\sF')+BB_{q_2}(\sF')+1$,
	which can deduced from Baum-Bott formula (cf. \cite[Theorem\,3.1]{bru-04}).
	This completes the proof.
\end{proof}

In the proof of the above lemma, we have seen that the blowing-up of a non-degenerate singularity produces non-degenerate singularities or saddle-nodes.
By a sequence of blowing-ups, one can resolve a non-degenerate non-reduced singularity $p$.
Such a process can be found at the end of Chapter 1 in \cite{bru-04}.
It depends on the eigenvalue $\lambda_p$, which is a positive rational number by assumption.
For convenience, we briefly recall it in the following.\vspace{2mm}

(i).  If $\lambda_p\not\in \mathbb{N}^+\cup \frac{1}{\mathbb{N}^+}$,
	then $\sF$ can be generated by a local vector field
	$v=x\frac{\partial}{\partial x}+\lambda_p y\frac{\partial}{\partial y}$,
	where $\lambda_p=\frac{m_1}{m_2}\in \mathbb{Q}^{+}$ with $\gcd(m_1,m_2)=1$.
	Equivalently, $\sF$ is given by the levels of the meromorphic function $\frac{x^{m_1}}{y^{m_2}}$.
	In particular, the local separatrices passing through $p$ are defined by
	$\frac{x^{m_1}}{y^{m_2}}=c$ or $x^{m_1}-cy^{m_2}=0$, where $c\in \mathbb{C}\cup \{\infty\}$.
	There are exactly two separatrices $\{C_1,C_1'\}$ smooth at $p$,
	defined respectively by $x=0$ and $y=0$.
	Let $\sigma_1:\,S_1 \to S$ be the blowing-up centered at $p$ with exceptional curve $E_1$.
	Then $E_1$ is $\sF_1=\sigma_1^*(\sF)$-invariant, and there are exactly two singularities on $E_1$, which are $q_1=\ol C_1\cap E_1$ and $q_1'=\ol C_1' \cap E_1$, where $\ol C_1$ (resp. $\ol C_1'$) is the strict transform of $C_1$ (resp. $C_1$).
	The eigenvalue of $\{\lambda_{q_1},\lambda_{q_1'}\}=\{\lambda_p-1,\lambda_p^{-1}-1\}$.
	It follows that one singularity, say $q_1$, is reduced; and the other singularity $q_1'$ is still non-reduced with positive rational eigenvalue.
	Let $\sigma_2:\,S_2 \to S_1$ be the blowing-up centered at $q_1'$ and continue the above process. Finally, one obtains a birational map
	$\tilde \pi:\, \wt S \to S$, such that there is only reduced singularities
	of $\wt\sF=\tilde\pi^*(\sF)$ on $\tilde\pi^{-1}(p)$. Moreover, all the exceptional curves except the last one $E_k$ are $\wt \sF$-invariant,
	there is no singularity of $\wt \sF$ on the last exceptional curve $E_n$, and 
	\begin{eqnarray}
		K_{\wt \sF}&=&\tilde\pi^*(K_{\sF})-E_n,\label{eqn-2-16}\\
		\sum_{q\in \tilde\pi^{-1}(p)}\beta_{q}(\wt \sF)&=&\beta_p(\sF_0)+1.\label{eqn-7-13}
	\end{eqnarray}
	The second equality	which follows from \autoref{lem-2-1}.
	Indeed, by \eqref{eqn-2-12}, one proves inductively that
	$$\begin{aligned}
	\beta_{p}(\sF_0)&\,=\beta_{q_1}(\sF_1)+\beta_{q_1'}(\sF_1)\\
	&\,=\beta_{q_1}(\sF_2)+\beta_{q_2}(\sF_2)+\beta_{q_2'}(\sF_2)=\cdots\\
	&\,=\sum_{i=1}^{n-1}\beta_{q_i}(\sF_{n-1})+\beta_{q_{n-1}'}(\sF_{n-1})
	=\sum_{i=1}^{n-1}\beta_{q_i}(\sF_{n})-1,
	\end{aligned}$$
	where we view $q_i$ also as a singularity of $\sF_j$ for $j>i$ with the same $\beta$-invariants.
	These points $\{q_1,\cdots,q_{n-1}\}$ are all the singularities of $\sF_n=\wt \sF$ lying on $\tilde \pi^{-1}(p)$.
			
(ii). If $\lambda_p\in \mathbb{N}^+\cup \frac{1}{\mathbb{N}^+}$,
	then $\sF$ can be generated by a local vector field
	$v=x\frac{\partial}{\partial x}+(my+\epsilon x^m)\frac{\partial}{\partial y}$,
	where $m=\lambda_p$ or $\lambda_p^{-1}$ belongs to $\mathbb{N}^+$, and $\epsilon\in\{0,1\}$.
	If $\epsilon=0$, then the situation is almost the same as the previous case
	(the only difference is that there might be infinitely many separatrices smooth at $p$, and we donot know exactly the position of the two singularities on $E_1$), and we are done by a similar argument.
	All the exceptional curves except the last one $E_k$ are $\wt \sF$-invariant,
	and \eqref{eqn-2-16} still holds in this case.
	If $\epsilon=1$, then as exhibited at the end of Chapter 1 in \cite{bru-04},
	the resolution $\tilde\pi:\,\wt S\to S$ of the non-reduced singularity $p$ would create a saddle-node of multiplicity two at the last step.
	In this case, all the exceptional curves are $\wt \sF$-invariant, and it holds
	\begin{equation}\label{eqn-2-17}
	K_{\wt \sF}=\tilde\pi^*(K_{\sF}).
	\end{equation}

	\begin{lemma}\label{lem-2-2}
		Let $(S,\sF)$ be a foliated surface, $C_1,C_2$ be two $\sF$-invariant curves, and $p\in C_1\cap C_2$ be a non-degenerate singularity of $\sF$
		with eigenvalue $\lambda_p$.
		Suppose that both $C_1$ and $C_2$ are smooth at $p$ and intersect transversely at $p$.
		Then module the exchange $\lambda_p \leftrightarrow \lambda_p^{-1}$,
		it holds that
		\begin{equation}\label{eqn-2-6}
			\cs(\sF,C_1,p)=\frac{1}{\lambda_p},\qquad \cs(\sF,C_2, p)=\lambda_p.
		\end{equation}
	\end{lemma}
\begin{proof}
	If $p$ is reduced singularity, it is actually proved in \cite[\S\,3.2]{bru-04}.
	We briefly recall the proof.
	The foliation $\sF$ can be locally defined by a one-form
	$$\omega=x(1+o(1))dy-\lambda_p y(1+o(1))dx,\qquad\text{with~}\lambda_p\not\in\mathbb{Q}_{\geq 0}.$$
	The invariant $\lambda_p$ is nothing but the eigenvalue of $\sF$ at $p$.
	There are exactly two separatrices through $p$, defined respectively by $\{x=0\}$ and $\{y=0\}$, which are nothing but the two $\sF$-invariant curves
	$C_1$ and $C_2$ by our assumption.
	Recall that a separatrix of $\sF$ at $p$ is a local holomorphic irreducible curve $C$ (possibly singular) on a neighborhood of $p$ which is $\sF$-invariant and passes through $p$.
	By a direct computation, one proves \eqref{eqn-2-6}.
	
	Suppose now that $p$ is a non-reduced singularity, i.e., the eigenvalue $\lambda_p\in \mathbb{Q}^+$.
	If $\lambda_p\not\in \mathbb{N}^+\cup \frac{1}{\mathbb{N}^+}$,
	then as recalled above, $\sF$ can be generated by a local vector field
	$v=x\frac{\partial}{\partial x}+\lambda_p y\frac{\partial}{\partial y}$,
	where $\lambda_p=\frac{m_1}{m_2}\in \mathbb{Q}^{+}$ with $\gcd(m_1,m_2)=1$.
	It follows that the local separatrices passing through $p$ are defined by
	$\frac{x^{m_1}}{y^{m_2}}=c$ or $x^{m_1}-cy^{m_2}=0$, where $c\in \mathbb{C}\cup \{\infty\}$.
	There are exactly two separatrices smooth at $p$,
	defined respectively by $x=0$ and $y=0$,
	which are nothing but the two $\sF$-invariant curves
	$C_1$ and $C_2$ by our assumption.
	Again \eqref{eqn-2-6} follows by a direct computation.
	If $\lambda_p\in \mathbb{N}^+\cup \frac{1}{\mathbb{N}^+}$,
	then $\sF$ can be generated by a local vector field
	$v=x\frac{\partial}{\partial x}+(my+\epsilon x^m)\frac{\partial}{\partial y}$,
	where $m=\lambda_p$ or $\lambda_p^{-1}$, and $\epsilon\in\{0,1\}$.
	If $\epsilon=0$, then the situation is almost the same as the previous case
	(the only difference is that there might be infinitely many separatrices smooth at $p$, and hence we can not decide the local defining equations of $C_1$ and $C_2$, but this would not infect the CS-indices),
	and we are done by a similar argument.
	If $\epsilon=1$, then except the strict transform of $\{x=0\}$, all the other
	separatrices through a point on $\pi^{-1}(p)$ is actually contained in $\{x=0\}$.
	It follows that there is only one separatrix of $\sF$ through $p$,
	since the strict transform of any separatrix of $\sF$ through $p$ gives a separatrix through a point on $\pi^{-1}(p)$ and not contained in $\{x=0\}$.
	This is a contradiction, since $C_1$ and $C_2$ are already two different separatrices of $\sF$ through $p$.
\end{proof}

\begin{remark}
	If $p$ is a saddle node, one can also compute the $\cs$-indices on the local separatrices through $p$; see \cite[\S\,3.2]{bru-04}.
\end{remark}

	\subsection{Birational invariants of a foliated surface}\label{sec-2-3}
	In this subsection, we recall the birational invariants for a foliated surface $(S,\sF)$; see \cite{bru-99,bru-04,men-00} for more details.
	
	Let $(S,\sF)$ be a foliated surface, not necessarily reduced.
	Let $\sigma:\,S'\to S$ be a blowing-up centered at some point $p\in S$, and $\sF'=\sigma^*(\sF)$ the induced foliation on $S'$.
	Let $q_1,\cdots,q_r$ be the singularities of $\sF'$ on the exceptional curve $E$.
	Let $a_p$ be the {\it vanishing order} of $\omega$ at $p$, where $\omega$ is a local one-form defining $\sF$.
	The number $a_p$ is independent on the choice of $\omega$, and it is also the vanishing order of a local vector field $v$ defining $\sF$.
	\begin{definition}\label{def-vanishing-order}
		The above number $a_p$ is called the {\it vanishing order} of $\sF$ at $p$,
		and will be denoted by $a_p(\sF)$, or simply by $a_p$ if there is no confusion.
	\end{definition}
    For a singularity $p$, the vanishing order $a_p(\sF)$ is different from the multiplicity $m_p(\sF)$ in \autoref{sec-singularities}.
    However, one checks easily that $m_p(\sF)\geq a_p(\sF)^2$.
	According to \cite[\S\,2.3, Example\,(1)]{bru-04}, it holds that
	\begin{equation}\label{eqn-2-13}
	K_{\sF'}=\sigma^*K_{\sF}-(\ell_p-1)E,\quad \text{~where~}\ell_p=\left\{\begin{aligned}
	&a_p, &~&\text{if $E$ is $\sF$-invariant};\\
	&a_p+1, &~& \text{if $E$ is not $\sF$-invariant}.
	\end{aligned}\right.
	\end{equation}
	Conversely, suppose $\sF'$ is any foliation on $S'$. Then the birational morphism $\sigma$ induces a foliation
	$\sF:=\sigma_*(\sF')$ on $S$ by extending $\sF'|_{S'\setminus E}$ to $S$, where we view $S'\setminus E \cong S\setminus\{p\} \subseteq S$.
	Moreover, $K_{\sF}=\sigma_*(K_{\sF'})$.
	
	In particular, if $\sF$ is reduced, then $\ell_p=0$ or $1$, and the exceptional curve $E$ is always $\sF$-invariant.
	Hence for any birational morphism $\pi:\,S'\to S$,
	$$K_{\sF'}=\pi^*K_{\sF}+\mathcal{E},$$
	where $\mathcal{E}\geq 0$ is supported on the exceptional curves of $\pi$.
	In particular,
	$$\dim H^0(S,nK_{\sF})=\dim H^0(S',nK_{\sF'}).$$
	This stimulates the following definitions.
	\begin{definition}\label{def-birational}
		Let $(S,\sF)$ be a foliated surface, and let $(S',\sF')$ be any reduced model, i.e., $(S',\sF')$ is birational to $(S,\sF)$ and $\sF'$ is reduced.
		\begin{enumerate}[(i)]
			\item For any integer $n\geq 1$, the $n$-th pluri-genus $p_n(\sF)$ of $(S,\sF)$ is defined to be
			$$p_n(\sF):=\dim H^0(S',nK_{\sF'}).$$
			When $n=1$, it is the geometric genus, and also denoted by $p_g(\sF)=p_1(\sF)$. \vspace{2mm}
			
			\item The volume $\vol(\sF)$ and the Kodaira dimension $\kod(\sF)$ are defined as follows.
			$$\left\{\begin{aligned}
			\vol({\sF})&\,=\limsup_{n\to +\infty} \frac{\,p_n(\sF)}{n^2/2}=\limsup_{n\to +\infty} \frac{\,\dim H^0(S', nK_{\sF'})}{n^2/2};\\[3pt]
			\kod(\sF)&\,=\left\{\begin{aligned}
			&-\infty, &\quad&\text{if~$p_n(\sF)=0$ for any $n\geq 1$},\\
			&\limsup_{n\to +\infty} \frac{\log p_n(\sF)}{\log n}, &&\text{otherwise}.
			\end{aligned}\right.
			\end{aligned}\right.$$
		\end{enumerate}
	\end{definition}

By Seidenberg's \autoref{thm-seidenberg},
every foliated surface admits a reduced model by a sequence of blowing-ups.
The above definitions are independent on the choice of the reduced model,
and they are all birational invariants for a foliated surface $(S,\sF)$.
Indeed, suppose that there are two reduced models $(S',\sF')$ and $(S'',\sF'')$.
Since $(S',\sF')$ is birational to $(S'',\sF'')$, there is a foliated surface $(\wt S,\wt \sF)$ with two birational morphisms $\rho':\,\wt S \to S'$ 
and $\rho'':\,\wt S \to S''$ such that
\begin{enumerate}[(i)]
	\item the foliated surface $(\wt S, \wt \sF)$ is reduced;
	\item we have the isomorphism $(\rho')^*(\sF')\cong \wt \sF  \cong (\rho'')^*(\sF'')$.
\end{enumerate}
$$\xymatrix{  &\wt S \ar[dl]_-{\rho'} \ar[dr]^-{\rho''}&\\
S' && S''}$$
In particular,
$$\dim H^0(S',nK_{\sF'})=\dim H^0(\wt S,nK_{\wt \sF})=\dim H^0(S'',nK_{\sF''}).$$

\begin{remark}
	(i) Historically, one defined the pluri-genera (as well as the volume and Kodaira dimension) using the divisor $K_{\sF}$ on $S$ directly in some literatures, i.e., defined them to be $\dim H^0(S,nK_{\sF})$.
	However, it turns out that such definitions behave not well in birational geometry;
	they might change under the birational morphisms between foliated surfaces.
	This is one of the reasons for us to define them over a reduced model. 
	
	(ii) In the study of the minimal model program,
	people prefer to consider foliated varieties with canonical singularities,
	especially for foliations over high dimensional varieties, cf.	 \cite{mcq-08,cas-21}.
	For foliated surfaces, it makes no essential difference between those with reduced singularities and with canonical singularities
	because of Seidenberg's \autoref{thm-seidenberg}.
	Moreover, it would be more convenient to work with a reduced model to
	introduce the new birational invariants: the Chern numbers; see \autoref{sec-chern-number} for more details.
\end{remark}
	
Suppose that the canonical divisor $K_{\sF'}$ is pseudo-effective for some reduced model $(S',\sF')$.
Let $K_{\sF'}=P'+N'$ be the Zariski decomposition, where $P'$ and $N'$ are respectively the nef and negative parts. By the Riemann-Roch theorem,
	\begin{equation}\label{eqn-2-5}
		\vol(\sF)=(P')^2,
	\end{equation}
	and $\vol(\sF)>0$ if and only if $\sF$ is of general type, i.e., the Kodaira dimension $\kod(\sF)=2$.
	In this case, we will also call the foliated surface $(S,\sF)$ is of general type,
	in which case, the surface $S$ might be of non-general type.	
	Foliated surfaces of non-general type are classified up to birational equivalence, cf. \cite{bru-03,bru-04,mcq-08}.\vspace{2mm}	
	
	\begin{center}
		\begin{tabular}{|c|l|l|}
			\hline
			$\kod(\sF)$ &  Algebraic foliations (Fibrations of genus $g$) & Transcendental foliations \\\hline
			$-\infty$  & $g=0:$ \quad $K_{\sF}$ is not pseudo-effective & Hilbert modular foliations \\\hline
			\multirow{2}{*}{$0$} &\multirow{2}{*}{Isotrivial fibrations of genus $g=1$} & Quotients of Foliations generated by\\
			&&global vector fields with isolated zeros \\\hline
			\multirow{2}{*}{$1$} & Non-isotrivial fibrations of genus $g=1$  & Riccati foliations or  \\
			& or isotrivial fibrations of genus $g\geq 2$ & Turbulent foliations  \\\hline
		\end{tabular}\vspace{2mm}
	\end{center}
We remark that in the above tabular,
the transcendental foliations of Kodaira dimension one
must be Riccati foliations or Turbulent foliations.
But the converse is not true.
Namely, there exist Riccati foliations or Turbulent foliations of Kodaira dimension zero.
     	
	\section{The Zariski decomposition of the canonical divisor $K_{\sF}$}\label{sec-zariski}
	In this section, we are concerned about the Zariski decomposition of the canonical divisor $K_{\sF}$.
	We will first briefly recall the Zariski decomposition of the canonical divisor $K_{\sF}$ for a relatively minimal foliation $\sF$
	with $K_{\sF}$ being pseudo-effective  and refer to \cite{mcq-08} and \cite[Chapter\,8]{bru-04} for more details.
	Then we give a careful analysis on the negative part, and prove a technical result about the coefficients appearing in the negative part, which will be helpful in proving the Noether inequalities.
	
	\begin{definition}
		Let $\sF$ be reduced foliation on a smooth projective $S$.
		An irreducible curve $C\subseteq S$ is $\sF$-exceptional if
		\begin{enumerate}[(i).]
			\item $C$ is an exceptional curve of first kind on $S$, i.e., it is a smooth rational curve with $C^2=-1$;
			\item the contraction of $C$ to a point produces a new foliation $(S_0,\sF_0)$ which is still reduced.
		\end{enumerate}
	\end{definition}
	
	\begin{definition}\label{def-min}
		A foliated surface $(S,\sF)$ is called relatively minimal if
		\begin{enumerate}[(i).]
			\item the foliation $\sF$ is reduced;
			\item there is no $\sF$-exceptional curve on $S$.
		\end{enumerate}
	\end{definition}
	
	It is proved that any foliated surface $(S,\sF)$ has a relatively minimal model, cf. \cite[Proposition\,5.1]{bru-04}.
	The  relatively minimal model is not necessarily
	unique, the foliations with at least two  relatively minimal models
	have been classified \cite[Theorem 5.1]{bru-04}: {\it
		fibrations by rational curves, Riccati foliations} and {\it very
		special foliations.}
	
	We assume in the following that $\sF$ is a relatively minimal foliation on a smooth projective surface $S$
	such that $K_{\sF}$ is pseudo-effective.
	In fact, the canonical divisor $K_{\sF}$ is pseudo-effective if and only if $\sF$ is not induced by a $\bbp^1$-fibration, cf. \cite{miy-87}.
	Denote the Zariski decomposition of $K_{\sF}$ by
	\begin{equation}\label{eqn-zariski}
		K_{\sF}=P+N,
	\end{equation}
	where $P$ is the nef part and $N$ is the negative one.
	In the case when $\sF$ is relatively minimal with pseudo-effective canonical divisor,
	McQuillan proved that the support of the negative part $N$ is a disjoint union of maximal $\sF$-chains.
	\begin{definition}\label{def-max-chain}
		Let $\sF$ be a relatively minimal foliation on a smooth projective surface $S$.
		We say a curve $C\subseteq S$ is an $\sF$-chain if
		\begin{enumerate}[(i).]
			\item the curve $C$ is a Hirzebruch-Jung string, i.e., $C=\cup_{j=1}^{r} C_j$, each $C_j$ is a smooth rational curve with $C_j^2\leq -2$, $C_j\cdot C_i=1$ if $|i-j|=1$ and $0$ if $|i-j|\geq 2$;
			\item each irreducible component $C_j$ is $\sF$-invariant;
			\item $\mathrm{Sing}(\sF)\cap C$ are all reduced and non-degenerate;
			\item $Z(\sF,C_1)=1$, and $Z(\sF,C_j)=2$ for any $2\leq j\leq r$.
		\end{enumerate}
	\end{definition}
	Since each irreducible component $C_j$ is a smooth rational curve, by \autoref{prop-2-2} the last condition (iv) is also equivalent to
	\begin{enumerate}[(iv)$'$.]
		\item $K_{\sF}C_1=-1$, and $K_{\sF}C_j=0$ for any $2\leq j\leq r$.
	\end{enumerate}
	
	\begin{theorem}[{\cite[Theorem\,8.1]{bru-04}}]\label{thm-3-4}
		Let $\sF$ be a relatively minimal foliation on a smooth projective surface $S$.
		Suppose that $K_{\sF}$ is pseudo-effective with the Zariski decomposition as in \eqref{eqn-zariski}.
		Then the support $\mathrm{Supp}(N)$ is a disjoint union of maximal $\sF$-chains,
		and $\lfloor N \rfloor=0$.
	\end{theorem}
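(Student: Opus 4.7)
The plan is to analyze each connected component $\Gamma \subseteq \mathrm{Supp}(N)$ separately and show it is a maximal $\sF$-chain with all multiplicities strictly smaller than $1$. Since $K_{\sF}$ is pseudo-effective, the Zariski decomposition \eqref{eqn-zariski} is characterized by three properties: $P$ is nef; the intersection form on $\mathrm{Supp}(N)$ is negative definite; and $P \cdot C = 0$ for every prime component $C \subseteq \mathrm{Supp}(N)$. In particular, for each such $C$ we have $K_{\sF} \cdot C = N \cdot C$ and $C^2 < 0$.

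First, I would show that every prime component $C$ of $\mathrm{Supp}(N)$ is an $\sF$-invariant smooth rational curve with $C^2 \leq -2$. If $C$ were not $\sF$-invariant, the tangency formula in \autoref{prop-2-2} yields $K_{\sF} \cdot C \geq -C^2 > 0$; coupled with $K_{\sF} \cdot C = N \cdot C$ and the negative-definite intersection form on $\mathrm{Supp}(N)$, this forces a contradiction with the absence of $\sF$-exceptional curves (cf. \autoref{def-min}). Once $C$ is known to be $\sF$-invariant, the invariant version of \autoref{prop-2-2} gives $Z(\sF,C) = K_{\sF} \cdot C + \chi(C) \geq 0$, and combining this with the local structure of reduced singularities together with the relative-minimality assumption (which forbids contracting any $(-1)$-curve to a still reduced foliation) yields $g(C) = 0$ and $C^2 \leq -2$. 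The next step is to show that $\Gamma$ is a Hirzebruch-Jung string whose singular points of $\sF$ are all non-degenerate and which satisfies $Z(\sF,C_1) = 1$ at a free end and $Z(\sF, C_j) = 2$ at every other component. Branchings and cycles inside $\Gamma$ would inflate the sum $\sum_j Z(\sF, C_j)$ past what the values $K_{\sF} \cdot C_j$ allow via the index formula; saddle-node singularities at the nodes $C_j \cap C_{j+1}$ can be excluded because one of the two local separatrices of such a singularity would be tangent to $\Gamma$ but not contained in it, contradicting the invariance of every $C_j$. The specific $Z$-values then follow from local counting combined once more with the relative-minimality requirement that $C_1$ is not $\sF$-exceptional.

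Finally, to prove $\lfloor N \rfloor = 0$, I would write $N|_\Gamma = \sum_j a_j C_j$ and solve the linear system $\sum_j a_j (C_i \cdot C_j) = K_{\sF} \cdot C_i$, whose right-hand side, by condition (iv)$'$ in the definition of an $\sF$-chain, equals $-1$ for $i=1$ and $0$ for $i \geq 2$. Since $(C_i \cdot C_j)$ is tridiagonal and negative definite with diagonal entries $\leq -2$, an explicit inversion via Hirzebruch-Jung continued fractions forces $a_j \in (0,1)$ for every $j$, giving $\lfloor N \rfloor = 0$. Maximality of each chain is then automatic, since any further $\sF$-invariant rational curve meeting $\Gamma$ with self-intersection $\leq -2$ would already belong to $\mathrm{Supp}(N)$ by the classification above. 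The main obstacle is Step~2: the simultaneous exclusion of non-invariant components, branchings, cycles, and saddle-node singularities along $\mathrm{Supp}(N)$ requires a delicate interplay between the Zariski decomposition constraints, the reducedness of $\sF$, and the hypothesis that $\sF$ is relatively minimal, and is precisely the core of the Brunella-McQuillan theorem.
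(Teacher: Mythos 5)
Note first that the paper does not prove this statement at all: it is quoted verbatim from \cite[Theorem\,8.1]{bru-04}, so the only meaningful question is whether your sketch would stand as an independent proof of the Brunella--McQuillan theorem. It would not, because the decisive steps are asserted rather than established. (a) Your exclusion of non-$\sF$-invariant components is not a deduction: from $\tang(\sF,C)=K_{\sF}\cdot C+C^2\geq 0$ and $P\cdot C=0$ you only get $N\cdot C=K_{\sF}\cdot C\geq -C^2>0$, and there is no contradiction with the absence of $\sF$-exceptional curves (those are $(-1)$-curves whose contraction keeps the foliation reduced, cf. \autoref{def-min}); moreover an effective divisor with negative definite support can meet one of its own components positively, so ``$N\cdot C>0$'' alone is not absurd and a real argument is needed here. (b) The exclusion of saddle-nodes at the nodes $C_j\cap C_{j+1}$ is wrong as stated: a saddle-node may have both its strong separatrix and its central manifold algebraic, and they could be exactly the two branches $C_j$, $C_{j+1}$, so nothing contradicts invariance; ruling out saddle-nodes (and branch points and cycles in $\Gamma$) is done in Brunella's proof by index computations with $Z(\sF,\cdot)$ and $\cs(\sF,\cdot)$ (\autoref{prop-2-2}), and ``would inflate the sum past what the values allow'' is precisely the computation you have not carried out. (c) The maximality claim at the end is circular: you must show that a curve which would prolong $\Gamma$ to a longer $\sF$-chain necessarily lies in $\mathrm{Supp}(N)$; this follows from $P$ nef, $N$ effective and condition (iv)$'$ ($K_{\sF}\cdot C_1=-1$, $K_{\sF}\cdot C_j=0$), not from ``the classification above,'' which only describes curves already known to be in $\mathrm{Supp}(N)$.

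What is correct in your proposal is the easy part: the identity $K_{\sF}\cdot C=N\cdot C$ for components of $N$, and the final computation of the coefficients from the tridiagonal negative definite system via Hirzebruch--Jung continued fractions, which does give $a_j\in(0,1)$ and hence $\lfloor N\rfloor=0$ once the chain structure and condition (iv)$'$ are known; this is essentially the computation the paper itself performs right after the theorem, in \eqref{eqn-2-2}, \eqref{eqn-3-3} and \autoref{prop-3-1}. But, as you candidly admit, your ``Step 2'' is exactly the core of the theorem, so the proposal is an outline with genuine gaps rather than a proof; the honest alternatives are to cite \cite[Theorem\,8.1]{bru-04} as the paper does, or to reproduce Brunella's full index-theoretic argument.
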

	The above theorem shows that all the coefficients in $N$ are less than $1$.
	In fact, since the support $\mathrm{Supp}(N)$ is a disjoint union of maximal $\sF$-chains, which can be contracted to singularities of Hirzebruch-Jung type,
	these coefficients can be  explicitly computed out using continued fractions \cite[\S\,III.5]{bhpv}.
	By contracting the support $\mathrm{Supp}(N)$, one obtains a surface $S_0$ with finitely many singularities.
	Then the negative part $N$ can be decomposed into
	$$N=\sum_{Q} N_Q,$$
	where the sum runs over all singularities $Q$'s on $S_0$, and $\mathrm{Supp}(N_Q)$ is supported on the inverse image of $Q$ in $S$.
	\begin{lemma}\label{lem-coefficient-N}
		Suppose that $Q$ is a singularity of type $A_{n,q}$,
		and let $N_Q=\sum\limits_{j=1}^{r} b_jC_j$ with $C=\bigcup\limits_{j=1}^{r} C_j$ being a maximal $\sF$-chain as above.
		Then \begin{equation}\label{eqn-3-3}
		b_j=\frac{\xi_j}{n}, \qquad \forall\,1\leq j\leq r,
		\end{equation}
		where $\xi_{r+1}=0$, $\xi_r=1$, and the rest $\xi_j$'s are given by the following recursion formula:
		\begin{equation}\label{eqn-2-2}
		\xi_{j-1}-e_j\xi_j+\xi_{j+1}=0, \qquad\text{~$e_j=-C_j^2\geq 2$.}
		\end{equation}
		In particular,
		\begin{equation}\label{eqn-N_Q^2}
			N_Q^2=-\frac{q}{n}.
		\end{equation}
	\end{lemma}
    \begin{proof}
    	Define $\xi_{r+1}=0$, $\xi_r=1$, and the rest $\xi_j$'s by the following recursion formula \eqref{eqn-2-2} above.
    	Then it has been shown in \cite[\S\,III.5]{bhpv} that $\xi_1=q$,
    	$\xi_{0}=n$, and that
    	$$\large \frac{n}{q}=e_1-\frac{1}{e_2-\frac{1}{\cdots \,-\, \frac{1}{e_r}}}.$$
    	Moreover, according to the property of the Zariski decomposition
    	$$N_QC_1=K_{\sF}C_1=-1,\qquad N_QC_j=K_{\sF}C_j=0,\quad\forall\,2\leq j\leq r.$$
    	On the other hand, define $N'=\sum\limits_{j=1}^{r} \frac{\xi_j}{n}C_j$.
    	Based on \eqref{eqn-2-2}, one checks directly that
    	$$N'C_1=-1=N_QC_1,\qquad N'C_j=0=N_QC_j,\quad\forall\,2\leq j\leq r.$$
    	Since the intersection matrix $\big(C_iC_j\big)$ is negatively definite,
    	it follows that $N_Q=N'$, i.e., the equality \eqref{eqn-3-3} holds.
    	Moreover,
    	$$N_Q^2=\sum_{j=1}^{r}b_jN_Q\cdot C_j=-b_1=-\frac{q}{n}.$$
    	This proves \eqref{eqn-N_Q^2}.    	
    \end{proof}

	\begin{proposition}\label{prop-3-1}
		Let $N_Q=\sum\limits_{j=1}^{r} b_jC_j$ with $C=\bigcup\limits_{j=1}^{r} C_j$ being a maximal $\sF$-chain as above.
		Then
		\begin{equation}\label{eqn-2-1}
			b_j < \left\{\begin{aligned}
				&\,\frac{1}{e_1-1}, &~&\text{~if~}j=1;\\[2mm]
				&\,\frac{1}{2e_j-3},&&\text{~if~} j\geq 2,
			\end{aligned}\right. 
		\end{equation}
		where $e_j=-C_j^2 \geq 2$.
	\end{proposition}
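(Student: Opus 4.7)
The plan is to work directly with the integers $\lambda_j$ via the recursion \eqref{eqn-2-2}, using the relation $b_j=\lambda_j/n$ from \eqref{eqn-3-3} to reformulate the desired inequalities. The bound $b_1<\frac{1}{e_1-1}$ is equivalent to $(e_1-1)\lambda_1<\lambda_0=e_1\lambda_1-\lambda_2$, i.e., to $\lambda_2<\lambda_1$; and for $j\geq 2$, since $e_j\lambda_j=\lambda_{j-1}+\lambda_{j+1}$, the bound $b_j<\frac{1}{e_j}$ is equivalent to $\lambda_{j-1}+\lambda_{j+1}<\lambda_0$.

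My first step would be to establish the strict monotonicity $\lambda_0>\lambda_1>\cdots>\lambda_r>\lambda_{r+1}=0$ by downward induction on $j$. Rewriting the recursion as $\lambda_{j-1}-\lambda_j=(e_j-1)\lambda_j-\lambda_{j+1}$ and using $e_j\geq 2$, one sees that if $\lambda_j>\lambda_{j+1}$ then $\lambda_{j-1}-\lambda_j\geq \lambda_j-\lambda_{j+1}>0$, with base case $\lambda_r-\lambda_{r+1}=1-0=1$. In particular $\lambda_2<\lambda_1$, which immediately yields the bound on $b_1$.

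For the bound on $b_j$ with $j\geq 2$ and $e_j\geq 3$, I would induct on the length $r$ of the chain. The case $r\leq 2$ is direct (when $r=2$ one computes $\lambda_0-e_2\lambda_2=e_2(e_1-1)-1\geq 2$). For the inductive step with $r\geq 3$ and $j\geq 3$, I would apply the inductive hypothesis to the sub-chain $e_2,\ldots,e_r$, whose associated $\lambda'$-sequence is precisely $(\lambda_1,\lambda_2,\ldots,\lambda_r,0)$; this yields $\lambda_1>e_j\lambda_j$, and combining with $\lambda_0>\lambda_1$ finishes this case.

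The main obstacle, and the only part of the argument where the hypothesis $e_j\geq 3$ is genuinely used, is the remaining subcase $j=2$. Here one must show $\lambda_0-\lambda_1-\lambda_3>0$, and the plan is to expand
\begin{equation*}
\lambda_0-\lambda_1-\lambda_3=(e_1-1)\lambda_1-\lambda_2-\lambda_3,
\end{equation*}
bound it from below using $e_1\geq 2$ by $\lambda_1-\lambda_2-\lambda_3=(e_2-1)\lambda_2-2\lambda_3$, and finally apply $e_2\geq 3$ together with the strict monotonicity $\lambda_2>\lambda_3$ to obtain the estimate $\geq 2(\lambda_2-\lambda_3)\geq 2$. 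The indispensability of $e_j\geq 3$ is already visible from the chain of length $3$ with all $e_k=2$, where $b_2=\frac{1}{2}=\frac{1}{e_2}$, so the strict inequality fails without this hypothesis.
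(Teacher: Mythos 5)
Your proof is correct, and while it starts from the same place as the paper---passing to the integer sequence $\lambda_j$ via \eqref{eqn-3-3} and proving the strict monotonicity $\lambda_{j-1}>\lambda_j$ from the recursion \eqref{eqn-2-2}, which immediately settles the case $j=1$---it handles the case $j\geq 2$ by a genuinely different mechanism: an induction on the length of the chain, exploiting the self-similarity of the recursion (the truncated sequence $(\lambda_1,\dots,\lambda_r,0)$ is exactly the sequence attached to the sub-chain $e_2,\dots,e_r$) to dispose of $j\geq 3$, with separate hand computations for the base case $r=2$ and for the subcase $j=2$. The paper avoids any induction: having recorded $\lambda_{j-1}>(e_j-1)\lambda_j$ from monotonicity, it estimates directly, for $j\geq 2$,
\[
n=\lambda_0\;\geq\;\lambda_{j-2}\;=\;e_{j-1}\lambda_{j-1}-\lambda_j\;\geq\;2\lambda_{j-1}-\lambda_j\;>\;\big(2(e_j-1)-1\big)\lambda_j\;\geq\;e_j\lambda_j,
\]
the last step using $e_j\geq 3$; this treats $j=2$ and $j\geq 3$ uniformly in two lines. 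So the paper's argument is shorter and more direct, while yours is equally elementary and has the merit of making the continued-fraction self-similarity explicit; your closing example (a chain of length three with all $e_k=2$, where $b_2=\tfrac12=\tfrac{1}{e_2}$) is a nice confirmation that the hypothesis $e_j\geq 3$ cannot be dropped, a point the paper does not record.
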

	\begin{proof}
		Let $\xi_{r+1}=0$, and $b_j=\frac{\xi_j}{n}$ for $1\leq j \leq r$.
		By the above arguments, $\xi_r=1$, and the rest $\xi_j$'s can be computed by \eqref{eqn-2-2}.
		As proved in \cite[\S\,III.5]{bhpv}, $\xi_1=q$ and $\xi_{0}=n$.
		Note that each $e_j\geq 2$.
		With the help of \eqref{eqn-2-2}, one proves inductively that
		\begin{equation}\label{eqn-3-4}
			\xi_{j-1} > \xi_{j}, \qquad \forall\, 1\leq j \leq r+1.
		\end{equation}
		Hence
		\[\xi_{j-1}=e_j\xi_j-\xi_{j+1}>e_j\xi_j-\xi_{j}=(e_j-1)\xi_j.\]
		In particular, $n=\xi_0>(e_1-1)\xi_1$.
		Equivalently, $b_1=\frac{\xi_1}{n}<\frac{1}{e_1-1}$ as required.
		
		Suppose now that $j\geq 2$. Then
		$$n=\xi_0\geq \xi_{j-2}=e_{j-1}\xi_{j-1}-\xi_{j}\geq 2\xi_{j-1}-\xi_j> \big(2(e_j-1)-1\big)\xi_j=(2e_j-3)\xi_j.$$
		%The last inequality follows from the assumption that $e_j\geq 3$.
		It follows that $b_j=\frac{\xi_j}{n}<\frac{1}{2e_j-3}$ if $j\geq 2$.
		This completes the proof of \eqref{eqn-2-1}.
	\end{proof}
	
	\begin{remark}\label{rem-3-1}
		The above bounds on the coefficients of the negative part $N_Q$ will be key to prove the Noether type inequalities for the canonical divisor $K_{\sF}$.
		One can similarly define the volume $\vol(L)$ for any big divisor $L$ on a smooth projective surface $S$ by
		$$\vol(L)=\limsup_{n\to +\infty} \frac{\dim H^0(X, nL)}{n^2/2}.$$
		The naive Noether type inequalities do NOT hold for $L$.
		For instance, let $e>0$ and $S=\mathbb{P}_{\mathbb{P}^1}\big(\mathcal{O}_{\mathbb{P}^1} \oplus \mathcal{O}_{\mathbb{P}^1}(e)\big)$ be the Hirzebruch surface admitting a unique section $C_0$ with $C_0^2=-e<0$.
		Let $f:\,S \to \mathbb{P}^1$ be the geometrical ruling on $S$, $F$ be a general fiber of $f$, and $L=mF+C_0$.
		Suppose that $0<m<e$. Then one checks easily that the Zariski decomposition of $L$ is $$L=\Big(mF+\frac{m}{e}C_0\Big)+\frac{e-m}{e}C_0,$$
		and hence
		$$\vol(L)=\frac{m^2}{e},\qquad h^0(L)=m+1.$$
		Fixing $m \gg 0$ and letting $e \to +\infty$, one deduces that there can not exist positive constants $a,b$ such that
		$\vol(L)\geq ah^0(L)-b$.
		\,Of course, in this concrete example, one shows that
		$L=K_{\sF}$ for some foliation $\sF$ on $S$ can happen only when $m=e-1$ based on \eqref{eqn-2-2} and \eqref{eqn-3-3}.
		If it is indeed the case, then
		$$\vol(\sF)=\frac{m^2}{e}=\frac{(e-1)^2}{e}=e-2+\frac{1}{e}>e-2=p_g(\sF)-2.$$
		We refer to \autoref{exam-6-2} for a construction of such a foliation.
	\end{remark}

\section{Algebraically integral foliations}\label{sec-alg-foliation}
In this section, we will first recall in \autoref{sec-alg} the algebraic integrability of a foliation as well as the modular invariants of a fibration.
\autoref{sec-non-reduced} is devoted to the resolution of non-reduced singularities of an algebraically integral foliation, which will be key to prove the Noether inequalities for algebraically integral foliations in \autoref{sec-noether}.

\subsection{The algebraic integrability of a foliation and modular invariants}\label{sec-alg}
A foliation on a smooth projective surface
is called {\it algebraically integral} if it admits a rational first integral;
otherwise it is called {\it non-algebraic} or {\it transcendent}.
Equivalently, a foliation is algebraically integral if its integral curves form a pencil of algebraic curves.
It is clear that the algebraic integrability is a birational property:
if $(S,\sF)$ and $(S',\sF')$ are two foliated surface birational to each other,
then $(S,\sF)$ is algebraic integral if and only if $(S',\sF')$ is algebraic integral.

Let $\sF_0$ be an algebraically integral foliation on a smooth projective surface $S_0$,
and $\Lambda_{\sF_0}$ be the corresponding pencil.
By \cite[Proposition\,1.1]{bru-04}, one proves easily that for any $p\in S_0$,
\begin{equation}\label{eqn-2-10}
\begin{aligned}
&\qquad \text{the point $p$ is a dicritical (i.e., $\exists$ infinitely many separatrices of $\sF$ through $p$),}\\
&\Longleftrightarrow~\text{the point $p$ is a base point of the pencil $\Lambda_{\sF}$.}
\end{aligned}
\end{equation}
By a sequence of possible blowing-ups $\sigma:\,S \to S_0$,
one obtains a surface fibration $f:\,S \to B$,
i.e., $f$ is proper surjective with connected fibers.
It defines a foliation $\sF$ on $S$ by taking the saturation of $\ker(df:\,T_{S} \to f^*T_{B})$ in $T_{S}$.
By construction, $\sF=\sigma^*(\sF_0)$ and it admits no dicritical points.
The canonical divisor $K_{\sF}$ is simple:
\begin{equation}\label{eqn-2-3}
K_{\sF}=K_{S/B} \otimes \mathcal{O}_{S}\Big(\sum(1-a_i)C_i\Big),
\end{equation}
where $K_{S/B}=K_{S}-f^*(K_B)$ is the relative canonical divisor,
the sum is taken over all components $C_i$'s in fibers of $f$,
and $a_i$ is the multiplicity of $C_i$ in fibers of $f$.

The foliation $\sF$ is reduced if and only if every possible singular fiber of $f$ is normal crossing.
For any given foliation $\sF$ corresponding to a fibration $f:\,S \to B$ above,
one can obtain a reduced one by a sequence of possible blowing-ups.
Such a foliation is relatively minimal if there is no redundant exceptional curves,
i.e., any possible exceptional curve in fibers of $f$ intersects other components at at least three points.
In particular, if the fibration is semi-stable,
i.e., any possible singular fiber of $f$ is a reduced node curve,
and any possible smooth rational component in such a singular fiber intersects other components at least two points,
then $\sF$ is relatively minimal and $K_{\sF}=K_{S/B}$ by \eqref{eqn-2-3}.

There are several well-studied invariants for a surface fibration.
Let $f:\,S \to B$ be a surface fibration (simply fibration).
Suppose that $f$ is relatively minimal, i.e., there is no exceptional curve contained in fibers of $f$. Remark that this is different from the relative minimality of the foliation $\sF$ associated to $f$, as mentioned above.
Consider the following invariants.
$$\left\{\begin{aligned}
K_f^2:&=K_{S/B}^2=K_S^2-8(g-1)(g(B)-1),\\
e_f:&=\sum_{F}\big(\chit(F)+(2g-2)\big)=\chit(S)-4(g-1)(g(B)-1),\\
\chi_f:&=\deg f_*\mathcal{O}_S(K_{S/B})= \chi(\mathcal{O}_S)-(g-)(g(B)-1),
\end{aligned}\right.$$
where $g$ and $g(B)$ are respectively the genus of a general fiber of $f$ and the base $B$,
and $\chit(F)$ is the Euler topological characteristic of the reduced curve $F_{\rm red}$.
These are three non-negative integers and satisfy the Noether equality:
$$K_f^2+e_f=12\chi_f.$$
By the stable reduction theorem (cf. \cite{delignemumford}),
there exists a base change $\phi:\wt B \to B$ of finite degree, possibly ramified,
such that the pull-back fibration $\tilde f: \wt S \to \wt B$ is semi-stable.
Here the pull-back fibration $\tilde f:\, \wt S \to \wt B$ is constructed as follows.
Let $S_1$ be the resolution of singularities of the fiber-product $S\times_{B}\wt B$.
Then $\tilde f:\, \wt S \to \wt B$ is just the relatively minimal model of $S_1$.
\begin{center}\mbox{}
	\xymatrix{
		\wt S \ar@{<-}[rr]^-{\theta} \ar[d]_-{\tilde f} && S_1 \ar[rr]^-{\Phi_1} \ar[d]_-{f_1}
		&& S\times_{B}\wt B \ar[rr]^-{\Phi_0} \ar[d] && S \ar[d]^-{f}\\
		\wt B \ar@{=}[rr]  && \wt B \ar@{=}[rr] && \wt B \ar[rr]^-{\phi} && B}
\end{center}
The modular invariants of $f$ is then defined to be
$$\kappa(f):=\frac{K_{\tilde f}^2}{\deg\phi},\qquad
\delta(f):=\frac{e_{\tilde f}}{\deg\phi},\qquad
\chi(f):=\frac{\chi_{\tilde f}}{\deg\phi}.$$
The three invariants defined above are independent on the choices of the semi-stable reductions.
Moreover, these are three non-negative rational numbers satisfying the Noether equality
\begin{equation}\label{eqn-noe-modular}
\kappa(f)+\delta(f)=12\chi(f).
\end{equation}

\begin{lemma}\label{lem-2-3}
	Let $\sF$ be a (not necessarily reduced) foliation induced by a fibration $f:\, S \to B$.
	Suppose that $E$ is an exceptional curve and $\sF$-invariant.
	Let $\sigma:\,S \to S_0$ be the blowing-down by contracting $E$.
	Then there is an induced fibration $f_0:\,S_0 \to B$ which induces the foliation $\sF_0=\sigma_*(\sF)$ on $S_0$ and satisfies $f=f_0\circ \sigma$.
\end{lemma}
\begin{proof}
	This is clear, since any $\sF$-invariant curve is by definition contained in fibers of $f$.
	By contracting a vertical exceptional curve, it still induces a fibration $f_0:\,S_0 \to B$ such that $f=f_0\circ \sigma$.
\end{proof}
\begin{lemma}\label{lem-7-10}
	Let $\sF$ be an algebraically integral reduced foliation of general type on $S$.
	Suppose that
	$$\pi=\sigma_1\circ\cdots\circ\sigma_n:\,S \to S_0=\mathbb{\bbp}_{\bbp^1}\big(\mathcal{O}_{\bbp^1}\oplus\mathcal{O}_{\bbp^1}(e)\big)$$
	is a birational morphism to a Hirzebruch surface $S_0$,
	where each $\sigma_i$ is a blowing-up.
	Then $n\geq 9$, i.e., $\pi$ consists of at least $9$ blowing-ups.
\end{lemma}
\begin{proof}
	Since $\sF$ is reduced of general type, it is induced by a fibration $f:\,S \to B$ of genus $g\geq 2$.
	Let $F$ be a general fiber of $f$ and $A=\pi_*(F)$.
	Let $C_0$ be a section of the Hirzebruch surface with $C_0^2=-e$ (unique if $e\geq 1$) and $\Gamma_0\subseteq S_0$ be a general fiber.
	Then $\Pic(S_0)$ is generated by $C_0$ and $\Gamma_0$.
	Let $A \sim aC_0+b\Gamma_0$.
	Then $b-ae\geq 0$ since $A\cdot C_0\geq 0$.
	Let
	$$F=\pi^*(A)-\sum_{i=1}^{n} a_i\mathcal{E}_i,$$
	where $\mathcal{E}_i$ is the total transform of the exceptional curve of $\sigma_i$.
	Then $a_i\leq a=\Gamma_0\cdot A$, and
	$$a(2b-ae)=A^2=\sum_{i=1}^{n}a_i^2.$$
	Hence 
	$$\begin{aligned}
	0<2g-2=K_{S}F&\,=\Big(\pi^*(K_{S_0})+\sum_{i=1}^{n} \mathcal{E}_i\Big)\Big(\pi^*(A)-\sum_{i=1}^{n} a_i\mathcal{E}_i\Big)\\
	&\,=K_{S_0}A-\sum_{i=1}^{n}a_i=\sum_{i=1}^{n}a_i-(2b-ae+2a).
	\end{aligned}$$
%	$$\begin{aligned}
%	2g-2&\,=K_{S}F=K_{S_0}A+A^2-\sum_{i}^{n}a_i(a_i-1)\\
%	&\,=\big((a-1)(2b-ae)-2a\big)-\bigg(a(2b-ae)-\sum_{i=1}^{n}a_i\bigg)\\
%	&\,=\sum_{i=1}^{n}a_i-\big(2b-ae+2a\big).
%	\end{aligned}
%	$$
	In particular,
	$$
	\sum_{i=1}^{n}a_i>2b-ae+2a=\sum_{i=1}^{n}\frac{a_i^2}{a}+2a.
	$$
	Hence
	$$2a^2 <\sum_{i=1}^{n}(aa_i-a_i^2)=\sum_{i=1}^{n}\Big(\frac{a^2}{4}-\big(a/2-a_i\big)^2\Big) \leq \frac{na^2}{4}.$$
	It follows that $n\geq 9$ as required.
%	
%	\begin{equation}\label{eqn-7-40}
%	\sum_{i=1}^{n}a_i>2b-ae+2a.
%	\end{equation}
%	Since $a_i\leq a$ and $b\geq ae$, it follows immediately that $n\geq 5$ if $e\geq 2$.
%	If $e=1$,
%	then
%	$$
%	\sum_{i=1}^{n}a_i>2b-ae+2a=2b+a=\sum_{i=1}^{n}\frac{a_i^2}{a}+2a.
%	$$
%	Hence
%	$$2a^2 <\sum_{i=1}^{n}(aa_i-a_i^2)=\sum_{i=1}^{n}\Big(\frac{a^2}{4}-\big(a/2-a_i\big)^2\Big) \leq \frac{na^2}{4}.$$
%	Hence $n\geq 9$.
%	Finally if $e=0$, then one $a_i\leq b=C_0\cdot A$. In other words, $a_i\leq \min\{a,b\}$. Combining with \eqref{eqn-7-40} it follows that $n\geq 5$.
\end{proof}

\begin{corollary}\label{cor-7-3}
	Let $\sF$ be an algebraically integral reduced foliation of general type on $S$.
	Suppose that
	$$\pi=\sigma_1\circ\cdots\circ\sigma_n:\,S \to S_0=\bbp^2$$
	is a birational morphism,
	where each $\sigma_i$ is a blowing-up.
	Then $n\geq 10$, i.e., $\pi$ consists of at least $10$ blowing-ups.
\end{corollary}
\begin{proof}
	Let $\sigma_1:\,S_1 \to \bbp^2$ and $\pi_1=\sigma_2\circ\cdots\circ\sigma_n:\,S \to S_1$.
	Then $\pi=\pi_1\circ\sigma_1$ and $S_1\cong \mathbb{\bbp}_{\bbp^1}\big(\mathcal{O}_{\bbp^1}\oplus\mathcal{O}_{\bbp^1}(1)\big)$.
	According to \autoref{lem-7-10}, $\pi_1$ consists of at least $9$ blowing-ups.
	Hence $n\geq 10$ as required.
\end{proof}

\subsection{Resolution of the non-reduced singularities}\label{sec-non-reduced}
In this subsection, we study the resolution of non-reduced singularities of an algebraically integral foliation,
which will be crucial in proving the third Noether inequality \eqref{eqn-noe3} in \autoref{sec-noether}.
We will not insist on resolving it to be a reduced foliation;
instead, we weaken a little sometimes: the singularities on the resolution are all non-degenerate.
Remark that any reduced singularity of an algebraically integral foliation is non-degenerate,
since there is no saddle-node.
However, the converse is not true.
%	Let $(S_0,\sF_0)$ be a foliated surface, not necessarily reduced.
%	Let $\sigma:\,S \to S_0$ be a blowing-up centered at some point $p\in S_0$, and $\sF=\sigma^*(\sF_0)$ the induced foliation on $S$.
%	Let $q_1,\cdots,q_r$ be the singularities of $\sF$ on the exceptional curve $E$.
%	Let $a_p$ be the {\it vanishing order} of $\omega$ at $p$, where $\omega$ is a local one-form defining $\sF_0$.
%	The number $a_p$ is independent on the choice of $\omega$, and it is also the vanishing order of a local vector field $v$ defining $\sF_0$.
%	Moreover, it holds (cf. \cite[\S\,2.3, Example\,(1)]{bru-04})
%	\begin{equation}\label{eqn-2-13}
%		K_{\sF}=\sigma^*K_{\sF_0}-(\ell_p-1)E,\quad \text{~where~}\ell_p=\left\{\begin{aligned}
%			&a_p, &~&\text{if $E$ is $\sF$-invariant};\\
%			&a_p+1, &~& \text{if $E$ is not $\sF$-invariant}.
%		\end{aligned}\right.
%	\end{equation}
%	Conversely, suppose $\sF$ is any foliation on $S$. Then the birational morphism $\sigma$ induces a foliation
%	$\sF_0:=\sigma_*(\sF)$ on $S_0$ by extending $\sF|_{S\setminus E}$ to $S_0$, where we view $S\setminus E \cong S_0\setminus\{p\} \subseteq S_0$.

%		Let $(S,\sF)$ be a relatively minimal foliated surface,
%		and
%		$$\pi:\,S=S_n \overset{\sigma_n}{\lra} S_{n-1} \overset{\sigma_{n-1}}{\lra}\cdots
%		\overset{\sigma_2}{\lra} S_1 \overset{\sigma_1}{\lra} S_0$$
%		be a sequence of blowing-ups with $\sF_0=\pi_*\sF_0$.
%		Let
%		$$K_{\sF}=\pi^*K_{\sF_0}-\sum_{i=1}^{n} (\ell_i-1)\mathcal{E}_i.$$
%		where $\mathcal{E}_i$ is the total inverse image of the exceptional curve of each blowing-up $\sigma_i$.
\begin{lemma}\label{lem-7-1}
	Let $(S_0,\sF_0)$ be a foliated surface, and $p\in S_0$ be a non-reduced singularity of $\sF_0$.
	Let
	$$\pi:\,S=S_n \overset{\sigma_n}{\lra} S_{n-1} \overset{\sigma_{n-1}}{\lra}\cdots
	\overset{\sigma_2}{\lra} S_1 \overset{\sigma_1}{\lra} S_0$$
	be a sequence of blowing-ups resolving the non-reduced singularity $p$,
	such that $S\setminus \pi^{-1}(p) \cong S_0\setminus \{p\}$,
	and that $\sF$ is relatively minimal over $\pi^{-1}(p)$,
	i.e., all the singularities	of $\sF=\pi^*(\sF_0)$ on $\pi^{-1}(p)$ are reduced and there is no $\sF$-exceptional curve contained in $\pi^{-1}(p)$.
	Let
	$$K_{\sF}=\pi^*K_{\sF_0}-\sum_{i=1}^{n} (\ell_i-1)\mathcal{E}_i.$$
	where $\mathcal{E}_i$ is the total inverse image of the exceptional curve of the blowing-up $\sigma_i$,
	and $\sF_i$ be the induced foliation on the surface $S_i$.
	\begin{enumerate}[$(i)$]
		\item It holds $\ell_i\geq 1$ for any $1\leq i \leq n$.
		In particular, $p_g(\sF_0)\geq p_g(\sF)$.
		\item If the foliation $\sF_0$ is algebraically integral, then $\ell_n\geq 2$.
		%		\item If the foliation $\sF_0$ is algebraically integral and $p$ is non-degenerate,
		%		then each exceptional curve $E_i$ of $\sigma_{i}$ is $\sF_i$-invariant with $\ell_i=1$ for $1\leq i \leq n-1$, and exceptional curve $E_n$ of $\sigma_{n}$ is not $\sF_n$-invariant with $\ell_n=2$.
		%		Moreover, in this case, there are $n-1$ reduced singularities lying on $\pi^{-1}(p)$
		%		such that
		%		\begin{equation}\label{eqn-7-13}
		%			\sum_{q'}\beta_{q'}(\sF)=\beta_p(\sF_0)+1,
		%		\end{equation}
		%		where the sum is taken over all the $n-1$ reduced singularities on $\pi^{-1}(p)$.
	\end{enumerate}	
\end{lemma}
\begin{proof}
	(i). Let $\sigma_i:\,S_i \to S_{i-1}$ be a blowing-up centered at $q_{i-1}\in S_{i-1}$ for $1\leq i \leq n$ with $q_0=p$.
	Since $\sF_n=\sF$ is relatively minimal over $\pi^{-1}(p)$, it follows that each $q_{i-1}$ is a singularity of $\sF_{i-1}$;
	otherwise, $\sigma_i$ is a blowing-up centered at a smooth point of $\sF_{i-1}$,
	and hence the inverse image of $q_{i-1}$ in $S$ would contain an $\sF$-exceptional curve,
	which contradicts the relative minimality of $\sF$ over $\pi^{-1}(p)$.
	Hence $a_{q_{i-1}} \geq 1$,
	where $a_{q_{i-1}}$ is the vanishing order of $\sF_{i-1}$ around $q_{i-1}$.
	According to \eqref{eqn-2-13}, $\ell_i\geq a_{q_{i-1}} \geq 1$ as required.	
	
	(ii). Replacing $\sF_0$ by $\sF_{n-1}$, we may assume that $\pi=\sigma:\,S \to S_0$ is a blowing-up centered at $p$,
	such that the exceptional curve $E$ is not $\sF$-exceptional, where $\sF=\sigma^*(\sF_0)$.
	By blowing-ups centered at points away from $E$, we may assume that there is a fibration $f:\,S \to B$ such that $\sF$ is induced by the fibration $f$.
	If the exceptional curve $E$ is not $\sF$-invariant, then $\ell_n=a_p+1\geq 2$ by \eqref{eqn-2-13}.
	Thus we may assume that $E$ is $\sF$-invariant, i.e., it is contained in fibers of $f$.
	Since the exceptional curve $E$ of $\sigma$ is not $\sF$-exceptional,
	it follows that $E$ would intersect other components of $F$ at at least three points, where $F$ is the fiber of $f$ containing $E$.
	Hence
	$$3\leq Z(\sF,E)=K_{\sF}\cdot E+2=\big(\sigma^*K_{\sF_0}-(\ell_n-1)E\big)\cdot E+2=\ell_n+1.$$
	Thus $\ell_n\geq 2$ as required.		
\end{proof}

\begin{lemma}\label{lem-7-8}
	Let $(S_0,\sF_0)$ be a foliated surface,
	and $p\in S_0$ be a singularity of $\sF_0$.
	Let $C$ be an $\sF_0$-invariant curve passing through $p$, and suppose that $C$ is smooth at $p$.
	Let $\sigma:\,S\to S_0$ be the blowing-up centered at $p$ with exceptional curve $E$, 
	$\sF=\sigma^*(\sF_0)$, and $\ol C$ be the strict transform of $C$ with $q=E\cap \ol C$.
	Let $K_{\sF}=\sigma^*(K_{\sF_0})-(\ell_p-1)E$. Then
	\begin{equation}\label{eqn-7-22}
	Z(\sF,\ol C,q)=Z(\sF_0,C,p)-(\ell_p-1).
	\end{equation}
\end{lemma}
\begin{proof}
	This follows from \autoref{prop-2-2}.
	Indeed, let $p_1=p,p_2,\cdots, p_m$ be all the singularities of $\sF_0$ on $C$,
	and let $q_1=q$ and $q_i=\sigma^{-1}(p_i)$ for $i\geq 2$.
	It is clear that $\chi(\ol C)=\chi(C)$, and $Z(\sF,\ol C,q_i)=Z(\sF_0,C,p_i)$ for $i\geq 2$.
	Thus by \autoref{prop-2-2},
	$$Z(\sF,\ol C)-\big(K_{\sF_0}C-(\ell_p-1)\big)=Z(\sF,\ol C)-K_{\sF}\ol C=\chi(\ol C)=\chi(C)=Z(\sF_0,C)-K_{\sF_0}C.$$
	It follows that
	$$\sum_{i=1}^{m}Z(\sF,\ol C,q_i)=\sum_{i=1}^{m}Z(\sF_0, C,p_i)-(\ell_p-1).$$
	Combining this with the equalities $Z(\sF,\ol C,q_i)=Z(\sF_0,C,p_i)$ for $i\geq 2$, we prove \eqref{eqn-7-22}.
\end{proof}

\begin{lemma}\label{lem-7-9}
	Let $\sF$ be an algebraically integral reduced foliation on $S$,
	and $\pi:\,S \to S_0$ be a birational morphism with $\sF_0=\pi_*(\sF)$.
	Let $$|K_{\sF}|=|M|+Z,$$
	where $M$ (resp. $Z$) is the moving (resp. fixed) part of $|K_{\sF}|$.
	Suppose that $M=\pi^*(M_0)$ for some divisor $M_0$ on $S_0$.
	\begin{enumerate}[(i)]
		\item All the possible non-reduced singularities of $\sF_0$ are on $Z_0=\pi_*(Z)$.
		In particular, if $Z_0=0$, then $\sF_0$ is also reduced;
		and $\pi$ is an isomorphism if moreover $\sF$ is relatively minimal.
		\item Let $\sigma_1:\,S_1 \to S_0$ be a blowing-up centered at a non-reduced singularities $p$ with exceptional curve $E_1$.
		Then $\pi$ factors through $\sigma_1$.
		Let $\sF_1=\sigma_1^*(\sF_0)$ and $K_{\sF_1}=\sigma_1^*(K_{\sF_0})-(\ell_p-1)E_1$.
		Then $m_p(Z_0) \geq \ell_p-1$, where $m_p(Z_0)$ is the multiplicity of $Z_0$ at $p$.
		%		\item Let $p$ be a non-reduced singularity with $a_p\geq 2$.
		%		Suppose that  $m_p(Z_0)=1$. Then all irreducible curves in $\pi^{-1}(p)$ are $\sF$-invariant.
	\end{enumerate}
\end{lemma}
\begin{proof}
	(i).
	By assumption,
	$$K_{\sF_0}=\pi_*(K_{\sF})=\pi_*\big(\pi^*(M_0)+Z\big)=M_0+Z_0.$$
	On the other hand, by \autoref{lem-7-1},
	$$M+Z=K_{\sF}=\pi^*K_{\sF_0}-\sum_{i=1}^{n} (\ell_i-1)\mathcal{E}_i=M+\pi^*(Z_0)-\sum_{i=1}^{n} (\ell_i-1)\mathcal{E}_i,$$
	where $\mathcal{E}_i$ is the total inverse image of the exceptional curve of the $i$-th blowing-up $\sigma_i$ contained in $\pi$.
	Thus
	\begin{equation}\label{eqn-7-35}
	\pi^*(Z_0)=Z+\sum_{i=1}^{n} (\ell_i-1)\mathcal{E}_i.
	\end{equation}
%	
%	$$M+C+Z_v=K_{\sF}=\psi^*(M_0+C_0)-\sum_{i=1}^{n} (\ell_i-1)\mathcal{E}_i
%	=M+C-\sum_{i=1}^{n} (\ell_i-1)\mathcal{E}_i.$$
	Suppose that there is a non-reduced singularity $p$ which is not on $Z_0$.
	By \autoref{lem-7-1}\,(ii), there is at least one exceptional curve $\mathcal{E}_i$, which is over $p$ with $\ell_i\geq 2$.
	This is a contradiction to \eqref{eqn-7-35},
	since the total inverse image $\pi^*(Z_0)$ is the strict transform of $Z_0$ plus some curves contracted to points on $Z_0$.
	This proves that all the possible non-reduced singularities of $\sF_0$ are on $Z_0$. The rest statements follow immediately.
	
	(ii).
	Since $p$ is non-reduced, $\pi$ is not an isomorphism around $p$.
	By the universal properties of blowing-ups,
	$\pi$ factors through $\sigma_1$.
	Let $\pi_1:\,S \to S_1$ be the induced birational morphism with $\pi=\sigma_1\circ\pi_1$. Then
	$$K_{\sF_1}=(\pi_1)_*(K_{\sF})=(\pi_1)_*(M)+(\pi_1)_*(Z)=\sigma_1^*(M_0)+Z_1, \qquad\text{where~}Z_1=(\pi_1)_*(Z).$$
	By assumption, 
	$$K_{\sF_1}=\sigma_1^*(K_{\sF_0})-(\ell_p-1)E_1=\sigma_1^*(M_0)+\sigma_1^*(Z_0)-(\ell_p-1)E_1=\sigma_1^*(M_0)+\ol Z_0+\big(m_p(Z_0)-(\ell_p-1)\big)E_1,$$
	where $\ol Z_0$ is the strict transform of $Z_0$ in $S_1$.
	It follows that
	$$Z_1=\ol Z_0+\big(m_p(Z_0)-(\ell_p-1)\big)E_1.$$
	Hence $m_p(Z_0)\geq \ell_p-1$ as required.
\end{proof}

\begin{lemma}\label{lem-7-2}
	Let $\sF_0$ be an algebraically integral foliation on $S_0$,
	and $p\in S_0$ be a non-reduced singularity of $\sF_0$.
	Let $C$ be an $\sF_0$-invariant curve passing through $p$.
	Suppose that $C$ is smooth at $p$.
	If $Z(\sF_0,C,p)=1$, then locally around $p$,
	there exists a coordinate $(z,w)$ such that $C=\{z=0\}$
	and that $\sF_0$ is generated by the vector field
	$$v=z\frac{\partial}{\partial z}+\lambda w\frac{\partial}{\partial w},
	\qquad \text{where $\lambda=\frac{m_1}{m_2}\in \mathbb{Q}^{+}$}.$$
	If moreover there exists another $\sF_0$-invariant curve $C'$ passing through $p$ and smooth at $p$, then the above local coordinate can be chosen such that $C'=\{w=0\}$.
\end{lemma}
\begin{proof}
	Let $v$ be a local vector field defining $\sF_0$ around $p$.
	We first show that the two eigenvalues $\lambda_1,\lambda_2$ of the linear part $(Dv)(p)$ are both non-zero with $\lambda:=\frac{\lambda_1}{\lambda_2}\in \mathbb{Q}^{+}$.
	To this aim, we choose a local coordinate $(x,y)$ with $C=\{x=0\}$.
	Let $v=A\frac{\partial}{\partial x}+B\frac{\partial}{\partial y}$.
	Since $C$ is $\sF_0$-invariant, $A=xA_1$ for some holomorphic function $A_1$.
	Hence
	\begin{equation}\label{eqn-7-6}
	(Dv)(p)=\left(\begin{aligned}
	A_1(p) ~&~ \quad 0\\
	B_x(p) ~&~ B_y(p)
	\end{aligned}\right),\qquad\text{where $B_x=\frac{\partial B}{\partial x}$ and $B_y=\frac{\partial B}{\partial y}$}.
	\end{equation}
	On the other hand, $\omega:=xA_1dy-Bdx$ would be a local one-form defining $\sF_0$.
	Hence by definition, $Z(\sF_0,C,p)$ equals to the vanishing order of the restriction $(-B)|_C$ at $p$.
	Since $Z(\sF_0,C,p)=1$ by assumption, it follows that
	$$B(x,y)=xB_1(x,y)+yB_2(y),$$
	where $B_1(x,y)$ and $B_2(y)$ are both holomorphic with $B_2(0)=\lambda_2\neq 0$.
	In particular, $B_y(p)=\lambda_2\neq 0$.
	According to \eqref{eqn-7-6}, the two eigenvalues of the linear part $(Dv)(p)$ are equal to $A_1(p)$ and $\lambda_2$.
	It follows that the eigenvalue of $\sF_0$ at $p$ is equal to $\lambda=\frac{A_1(p)}{\lambda_2}$.
	Since $p$ is assumed to be a non-reduced singularity of $\sF_0$,
	it follows that $A_1(p)=\lambda_1\neq 0$ (otherwise it is a saddle-node),
	and that $\lambda=\frac{\lambda_1}{\lambda_2}\in \mathbb{Q}^{+}$ as required.
	
	Now we have proved that, locally around $p$ the foliation $\sF_0$ can be a generated by a vector field whose linear part has eigenvalues $\lambda_1,\lambda_2$,
	both non-zero with $\lambda=\frac{\lambda_1}{\lambda_2}\in \mathbb{Q}^{+}$.
	Such a singularity can be resolved by a standard way as recalled above \autoref{lem-2-2}.
	
	If $\lambda\not\in \mathbb{N}^+\cup \frac{1}{\mathbb{N}^+}$,
	then $\sF_0$ can be generated by a local vector field
	$v=z\frac{\partial}{\partial z}+\lambda w\frac{\partial}{\partial w}$,
	where $\lambda=\frac{m_1}{m_2}\in \mathbb{Q}^{+}$ with $\gcd(m_1,m_2)=1$.
	Equivalently, $\sF_0$ is given by the levels of the meromorphic function $\frac{z^{m_1}}{w^{m_2}}$.
	In particular, the local separatrices passing through $p$ are defined by
	$\frac{z^{m_1}}{w^{m_2}}=c$ or $z^{m_1}-cw^{m_2}=0$, where $c\in \mathbb{C}\cup \{\infty\}$.
	Since $C$ is a separatrix passing through $p$ and $C$ is smooth at $p$ by assumption,
	it follows that $C$ is defined by $z=0$ or $w=0$.
	By exchange $z$ and $w$ (this would change $\lambda$ to $\lambda^{-1}$), we may assume that $C=\{z=0\}$ as required.
	
	If $\lambda\in \mathbb{N}^+\cup \frac{1}{\mathbb{N}^+}$,
	then $\sF_0$ can be generated by a local vector field
	$v=z\frac{\partial}{\partial z}+(mw+\epsilon z^m)\frac{\partial}{\partial w}$,
	where $m=\lambda$ or $\lambda^{-1}$, and $\epsilon\in\{0,1\}$.
	If $\epsilon=0$, then the situation is almost the same as the previous case, and we are done by a similar argument.
	If $\epsilon=1$, then the resolution of the non-reduced singularity $p$ would create a saddle-node of multiplicity two.
	This is impossible since $\sF_0$ is assumed to be algebraically integral.
	
	\vspace{2mm}
	Finally, it remains to prove that $C'=\{w=0\}$ if there exists another $\sF_0$-invariant curve $C'$ passing through $p$ and smooth at $p$.
	If $\lambda=\frac{m_1}{m_2}\neq 1$, then by the above argument, the local separatrices passing through $p$ are defined by $z^{m_1}-cw^{m_2}=0$.
	Hence $C'=\{w=0\}$ as required.
	If $\lambda=1$, then the local separatrices passing through $p$ are defined by $z-cw=0$. Hence $C'=\{z-c_0w=0\}$ for some $c_0\neq 0$.
	Replacing $(z,w)$ by $(z,w')=(z,c_0w-z)$, one sees that the local vector field defining $\sF_0$ is
	$$v=z\frac{\partial}{\partial z}+\lambda w\frac{\partial}{\partial w}
	=z\frac{\partial}{\partial z}+\lambda w'\frac{\partial}{\partial w'},$$
	and $C=\{z=0\}$ and $C'=\{w'=0\}$ as required.
\end{proof}

\begin{corollary}\label{cor-7-2}
	Let $\sF_0$ be an algebraically integral foliation on $S_0$,
	and $C,C'$ be two $\sF_0$-invariant curves which intersect transversely at $p$.
	Suppose that $Z(\sF_0,C,p)=k\geq 2$.
	\begin{enumerate}[(i)]
		\item It holds that $k':=Z(\sF_0,C',p)\geq 2$.
		\item If moreover the vanishing order of $\sF_0$ at $p$ is $a_p(\sF_0)\leq 2$,
		then $a_p=\min\{k,k'\}=2$.
	\end{enumerate}	
\end{corollary}
\begin{proof}
	(i). Since $Z(\sF_0,C,p)=k\geq 2$, $p$ is a singularity of $\sF_0$, and hence
	$Z(\sF_0,C',p)=k'\geq 1$.
	Suppose that $Z(\sF_0,C',p)=1$. Then by \autoref{lem-7-2}, it follows that $Z(\sF_0,C,p)=1$ too, which is a contradiction.
	Hence $Z(\sF_0,C',p)=k'\geq 2$ as required.
	
	(ii).
	Let $\sigma_1:\,S_1 \to S_0$ be the blowing-up centered at $p$ with exceptional curve $E_1$.
	If $E_1$ is not $\sF_1=\sigma_1^*(\sF_0)$-invariant,
	then $K_{\sF_1}=\sigma_1^*(K_{\sF_0})-a_pE_1$ by \eqref{eqn-2-13}.
	Hence $\tang(\sF_1,E_1)=a_p-1\leq 1$ by \autoref{prop-2-1}.
	It follows that there is at most one singularity of $\sF_1$ on $E_1$.
	We may thus assume without loss of generality that $q=\ol C \cap E_1$ is not a singularity of $\sF_1$ (the arguments would be similar if $q'=\ol C' \cap E_1$ is not a singularity), where $\ol C$ (resp. $\ol C'$) is the strict transform of $C$ (resp. $C'$).
	By \eqref{eqn-7-22},
	$$0=Z(\sF_1,\ol C,q)=Z(\sF_0,C,p)-a_p,\quad \Longrightarrow\quad
	k=Z(\sF_0,C,p)=a_p\leq 2.$$
	It follows that $a_p=k=2$ as required, since $k$ is assumed to be $\geq 2$.
	Suppose next that $E_1$ is $\sF_1$-invariant.
	Then $K_{\sF_1}=\sigma_1^*(K_{\sF_0})-(a_p-1)E_1$ by \eqref{eqn-2-13}.
	Hence $Z(\sF_1,E_1)=a_p+1\leq 3$ by \autoref{prop-2-2}.
	Note that $Z(\sF_1,E_1,q)\geq 1$ and $Z(\sF_1,E_1,q')\geq 1$.
	We may thus assume without loss of generality that $Z(\sF_1,E_1,q)=1$;
	otherwise, $Z(\sF_1,E_1)\geq Z(\sF_1,E_1,q)+Z(\sF_1,E_1,q')\geq 4$, which is a contradiction.
	By \autoref{lem-7-2}, one sees that $Z(\sF_1,\ol C,q)=1$ too.
	According to \eqref{eqn-7-22},
	\[1=Z(\sF_1,\ol C,q)=Z(\sF_0,C,p)-(a_p-1)=k+1-a_p.\]
	As $k\geq 2$ and $a_p\leq 2$ by assumption,
	it follows also that $a_p=k=2$ as required.
\end{proof}

\begin{lemma}\label{lem-7-??}
	Let $\sF_0$ be an algebraically integral foliation on $S_0$,
	and $p\in S_0$ be a non-reduced singularity of $\sF_0$.
	Let $\sigma_1:\,S_1 \to S_0$ be the blowing-up centered at $p$ with exceptional curve $E_1$.
	Suppose that $E_1$ is $\sF_1=\sigma_1^*\sF_0$-invariant,
	and that there are exactly two singularities $q_1,q_2$ of $\sF_1$ on $E_1$ with
	\begin{equation}\label{eqn-7-28}
	Z(\sF_1,E_1,q_1)=Z(\sF_1,E_1,q_2)=1.
	\end{equation}
	Then $p$ is a non-degenerate singularity of $\sF_0$.
\end{lemma}
\begin{proof}
	Let $K_{\sF_1}=\sigma_1^*K_{\sF_0}-(\ell-1)E_1$.
	By \autoref{prop-2-2},
	$$2=Z(\sF_1,E_1)=K_{\sF_1}\cdot E_1+2=(\ell-1)+2.$$
	Hence $\ell=1$, i.e., $K_{\sF_1}=\sigma_1^*K_{\sF_0}$.
	
	Suppose first that both singularities $q_1,q_2$ are reduced.
	Then $E_1$ is an $\sF_1$-exceptional curve by the assumption \eqref{eqn-7-28}, see the text after \cite[Definition\,5.2]{bru-04}.
	Hence there is at least one point, say $q_1$, is non-reduced.
	By \autoref{lem-7-2}, locally around $q_1$, there exists a local coordinate $(z,w)$ such that
	$E_1=\{z=0\}$ and that $\sF_1$ is generated by the vector field
	$$v=z\frac{\partial}{\partial z}+\lambda w\frac{\partial}{\partial w},
	\qquad \text{where $\lambda=\frac{m}{n}\in \mathbb{Q}^{+}$}.$$
	Let $C_1=\{w=0\}$ be the local holomorphic curve which is $\sF_1$-invariant and $C_0=\sigma_1(C_1)$.
	Then $C_0$ is $\sF_0$-invariant and smooth around $p$ and by \autoref{lem-7-8},
	$$Z(\sF_0,C_0,p)=Z(\sF_1,C_1,q_1)=1.$$
	This implies that $p$ is non-degenerate by \autoref{lem-7-2}.
\end{proof}

\begin{lemma}\label{lem-7-7}
	Let $\sF_0$ be an algebraically integral foliation on $S_0$,
	and $p\in S_0$ be a singularity of $\sF_0$.
	Let $C$ be an $\sF_0$-invariant curve passing through $p$ with $Z(\sF_0,C,p)=k\geq 2$
	Suppose that $C$ is smooth at $p$.
	\begin{enumerate}[$(i)$]
		\item Locally around $p$,
		there exists a coordinate $(x,y)$ such that $C=\{x=0\}$
		and that $\sF_0$ is generated by the vector field
		$$v=xA_1(x,y)\frac{\partial}{\partial x}+\big(xB_1(x,y)+y^kB_2(y)\big)\frac{\partial}{\partial y},$$
		where $A_1(x,y)$, $B_1(x,y)$ and $B_2(y)$ are all holomorphic with
		$A_1(0,0)=0$ and $B_2(0)\neq 0$.
		In particular, $p$ is a non-reduced singularity.
		
		\item Suppose moreover that the vanishing order of $\sF_0$ at $p$ is equal to $1$. Then there is a sequence of blowing-ups:
		$$\pi:\,S_k \overset{\sigma_k}{\lra} S_{k-1} \overset{\sigma_{k-1}}{\lra} ~\cdots~ \overset{\sigma_{2}}{\lra} S_1
		\overset{\sigma_1}{\lra} S_0,$$
		where $\sigma_1:\,S_1\to S_0$ is the blowing-up centered at $p$ with exceptional curve $E_1$,
		and $\sigma_i:\,S_i \to S_{i-1}$ (for $2\leq i \leq k$) is the blowing up centered at the intersection $q_{i-1}=E_{i-1}\cap \ol C$ with exceptional curve $E_i\subseteq S_i$.
		Here we denote by $\ol C$ the strict transform in each $S_i$ ($i\geq1$) by abuse of notations, and similarly for $\ol E_j$ with $j<i$.	
		The inverse image of $C$ around $p$ is as follows:
		$${\setlength{\unitlength}{6mm}
			\begin{tikzpicture}
			\draw[thick] (14,1) -- (14,3);
			\filldraw[black] (14,2) circle (1.5pt);
			\filldraw[black] (14,2) node[anchor=west]{$p$};
			\filldraw[black] (14,1) node[anchor=west]{$C$};

			\draw[->] (12,2) -- (13.5,2);
			\filldraw[black] (12.5,2.3) node[anchor=west]{\Large  $\sigma_1$};
			
			\draw[thick] (11,1) -- (11,3);
			\draw[thick] (10.75,2.5) -- (11.5,1);
			\filldraw[black] (11,2) circle (1.5pt);
			\filldraw[black] (11,2) node[anchor=west]{$q_1$};
			\filldraw[black] (10.5,1) node[anchor=west]{$\ol{C}$};
			\filldraw[black] (11.5,1) node[anchor=west]{$E_1$};
			
			\draw[->] (9,2) -- (10.5,2);
			\filldraw[black] (9.5,2.3) node[anchor=west]{\Large  $\sigma_2$};
			
			\draw[thick] (8,1) -- (8,3);
			\draw[thick] (6.5,1) -- (6.5,3);
			\draw[thick] (6,2) -- (8.5,2);
			\filldraw[black] (6.5,2) circle (1.5pt);
			\filldraw[black] (8,2) circle (1.5pt);
			\filldraw[black] (8,1.7) node[anchor=west]{$q_2$};
			\filldraw[black] (6,1.7) node[anchor=west]{$\bar q_1$};
			\filldraw[black] (6.5,1) node[anchor=west]{$\ol{E}_1$};
			\filldraw[black] (8,1) node[anchor=west]{$\ol{C}$};
			\filldraw[black] (5.6,2.2) node[anchor=west]{$E_2$};

			\draw[->] (4,2) -- (5.5,2);
			\filldraw[black] (4.5,2.3) node[anchor=west]{\Large  $\sigma_3$};
			
			\draw[thick] (3,1) -- (3,3);
			\draw[thick] (0.5,1) -- (0.5,3);
			\draw[thick] (1.34,0.88) -- (3.5,2.5);
			\draw[thick] (2.16,0.88) -- (0,2.5);
			\filldraw[black] (0.5,2.12) circle (1.5pt);
			\filldraw[black] (1.75,1.2) circle (1.5pt);
			\filldraw[black] (3,2.12) circle (1.5pt);
			\filldraw[black] (3,2) node[anchor=west]{$q_3$};
			\filldraw[black] (1.45,1.5) node[anchor=west]{$\bar q_2$};
			\filldraw[black] (0.45,2.2) node[anchor=west]{$\bar q_1$};
			\filldraw[black] (-0.1,1) node[anchor=west]{$\ol{E}_1$};
			\filldraw[black] (3,1) node[anchor=west]{$\ol{C}$};
			\filldraw[black] (-0.65,2.6) node[anchor=west]{$\ol E_2$};
			\filldraw[black] (3.3,2.7) node[anchor=west]{$E_3$};
			
			%%%%%% second line
			\draw[->] (2,-1) -- (2,0.5);
			\filldraw[black] (2,-0.2) node[anchor=west]{\Large  $\sigma_4$};
			\filldraw[black] (1.3,-1.5) circle (1.5pt);
			\filldraw[black] (1.6,-1.5) circle (1.5pt);
			\filldraw[black] (1.9,-1.5) circle (1.5pt);
			\filldraw[black] (2.2,-1.5) circle (1.5pt);
			\filldraw[black] (2.5,-1.5) circle (1.5pt);
			\filldraw[black] (2.8,-1.5) circle (1.5pt);
			
			\draw[->] (5,-1.5) -- (3.5,-1.5);
			\filldraw[black] (4,-1.3) node[anchor=west]{\Large  $\sigma_k$};

			\draw[thick] (6,-1) -- (6,-3);
			\draw[thick] (6.84,-3.12) -- (9,-1.5);
			\draw[thick] (7.66,-3.12) -- (5.5,-1.5);
			\draw[thick] (9.2,-2.4) -- (8,-1.5);
			\filldraw[black] (6,-1.88) circle (1.5pt);
			\filldraw[black] (7.25,-2.8) circle (1.5pt);
			\filldraw[black] (8.5,-1.88) circle (1.5pt);
			
			\filldraw[black] (9.9,-2.4) circle (1.5pt);
			\filldraw[black] (10.2,-2.4) circle (1.5pt);
			\filldraw[black] (10.5,-2.4) circle (1.5pt);
			\filldraw[black] (10.8,-2.4) circle (1.5pt);
			\filldraw[black] (11.1,-2.4) circle (1.5pt);
			\filldraw[black] (11.4,-2.4) circle (1.5pt);

			\draw[thick] (14,-1) -- (14,-3);
			\draw[thick] (12.34,-3.12) -- (14.5,-1.5);
			\draw[thick] (13.16,-3.12) -- (12,-2.25);
			\filldraw[black] (12.75,-2.8) circle (1.5pt);
			\filldraw[black] (13.35,-2.35) circle (1.5pt);
			\filldraw[black] (14,-1.88) circle (1.5pt);
			
			\filldraw[black] (5.3,-3) node[anchor=west]{$\ol E_1$};
			\filldraw[black] (7.7,-3.3) node[anchor=west]{$\ol E_2$};
			\filldraw[black] (9,-1.3) node[anchor=west]{$\ol E_3$};
			\filldraw[black] (9.1,-2.7) node[anchor=west]{$\ol E_4$};
			\filldraw[black] (11.5,-1.9) node[anchor=west]{$\ol E_{k-1}$};
			\filldraw[black] (14.4,-1.4) node[anchor=west]{$E_k$};
			\filldraw[black] (14,-3) node[anchor=west]{$\ol C$};
			
			\filldraw[black] (6,-1.8) node[anchor=west]{$\bar q_1$};
			\filldraw[black] (7,-2.5) node[anchor=west]{$\bar q_2$};
			\filldraw[black] (8.2,-1.6) node[anchor=west]{$\bar q_3$};
			\filldraw[black] (11.7,-2.8) node[anchor=west]{$\bar q_{k-1}$};
			\filldraw[black] (13.2,-2.5) node[anchor=west]{$q_{k}'$};
			\filldraw[black] (14,-2) node[anchor=west]{$q_k$};	
	\end{tikzpicture}}$$
	On $S_k$, all the singularities over $\pi^{-1}(p)$ are non-degenerate (might not be reduced).
	Moreover, let $\sF_i$ be the induced foliation on $S_i$. Then
	\begin{equation}\label{eqn-7-32}
	\left\{\begin{aligned}
	&K_{\sF_k}=\pi^*(K_{\sF_0})-\sum_{i=2}^{k}\mathcal{E}_i=\pi^*(K_{\sF_0})- \bigg(\sum_{i=2}^{k-1} (i-1)\ol E_i+(k-1)E_k\bigg),\\
	&\pi^*(C)=\ol C+\sum_{i=1}^{k}\mathcal{E}_i=\ol C+\sum\limits_{i=1}^{k-1} i\ol E_i+kE_k,
	\end{aligned}\right.
	\end{equation}
	where $\mathcal{E}_i=\sum\limits_{j=i}^{k-1}\ol E_j+E_k$ is the total transform of the exceptional curve of $\sigma_i$ for $1\leq i \leq k-1$.
	\end{enumerate}
	\end{lemma}
	
	\begin{proof}
		(i).
		Let $(x,y)$ be a local coordinate around $p$ such that $C=\{x=0\}$.
		Let $v=A\frac{\partial}{\partial x}+B\frac{\partial}{\partial y}$.
		Since $C$ is $\sF_0$-invariant, $A=xA_1$ for some holomorphic function $A_1$.
		Moreover, similar as in the proof of \autoref{lem-7-2},
		one shows that the linear part of $(Dv)(p)$ is of the form in \eqref{eqn-7-6}, and that
		$$Z(\sF_0,C,p)=\text{vanishing order of $(-B)|_C$ at $p$}.$$
		Thus
		$$B(x,y)=xB_1(x,y)+y^kB_2(y),$$
		where $B_1(x,y)$ and $B_2(y)$ are both holomorphic with $B_2(0)\neq 0$.
		In particular, $B_y(p)= 0$ since $k\geq 2$.
		According to \eqref{eqn-7-6}, the two eigenvalues of the linear part $(Dv)(p)$ are equal to $A_1(p)$ and $B_y(p)= 0$.
		Since $p$ is assumed to be a non-reduced singularity of $\sF_0$,
		it follows that $A_1(p)=A_1(0,0)=0$ as required; otherwise it is a saddle-node.
		
		(ii).
		By (i), the foliation $\sF_0$ is defined by the one-form
		$$\omega=\big(xB_1(x,y)+y^kB_2(y)\big)dx-xA_1(x,y)dy.$$
		Since the vanishing order of $\sF_0$ at $p$ is equal to $1$,
		it follows that $B_1(0,0)\neq 0$.
		Let $\sigma_1$ be given by $(x,y)=(x_1y_1,y_1)$.
		Then $\ol{C}=\{x_1=0\}$ and $E_1=\{y_1=0\}$.
		Moreover,
		$$\sigma_1^*(\omega)=y_1^2\big(x_1B_1(x_1y_1,y_1)+y_1^{k-1}B_2(y_1)\big)dx_1+x_1y_1\big(x_1B_1(x_1y_1,y_1)+y_1^{k-1}B_2(y_1)-A_1(x_1y_1,y_1)\big)dy_1.$$
		Since $A_1(0,0)=0$, $B_1(0,0)\neq 0$ and $B_2(0)\neq 0$,
		it follows that the vanishing order of $\sigma_1^*(\omega)$ along $E_1=\{y_1=0\}$ is exactly $1$.
		Hence the foliation $\sF_1$ is defined by the one-form
		$\omega_1=\frac{\sigma_1^*(\omega)}{y_1}$, namely,
		$$\omega_1=
		y_1\big(x_1B_1(x_1y_1,y_1)+y_1^{k-1}B_2(y_1)\big)dx_1
		+x_1\big(x_1B_1(x_1y_1,y_1)+y_1^{k-1}B_2(y_1)-A_1(x_1y_1,y_1)\big)dy_1.
		$$
		Hence the vanishing order of $\omega_1$ at $\ol{C}\cap E_1=q_1=(0,0)$ is $2$.
		Note that the exceptional curve $E_1$ is $\sF_1$-invariant since $y$ divides the two-form $\omega_1\wedge dy$.
		By \eqref{eqn-2-13} and \eqref{eqn-7-22},
		$$K_{\sF_1}=\sigma_1^*(K_{\sF_0}),\qquad Z(\sF_1,E_1)=2,\qquad
		Z(\sF_1,\ol C,q_1)=Z(\sF_1,C,p)=k.$$	 
		Combining with \autoref{cor-7-2}, one obtains that $Z(\sF_1,E_1,q_1)=Z(\sF_1,E_1)=2$, and hence
		$q_1$ is the unique singularity of $\sF_1$ on $E_1$.
		
		\begin{claim}\label{claim-7-6}
			For any $2\leq i \leq k-1$, let $\sigma_i:\,S_i \to S_{i-1}$ be the blowing-up centered at $q_{i-1}=E_{i-1}\cap \ol C$ with exceptional curve $E_i\subseteq S_i$,
			and let $\sF_i=\sigma_i^*(\sF_{i-1})$.
			\begin{enumerate}[(1).]
				\item Locally around around $q_i$, $\sF_i$ can be defined by the one-form
				\begin{equation}\label{eqn-7-33}
				\begin{aligned}
				\omega_i=\,&y_i\big(x_iB_1(x_iy_i^i,y_i)+y_i^{k-i}B_2(y_i)\big)dx_i\\
				&+x_i\Big(i\big(x_iB_1(x_iy_i^i,y_i)+y_i^{k-i}B_2(y_i)\big)-\frac{A_1(x_iy_i^i,y_i)}{y_i^{i-1}}\Big)dy_i.
				\end{aligned}
				\end{equation}
				Moreover, the function $\frac{A_1(x_iy_i^i,y_i)}{y_i^{i-1}}$ takes value equal to $0$ when $x_i=y_i=0$.
				In other words, one can express $A_1(x,y)$ as
				$A_1(x,y)=xA_2(x,y)+y^iA_3(y)$ for some holomorphic functions $A_2(x,y)$ and $A_3(y)$.
				In particular, the vanishing order of $\omega_i$ at $q_i$ is $2$,
				$E_i$ is $\sF_i$-invariant, and
				$K_{\sF_i}=\sigma_i^*(K_{\sF_{i-1}})-E_i$.
				\item The two points $\bar q_{i-1}=E_i\cap \ol E_{i-1}$ and $q_i=E_i\cap \ol C$ are the only two singularities of $\sF_i$ on $E_i$, where $\bar q_{i-1}$ is non-degenerate, and
				$$Z(\sF_i,\ol C,q_i)=k-(i-1)\geq 2,\qquad Z(\sF_i,E_i, q_i)=2.$$
			\end{enumerate}
		\end{claim}
		
		\begin{proof}[{Proof of \autoref{claim-7-6}}]
			We prove this by induction.
			Clearly $\omega_1$ has the form as in \eqref{eqn-7-33}.
			Suppose that $\omega_{i-1}$ has the form as in \eqref{eqn-7-33}.
			Locally, $\sigma_i$ is given by $(x_{i-1},y_{i-1})=(x_iy_i,y_i)$
			with $\ol C=\{x_i=0\}$ and $E_i=\{y_i=0\}$.
			Hence
			$$\begin{aligned}
			\sigma_i^*(\omega_{i-1})=\,&y_i^3\big(x_iB_1(x_iy_i^i,y_i)+y_i^{k-i}B_2(y_i)\big)dx_i\\
			&+x_iy_i^2\Big(i\big(x_iB_1(x_iy_i^i,y_i)+y_i^{k-i}B_2(y_i)\big)-\frac{A_1(x_iy_i^i,y_i)}{y_i^{i-1}}\Big)dy_i.
			\end{aligned}$$
			Since $B_1(0,0) \neq 0$, it follows that the multiplicity of $E_i=\{y_i=0\}$ in $\sigma_i^*(\omega_{i-1})$ is exactly $2$.
			Hence $\sF_i$ is defined by
			$$\begin{aligned}
			\omega_i=\frac{\sigma_i^*(\omega_{i-1})}{y_i^2}=\,&y_i\big(x_iB_1(x_iy_i^i,y_i)+y_i^{k-i}B_2(y_i)\big)dx_i\\
			&+x_i\Big(i\big(x_iB_1(x_iy_i^i,y_i)+y_i^{k-i}B_2(y_i)\big)-\frac{A_1(x_iy_i^i,y_i)}{y_i^{i-1}}\Big)dy_i.
			\end{aligned}$$
			Equivalently, $\sF_i$ can be defined locally by the vector field
			$$\begin{aligned}
			v_i=x_i\Big(i\big(x_iB_1(x_iy_i^i,y_i)+y_i^{k-i}B_2(y_i)\big)-\frac{A_1(x_iy_i^i,y_i)}{y_i^{i-1}}\Big)\frac{\partial}{\partial x_i}
			+y_i\big(x_iB_1(x_iy_i^i,y_i)+y_i^{k-i}B_2(y_i)\big)\frac{\partial}{\partial y_i}.
			\end{aligned}$$
			Hence the linear part of $(Dv_i)(q_i)$ is
			\begin{equation*}
			(Dv_i)(q_i)=\left(\begin{aligned}
			\frac{A_1(x_iy_i^i,y_i)}{y_i^{i-1}}\Big(0,0\Big) ~&~ \quad 0~\\
			0~\qquad\,\quad&~\quad 0~
			\end{aligned}\right).
			\end{equation*}
			Here, $\frac{A_1(x_iy_i^i,y_i)}{y_i^{i-1}}\Big(0,0\Big)$ stands for the value of the function $\frac{A_1(x_iy_i^i,y_i)}{y_i^{i-1}}$ when $x_i=y_i=0$.
			Since $\sF_i$ is algebraically integral, it follows that $\frac{A_1(x_iy_i^i,y_i)}{y_i^{i-1}}\Big(0,0\Big)=0$ as required.
			In particular, the vanishing order of $\omega_i$ at $q_i$ is $2$ since $B_1(0,0)\neq 0$.
			Moreover, from the expression of $\omega_i$, one sees easily that $E_i$ is $\sF_i$-invariant.
			It follows that $K_{\sF_i}=\sigma_i^*(K_{\sF_{i-1}})-E_i$ by \eqref{eqn-2-13},
			and hence by \autoref{lem-7-8} and \autoref{prop-2-2},	
			$$Z(\sF_i,\ol C,q_i)=Z(\sF_{i-1},\ol C,q_{i-1})-1=k+1-i,\qquad Z(\sF_i,E_i)=3.$$
			As $\ol E_{i-1},E_i,\ol C$ are all $\sF_i$-invariant,
			the two intersection points $\bar q_{i-1}=E_i\cap \ol E_{i-1}$ and $q_i=E_i\cap \ol C$ are singularities of $\sF_i$.
			Since $Z(\sF_i,\ol C,q_i)=k+1-i\geq 2$, it follows that $Z(\sF_i,E_i,q_i) \geq 2$ by \autoref{cor-7-2}.
			Thus 
			$$Z(\sF_i,E_i,\bar q_{i-1})=1,\qquad Z(\sF_i,E_i,q_i)=2.$$
			In particular, $\bar q_{i-1}$ and $q_i$ are the only two singularities of $\sF_i$ on $E_i$.
		\end{proof}
		Come back to the proof of \autoref{lem-7-7}.
		Let $\sigma_k:\,S_k \to S_{k-1}$ be the blowing-up centered at $q_{k-1}=E_{k-1}\cap \ol C$ with exceptional curve $E_k\subseteq S_k$.
		Locally, $\sigma_k$ is given by $(x_{k-1},y_{k-1})=(x_ky_k,y_k)$
		with $\ol C=\{x_k=0\}$ and $E_k=\{y_k=0\}$.
		Hence
		$$\begin{aligned}
		\sigma_k^*(\omega_{k-1})=\,&y_k^3\big(x_kB_1(x_ky_k^k,y_k)+B_2(y_k)\big)dx_k\\
		&+x_ky_k^2\Big(k\big(x_kB_1(x_ky_k^k,y_k)+B_2(y_k)\big)-\frac{A_1(x_ky_k^k,y_k)}{y_k^{k-1}}\Big)dy_k.
		\end{aligned}$$
		Since $B_1(0,0) \neq 0$ and $B_2(0)\neq 0$, it follows that the multiplicity of $E_k=\{y_k=0\}$ in $\sigma_k^*(\omega_{k-1})$ is exactly $2$.
		Hence $\sF_k$ is defined by
		$$\begin{aligned}
		\omega_k=\frac{\sigma_k^*(\omega_{k-1})}{y_k^2}=\,&y_k\big(x_kB_1(x_ky_k^k,y_k)+B_2(y_k)\big)dx_k\\
		&+x_k\Big(k\big(x_kB_1(x_ky_k^k,y_k)+B_2(y_k)\big)-\frac{A_1(x_ky_k^k,y_k)}{y_k^{k-1}}\Big)dy_k.
		\end{aligned}$$
		In particular, $E_k$ is $\sF_k$-invariant, and $K_{\sF_k}=\sigma_k^*(K_{\sF_{k-1}})-E_k$.
		By \autoref{lem-7-8},
		$$Z(\sF_k,\ol E_{k-1},\bar q_{k-1})=Z(\sF_{k-1}, E_{k-1},q_{k-1})-1=1,\quad
		Z(\sF_k,\ol C,q_{k})=Z(\sF_{k-1}, \ol C,q_{k-1})-1=1.$$
		Combining this with \autoref{cor-7-2},
		$$Z(\sF_k,E_k,\bar q_{k-1})=Z(\sF_k,E_k,q_k)=1.$$
		On the other hand, by \autoref{prop-2-2},
		$Z(\sF_k,E_k)=3$.
		Hence there must be a third singularity $q_k'$ with $Z(\sF_k,E_k,q_k')=1$, which is non-degenerate by \autoref{lem-7-2}.
		Finally, the first formula in \eqref{eqn-7-32} follows from the equalities $K_{\sF_1}=\sigma_1^*(K_{\sF_0})$ and $K_{\sF_i}=\sigma_i^*(K_{\sF_{i-1}})-E_i$ for $2\leq i \leq k$;
		and the second formula in \eqref{eqn-7-32} follows by a direct computation.
	\end{proof}
	
	\begin{lemma}\label{lem-7-3}
		Let $\sF_0$ be an algebraically integral foliation on $S_0$,
		and $p\in S_0$ be a singularity of $\sF_0$.
		Let $C$ be an $\sF_0$-invariant curve passing through $p$
		such that $Z(\sF_0,C,p)=2$ and that $C$ is smooth at $p$.
		Suppose that the vanishing order of $\sF_0$ at $p$ is $2$. Let $\sigma_1:\,S_1 \to S_0$ be the blowing-up centered at $p$ with exceptional curve $E_1$ and $\sF_1=\sigma_1^*(\sF_0)$. Then
		\begin{enumerate}[$(i)$]
			\item  either $E_1$ is not $\sF_1$-invariant, in which case $K_{\sF_1}=\sigma_1^*K_{\sF_0}-2E_1$ and $q_1=\ol C\cap E_1$
			is not a singularity of $\sF_1$;
			\item or $E_1$ is $\sF_1$-invariant, in which case $K_{\sF_1}=\sigma_1^*K_{\sF_0}-E_1$, $q_1=\ol C\cap E$
			is a non-degenerate singularity of $\sF_1$, and there are exactly another two singularities of $\sF_1$ on $E_1$
			with $Z(\sF_1,E_1,q_1')=Z(\sF_1,E_1,q_1'')=1$,
			where $\ol{C}$ is the strict transform of $C$;
			$${\setlength{\unitlength}{6mm}
				\begin{tikzpicture}
				\draw[thick] (14,1) -- (14,3);			
				\filldraw[black] (14,2) circle (1.5pt);
				\filldraw[black] (14,1.8) node[anchor=west]{$p$};
				\filldraw[black] (14,1) node[anchor=west]{$C$};

				\draw[->] (12,2) -- (13.5,2);
				\filldraw[black] (12.5,2.3) node[anchor=west]{\Large  $\sigma_1$};
				
				\draw[thick] (10.5,1) -- (10.5,3);
				\draw[thick] (8,2) -- (11,2);
				\filldraw[black] (10.5,2) circle (1.5pt);
				\filldraw[black] (9.5,2) circle (1.5pt);
				\filldraw[black] (8.5,2) circle (1.5pt);
				\filldraw[black] (10.5,1.8) node[anchor=west]{$q_1$};
				\filldraw[black] (9.3,1.7) node[anchor=west]{$q_1''$};
				\filldraw[black] (8.3,1.7) node[anchor=west]{$q_1'$};
				\filldraw[black] (10.5,1) node[anchor=west]{$\ol C$};
				\filldraw[black] (7.3,2) node[anchor=west]{$E_1$};
		\end{tikzpicture}}$$
		\item or $E_1$ is $\sF_1$-invariant, in which case
		$K_{\sF_1}=\sigma_1^*K_{\sF_0}-E_1$, $q_1=\ol C\cap E$
		is a non-degenerate singularity of $\sF_1$, and there is exactly another one singularity $q_1'$ of $\sF_1$ on $E_1$
		with $Z(\sF_1,E_1,q_1')=2$.
		$${\setlength{\unitlength}{6mm}
			\begin{tikzpicture}
			\draw[thick] (14,1) -- (14,3);			
			\filldraw[black] (14,2) circle (1.5pt);
			\filldraw[black] (14,1.8) node[anchor=west]{$p$};
			\filldraw[black] (14,1) node[anchor=west]{$C$};

			\draw[->] (12,2) -- (13.5,2);
			\filldraw[black] (12.5,2.3) node[anchor=west]{\Large  $\sigma_1$};
			
			\draw[thick] (10.5,1) -- (10.5,3);
			\draw[thick] (8,2) -- (11,2);
			\filldraw[black] (10.5,2) circle (1.5pt);
			\filldraw[black] (9,2) circle (1.5pt);
			\filldraw[black] (10.5,1.8) node[anchor=west]{$q_1$};
			\filldraw[black] (8.8,1.7) node[anchor=west]{$q_1'$};
			\filldraw[black] (10.5,1) node[anchor=west]{$\ol C$};
			\filldraw[black] (7.3,2) node[anchor=west]{$E_1$};
			\end{tikzpicture}}$$
		\end{enumerate}
		\end{lemma}
		\begin{proof}
		If $E_1$ is not $\sF_1$-invariant, then we are in case (i).
		In this case, $K_{\sF_1}=\sigma_1^*K_{\sF_0}-2E_1$ by \eqref{eqn-2-13},
		and hence by \autoref{lem-7-8},
		$$Z(\sF_1,\ol C,q_1)=Z(\sF_0,C,p)-2=0.$$
		Hence $q_1$ is not a singularity of $\sF_1$.
		
		If $E_1$ is $\sF_1$-invariant,
		we are in cases (ii) or (iii).
		By \eqref{eqn-2-13}, $K_{\sF_1}=\sigma_1^*K_{\sF_0}-E_1$ in this case,
		and hence $Z(\sF_1,E_1)=3$, and by \autoref{lem-7-8},
		$$Z(\sF_1,\ol C,q_1)=Z(\sF_0,C,p)-1=1.$$
		Hence $Z(\sF_1,E_1,q_1)=1$ by \autoref{cor-7-2}.
		It follows that either there are another two singularities $q_1'$ and $q_1''$ with
		$Z(\sF_1,E_1,q_1')=Z(\sF_1,E_1,q_1'')=1$,
		or there is another one singularity $q_1'$ with
		$Z(\sF_1,E_1,q_1')=2$. 
	\end{proof}
	
	By repeating the process for the singularity $q_1'$ with $Z(\sF_1, E_1, q_1')=2$ in case (iii) above,
	we can reach the following situation.
	\begin{corollary}\label{cor-7-1}
		Let $\sF_0$ be an algebraically integral foliation on $S_0$,
		and $p\in S_0$ be a non-reduced singularity of $\sF_0$.
		Let $C$ be an $\sF_0$-invariant curve passing through $p$.
		Suppose that $C$ is smooth at $p$, and that $Z(\sF_0,C,p)=2$.
		Then there exist a sequence blowing-ups
		$$\pi=\sigma_1\circ\cdots\circ\sigma_n:~S_n \overset{\sigma_n}{\lra} S_{n-1}
		\overset{\sigma_{n-1}}{\lra} \cdots \overset{\sigma_2}{\lra} S_{1}\overset{\sigma_1}{\lra} S_{0},$$
		such that the singularities of $\sF_{n}$ on $\pi^{-1}(p)$ belong to one of the following,
		where a solid curve (resp. dotted curve) means $\sF_n$-invariant (resp. non-$\sF_n$-invariant).
		\begin{enumerate}[$(i)$]
			\item A chain of $\sF_n$-invariant curves ($n\geq 1$) with all the singularities on $\pi^{-1}(p)$ being non-degenerate,
			$\ol C^2=C^2-1$,
			$\ol E_i$ is the strict transform of the exceptional curve of $\sigma_i$ and
			$\ol E_i^2=-2$ for $1\leq i\leq n-1$,
			and $E_n^2=-1$. Moreover,
			\begin{equation}\label{eqn-7-16}
			K_{\sF_n}= \pi^*K_{\sF_0}-\sum_{i=1}^{n}\mathcal{E}_i=\pi^*K_{\sF_0}-\sum_{i=1}^{n}i\ol E_i,
			%\qquad\ol C=\sigma^*(C)-\mathcal{E}_1=\sigma^*(C)-\sum_{i=1}^{n}\ol E_i
			\end{equation}
			where $\mathcal{E}_i=\sum\limits_{j=i}^{n}\ol E_j$ is total inverse image of the exceptional curve of $\sigma_i$, (here we understand $\mathcal{E}_n=E_n=\ol E_n$).
			$${\setlength{\unitlength}{6mm}
				\begin{tikzpicture}
				\draw[thick] (14.5,1) -- (13.5,3);
				\filldraw[black] (13.8,2.4) circle (1.5pt);
				\filldraw[black] (13.9,2.4) node[anchor=west]{$q_1$};
				\filldraw[black] (14.5,0.7) node[anchor=west]{$\ol C$};
				
				\draw[thick] (13.1,1) -- (14.1,3);
				\filldraw[black] (13.4,1.6) circle (1.5pt);
				\filldraw[black] (13.5,1.6) node[anchor=west]{$q_2$};
				\filldraw[black] (12.6,0.7) node[anchor=west]{$\ol E_1$};
				
				\draw[thick] (13.7,1) -- (12.7,3);
				\filldraw[black] (13,2.4) circle (1.5pt);
				%\filldraw[black] (13.1,2.4) node[anchor=west]{$q_0$};
				\filldraw[black] (13.7,0.7) node[anchor=west]{$\ol E_2$};
				
				%\draw[thick] (12.3,1) -- (12.4,1.2);
				%\draw[thick] (12.5,1.4) -- (12.6,1.6);
				%\draw[thick] (12.7,1.8) -- (12.8,2);
				%\draw[thick] (12.9,2.2) -- (13,2.4);
				%\draw[thick] (13.1,2.6) -- (13.2,2.8);
				\draw[thick] (12.3,1) -- (13.3,3);
				\filldraw[black] (12.6,1.6) circle (1.5pt);
				
				\filldraw[black] (12.4,2) circle (1pt);
				\filldraw[black] (12.2,2) circle (1pt);
				\filldraw[black] (12,2) circle (1pt);
				\filldraw[black] (11.8,2) circle (1pt);
				\filldraw[black] (11.6,2) circle (1pt);
				\filldraw[black] (11.4,2) circle (1pt);

				\draw[thick] (11.5,1) -- (10.5,3);
				\filldraw[black] (10.8,2.4) circle (1.5pt);
				\filldraw[black] (11.2,1.6) circle (1.5pt);
				\filldraw[black] (10.9,2.4) node[anchor=west]{$q_n$};
				\filldraw[black] (11,0.7) node[anchor=west]{$\ol E_{n-1}$};
				
				\draw[thick] (9.6,1) -- (11.3,3);
				\filldraw[black] (10.35,1.9) circle (1.5pt);
				\filldraw[black] (10.3,1.7) node[anchor=west]{$q_n''$};
				\filldraw[black] (9.93,1.4) circle (1.5pt);
				\filldraw[black] (9.9,1.2) node[anchor=west]{$q_n'$};
				\filldraw[black] (9,0.7) node[anchor=west]{$E_n$};
		\end{tikzpicture}}$$
		
		\item A chain of $\sF_n$-invariant curves ($n\geq 2$) with all the singularities on $\pi^{-1}(p)$ being non-degenerate,
		$\ol C^2=C^2-1$ if $n\geq 3$ (resp. $\ol C^2=C^2-2$ if $n=2$),
		$\ol E_i$ is the strict transform of the exceptional curve of $\sigma_i$ and
		$\ol E_i^2=-2$ for $1\leq i\leq n-3$ or $i=n-1$,
		$\ol E_{n-2}^2=-3$,
		and $E_n^2=-1$. Moreover,
		\begin{equation}\label{eqn-7-17}
		K_{\sF_n}= \pi^*K_{\sF_0}-\sum_{i=1}^{n-2}\mathcal{E}_i-\mathcal{E}_n=\pi^*K_{\sF_0}-\sum_{i=1}^{n-2}i\ol E_i-(n-2)\ol E_{n-1}-(2n-3) E_n,
		\end{equation}
		where $\mathcal{E}_i=\sum\limits_{j=i}^{n-1}\ol E_j+2E_n$ is the total inverse image of the exceptional curve of $\sigma_i$ for $1\leq i \leq n-2$, and $\mathcal{E}_{n}=E_n$ the exceptional curve of $\sigma_n$.
		$${\setlength{\unitlength}{6mm}
			\begin{tikzpicture}
			\draw[thick] (14.5,1) -- (13.5,3);
			\filldraw[black] (13.8,2.4) circle (1.5pt);
			\filldraw[black] (13.9,2.4) node[anchor=west]{$q_1$};
			\filldraw[black] (14.5,0.7) node[anchor=west]{$\ol C$};
			
			\draw[thick] (13.1,1) -- (14.1,3);
			\filldraw[black] (13.4,1.6) circle (1.5pt);
			\filldraw[black] (13.5,1.6) node[anchor=west]{$q_2$};
			\filldraw[black] (12.6,0.7) node[anchor=west]{$\ol E_1$};
			
			\draw[thick] (13.7,1) -- (12.7,3);
			\filldraw[black] (13,2.4) circle (1.5pt);
			%\filldraw[black] (13.1,2.4) node[anchor=west]{$q_0$};
			\filldraw[black] (13.7,0.7) node[anchor=west]{$\ol E_2$};
			
			%\draw[thick] (12.3,1) -- (12.4,1.2);
			%\draw[thick] (12.5,1.4) -- (12.6,1.6);
			%\draw[thick] (12.7,1.8) -- (12.8,2);
			%\draw[thick] (12.9,2.2) -- (13,2.4);
			%\draw[thick] (13.1,2.6) -- (13.2,2.8);
			\draw[thick] (12.3,1) -- (13.3,3);
			\filldraw[black] (12.6,1.6) circle (1.5pt);
			
			\filldraw[black] (12.4,2) circle (1pt);
			\filldraw[black] (12.2,2) circle (1pt);
			\filldraw[black] (12,2) circle (1pt);
			\filldraw[black] (11.8,2) circle (1pt);
			\filldraw[black] (11.6,2) circle (1pt);
			\filldraw[black] (11.4,2) circle (1pt);

			\draw[thick] (11.5,1) -- (10.5,3);
			\filldraw[black] (10.8,2.4) circle (1.5pt);
			\filldraw[black] (11.2,1.6) circle (1.5pt);
			\filldraw[black] (10.9,2.4) node[anchor=west]{$q_{n-1}$};
			\filldraw[black] (11,0.7) node[anchor=west]{$\ol E_{n-2}$};
			
			\draw[thick] (9.6,1) -- (11.3,3);
			\filldraw[black] (10.35,1.9) circle (1.5pt);
			\filldraw[black] (10.3,1.7) node[anchor=west]{$q_n'$};
			\filldraw[black] (9.93,1.4) circle (1.5pt);
			\filldraw[black] (9.3,1.4) node[anchor=west]{$q_n$};
			\filldraw[black] (9,0.7) node[anchor=west]{$E_n$};
			
			\draw[thick] (10.11,1) -- (9.11,3);
			\filldraw[black] (8.2,2.8) node[anchor=west]{$\ol E_{n-1}$};
	\end{tikzpicture}}$$
	
	\item A chain of rational curves ($n\geq 1$),
	such that $\ol E_i$ is $\sF_n$-invariant for $1\leq i \leq n-1$, $E_n$ is not $\sF_n$-invariant,
	all the singularities on $\bigcup\limits_{i=1}^{n-1} \ol E_i$ are non-degenerate,
	$\ol C^2=C^2-1$,
	$\ol E_i$ is the strict transform of the exceptional curve of $\sigma_i$ with
	$\ol E_i^2=-2$ for $1\leq i\leq n-1$, and $E_n^2=-1$.
	%and $E_n^2=-1$. Moreover,
	%\begin{equation}
	%	K_{\sF_n}= \sigma^*K_{\sF_0}-\sum_{i=1}^{n-1}\mathcal{E}_i-2\mathcal{E}_n=\sigma^*K_{\sF_0}-\sum_{i=1}^{n-1}i\ol E_i-(n+1)E_n.
	%\end{equation}
	%where $\mathcal{E}_i=\sum\limits_{j=i}^{n}\ol E_j$ is total inverse image of the exceptional curve of $\sigma_i$, (here we understand $\mathcal{E}_n=E_n=\ol E_n$).
	$${\setlength{\unitlength}{6mm}
		\begin{tikzpicture}
		\draw[thick] (14.5,1) -- (13.5,3);
		\filldraw[black] (13.8,2.4) circle (1.5pt);
		\filldraw[black] (13.9,2.4) node[anchor=west]{$q_1$};
		\filldraw[black] (14.5,0.7) node[anchor=west]{$\ol C$};
		
		\draw[thick] (13.1,1) -- (14.1,3);
		\filldraw[black] (13.4,1.6) circle (1.5pt);
		\filldraw[black] (13.5,1.6) node[anchor=west]{$q_2$};
		\filldraw[black] (12.6,0.7) node[anchor=west]{$\ol E_1$};
		
		\draw[thick] (13.7,1) -- (12.7,3);
		\filldraw[black] (13,2.4) circle (1.5pt);
		%\filldraw[black] (13.1,2.4) node[anchor=west]{$q_0$};
		\filldraw[black] (13.7,0.7) node[anchor=west]{$\ol E_2$};
		
		%\draw[thick] (12.3,1) -- (12.4,1.2);
		%\draw[thick] (12.5,1.4) -- (12.6,1.6);
		%\draw[thick] (12.7,1.8) -- (12.8,2);
		%\draw[thick] (12.9,2.2) -- (13,2.4);
		%\draw[thick] (13.1,2.6) -- (13.2,2.8);
		\draw[thick] (12.3,1) -- (13.3,3);
		\filldraw[black] (12.6,1.6) circle (1.5pt);
		
		\filldraw[black] (12.4,2) circle (1pt);
		\filldraw[black] (12.2,2) circle (1pt);
		\filldraw[black] (12,2) circle (1pt);
		\filldraw[black] (11.8,2) circle (1pt);
		\filldraw[black] (11.6,2) circle (1pt);
		\filldraw[black] (11.4,2) circle (1pt);

		\draw[thick] (11.5,1) -- (10.5,3);
		%\filldraw[black] (10.8,2.4) circle (1.5pt);
		\filldraw[black] (11.2,1.6) circle (1.5pt);
		\filldraw[black] (11.2,1.6) node[anchor=west]{$q_{n-1}$};
		\filldraw[black] (11,0.7) node[anchor=west]{$\ol E_{n-1}$};
		
		\draw[very thick, dashed] (9.6,1) -- (11.3,3);
		%		\filldraw[black] (10.35,1.9) circle (1.5pt);
		%		\filldraw[black] (10.3,1.7) node[anchor=west]{$q_n''$};
		%		\filldraw[black] (9.93,1.4) circle (1.5pt);
		%		\filldraw[black] (9.9,1.2) node[anchor=west]{$q_n'$};
		\filldraw[black] (9,0.7) node[anchor=west]{$E_n$};
\end{tikzpicture}}$$		
\end{enumerate}
\end{corollary}
\begin{proof}
If $Z(\sF_0,C,p)=2$, it follows that the vanishing order of $\sF_0$ at $p$ is at most $2$ by the definition of the $Z$-index, cf. \autoref{sec-pre}.
Almost all the statements follow from \autoref{lem-7-3} and \autoref{lem-7-7} (for $k=2$) by induction.
We remark that in case (iii), except these singularities $\{q_1,\cdots q_{n-1}\}$, which are all non-degenerate,
there might exist singularity of $\sF_n$ on $E_n$ (which is not $\sF_n$-invariant in this case).
Nevertheless, the intersection point $q_n=\ol E_{n-1} \cap E_n$ is not a singularity by \autoref{lem-7-3}\,(i).
\end{proof}

\begin{lemma}\label{lem-7-4}
	Let $\sF_0$ be an algebraically integral foliation on $S_0$,
	and $p\in S_0$ be a non-reduced singularity of $\sF_0$.
	Let $C$ be an algebraic curve passing through $p$, which not $\sF_0$-invariant.
	Suppose that $C$ is smooth at $p$, and that $\tang(\sF_0,C,p)=1$.
	Let $\sigma_1:\,S_1 \to S_0$ be the blowing-up centered at $p$ with exceptional curve $E_1$, and $\sF_1=\sigma_1^*\sF_0$.
	\begin{enumerate}[$(i)$]
		\item If $E_1$ is not $\sF_1$-invariant, then $\tang(\sF_1,E_1)=0$.
		In particular, there is no singularity of $\sF_1$ on $E_1$.
		\item If $E_1$ is $\sF_1$-invariant, then $Z(\sF_1,E_1)=2$.
		Moreover, the intersection $q=\ol{C}\cap E_1$ is not a singularity of $\sF_1$.
	\end{enumerate}
\end{lemma}
\begin{proof}
	Let $(x,y)$ be a local coordinate such that $C=\{x=0\}$.
	Let $v=A\frac{\partial}{\partial x}+B\frac{\partial}{\partial y}$ be a vector field generating $\sF_0$ around $p$.
	Then
	$$1=\tang(\sF_0,C,p)=I_p\big(x,v(x)\big)=I_p\big(x,A(x,y)\big).$$
	Hence $\frac{\partial A}{\partial y}(0,0) \neq 0$.
	In particular, the vanishing order of $v$ at $p$ is $a_p=1$.
	
	If $E_1$ is not $\sF_1$-invariant, then $K_{\sF_1}=\sigma_1^*K_{\sF_0}-E_1$ by \eqref{eqn-2-13}.
	Hence $\tang(\sF_1,E_1)=K_{\sF_1}E_1+E_1^2=0$ as required.
	
	If $E_1$ is $\sF_1$-invariant, then $K_{\sF_1}=\sigma_1^*K_{\sF_0}$ by \eqref{eqn-2-13}.
	Hence $Z(\sF_1,E_1)=K_{\sF_1}E_1+2=2$ by \autoref{prop-2-2}.
	Moreover,
	$$\tang(\sF_1,\ol{C})=K_{\sF_1}\ol{C}+\ol{C}^2=K_{\sF_0}C+(C^2-1)=\tang(\sF_0,C)-1.$$
	It follows that $\tang(\sF_1,\ol{C},q)=\tang(\sF_0,C,p)-1=0$,
	since $\sigma_1$ induces an isomorphism between $S_1\setminus E_1$
	and $S_0\setminus\{p\}$.
	In particular, the intersection $q=\ol{C}\cap E_1$ is not a singularity of $\sF_1$.
\end{proof}
	
	\section{The Chern numbers of foliations}\label{sec-chern-number}
	The main purpose of this section is to introduce three birational invariants for a foliated surface, which will be called the Chern numbers.
	Some basic properties of these three invariants will be also discussed.
	In particular, \autoref{thm-chern-1} will be proved in this section.
	More precisely, in \autoref{sec-definition} we will introduce the three Chern numbers and prove the Noether equality \eqref{eqn-chern-1-1}.
	In \autoref{sec-invariance} we show that the three Chern numbers are well-defined birational invariants. In particular, we will prove that the first Chern number $c_1^2(\sF)$ is nothing but the volume of $\sF$.
	Finally, in \autoref{sec-chern-algebraic} we consider the case when $\sF$ is algebraically integral, i.e., $\sF$ is induced by a family $f:\,S \to B$ of curves.
	In this case, we prove that the three Chern numbers are just the corresponding modular invariants of $f$.
	
	\subsection{Definitions and the Noether equality}\label{sec-definition}
	Let $(S,\sF)$ be a foliated surface, and $(S',\sF')$ be a relatively minimal model, i.e., $(S',\sF')$ is birational to $(S,\sF)$ and $(S',\sF')$ is relatively minimal.
	\begin{lemma}\label{lem-5-pseudo-eff}
		Let $(S',\sF')$ and $(S'',\sF'')$ be a two relatively minimal model of a foliated surface $(S,\sF)$.
		Then either
		\begin{enumerate}[(i)]
			\item both the canonical divisors $K_{\sF'}$ and $K_{\sF''}$ are non-pseudo-effective; or
			\item both the canonical divisors $K_{\sF'}$ and $K_{\sF''}$ are pseudo-effective.
		\end{enumerate}
	\end{lemma}
\begin{proof}
	Suppose for instance $K_{\sF'}$ is not pseudo-effective.
	Then $\sF'$ is induced by a family of rational curves by Miyaoka's \autoref{thm-miyaoka}, and so is $\sF''$.
	It implies that $K_{\sF''}$ is not pseudo-effective either.
	This completes the proof.
\end{proof}
	
	Suppose that $K_{\sF'}$ is pseudo-effective. Let
	$$K_{\sF'}=P'+N',$$
	be the Zariski decomposition of $K_{\sF'}$, where $P'$ and $N'$ are respectively the nef and negative parts.
	We will use the following notations.
	
	{\noindent\bf Notations.}
	For a singular point $p$ of $\sF'$, we denote by $p\in N'$ if $p$
	lies on the support of the negative part $N'$.
	In case $N'=0$, $p\in N'$ means $p$ is in an empty set.

	\begin{definition}\label{def-chern-numbers}
	Let $(S,\sF)$ be a foliated surface, and $(S',\sF')$ be a relatively minimal model as above.
	\begin{enumerate}[(i)]
		\item Suppose that the canonical divisor $K_{\sF'}$ is not pseudo-effective. We define all the three Chern numbers of $\sF$ to be zero:
		$$c_1^2(\sF)=c_2(\sF)=\chi(\sF)=0.$$
		\item Suppose that the canonical divisor $K_{\sF'}$ is pseudo-effective. The Chern numbers of $\sF$ is defined by
		the following formulas.
		\begin{equation}\label{eqn-4-1}
			\left\{\begin{aligned}
				c_1^2(\sF)&\,:=K_{\sF'}^2+\sum_{p\in N'
				}\beta_p(\sF'),\\
				c_2(\sF)&\,:=\sum_{p\not\in N'}\beta_p(\sF'),\\
				\chi(\sF)&\,:=\chi(\mathcal
				O_{S'})+\dfrac14K_{\sF'} N_{\sF'}+
				\sum_{p}\chi_p(\sF'),
			\end{aligned}\right.
		\end{equation}
		where $N_{\sF'}$ is the normal bundle of $\sF'$, and $\beta_p(\sF')$ and $\chi_p(\sF')$ are defined in \autoref{def-beta-chi}.
		
%		\item For an arbitrary foliation $\sF$,
%		we define the Chern numbers of $\sF$ as those of one
%		relatively minimal model.
%		Namely, let $(S',\sF')$ be a relatively minimal model of $(S,\sF)$, which exists by \cite[Proposition\,5.1]{bru-04}.
%		Then
%		$$c_1^2(\sF):=c_1^2(\sF'),\qquad c_2(\sF):=c_2(\sF'),\qquad \chi(\sF):=\chi(\sF').$$
	\end{enumerate}
	\end{definition}

		We will prove in \autoref{sec-invariance} that the definition of Chern numbers is independent of the choices of the  relatively minimal models,
i.e., one can use any relatively minimal model to compute the Chern
numbers.
	
	\begin{theorem}\label{thm-4-2}
		 The following Noether equality holds for the three Chern numbers of a foliated surface $(S,\sF)$:
		\begin{equation*}
		c_1^2(\sF)+c_2(\sF)=12\chi(\sF).
		\end{equation*}
	\end{theorem}
	\begin{proof}
		Let $(S',\sF')$ be a relatively minimal model of $(S,\sF)$.
	    To prove the Noether equality, we may assume that $K_{\sF'}$ is pseudo-effective; otherwise all the three Chern numbers are zeros, and the Noether equality holds automatically.
		By the Noether equality
		$c_1^2(S')+c_2(S')=12\chi(\mathcal O_{S'})$ for the projective surface $S'$
		together with the Baum-Bott formula (cf. \cite[Theorem\,3.1]{bru-04}) and \eqref{eqn-2-4}, one obtains that
		\begin{align*}
		c_1^2(\sF)+c_2(\sF)&=K_{\sF'}^2+\sum_{p}\beta_p(\sF')
		=(K_{S'}+N_{\sF'})^2+\sum_{p}\beta_p(\sF')\\
		&=K_{S'}^2+c_2(S')+2K_{S'}N_{\sF'}+N_{\sF'}^2-c_2(S')+\sum_{p}\beta_p(\sF')\\
		&=12\chi(\mathcal O_{S'})+3K_{\sF'} N_{\sF'}-N_{\sF'}^2-m(\sF')+
		\sum_{p}\beta_p(\sF')\\
		&=12\chi(\mathcal O_{S})+3K_{\sF'} N_{\sF'}-
		\sum_{p}BB_p(\sF')-\sum_pm_p(\sF')+\sum_{p}\beta_p(\sF')\\
		&=12\chi(\mathcal
		O_{S'})+3K_{\sF'} N_{\sF'}+
		12\sum_{p}\chi_p(\sF') =12\chi(\sF).
		\end{align*}
		The last two equalities follow from the definitions of $\chi_p(\sF')$
		in \autoref{def-beta-chi} and $\chi(\sF)$ in \eqref{eqn-4-1}.
	\end{proof}
	
	\begin{remark}
		In the definition of
		$c_1^2(\sF)$ and $c_2(\sF)$ for foliated surfaces with pseudo-effective divisor $K_{\sF'}$, we can assume that
		$p$ runs over the singular points whose eigenvalues $\lambda_p\in \mathbb Q^-$
		are {\it negative rational numbers.}
		\begin{equation*}
		\left\{\begin{aligned}
			c_1^2(\sF)&\,=K_{\sF'}^2+\sum_{p\in N',\,\lambda_p\in \mathbb Q^-
			}\beta_p(\sF')
		=K_{\sF'}^2+\sum_{p\in N',\,\lambda_p\in \mathbb Q^-
			}\beta(-\lambda_p),\\
			c_2(\sF)&\,=\sum_{p\not\in N',\,\lambda_p\in \mathbb
				Q^-}\beta_p(\sF')
			=\sum_{p\not\in N',\,\lambda_p\in \mathbb
				Q^-}\beta(-\lambda_p).
		\end{aligned}\right.
		\end{equation*}
		Because the other singular
		points $q$ satisfy $\beta_q(\sF')=0$ by definition, which have no contributions
		to $c_1^2(\sF)$ and $c_2(\sF)$.
		In particular, saddle-nodes have no contributions to the Chern numbers.
		This is reasonable because
		the existence of a saddle-node implies that $\sF$ is non-algebraic,
		and hence Poincar\'e problem is solved automatically.
	\end{remark}
	
	\subsection{Birational invariance of the Chern numbers}\label{sec-invariance}
	In this subsection, we will prove that the Chern numbers are well-defined birational invariants.
	By their definitions in \autoref{def-chern-numbers}, if these Chern numbers are well-defined,
	then they are naturally birationally invariants.
Let $(S',\sF')$ and $(S'',\sF'')$ be two relatively minimal models of a given foliated surface $(S,\sF)$,
both $(S',\sF')$ and $(S'',\sF'')$ are relatively minimal, and birational to $(S,\sF)$.
According to \autoref{lem-5-pseudo-eff}, we may assume that
both the canonical divisors $K_{\sF'}$ and $K_{\sF''}$ are pseudo-effective.
By the Noether equality,
%in order to prove that the Chern numbers in \autoref{def-chern-numbers} are well-defined,
it suffices to prove that
\begin{equation*} %\label{202301142130}
\left\{\begin{aligned}
&K_{\sF'}^2+\sum_{p\in N'
}\beta_p(\sF')=K_{\sF''}^2+\sum_{p\in N''
}\beta_p(\sF'');\\
&\sum_{p\not\in N'}\beta_p(\sF')=\sum_{p\not\in N''}\beta_p(\sF''),
\end{aligned}\right.
\end{equation*}
where $N'$ and $N''$ are the nef parts in the Zariski decompositions of $K_{\sF'}$ and $K_{\sF''}$ respectively.
%For algebraic foliations $\sF$ and $\sF'$, \eqref{202301142130} is
%proved in Corollary \ref{202301142112}, we can assume that $\sF$ and
%$\sF'$ are non-algebraic. Furthermore, we can assume also that
%$\varphi$ and $\varphi^{-1}$ are not holomorphic. Otherwise, if
%$\varphi$ is holomorphic, then it is a sequence of blowing-ups or an
%isomorphism. Because $(X',\sF')$ is minimal, $\varphi$ must be an
%isomorphism, and \eqref{202301142130} holds true.
	
	\begin{proposition}\label{thm813c}
		Let $(S',\sF')$ be a relatively minimal foliated surface with pseudo-effective canonical divisor $K_{\sF'}$ as above.
		Then
		$$K_{\sF'}^2+\sum_{p\in N'
		}\beta_p(\sF')=\vol(\sF').$$
	\end{proposition}
	\begin{proof}
		Let
		$$K_{\sF'}=P'+N',$$
		be the Zariski decomposition, where $P'$ and $N'$ are respectively the nef and negative parts.
		Since $\sF'$ is relatively minimal, the support of $N'$ is a disjoint union of maximal $\sF'$-chains by \autoref{thm-3-4}.
		By contracting all such maximal $\sF'$-chains, one obtains a singular surface $S_0'$ with finitely many singularities of Hirzebruch-Jung type,
		and hence we can write
		$$N'=\sum_{Q} N_Q',$$
		where the sum runs over all singularities $Q$'s on $S_0'$, and $\mathrm{Supp}(N_Q')$ is supported on the inverse image of $Q$ in $S$.
		According to \eqref{eqn-2-5},
		$$\vol(\sF')=(P')^2=K_{\sF'}^2-(N')^2=K_{\sF'}^2-\sum_{Q}(N_Q')^2.$$
		In view of the definition of $c_1^2(\sF')$ in \eqref{eqn-4-1},
		it suffices to prove that for each singularity $Q\in S_0'$,
		\begin{equation}\label{eqn-4-21}
			\sum_{p\in
				N_Q'}\beta(-\lambda_p)=-(N_Q')^2.
		\end{equation}
		The sum above is taken over all singularities of $\sF'$ lying on the support of $N_Q'$ (namely the inverse image of $Q$).
		To this aim, we first claim that
		\begin{claim}\label{claim-4-1}
			Let $C=C_1+C_2+\cdots+C_r$ be a maximal $\sF'$-chain contracted to a singularity $Q\in S_0'$.
			Let $\mu_0=0$, $\mu_1=1$, and the rest $\mu_k$'s be given by the following recursion formula:
			\begin{equation}\label{eqn-4-22}
			\mu_{k-1}-e_k\mu_k+\mu_{k+1}=0, \hskip1cm k=1,2,\cdots,r,
			\end{equation}
			where $e_k=-C_k^2\geq 2$.
			Then around each singularity $p_k\in C_k\cap C_{k+1}$ ($1\leq k \leq r$),
			there exists local coordinate $(x_k,y_k)$, such that
			\begin{enumerate}[$(i)$]
				\item $C_k$ is locally defined by $x_k=0$, and $C_{k+1}$ is locally defined by $y_k=1$;
				\item the foliation $\sF'$ is locally defined by $\omega_k=\mu_ky_kdx_k+\mu_{k+1}x_kdy_k$. 
			\end{enumerate}
		Here $p_r$ is the singularity on $C_r$ different from the point $p_{r-1}=C_{r-1}\cap C_r$, and we understand that $C_{k+1}$ is the local separatrix of $\sF'$ passing through $p_r$ other than $C_r$. In particular, $C_{r+1}$ might not be algebraic, but only locally holomorphic.
		\end{claim}
	\begin{proof}[{Proof of \autoref{claim-4-1}}]
		This is in fact implicitly contained in \cite[\S\,8.2]{bru-04}, the context following Definition\,8.1 loc. cit.
		One can prove inductively the statements hold for each singularity.
		Consider first the case around the singularity $p_1=C_1\cap C_2$.
		Because	$C_1\setminus p_1$ is simply connected,
		the point $p_1$ is a reduced non-degenerate singularity of $\sF'$ with a separatrix inside $C_1$ without holonomy.
		Hence locally the foliation is of type $d(x_1y_1^{\ell_1})=0$, i.e.,
		the foliation $\sF'$ is defined around $p_1$ by
		$$\omega_1=y_1dx_1+\ell_1x_1dy_1,$$
		where $C_1=\{x_1=0\}$ and $C_2=\{y_1=0\}$ around $p_1$, and $\ell_1=e_1=-C_1^2$ by the Camacho-Sad formula \eqref{eqn-2-7} and \eqref{eqn-2-6}.
		By \eqref{eqn-4-22}, $\mu_2=\ell_1=e_1$.
		This proves the statement for the singularity $p_1$.
		
		Look now at the singularity $p_2$.
		The separatrix $C_2$ has $\mu_2$-periodic holonomy around $p_2$,
		and hence locally the foliation is of type $d(x_2^{\mu_2}y_2^{\ell_2})=0$, i.e.,
		the foliation $\sF'$ is defined around $p_2$ by
		$$\omega_2=\mu_2y_2dx_2+\ell_2x_2dy_2,$$
		where $C_2=\{x_2=0\}$ and $C_3=\{y_2=0\}$ around $p_2$. By the Camacho-Sad formula \eqref{eqn-2-7} and \eqref{eqn-2-6} one has
		$$e_2=-C_2^2=\cs(\sF',C_2)=\cs(\sF',C_2,p_1)+\cs(\sF',C_2,p_2)=\frac{1}{\mu_2}+\frac{\ell_2}{\mu_2}.$$
		Hence $\ell_2=\mu_3$ in view of \eqref{eqn-4-22}.
		This proves the statement for the singularity $p_2$.
		The rest cases can proved inductively and we omitted here.
	\end{proof}
		
		Come back to the proof of \autoref{thm813c}.
		Suppose that $C=C_1+C_2+\cdots+C_r$ is a maximal $\sF'$-chain contracted to a singularity $Q\in S_0'$ of type $A_{n,q}$.
		Let $\xi_i$'s and $\mu_i$'s are defined respectively in \autoref{lem-coefficient-N} and \autoref{claim-4-1}.
		Since $\mu_0=\xi_{r+1}=0$ and $\mu_1=\xi_{r}=1$, one deduces inductively by \eqref{eqn-2-2} and \eqref{eqn-4-22} that
		$$\mu_k=\xi_{r+1-k},\qquad \forall~0\leq k \leq r+1,$$
		and that (recall that $\xi_0=n$ and $\xi_1=q$ as proved in \cite[\S\,III.5]{bhpv})
		$$\xi_k\mu_{k+1}-\xi_{k+1}\mu_k=n, \qquad \forall~0\leq k \leq r.$$
		In particular, 
		$$
		\frac{\xi_k}{\mu_{k}}-
		\frac{\xi_{k+1}}{\mu_{k+1}}=\frac{n}{\mu_k\mu_{k+1}}, \qquad \forall~1\leq k \leq r.
		$$
	Taking the summation, we get
$$
\sum_{k=1}^r\dfrac{1}{\mu_k\mu_{k+1}}=\dfrac1n\sum_{k=1}^r
\left(\dfrac{\xi_k}{\mu_{k}}-
\dfrac{\xi_{k+1}}{\mu_{k+1}}\right)=
\dfrac1n\left(\dfrac{\xi_1}{\mu_1}-
\dfrac{\xi_{r+1}}{\mu_{r+1}}\right)=\dfrac{q}{n}.
$$
By \autoref{claim-4-1}, the foliation $\sF'$ is locally defined by $\omega_k=\mu_ky_kdx_k+\mu_{k+1}x_kdy_k$ around $p_k$.
It follows that the eigenvalue of $\sF'$ at $p_k$ is $\lambda_{p_k}=-\frac{\mu_{k+1}}{\mu_k}$.
Moreover, according to \eqref{eqn-4-22} one proves inductively that $\gcd(\mu_k,\mu_{k+1})=1$.
Hence
$\beta(-\lambda_{p_k})=\frac{1}{\mu_k\mu_{k+1}}$ by \eqref{eqn-def-beta}.
Therefore,
\begin{align*}
	\sum_{p\in
		N_Q'}\beta(-\lambda_p)&=\sum_{k=1}^r\beta(-\lambda_{p_k})=
	\sum_{k=1}^r\dfrac{1}{\mu_k\mu_{k+1}}
	=\dfrac{q}{n}=-(N_Q')^2.
\end{align*}
The last equality above follows from \eqref{eqn-N_Q^2}.
This proves \eqref{eqn-4-21} and hence completes the proof.
	\end{proof}
	
\begin{corollary}\label{cor-4-1}
	Let $(S,\sF)$ be a foliated surface.
	Then the first Chern number $c_1^2(\sF)$ defined in \autoref{def-chern-numbers} is a well-defined birational invariant satisfying $c_1^2(\sF)=\vol(\sF)$, which is a non-negative rational number.
	%Moreover, $c_1^2(\sF)= 0$ if and only if $\sF$ is not of general type.
\end{corollary}
\begin{proof}
	Let $(S',\sF')$ be any relatively minimal model of $(S,\sF)$.
	If the canonical divisor $K_{\sF'}$ is not pseudo-effective, then the canonical divisor $K_{\sF''}$ is not pseudo-effective for any relatively minimal model $(S'',\sF'')$ by \autoref{lem-5-pseudo-eff}. It follows that $c_1^2(\sF)=0=\vol(\sF)$ in this case.
	If $K_{\sF'}$ is pseudo-effective, then the corollary follows from \autoref{thm813c} and the fact that the volume $\vol(\sF)$ is
	a birational invariant, which is a non-negative rational number.
\end{proof}

\begin{corollary}\label{cor-4-2}
	Let $(S,\sF)$ be a reduced foliated surface (not necessarily relatively minimal)
	with pseudo-effective canonical divisor.
	Let
	$$K_{\sF}=P+N,$$
	be the Zariski decomposition, where $P$ and $N$ are respectively the nef and negative parts.
	Then
	$$c_1^2(\sF)=K_{\sF}^2+\sum_{p\in N
	}\beta_p(\sF).$$
	In other words, one can compute the first Chern number $c_1^2(\sF)$ over any reduced model (not necessarily relatively minimal).
\end{corollary}
\begin{proof}
	By contracting possible $\sF$-exceptional curves on $S$, one can obtain a relatively minimal model.
	Hence it is enough to prove that if $\sigma:\,S \to S'$ is a blowing-up centered at some point $p$, then
	\begin{equation}\label{eqn-4-23}
		K_{\sF}^2+\sum_{p\in N
		}\beta_p(\sF)=K_{\sF'}^2+\sum_{p\in N'
		}\beta_p(\sF'),
	\end{equation}
	where $(S,\sF)$ and $(S',\sF')=(S',\sigma_*\sF)$ are both reduced, and $N$ and $N'$ are respectively the negative parts of $K_{\sF}$ and $K_{\sF'}$.
	By construction, the exceptional curve $\mathcal{E}$ is always $\sF$-invariant (cf. \autoref{lem-2-1}) and
	\[K_{\sF}=\sigma^*(K_{\sF'})+(1-\ell_p)\mathcal{E},\]
	where $\ell_p=0$ (resp. $\ell_p=1$) if $p$ is a regular point (resp. singularity).
	Remark also that if $p$ is a saddle-node, then it does not lie on the negative part $N$.
	Hence \eqref{eqn-4-23} follows by \autoref{lem-2-1}. This completes the proof.
\end{proof}
\begin{remark}
	One can prove similarly that
	$$	K_\sF N_\sF=K_{\sF'} N_{\sF'},$$
	for any two reduced foliated surfaces $(S,\sF)$ and $(S',\sF')$ birational to each other.
	Suppose moreover that there are no saddle-nodes. Then 
	one shows also with the help of \autoref{lem-2-1} that
	$$\sum_{p\in Sing(\sF)}\chi_p(\sF)=\sum_{p'\in Sing(\sF')}\chi_{p'}(\sF'),$$
	for any two reduced foliated surfaces $(S,\sF)$ and $(S',\sF')$ birational to each other.
	%, from which it follows that
	%$$\chi(\sF)=\chi(\sF').$$
	In other words, one can compute $\chi(\sF)$ over any reduced model (not necessarily relatively minimal) if its reduced model has no saddle-nodes.
	In particular, $\chi(\sF)$ (as well as $c_2(\sF)$ by the Noether equality \eqref{eqn-chern-1-1}) is well-defined for any arbitrary reduced foliation $\sF$ without saddle-nodes.
	In general, we prove that $c_2(\sF)$ (and hence also $\chi(\sF)$) is well-defined
	 in the following.
\end{remark}

\begin{proposition}\label{prop-invariant-c_2}
	Let $(S',\sF')$ and $(S'',\sF'')$ be two relatively minimal foliated surfaces birational to each other.
	Then either
	\begin{enumerate}[(i)]
		\item both $K_{\sF'}$ and $K_{\sF''}$ are non-pseudo-effective, in which case $c_2(\sF)=0$ by definition;
		\item both $K_{\sF'}$ and $K_{\sF''}$ are pseudo-effective, in which case
		\begin{equation}\label{eqn-5-10-1}
			\sum_{p\not\in N'}\beta_p(\sF')=\sum_{p\not\in N'}\beta_p(\sF'').
		\end{equation}
	\end{enumerate} 
	In particular, the second Chern number $c_2(\sF)$ is well-defined for any arbitrary foliation $\sF$.	
\end{proposition}
\begin{proof}
	The two possibilities follow from \autoref{lem-5-pseudo-eff}.
	And in the first case, $c_2(\sF)=0$ by definition.
	It remains to prove \eqref{eqn-5-10-1}, if both $K_{\sF'}$ and $K_{\sF''}$ are pseudo-effective.
	According to \cite[Theorem\,5.1]{bru-04}, a foliated surface admitting two different relatively minimal model only if it is either the {\it very special foliation}
	 or {\it a Riccati foliation}.
	
	Suppose first that both $\sF'$ and $\sF''$ are the very
	special foliations,
	then the eigenvalues on a singularity $p$ away from the support of the negative parts are all non-rational, cf. \cite[\S\,4.2 and Example\,8.1]{bru-04}.
	It follows that
	$$\sum_{p\not\in N'}\beta_p(\sF')=0=\sum_{p\not\in N'}\beta_p(\sF'').$$
	
	Consider next the case when both $\sF'$ and $\sF''$ are Riccati foliations.
	Let $\varphi:\,S' \dashrightarrow S''$ be the birational map with $\sF'=\varphi^*\sF''$.
	Our aim is to prove that the birational map $\varphi$ between $S'$ and $S''$ is
	biholomorphic near any singular point whose eigenvalue
	is a negative rational number.
	Then according to the definitions \eqref{eqn-4-1} and \eqref{eqn-def-beta}, it follows that $$\sum_{p\not\in N'}\beta_p(\sF')=\sum_{p\not\in N'}\beta_p(\sF'')$$ as required.
	Suppose on the contrary that the birational map $\varphi$ is not
	biholomorphic near some singular point $p'\in S'$ with eigenvalue $\lambda_{p'}=-\frac{n'}{m'}\in \mathbb{Q}^-$.
	By exchanging $S'$ and $S''$ if possible, we may assume that $\varphi$ is not defined at $p'$.	
	Let $\sigma':X\to S'$ be the  minimal resolution of the
	indeterminacies of $\varphi: S'\dashrightarrow S''$. Then there is a
	sequence of blowing-ups $\sigma'':X\to S''$ such that
	$\sigma''=\varphi\circ\sigma'$, and the pullback foliations on $X$ are
	the same, $\sG:=(\sigma'')^*\sF''=(\sigma')^*\sF'$.
	$$
	\xymatrix{
		& X\ar[dl]_{\sigma'}  \ar[dr]^{\sigma''}             \\
		S' \ar@{-->}[rr]^{\varphi} & &     S''        }
	$$
	
	By assumption, the exceptional curve $C$ of the last blowing-up of
	$\sigma'$ above $p'$ is not contracted by $\sigma''$. Moreover, $\sigma''$ is not an
	isomorphism near $C$; otherwise, as $C$ is a $\sG$-exceptional curve,
	$\sigma''(C)$ would be an	$\sF''$-exceptional curve, which contradicts the relative minimality of $\sF''$.
	Hence $\sigma''$ can be factorized as $$(X,\sG)\overset{g}\lra
	(X',\sG')\overset{h}\lra (S'',\sF''),$$ where $g$ is the contraction of
	$(-1)$-curves disjoint from $C$.
	Then $g$ is a biholomorphism in a neighborhood of $C$,
	and hence $C_1=g(C)$ is a $\sG'$-exceptional curve.
	Moreover, since $h$ is not an isomorphism near $C_1$ by construction,
	$C_1$ intersects another $\sG'$-exceptional curve
	$C_2$ at one point $q$.
	According to the proof of \cite[Theorem\,5.1]{bru-04}, one sees that $\#(C_1\cap C_2)=1$,
	and that the image $h(C_1)$ must be a fiber $F_1$ of $f$,
	where $f:\,S'' \to B$ is the $\bbp^1$-fibration attached to the Riccati foliation $\sF''$ such that the general fiber of $f$ is transverse to $\sF''$.
	Since $p'$ is a reduced singularity with negative rational eigenvalue $\lambda_{p'}=-\frac{n'}{m'}$,
	the eigenvalues of any possible singularities on $(\sigma')^{-1}(p')$ are all negative rational numbers by \autoref{lem-2-1}\,(ii).
	It follows that the image $h(C_2)$ would be a singularity $p$ on $F_1$.
	Since $\sF''$ is reduced, the eigenvalue at $p$ is a negative rational eigenvalue $\lambda_{p}=-\frac{n}{m}$.
	Since $K_{\sF''}\cdot F_1=F_1^2=0$, by the two formulas in \autoref{prop-2-2} one sees that there is exactly one another singularity $\tilde p$ (other than $p$) on $F_1$ with eigenvalue $\lambda_{\tilde p}=-\lambda_p=\frac{n}{m}$.	
	This gives a contradiction since $\sF''$ is assumed to be reduced.
\end{proof}

To end this subsection, 
we prove a useful estimation on the lower bound of $c_1^2(\sF)=\vol(\sF)$
for a foliation with non-degenerate singularities.

\begin{lemma}\label{lem-7-5}
	Let $(S_0,\sF_0)$ be a foliated surface with pseudo-effective canonical divisor $K_{\sF_0}$, and
	$$K_{\sF_0}=P_0+N_0,$$ be the Zariski decomposition of $K_{\sF_0}$.
	Suppose that all the singularities of $\sF_0$ are non-degenerate,
	and denote by $T$ the set of singularities of $\sF_0$ which either lie on the support of $N_0$ or are non-reduced.
	Then
	\begin{equation}\label{eqn-7-12}
		\vol(\sF) =c_1^2(\sF) \geq K_{\sF_0}^2 + \sum_{p\in T}\beta_p(\sF_0).
		%\sum_{p\text{~is~non-reduced}, \atop \text{or~$p$~is~reduced~}\in N_0} \beta_p(\sF_0),
	\end{equation}
\end{lemma}
\begin{proof}
	Let $\pi:\,S' \to S_0$ be a minimal resolution of non-reduced singularities of $\sF_0$,
	i.e., the induced foliation $\sF'=\pi^*(\sF_0)$ is reduced and there is no $\sF'$-exceptional curve contained in the inverse image of non-reduced singularities under $\pi$.
	Of course, $\sF'$ might not be relatively minimal. However it is reduced and birational to $\sF_0$.
	Hence one can compute the volume (or the first Chern number) using the reduced model $\sF'$ based on \autoref{thm813c} and \autoref{cor-4-2}.
	
%	By Miyaoka's criterion \cite{miy-87},
%	$K_{\sF'}$ is also pseudo-effective;
%	otherwise, $\sF'$ is the foliation induced by a family of rational curves
	
	 Let
	\begin{equation}\label{eqn-7-14}
		 K_{\sF'}=P'+N',
	\end{equation}
	be the Zariski decompositions of $K_{\sF'}$.
	We claim
	\begin{claim}\label{claim-7-1}
		(i). Let $p$ be a reduced singularity of $\sF_0$ on $N_0$.
		Then the inverse image $\pi^{-1}(p)$ is a reduced singularity of $\sF'$ lying on the negative part $N'$.
		
		(ii). Let $p$ be a non-reduced degenerate singularity of $\sF_0$.
		Suppose that there is no saddle-node of $\sF'$ lying on $\pi^{-1}(p)$.
		Then all the singularities of $\sF'$ on $\pi^{-1}(p)$ lie on the negative part $N'$.
	\end{claim}
	
	We first prove \autoref{lem-7-5} based on \autoref{claim-7-1}, whose proof is postponed.
	Let $p_1,\cdots,p_k$ be the non-reduced singularities of $\sF_0$,
	such that there is no saddle-node of $\sF'$ lying on $\pi^{-1}(p_i)$.
	According to \eqref{eqn-2-16},
	\begin{equation}\label{eqn-7-15}
		K_{\sF'}=\pi^*K_{\sF_0}-\sum_{i=1}^{k} E_{p_i},
	\end{equation}
	where $E_{p_i}$ is the last exceptional curve lying on $\pi^{-1}(p_i)$.
	Note that $\beta_p(\sF_0)<0$ for any non-reduced non-degenerate singularity $p$ of $\sF_0$.
	Hence it suffices to prove
	\begin{equation}\label{eqn-4-2}
		\vol(\sF)\geq K_{\sF_0}^2+ \sum_{\text{$p$~is~reduced~}\in N_0}\beta_{p}(\sF_0)+\sum_{i=1}^{k}\beta_{p_i}(\sF_0).
	\end{equation}
	By definition, $\beta_p(\sF')\geq 0$ for any reduced singularity $p$ of $\sF'$.
	Hence by \autoref{cor-4-2} and \autoref{claim-7-1} together with \eqref{eqn-7-13},
	$$\begin{aligned}
		\vol(\sF)=c_1^2(\sF)=c_1^2(\sF')&\,=K_{\sF'}^2 + \sum_{p'\in N'} \beta_{p'}(\sF')\\
		&\,\geq K_{\sF'}^2 + \sum_{\text{$p$~is~reduced~}\in N_0}\beta_{\pi^{-1}(p)}(\sF')+\sum_{i=1}^{k}\sum_{p'\in \pi^{-1}(p_i)} \beta_{p'}(\sF')\\
		&\,=\big(K_{\sF_0}^2-k\big)+ \sum_{\text{$p$~is~reduced~}\in N_0}\beta_{p}(\sF_0)+\sum_{i=1}^{k}\big(\beta_{p_i}(\sF_0)+1\big)\\
		&\,=K_{\sF_0}^2+ \sum_{\text{$p$~is~reduced~}\in N_0}\beta_{p}(\sF_0)+\sum_{i=1}^{k}\beta_{p_i}(\sF_0).
	\end{aligned}$$
	In the above, if $p\in N_0$ is a reduced singularity,
	then $\pi$ is an isomorphism around $p$, and hence $\beta_{\pi^{-1}(p)}(\sF')=\beta_{p}(\sF_0)$.
	This proves \eqref{eqn-4-2}, and hence also \eqref{eqn-7-12}.
	It remains to prove \autoref{claim-7-1}.
\end{proof}

\begin{proof}[{Proof of \autoref{claim-7-1}}]
	(i). Consider the Zariski decompositions of $K_{\sF_0}$ and $K_{\sF'}$ in \eqref{eqn-7-14}.
	By \eqref{eqn-7-15},
	$$\pi^*P_0+\pi^*N_0=\pi^*K_{\sF_0}=K_{\sF'}+\sum_{i=1}^{k} E_k=P'+N'+\sum_{i=1}^{k} E_{p_i}.$$
	Note that the first equality is just the Zariski decomposition of $\pi^*K_{\sF_0}$.
	It follows that $P'\leq \pi^*P_0$, or equivalently,
	$$\pi^*N_0\leq N'+\sum_{i=1}^{k} E_{p_i}.$$
	In particular, the inverse image of any reduced singularity of $\sF_0$ on $N_0$
	is still reduced (since $\pi$ is an isomorphism around reduced singularities) lying on $\text{Supp}\big(N'\big)$.
%	According to \autoref{lem-7-1}\,(iii), there is no singularity lying on $\bigcup\limits_{i=1}^{k} E_{p_i}$.
%	This proves the first statement.
	
	(ii). This is in fact follows from the resolution process of a non-reduced degenerate singularity presented above \autoref{lem-2-2}.
	Indeed, fixing $p=p_i$,
	the inverse image $\pi^{-1}(p)$ consists of a chain of rational curves
	$C_1+\cdots+C_n$ with $C_j=E_{p_i}$ being the last exceptional curve.
	Then there is no singularity of $\sF'$ lying on $C_j=E_{p_i}$, and
	$$C_{j-1}\cdot K_{\sF'}=C_{j-1}\cdot\Big(\pi^*K_{\sF_0}-\sum_{i=1}^{k} E_{p_i}\Big)=-C_{j-1}C_j=-1.$$
	Hence $C_{j-1}\subseteq \text{Supp}(N')$.
	Suppose $N'=b_{j-1}C_{j-1}+N_{j-1}'$
	Thus
	$$0\leq C_{j-2}\cdot P'=C_{j-2}\cdot (K_{\sF'}-N')=-C_{j-2}N'<-C_{j-2}\cdot N_{j-1}'.
	$$
	Hence $C_{j-2}\subseteq \text{Supp}(N_{j-1}') \subseteq \text{Supp}(N')$.
	One proves then inductively that $C_{s}\subseteq \text{Supp}(N')$ for any $1\leq s\leq j-1$.
	Similarly one can prove inductively that $C_{s}\subseteq \text{Supp}(N')$ for any $j+1\leq s\leq n$.
	This completes the proof.
\end{proof}

\begin{remark}
	Let
	$$T'=\big\{p\in\text{Sing}(\sF_0)~\big|~p\in N_0,\text{~or $p$ is non-reduced and there is no saddle-node in $\pi^{-1}(p)$}\big\},$$
	where $\sigma:\,S' \to S_0$ is a minimal resolution of the non-reduced singularity $p$. Then $T'\subseteq T$, and $T'=T$ if $\sF_0$ is algebraically integral.
	We have in fact proved in the above that
	$$\vol(\sF) =c_1^2(\sF) \geq K_{\sF_0}^2 + \sum_{p\in T'}\beta_p(\sF_0).$$
\end{remark}
	
	\subsection{The Chern numbers of an algebraically integral foliation}\label{sec-chern-algebraic}
	In this subsection, we compute the Chern numbers of an algebraically integral foliation $\sF$.    
	The main purpose
	is to prove that the Chern numbers are exactly the modular
	invariants of the corresponding fibration.
	Since the Chern numbers are birational invariants,
	we may thus always assume that $\sF$ is relatively minimal in this subsection.
	A relatively algebraically integral minimal foliation $\sF$ is defined by a normal-crossing
	fibration $f:S\to B$ of genus $g\geq 0$.
	If $g=0$, then by definition the Chern numbers are all zero, and all the modular invariants of $f$ are also zero.
	Thus we may assume $g\geq 1$ in the following.

	Let $f:\,S \to B$ be a surface fibration defining a relatively minimal foliation $\sF$
	by taking the saturation of $\ker(df:\,T_{S} \to f^*T_{B})$ in $T_{S}$.
	Let $F$ be a singular fiber of $f$. Then $F$ is a
	normal-crossing divisor containing no redundant $(-1)$-curves. Let
	$p$ be a singular point of $F_{\rm red}$. Then $(F,p)$ is defined
	locally by $x^ay^b=0$ for some positive integers $a,b$. Let
	$$\beta_p:=\frac{\gcd(a,b)^2}{ab}, \hskip1cm
	\beta_F:=\sum_{p\in F}\beta_p. $$
	Locally around $p$, the corresponding foliation $\sF$ is defined by $d(x^ay^b)=0$,
	or equivalently by $\omega=aydx+bxdy=0$.
	As a singular point of $\sF$, the eigenvalue
	$\lambda_p=-\frac{a}{b}\in\mathbb Q^-$. Thus
	$$\beta_p=\beta(-\lambda_p)=\beta_p(\sF).$$
	We can also talk about the maximal $\sF$-chains in the fibers. We
	use $p\in N$ to denote a singular point on some maximal $\sF$-chain.
	We define
	\begin{equation}\label{eqn-4-25}
	c_{-1}(F)=\sum_{p\in F\cap N}\beta(-\lambda_p).
	\end{equation}
	Let
	$$\mu_F=\sum\limits_{p\in F}\mu_p(F_{\rm red}),\qquad \alpha_F=\sum_{p\in F}(m_p(F_{\rm red})-2)^2,$$
	where $\mu_p(F_{\rm red})$ is the Milnor number of the reduced part $F_{\rm red}$ at $p$,
	$m_p(F_{\rm	red})$ is the multiplicity of $F_{\rm red}$ at $p$, and the sums run
	over all singular points of $F_{\rm red}$.
	The number $\mu_F$ is called the total Milnor
	number of the singular points of $F_{\rm red}$.
	Because $F$ is normal-crossing, it follows that
	\begin{equation}\label{eqn-4-24}
		\mu_F=\sum_{p\in F}1=\#\{\text{singularities of $F_{\rm red}$}\},\qquad \alpha_F=0.
	\end{equation}
	
	In \cite[Definition\,II]{Tan1996}, the local Chern
	numbers	$c_1^2(F)$, $c_2(F)$ and $\chi_F$ are introduced for a fiber $F$.
	They satisfy Noether's equality
	$$\chi_F=\frac1{12}(c_1^2(F)+c_2(F)).$$
	Moreover, we have the following formulas to compute them (cf. \cite[Theorem\,3.1]{Tan1996})
	\begin{equation*}
	\begin{cases}
	c_1^2(F)=4(g-p_a(F_{\rm red}))+F_{\rm red}^2+\alpha_F-c_{-1}(F), &\\
	c_2(F)=2(g-p_a(F_{\rm red}))+\mu_F-\beta_F+c_{-1}(F), &
	\end{cases}
	\end{equation*}
	where $p_a(F_{\rm red})$ is the arithmetic genus of $F_{\rm red}$.
	These local Chern numbers are closely related to the modular invariants recalled in \autoref{sec-alg},
	cf. the corollary after \cite[Theorem\,A]{Tan1996}.
	\begin{equation*}
		\left\{
		\begin{aligned}
			\kappa(f)&\,=K_S^2-8(g-1)(g(B)-1)-\sum_F c_1^2(F)=K_{f}^2-\sum_F c_1^2(F), \\
			\delta(f)&\,=c_2(S)-4(g-1)(g(B)-1)-\sum_{F}c_2(F)=e_f-\sum_{F}c_2(F),\\
			\chi(f)&\,=\chi(\mathcal O_S)-(g-1)(g(B)-1)-\sum_F\chi_F=\chi_f-\sum_F\chi_F.
		\end{aligned}\right.
	\end{equation*}
	We refer to \cite{LuTan} for more properties of these local Chern numbers.
	
	\begin{theorem}\label{thm813b}
		Let $f:\,S \to B$ be a surface fibration of genus $g\geq 1$ defining a relatively minimal foliation $\sF$
		by taking the saturation of $\ker(df:\,T_{S} \to f^*T_{B})$ in $T_{S}$.
		Then the Chern numbers of $\sF$ are exactly	the modular Chern numbers of the surface fibration $f$,
		$$c_1^2(\sF)=\kappa(f), \hskip0.5cm c_2(\sF)=\delta(f), \hskip0.5cm \chi(\sF)=
		\lambda(f).$$
	\end{theorem}
	\begin{proof}   Let
		$\Delta(f)=\sum\limits_{F}(F-F_{\rm red})$ be the discriminant divisor of
		$f$, where the sum runs over all singular fibers.
		By \eqref{eqn-2-3},
		$$ K_{\sF}=K_{S/B}-\Delta(f),\qquad
		N_\sF^*=K_S-K_{\sF}=f^*K_B+\Delta(f).
		$$
		Note that $\Delta(f)^2=\sum\limits_F F_{\rm red}^2$,
		and that the support of the negative part $N$ of $K_{\sF}$ is contained in singular fibers of $f$.
		By \eqref{eqn-4-25} and \eqref{eqn-4-24},
		\begin{align*}
		\sum_F c_1^2(F)&=\sum_F4(g-p_a(F_{\rm red}))+\sum_F F_{\rm
			red}^2-\sum_F\sum_{p\in F\cap N}
		\beta(-\lambda_p)\\
		&=\sum_F2K_S(F-F_{\rm red})-\sum_FF_{\rm red}^2-
		\sum_{p\in N}\beta(-\lambda_p)\\
		&=2K_{S/B}\cdot \Delta(f)-\Delta(f)^2-\sum_{p\in N}\beta(-\lambda_p).
		\end{align*}
		Hence
		\begin{align*}
		\kappa(f)&=K_{f}^2-\sum_{F}c_1^2(F)
		=K_{S/B}^2-2K_{S/B}\cdot \Delta(f)+\Delta(f)^2+\sum_{p\in N}\beta(-\lambda_p)\\
		&=(K_{S/B}-\Delta(f))^2+\sum_{p\in
			N}\beta(-\lambda_p)=K_\sF^2+\sum_{p\in
			N}\beta(-\lambda_p)=c_1^2(\sF).
		\end{align*}
		Similarly,
		\begin{align*}
		c_2(F)&=2(g-p_a(F_{\rm red}))+\mu_F-\sum_{p\in F,\, p\not\in
			N}\beta(-\lambda_p)=e_F-\sum_{p\in F,\, p\not\in
			N}\beta(-\lambda_p),
		\end{align*}
		where we use the following formula (cf. \cite[Formula (7) on p.229]{Tan1994}):
		$$e_F:=\chit(F_{\rm red})+2g-2=2(g-p_a(F_{\rm red}))+\mu_F.$$
		Therefore,
		\begin{align*}
		\delta(f)&=e_f-\sum_{F}c_2(F)=\sum_{F}(e_F-c_2(F))\\
		&=\sum_F\sum_{p\in F,\, p\not\in
			N}\beta(-\lambda_p)=\sum_{p\not\in N}\beta(-\lambda_p)=c_2(\sF).
		\end{align*}
		Finally, by Noether's equalities \eqref{eqn-chern-1-1} and \eqref{eqn-noe-modular},
		we get $\lambda(f)=\chi(\sF)$.
	\end{proof}

%\begin{remark}
%	We define the Chern numbers only for a reduced foliation with a pseudo-effective canonical divisor $K_{\sF}$.
%	If $K_{\sF}$ is not pseudo-effective, then $\sF$ is induced by a family $f:\,S \to B$ of curves of genus $g=0$ by Miyaoka's theorem \cite{miy-87}.
%	Such a family is automatically iso-trivial since there is a unique curve of genus $g=0$.
%	We can thus define $c_1^2(\sF)=c_2(\sF)=\chi(\sF)=0$ for such a foliation by convention,
%	and all the statements in \autoref{thm-chern-1} still hold.
%\end{remark}
	
%	\begin{corollary}\label{202301142112} The definition Chern numbers of an algebraically integral foliation $\sF$ is
%		independent of the choices of the  relatively minimal models.
%	\end{corollary}
%	\begin{proof} If $g(\sF)\neq 0$, then the  relatively minimal model is unique,
%		there is only one choice. If $g(\sF)=0$, then the  relatively
%		minimal models are $\mathbb P^1$-fibrations and their modular
%		invariants are all zero. Therefore, the desired result is proved.
%	\end{proof}

	\section{Foliations of non-general type}\label{sec-non-general-type}
	In this section, we compute the Chern numbers of foliations of non-general type.
	By the birational invariance of the Chern numbers, we may consider only the relatively minimal foliated surfaces.
	It depends on the classification of the foliated surface of non-general type; see the tabular in \autoref{sec-2-3}.
	\autoref{thm-chern-2} will be proved at the end of this section;
	while \autoref{cor-chern-1} is a direct consequence whose proof will be thus omitted.
	%We consider first algebraically integral foliations.
	\begin{lemma}\label{lem-5-2}
		Let $\sF$ be an algebraically integral relatively minimal foliation of non-general type. Then
		\begin{equation}\label{eqn-5-6}
			\left\{\begin{aligned}
			&c_2(\sF)=12\chi(\sF)=0, &&\text{if $\sF$ is induced by an isotrivial fibration of genus $g\geq 0$};\\
			&c_2(\sF)=12\chi(\sF)>0, &&\text{if $\sF$ is induced by a non-isotrivial fibration of genus $g= 1$}.
			\end{aligned}\right.
		\end{equation}
	\end{lemma}
\begin{proof}
	By \eqref{eqn-chern-1-2}, the Chern numbers of $\sF$ are equal to the corresponding modular invariants of the fibration $f$.
	When the fibration $f$ is isotrivial, then all these modular invariants are zero by definition.
	When $f$ is a non-isotrivial fibration of genus $g=1$,
	it is well-known that $\delta(f)=12\lambda(f) \neq 0$.
	This completes the proof.
\end{proof}

\begin{lemma}\label{lem-5-3}
	Let $\sF$ be a Hilbert modular foliation. Then
	$c_2(\sF)=0$.
\end{lemma}
\begin{proof}
	We first briefly recall the construction of Hilbert modular foliations,
	and refer to \cite[\S\,9.5]{bru-04} and \cite[\S\,4]{bru-03} for more details.
	Let $\cald$ be the unit disc,
	and $\Gamma\subseteq \text{Aut}(\cald\times \cald)=PSL(2,\mathbb{R}) \times PSL(2,\mathbb{R})$
	be a subgroup such that it does not contain any finite index subgroup of type
    $\Gamma_1\times \Gamma_2$ with $\Gamma_i\subseteq PSL(2,\mathbb{R})$.
    The quotient surface $\cald\times \cald/\Gamma$ admits at most quotient singularities, resolved by Hirzebruch-Jung chains of rational curves (denoted by $D_i$'s).
    This quotient may not be compact, but it can be compactified by adding one or more cycles of rational curves (denoted by $E_j$'s).
    By a minimal resolution of these possible singularities and compactification,
    one obtains a smooth projective surface $S$, which is called a Hilbert modular surface.
    There are two natural foliations, denoted by $\sF$ and $\sG$ respectively,
    arising from the vertical and the horizontal ones on $\cald\times\cald$.
    Both foliations will be referred as Hilbert modular foliations.
    The Hirzebruch-Jung chains $D_i$'s and
    the cycles of rational curves $E_j$'s are the only
    compact leaves of both $\sF$ and $\sG$.
    Moreover, both foliations are reduced whose singularities are all non-degenerate and equal to the nodes of $D_i$'s and $E_j$'s.
    These Hirzebruch–Jung chains $D_i$'s are maximal $\sF$-chains and $\sG$-chains in sense of \autoref{def-max-chain}.
    Outside $D_i$'s and $E_j$'s, $\sF$ and $\sG$ are transverse to each other.
    Hence one has
    $$K_{\sF}=N_{\sG}^*\otimes \mathcal{O}_S(H),$$
    where $H$ is a positive integral divisor whose support is equal to the union of
    these compact leaves $D_i$'s and $E_j$'s.
    
   By the above construction, the singularities of $\sF$ are exactly the nodes of $D_i$'s and $E_j$'s, and the Hirzebruch-Jung chains $D_i$'s are maximal $\sF$-chains.
   It follows that the singularities of $\sF$ which are not on the support of $N$ are exactly the nodes of $E_j$'s, where $N$ is the negative part of the Zariski decomposition of $K_{\sF}=P+N$.
   By the definition of the second Chern number $c_2(\sF)$ in \autoref{def-chern-numbers},
   it is enough to prove that for any node $p$ of some cycle $E_j$ of rational curves,
   \begin{equation}\label{eqn-5-3}
   	\lambda_p \not\in \mathbb{Q}\setminus\{0\},
   \end{equation}
   where $\lambda_p$ is the eigenvalue of $\sF$ at $p$.
	
   Let $E_j=C_1+C_2+\cdots+C_r$ be a cycle of rational curve as follows.
   $${\setlength{\unitlength}{6mm}
   	\begin{tikzpicture}
   	\draw[thick] (0.5,1) -- (0.5,3);
   	\draw[thick] (0,1.5) -- (3,0.8);
   	\draw[thick] (0,2.5) -- (3,3.2);
   	\draw[thick] (2,3.2) -- (5,2.5);
   	
   %	\filldraw[black] (4.2,2.1) circle (1pt);
   	\filldraw[black] (4,1.9) circle (1pt);
   	\filldraw[black] (3.8,1.7) circle (1pt);
   	\filldraw[black] (3.6,1.5) circle (1pt);
   	\filldraw[black] (3.4,1.3) circle (1pt);
   	
   	\filldraw[black] (0.5,1.38) circle (1.5pt);
   	\filldraw[black] (0.5,2.62) circle (1.5pt);
   	\filldraw[black] (2.5,3.1) circle (1.5pt);
   	\filldraw[black] (4.5,2.6) circle (1.5pt);
   	\filldraw[black] (2.5,0.92) circle (1.5pt);
   	
   	\filldraw[black] (-0.1,2) node[anchor=west]{$C_1$};
   	\filldraw[black] (1,3.1) node[anchor=west]{$C_2$};
   	\filldraw[black] (3.3,3) node[anchor=west]{$C_3$};
   	\filldraw[black] (1,0.9) node[anchor=west]{$C_r$};
   	
   	\filldraw[black] (0.45,1.55) node[anchor=west]{$p_1$};
   	\filldraw[black] (0.45,2.45) node[anchor=west]{$p_2$};
   	\filldraw[black] (2.25,2.85) node[anchor=west]{$p_3$};
   	\filldraw[black] (4.2,2.35) node[anchor=west]{$p_4$};
   	\filldraw[black] (2.25,1.15) node[anchor=west]{$p_{r}$};
\end{tikzpicture}}$$
By \autoref{lem-2-2}, we may assume that
$$\cs(\sF,C_i,p_{i})=\lambda_i,\qquad \cs(\sF,C_{i-1},p_i)=\frac{1}{\lambda_i},\qquad \forall\,1\leq i\leq r,$$
where $\lambda_i$ is the eigenvalue of $\sF$ at $p_i$,
and we under stand the subscript $0$ as $r$ (resp. $r+1$ as $1$):
for instance, we understand $C_{0}=C_r$.
Note that $\sF$ is reduced. Hence $\lambda_i$ can not be a positive rational number.
We will prove by contradiction that $\lambda_i$ can not be negative rational number either.

Suppose that there exists some $1\leq j \leq r$, such that the eigenvalue $\lambda_{i_0}=-\frac{m_{i_0}}{n_{i_0}}$ is a negative rational number with $m_{i_0}\geq 1, n_{i_0}\geq 1$ and $\gcd(m_{i_0},n_{i_0})=1$.
By the formula \eqref{eqn-2-7},
\begin{equation}\label{eqn-5-5}
	\lambda_i+\frac{1}{\lambda_{i+1}}=\cs(\sF,C_i,p_{i})+\cs(\sF,C_i,p_{i+1})=C_i^2\leq -2,\qquad \forall\,1\leq i \leq r.
\end{equation}
If $\lambda_{i_0}\geq -1$, then one show by induction that $\lambda_i$ is a negative rational number with $\lambda_{i}\geq -1$ for $j\leq i \leq r+1$.
In particular, $\lambda_1=\lambda_{r+1}\geq -1$.
Then by repeatedly using \eqref{eqn-5-5}, one shows that every eigenvalue
$\lambda_i$ is a negative rational number with $\lambda_{i}\geq -1$.
If $\lambda_{i_0}\leq -1$, then one show by induction that $\lambda_i$ is a negative rational number with $\lambda_{i}\leq -1$ for $0\leq i \leq j$.
In particular, $\lambda_r=\lambda_0\leq -1$.
Similarly, one shows that every eigenvalue
$\lambda_i$ is a negative rational number with $\lambda_{i}\leq -1$.
In any case, one shows that every eigenvalue $\lambda_i=-\frac{m_i}{n_i}$ is a negative rational number with $m_i\geq 1, n_i \geq 1$ and $\gcd(m_i,n_i)=1$.
Let $e_i=-C_i^2\geq 2$. Then by \eqref{eqn-5-5}, for every $1\leq i \leq r$, it holds
$$\frac{m_i}{n_i}+\frac{n_{i+1}}{m_{i+1}}=e_i,\quad \Longrightarrow\quad
m_im_{i+1}+n_in_{i+1}=n_im_{i+1}e_i.$$
Since $\gcd(m_i,n_i)=1$ and $n_i$ divides both $n_in_{i+1}$ and $n_im_{i+1}e_i$,
it follows that $n_i$ divides $m_{i+1}$.
Similarly, one shows that $m_{i+1}$ divides $n_i$.
Hence $m_{i+1}=n_i$, and the above equality becomes
$$n_{i-1}+n_{i+1}=n_ie_i,\qquad \forall\,1\leq i \leq r.$$
Let $D=\sum\limits_{i=1}^{r}n_iC_i$ be an effective divisor supported on $E_j=C_1+C_2+\cdots+C_r$.
The above relation shows that $D\cdot C_i=0$ for any $1\leq i \leq r$.
On the other hand, it is known that the intersection matrix $\big(C_i\cdot C_j\big)$ is negatively definite, cf. \cite[p.146]{bru-03} or \cite[p.124]{bru-04}.
This gives a contradiction,
and the proof is complete.
\end{proof}
	
\begin{lemma}\label{lem-5-4} 
	Let $\sF$ be a transcendental Riccati foliation. Then
		$c_2(\sF)=0$.
\end{lemma}	
\begin{proof}
	First, recall that a (transcendental) Riccati foliation $(S, \sF)$ is a foliation
	associated with a rational fibration $\pi:S\to B$ whose general
	fiber $F$ is transverse to $\sF$. Every Riccati foliation has a
	standard form (\cite[Proposition\,4.2]{bru-04}) where $\pi$
	is a $\mathbb P^1$-bundle. The singularities of $\sF$ are on one of the
	invariant fibers $F$ of the following forms.	
	\begin{enumerate}
		\item[\textrm{I}$_\lambda$)] {\it Non-degenerate fiber:} $F$
		admits two non-degenerate singularities with eigenvalues $\pm \lambda\neq0$.
		
		\item[\textrm{II})] {\it Semi-degenerate fiber:}
		$F$ admits  two saddle-nodes of the same multiplicity, or one
		saddle-node of multiplicity two.
		
		\item[\textrm{III})] {\it Nilpotent fiber:}  $F$ admits only one nilpotent singularity.
	\end{enumerate}
	Based on \autoref{lem-2-1}\,(ii), by choosing proper flippings, one can further assume that
	the non-degenerate fibers {I}$_\lambda$ satisfy
	$0\leq\mathrm{Re}\,\lambda\leq\frac{1}{2}$. Then the minimal
	resolution $\sigma:\,\wt S \to S$ of this standard model is a relatively minimal
	model $\wt \sF$ (relatively minimal model for a Riccati foliation may not be unique!), and the canonical divisor $K_{\wt \sF}$ is described explicitly
	below \cite[Proposition\,4.2]{bru-04}. Based on this relatively
	minimal model, one sees that any singular point $p$ with
	$\lambda_p\in \mathbb Q^-$ is contained in $\wt N$, the negative part of
	the Zariski decomposition $K_{\wt \sF}=\wt P+ \wt N$.
	Indeed, any fiber admitting a non-reduced singularity is either of type I$_{\lambda}$) of type III).
	Let $F_1,\cdots, F_s$ be all such fibers, and let $\wt F_i$ be its corresponding fiber in its relatively minimal model as above.
	Then there is exists a unique component $C_i \subseteq \wt F_i$ for each $1\leq i \leq s$ (cf. the text at the end of \cite[\S\,4.1]{bru-04}), such that
	$$K_{\wt \sF}= \sigma^*(K_{\sF})-\sum_{i=1}^{s} C_i
	=\tilde \pi^*(\call)-\sum_{i=1}^{s} C_i=\wt P+ \wt N,$$
	where
	$$\wt P
	=\tilde \pi^*(\call)-\sum_{i=1}^{s}\frac{\wt F_i}{m_i},\qquad \wt N=\sum_{i=1}^{s}\Big(\frac{\wt F_i}{m_i}-C_i\Big),$$
	where $m_i$ is the multiplicity of $C_i$ in $\wt F_i$.
	This shows that all the singularities on $\wt F_i$ lie on the negative part $\wt N$ as required.
	By \autoref{prop-invariant-c_2}, one can compute the second Chern number using this relatively minimal model.
 Thus $c_2(\sF)=c_2(\wt \sF)=0$ by definition.
\end{proof}

\begin{lemma}\label{lem-5-5} 
	Let $\sF$ be a transcendental turbulent foliation. Then
	$c_2(\sF)=0$.
\end{lemma}	
\begin{proof}
	Since $\sF$ is a turbulent foliation,
	there is an isotrivial elliptic fibration $\pi:\,S\to B$ whose general
	fiber $F$ is transverse to $\sF$, equivalently, $K_\sF\cdot F=0$.
		
	We can choose a model such that $\pi$ is  relatively minimal
		normal-crossing fibration, i.e., the singular fibers
		$F_1,\cdots,F_s$ of $\pi$ are normal-crossing divisors without
		redundant $(-1)$-curves, see the list in \cite[\S\,4.3,
		p.57]{bru-04}. Then the singularities of $\sF$ are reduced and lie on the nodes of the singular fibers.
		Contracting the $\sF$-exceptional curves in the fiber, we get a
		relatively minimal model, still denoted by $\sF$ by abuse of notations.
		We claim that all the singularities of the
		relatively minimal model $\sF$ are in the negative part $N$ of the Zariski
		decomposition $K_{\sF}=P+N$.
		Indeed, let $F_{i,red}$ be the reduced part of the singular fiber $F_i$.
		Then
		$$K_{\sF}=\pi^*(\call)+\sum_{i=1}^{s} F_{i,red}=P+N,$$
		where
		$$P=\pi^*(\call)+\sum_{i=1}^{s}\frac{F_i}{m_i},\qquad N=\sum_{i=1}^{s}\Big(F_{i,red}-\frac{F_i}{m_i}\Big),
		\qquad\text{with~$m_i=\max\limits_{j}\{m_{ij}\}$ if $F_i=\sum\limits_{j}m_{ij}C_{ij}$.}$$
		Note that each $F_i$ can not be of type $I_b$ or $I_b^*$ with $b\geq 1$;
		see the argument on \cite[\S\,4.3,
		p.57-58]{bru-04}.
		According to the list in \cite[\S\,4.3,
		p.57]{bru-04},
		for each singular fiber $F_i$, there is at most one irreducible component $C_{ij}\subseteq F_i$ with $C_{ij} \not\subseteq \text{Supp}(N)$.
		In particular, all the singularities of $\sF$ on $F_i$ are contained in $N$ as required.
		Thus $c_2(\sF)=0$ by definition.
	\end{proof}

\begin{lemma}\label{lem-5-6} 
	Let $\sF$ be a transcendental foliation generated by a global vector field. Then
	$c_2(\sF)=0$.
\end{lemma}
\begin{proof}
	Since $\sF$ is generated by a global vector field $v$,
	it follows that the attached surface $S$ is of non-general type, cf. \cite[\S\,6.3]{bru-04}.
	Moreover, if $S$ has non-negative Kodaira dimension, the vector field $v$
	has no zeros; equivalently, $\sF$ has no singularities and hence $c_2(\sF)=0$ automatically by definition.
	We assume next that $S$ has negative Kodaira dimension, i.e., $S$ is a ruled surface.
	Note that $\sF$ is transcendental by assumption.
	In particular, $\sF$ is not induced by a rational fibration.
	According to \cite[Proposition\,6.6]{bru-04},
	$\sF$ is transverse to the ruling on $S$,
	i.e., $\sF$ is a Riccati foliation.
	Hence $c_2(\sF)=0$ by \autoref{lem-5-4}.
\end{proof}

We can now prove \autoref{thm-chern-2}.
The proof makes use of the classification of foliations of non-general type;
see the tabular in \autoref{sec-2-3}.
\begin{proof}[{Proof of \autoref{thm-chern-2}}]
	First, $c_1^2(\sF)=0$ since $\sF$ is of non-general type and $c_1^2(\sF)=\vol(\sF)$ by \autoref{thm-chern-1}\,(iii).
	This proves (i).
	If $\sF$ is algebraically integral, (ii) follows from \autoref{lem-5-2}.
	Thus, it remains to prove $c_2(\sF)=0$ if $\sF$ is transcendental in view of the Noether equality \eqref{eqn-chern-1-1}.

	If $\kod(\sF)=-\infty$, then $\sF$ is a Hilbert modular foliation, and $c_2(\sF)=0$ by \autoref{lem-5-3}.
	
	If $\kod(\sF)=0$, by \cite[Theorem\,8.2]{bru-04} and its poof, there exists a ramified cover $\pi':\,S' \to S$ together with a minimal resolution of singularities $\sigma:\,\wt S \to S'$,
	such that the induced foliation $\wt{\sF}=\pi^*(\sF)$ is birational to a foliation generated by a global section with isolated zeros,
	where $\pi=\pi'\circ\sigma$.
	Moreover, the cover $\pi'$ is branched  over $\text{Supp}(N)$,
	where $N$ is the negative part of the Zariski decomposition $K_{\sF}=P+N$.
	According to \autoref{lem-5-6}, $c_2(\wt\sF)=0$.
	In other words, any non-degenerated singularity $\tilde p$ of $\wt \sF$ with eigenvalue $\lambda_{\tilde p}\in \mathbb{Q}_{<0}$ lies on the negative part $\wt N$
	of the Zariski decomposition $K_{\wt \sF}=\wt P+\wt N$.
	Note also that $K_{\wt\sF}=\pi^*(K_{\sF})$ according to the proof of \cite[Theorem\,8.2]{bru-04}.
	It follows that any non-degenerated singularity $p$ of $\sF$ with eigenvalue $\lambda_{p}\in \mathbb{Q}_{<0}$ lies on $N$;
	otherwise, such singularity $p$ would produces $d=\deg(\pi)$ singularities with the same
	eigenvalues and do not lie on $\wt N$.
	Hence $c_2(\sF)=0$ by definition.
	
	Finally, if $\kod(\sF)=1$, then $\sF$ must be either a Riccati or a Turbulent foliation according to the classification of foliations of non-general type.
	Thus $c_2(\sF)=0$ by \autoref{lem-5-4} and \autoref{lem-5-5}.
\end{proof}
	
	\section{The slope inequalities for a foliation of general type}\label{sec-slope}
	In this section, we consider the slope inequalities for a foliation $\sF$ of general type, and prove \autoref{thm-chern-3} and \autoref{cor-chern-2}.
	The slope of $\sF$ is defined as
	$$\lambda(\sF):=\frac{c_1^2(\sF)}{\chi(\sF)}.$$
	We remark that if $\sF$ is of general type, then $c_1^2(\sF)>0$ and also $\chi(\sF)>0$ by the Noether equality \eqref{eqn-chern-1-1} with the non-negativity of $c_2(\sF)$.
	Hence the slope $\lambda(\sF)$ is well-defined and $0<\lambda(\sF)\leq 12$.
	\begin{proof}[{Proof of \autoref{thm-chern-3}}]
		This follows from \autoref{thm-chern-1}\,(iv) and the slope inequality for a semi-stable family of curves by Cornalba-Harris-Xiao (cf. \cite{ch-88,xia-87}).
		Indeed, 
		by the stable reduction theorem (cf. \cite{delignemumford}),
		there exists a base change $\phi:\wt B \to B$ of finite degree, possibly ramified,
		such that the pull-back fibration $\tilde f: \wt S \to \wt B$ is semi-stable.
		We refer to \autoref{sec-alg} for the explanation of the base change process.
		Moreover, by \autoref{thm-chern-1}\,(iv), it holds
		$$K_{\tilde f}^2=\deg(\phi)\cdot c_1^2(\sF),\qquad \chi_{\tilde f}=\deg(\pi)\cdot \chi(\sF).$$
		Thus by the slope inequality for the semi-stable fibration $\tilde f$ (cf. \cite{ch-88,xia-87}),
		it follows that
		\[\lambda(\sF)=\frac{c_1^2(\sF)}{\chi(\sF)}=\frac{K_{\tilde f}^2}{\chi_{\tilde f}} \geq \frac{4(g-1)}{g}.\qedhere\]
	\end{proof}

\begin{proof}[{Proof of \autoref{cor-chern-2}}]
	Both statements follow from the slope inequality \eqref{eqn-chern-3}.
	Indeed, if $\sF$ is algebraically integral with $\lambda(\sF)<4$, then $\lambda(\sF)\geq \frac{4(g-1)}{g}=4-\frac{4}{g}$; equivalently the inequality \eqref{eqn-chern-4} holds.
	Moreover, since $g\geq 2$, it follows that $\lambda(\sF)\geq \frac{4(g-1)}{g}\geq 2$ if $\sF$ is algebraically integral. This proves (ii).
\end{proof}

To end this section, we construct a transcendental foliation of general type whose slope $\lambda(\sF)<2$.
\begin{example}\label{exam-slope<2}
%	Let $X_0=\bbp^1\times \bbp^1$, and $\sG_0$ be the foliation on $S_0$ defined by
%	the vector field $v_0=x_0^2\frac{\partial}{\partial x_0}+y_0\frac{\partial}{\partial y_0}$,
%	where $x_0$ and $y_0$ are respectively the affine coordinates of the factors $\bbp^1$ in $X_0$.
%	Globally, one can view $X_0=\bbp^1\times \bbp^1$ as four copies of $U_i\cong \mathbb{C}\times \mathbb{C}$ with the natural transition functions.
%	Then $\sG_0$ is given by $v=\{U_i,v_i\}$, where
%	$$v_0=x_0^2\frac{\partial}{\partial x_0}+y_0\frac{\partial}{\partial y_0},\quad
%	v_1=-\frac{\partial}{\partial x_1}+y_0\frac{\partial}{\partial y_0},\quad
%	v_2=x_0^2\frac{\partial}{\partial x_0}-y_1\frac{\partial}{\partial y_1},\quad
%	v_3=-\frac{\partial}{\partial x_1}-y_1\frac{\partial}{\partial y_1}.$$
%	One checks easily that $K_{\sG_0}=0$, and there are exactly two saddle-nodes
%	$p_1=(0,0) \in U_0$ and $p_2=(0,0)\in U_3$ (i.e., $p_2=(0,\infty)\in U_1$ if we view $(x_0,y_0)$ as an affine coordinate of $X_0$).
%	In particular, $\sG_0$ is relatively minimal.
	In this example, we construct a relatively minimal foliation $\sF$ of general type whose slope is $$\lambda(\sF)=\frac{12}{7}<2.$$

	Let $X_0=\bbp^1\times \bbp^1$, and $\sG_0$ be the foliation on $S_0$ defined by
	the global vector field $v_0=x^2\frac{\partial}{\partial x}+y\frac{\partial}{\partial y}$,
	where $x$ and $y$ are respectively the affine coordinates of the factors $\bbp^1$ in $X_0$.
	One checks easily that $K_{\sG_0}=0$, and there are exactly two singularities
	$p_0=(0,0)$ and $p_\infty=(0,\infty)$, both of which are saddle-nodes of multiplicity two.
	In particular, $\sG_0$ is relatively minimal.
	
	Let $D=\{y+x^2(1+y^2)=0\}$, $C_0=\{y=0\}$ and $C_{\infty}=\{y=\infty\}$.
	Then $R_0=D+C_0+C_{\infty}$ is clearly an even divisor and decides a double cover
	$\pi_0:\,S_0 \to X_0$,
	whose branched divisor is exactly $R_0$.
	By a canonical resolution of the singularities of $R_0$, one obtains a smooth surface $S$.
	$$\xymatrix{S \ar[d]_-{\pi} \ar[rr]^-{\rho} && S_0 \ar[d]^-{\pi_0} \\
	X \ar[rr]^-{\sigma} && X_0}$$
	The two points $\{p_0,p_\infty\}$ are the two singularities of $R_0$, and the resolution of these two singularities are exhibited as follows.
   $${\setlength{\unitlength}{6mm}
	\begin{tikzpicture}
	\draw[very thick, dotted] (10,1) -- (10,4);
	\draw[thick] (9,1.5) -- (12,1.5);
	\draw[thick] (9,3.5) -- (12,3.5);
	\draw (11.7,2.98) arc (30:110:2 and 1);
	\draw (9.3,1.58) arc (250:330:2 and 1);
	\filldraw[black] (10,1.5) circle (1.5pt);
	\filldraw[black] (10,3.5) circle (1.5pt);
	\filldraw[black] (12,1.3) node[anchor=west]{$C_0$};
	\filldraw[black] (12,3.4) node[anchor=west]{$C_\infty$};
	\filldraw[black] (12.5,2.5) node[anchor=west]{$D$};
	\draw[->] (12.5,2.6) -- (11.9,2.9);
	\draw[->] (12.5,2.4) -- (11.9,2.1);
	\filldraw[black] (9.9,1.7) node[anchor=west]{$p_0$};
	\filldraw[black] (9.9,3.3) node[anchor=west]{$p_\infty$};
	\filldraw[black] (9.7,0.7) node[anchor=west]{$F_0$};
	
	\filldraw[black] (11.5,3.12) circle (1.5pt);
	\filldraw[black] (11.5,1.88) circle (1.5pt);
	\filldraw[black] (11,3.32) circle (1.5pt);
	\filldraw[black] (11,1.68) circle (1.5pt);
	
	\filldraw[black] (11.2,2.1) node[anchor=west]{$p_1$};
	\filldraw[black] (11.2,2.9) node[anchor=west]{$p_3$};
	\filldraw[black] (10.6,1.9) node[anchor=west]{$p_2$};
	\filldraw[black] (10.6,3.1) node[anchor=west]{$p_4$};

	\draw[->] (6.7,2.5) -- (8.7,2.5);
	\filldraw[black] (7.4,2.7) node[anchor=west]{$\sigma$};
	
	\draw[thick] (3,1) -- (6,1);
	\draw[thick] (3,4) -- (6,4);
	\draw[very thick, dotted] (4.1,4.3) -- (1.7,3.1);
	\draw[very thick, dotted] (4.1,0.7) -- (1.7,1.9);
	\draw[very thick, dotted] (2.8,1.6) -- (0.5,1.6);
	\draw[very thick, dotted] (2.8,3.4) -- (0.5,3.4);
	\draw[very thick, dotted] (0.8,1.2) -- (0.8,3.8);
	\draw[thick] (2.6,3.9) -- (4.5,3);
	\draw[thick] (2.6,1.1) -- (4.5,2);
	
	\filldraw[black] (3.5,4) circle (1.5pt);
	\filldraw[black] (2.3,3.4) circle (1.5pt);
	\filldraw[black] (0.8,3.4) circle (1.5pt);
	
	\filldraw[black] (3.5,1) circle (1.5pt);
	\filldraw[black] (2.3,1.6) circle (1.5pt);
	\filldraw[black] (0.8,1.6) circle (1.5pt);
	
	\filldraw[black] (6,0.9) node[anchor=west]{$\ol C_0$};
	\filldraw[black] (6,3.9) node[anchor=west]{$\ol C_\infty$};
	\filldraw[black] (5.2,2.5) node[anchor=west]{$\ol D$};
	\draw[->] (5.2,2.6) -- (4.6,2.9);
	\draw[->] (5.2,2.4) -- (4.6,2.1);
	
	\filldraw[black] (0.5,0.8) node[anchor=west]{$\ol F_0$};
	\filldraw[black] (-0.2,1.6) node[anchor=west]{$\ol E_1$};
	\filldraw[black] (4,0.4) node[anchor=west]{$E_2$};
	\filldraw[black] (-0.2,3.4) node[anchor=west]{$\ol E_3$};
	\filldraw[black] (4,4.5) node[anchor=west]{$E_4$};

	\filldraw[black] (3.4,1.2) node[anchor=west]{$q_0$};
    \filldraw[black] (3.4,3.8) node[anchor=west]{$q_\infty$};
    
    \filldraw[black] (2.2,1.8) node[anchor=west]{$q_5$};
    \filldraw[black] (2.2,3.2) node[anchor=west]{$q_7$};
    
    \filldraw[black] (0.7,1.8) node[anchor=west]{$q_6$};
    \filldraw[black] (0.7,3.2) node[anchor=west]{$q_8$};
    
    \filldraw[black] (4.3,3.1) circle (1.5pt);
    \filldraw[black] (4.3,1.9) circle (1.5pt);
    \filldraw[black] (3.7,3.37) circle (1.5pt);
    \filldraw[black] (3.7,1.63) circle (1.5pt);
    
    \filldraw[black] (4,2.12) node[anchor=west]{$q_1$};
    \filldraw[black] (4,2.88) node[anchor=west]{$q_3$};
    \filldraw[black] (3.3,1.85) node[anchor=west]{$q_2$};
    \filldraw[black] (3.3,3.15) node[anchor=west]{$q_4$};
\end{tikzpicture}}$$
In the above, $F_0=\{x=0\}$, and
$\ol F_0, \ol D, \ol C_0, \ol C_{\infty}$ are respectively the strict transforms of $F_0, D,C_0,C_{\infty}$.
A solid curve is branched; while a dotted line is not branched.
One of the dotted line is $\ol F_0$, and the rest four dotted ones are the exceptional curves of $\sigma$.
The points $\{p_1,p_2,p_3,p_4\}$ are the four points on $D$ with
$$\tang(\sG_0,D,p_1)=\tang(\sG_0,D,p_2)=\tang(\sG_0,D,p_3)=\tang(\sG_0,D,p_4)=1,$$
and $\{q_i=\sigma^{-1}(p_i),~1\leq i \leq 4\}$ are the inverse images on $\ol D$.

Let $\sG=\sigma^*(\sG_0)$ and $\sF=\pi^*(\sG)$ be the induced foliations.
Then $K_{\sG}=\sigma^*(K_{\sG_0})=0$.
Note that the two branched curves $C_0,C_{\infty}$ are $\sG_0$-invariant,
and $D$ is not $\sG_0$-invariant.
Hence
\begin{equation}\label{eqn-6-5}
	K_{\sF}=\pi^*\big(K_{\sG}+\ol D/2\big)=\pi^*(\ol D)/2.
	%=\pi^*\Big(\sigma^*(K_{\sG_0}+D/2)-\sum_{i=1}^{4}\mathcal{E}_i/2\Big),
\end{equation}
%where $\mathcal{E}_i$'s are the total inverse images of the four exceptional curves of $\sigma$ on $X$. It holds that
%$$\mathcal{E}_i=E_i,\quad\text{for~}i=2,4;\qquad
%\mathcal{E}_i=\ol E_i+E_{i+1},\quad \text{for~}i=1,3.$$
It follows immediately that $K_{\sF}$ is nef with $K_{\sF}^2=\frac{\ol D^2}{2}=2$.

Let's next compute the singularities of $\sF$.
First, $\sG_0$ admits exactly two singularities $\{p_0,p_\infty\}$, both of which are saddle-nodes as mentioned above.
The birational map $\sigma$ is composed of four blowing-ups.
By \autoref{lem-2-1}\,(iv) and its proof, one checks immediately that
$\sG$ admits exactly six singularities $\{q_0,q_{\infty},q_5,q_6,q_7,q_8\}$.
The two singularities $\{q_0,q_{\infty}\}$ are saddle-nodes, and the rest four singularities $\{q_5,q_6,q_7,q_8\}$ are non-degenerate with eigenvalues all equal to $-1$.
Besides the two saddle-nodes $\{q_0,q_{\infty}\}$ with
$$\tang(\sG,\ol D,q_0)=\tang(\sG,\ol D,q_\infty)=2,$$
there are four tangent points of $\sG$ on the branch divisor $\ol R=\ol D+\ol C_0+\ol C_{\infty}$, which are $q_i=\sigma^{-1}(p_i)$ ($1\leq i \leq 4$) with
$$\tang(\sG,\ol D,q_i)=1,\qquad \forall~1\leq i \leq 4.$$
Thus $\sF$ admits $14$ singularities, which are the inverse images of $\{q_i\}$'s with $0\leq i \leq 8$ and $i=\infty$.
It is clear that $\pi^{-1}(q_i)$ is still a saddle-node for $i=0$ or $\infty$,
and that $\pi^{-1}(q_i)$ consists two singularities with the same eigenvalues both equal to $-1$ for $5\leq i \leq 8$.
Moreover, a local computation (cf. \cite[Example\,3.4]{bru-04}) shows that
$\pi^{-1}(q_i)$ is a reduced singularity with eigenvalue equal to $-1$ for $1\leq i\leq 4$.
In conclusion, $\sF$ admits $14$ singularities, two of which are saddle-nodes,
and twelve of which are reduced with eigenvalues all equal to $-1$.

We claim that $\sF$ is relatively minimal.
Note first that there exist exactly three $\sG_0$-invariant curves, which are $C_0,C_\infty,F_0$.
Indeed, for any possible $\sG_0$-invariant curve $C$, other than $\{C_0,C_\infty,F_0\}$,
the intersection $C\cap F_0$ would be singularities of $\sG_0$.
Hence $C\cap F_0 \subseteq \{p_0,p_\infty\}$.
However, there are at most two separatrices through a saddle-node.
This shows that such a $\sG_0$-invariant curve $C$ does not exist.
It follows that all the $\sG$-invariant curves are
$$\{\ol F_0,\ol C_0, \ol C_\infty, \ol E_1, E_2, \ol E_3, E_4\}.$$
Hence all the $\sF$-invariant curves are the inverse images of the above seven curves.
On the other hand, let $C$ be any possible $\sF$-exceptional curve.
Then $C$ is an $\sF$-invariant smooth rational curve, and
either there is one singularity $p$ on $C$ with $Z(\sF,C,p)=1$,
or there is two singularities $\{p,p'\}$ on $C$ with $Z(\sF,C,p)=Z(\sF,C,p')=1$.
By a direct computation, one checks that
inverse images of the above seven $\sG$-invariant curves are not $\sF$-exceptional.
This proves that $\sF$ is relatively minimal.

Finally, let's compute the Chern numbers of $\sF$.
Since $\sF$ is relatively minimal and $K_{\sF}$ is nef, it follows that
$$c_1^2(\sF)=K_{\sF}^2=2.$$
Moreover, the nefness of $K_{\sF}$ implies that the negative part in the Zariski decomposition of $K_{\sF}$ is $N=0$.
Hence by \autoref{def-chern-numbers},
$$c_2(\sF)=\sum_{q \not\in N}\beta_q(\sF)=\sum_{q}\beta_q(\sF)=12.$$
By \eqref{eqn-chern-1-1}, one obtains $\chi(\sF)=\frac76$, and hence
$$\lambda(\sF)=\frac{c_1^2(\sF)}{\chi(\sF)}=\frac{12}{7}.$$
\end{example}

\begin{remark}
	We refer to \cite{lltx-24} for more examples of transcendental foliations with slope $\lambda(\sF)<2$.
\end{remark}
	
	\section{The Noether inequalities for a foliation of general type}\label{sec-noether}
In this section, we study the Noether type inequalities for a foliation of general type, and prove \autoref{thm-main};
while both \autoref{cor-noether} and \autoref{cor-volume} are direct consequence of \autoref{thm-main}, and the proofs of these two corollaries will be omitted.

As the invariants involved in \autoref{thm-main} are birational invariants,
we may assume that the foliated surface $(S,\sF)$ is reduced (or even relatively minimal if necessary).
Since the volume $\vol(\sF)>0$,
in order to prove the three Noether type inequalities in \autoref{thm-main},
one may assume that $p_g(\sF)\geq 2$.
As in the case proving the classical Noether inequality \eqref{eqn-1-1},
the starting point is to analyze the canonical map $\varphi_{|K_{\sF}|}$ defined by the complete linear system $|K_{\sF}|$:
	\begin{equation}\label{eqn-5-12}
		\varphi=\varphi_{|K_{\sF}|}:\,S \dashrightarrow \Sigma \subseteq \mathbb{P}^{p_g(\sF)-1}.
	\end{equation}	
	\begin{proposition}\label{prop-5-2}
	Let $(S,\sF)$ be a foliated surface of general type, i.e., $\kod(\sF)=2$.
	Suppose that the image $\Sigma=\varphi(S)$ is of dimension two.
	\begin{enumerate}[$(i).$]
		\item 	The following inequality holds.
		\begin{equation}\label{eqn-5-8}
		c_1^2(\sF)=\vol(\sF) \geq p_g(\sF)-2.
		\end{equation}
		Moreover, if the equality in \eqref{eqn-5-8} holds,
		then $\deg(\varphi)=1$ and the image $\Sigma$ is a surface of minimal degree $($equal to $p_g(\sF)-2)$ in $\bbp^{p_g(\sF)-1}$.
		\item If the equality in \eqref{eqn-5-8} does not hold,
		then \begin{equation}\label{eqn-5-8-1}
		c_1^2(\sF)=\vol(\sF) \geq p_g(\sF)-\frac{3}{2}.
		\end{equation}
		\item If moreover $\sF$ is algebraically integral,
		then \begin{equation}\label{eqn-5-8-2}
		c_1^2(\sF)=\vol(\sF) \geq p_g(\sF)-\frac54.
		\end{equation}
	\end{enumerate}
\end{proposition}

	\begin{proposition}\label{prop-5-1}
	Let $(S,\sF)$ be a foliated surface of general type, i.e., $\kod(\sF)=2$.
	Suppose that the image $\Sigma=\varphi(S)$ is of dimension one.
	\begin{enumerate}[$(i).$]
		\item The following inequality holds.
		\begin{equation}\label{eqn-5-4}
		\vol(\sF) \geq  p_g(\sF)-2+\frac{1}{p_g(\sF)}.
		\end{equation}
		\item If moreover $\sF$ is algebraically integral, then
		\begin{equation}\label{eqn-5-11}
		\vol(\sF) \geq p_g(\sF)-\frac{3}{2}+\frac{3}{2\big(2p_g(\sF)+1\big)}.
		\end{equation}
	\end{enumerate}
\end{proposition}
\begin{proof}[{Proof of \autoref{thm-main}}]
	It follows directly from the above two propositions.
	We just remark two points:
	firstly, we may assume that $p_g(\sF)\geq 2$ since $\vol(\sF)>0$;
	secondly, $p_g(\sF)\geq 3$ if $\Sigma=\varphi_{|K_{\sF}|}(S)$ is of dimension two.
\end{proof}
To complete the proof of the Noether type inequalities in \autoref{thm-main},
it remains to prove the above two propositions.
Before going to the detailed proofs, let's explain the main difficulties
compared to the case of $\varphi_{|K_S|}$:

(1). The canonical divisor $K_{\sF}$ might not be nef for a foliation $\sF$ of general type on a smooth surface $S$, even if the foliation $\sF$ is reduced or relatively minimal.
It makes some trouble in estimating the lower bound of $\vol(\sF)$.
For instance, suppose that
\begin{equation}\label{eqn-1-2}
|K_{\sF}|=|M|+Z,
\end{equation}
where $Z$ is the fixed part of $|K_{\sF}|$, and $M$ is the moving part.
Then it is no longer true that
$\vol(\sF) \geq K_{\sF} \cdot M$, cf. \autoref{rem-6-2}.
Of course, one can contract the support of the negative part of $K_{\sF}$ to a normal surface $S_0$, such that the induced foliation $\sF_0$ on $S_0$ has the advantage that $K_{\sF_0}$ is nef \cite{bru-99,mcq-08}.
However, the surface $S_0$ would be singular with klt singularities
and $K_{\sF_0}$ is no longer a line bundle (but a $\mathbb{Q}$-bundle).

(2). The second difficulty occurring in the case when
the image $\Sigma=\varphi(S)$ is of dimension one;
namely the canonical map $\varphi$ induces a fibration $f:\,S \to B$.
In the case proving the classical Noether inequality \eqref{eqn-1-1},
the general fiber $F$ of $f$ is of genus at least two,
and hence $K_S\cdot F=2g(F)-2\geq 2$.
However, in our case it is only known that $K_{\sF}\cdot F>0$ since $K_{\sF}$ is big.
It can happen that $K_{\sF}\cdot F=1$, even when the foliation $\sF$ is algebraically integral
(see \autoref{exam-6-2} and \autoref{rem-6-2} for such an example).
Suppose that
\begin{equation}\label{eqn-1-3}
K_{\sF}=P+N,
\end{equation}
is the Zariski decomposition of $K_{\sF}$,
where $P$ is the nef part and $N$ is the negative part of $K_{\sF}$.
%apriori
It might happen that $N\cdot F>0$, and hence
$P\cdot F<K_{\sF}\cdot F =1$ (see \autoref{exam-6-2} for such an example).
This will cause trouble in estimating the lower bound of $\vol(\sF)=P^2$ along the usual way, cf. \autoref{rem-3-1}.

(3). The third difficulty happens when proving the Noether type inequalities
\eqref{eqn-5-8-2} and \eqref{eqn-5-11} for algebraically integral foliations.
This is completely new situation compared to the case of $\varphi_{|K_S|}$.
A priori, it is not clear what is the difference of the canonical map $\varphi_{|K_{\sF}|}$
between algebraically integral and transcendental foliations.
Indeed, any difference would give a positive solution to Poincar\'e's problem.

To overcome the above difficulties,
we need to control the negative part in the Zariski decomposition of the canonical divisor $K_{\sF}$,
as well as the structure of $K_{\sF}$ when the canonical map $\varphi_{|K_{\sF}|}$ induces a fibration or when the foliation is algebraically integral.

As pointed out at the beginning of this section,
we may assume that $\sF$ is reduced.
Let $\varphi=\varphi_{|K_{\sF}|}$ be the rational map defined by $|K_{\sF}|$ as in \eqref{eqn-5-12}.

When the image $\Sigma$ is of dimension two,
\eqref{eqn-5-8} follows by an easy argument.
To prove the rest two inequalities,
first by some elementary reduction,
we may assume that $\varphi$ is birational such that the image $\Sigma$ is a surface of minimal degree in $\bbp^{p_g(\sF)-1}$,
and that
$$K_{\sF_0}=M_0+Z_0,$$
where $\sF_0=\phi_*(\sF)$ is the induced foliation on the minimal desingularization $Y$ of $\Sigma$,
$M_0$ is the pulling-back of a hyperplane section on $\Sigma$,
and $Z_0=\phi_*(Z)$ with $\phi:\,S \to Y$ the induced birational map.
A key observation is that the divisor $Z_0$ is non-zero either if the equality does not hold in \eqref{eqn-5-8}
or if $\sF$ is algebraically integral.
With the help of \autoref{prop-3-1} and \autoref{lem-7-5},
we can estimate the contribution of $Z_0$ to the lower bound on the volume $\vol(\sF)$ and hence prove \eqref{eqn-5-8-1}.
The proof of \eqref{eqn-5-8-2} is the most technical part of \autoref{prop-5-2}.
Besides \autoref{prop-3-1} and \autoref{lem-7-5} mentioned above,
we have to more carefully analyze the possible non-reduced singularities lying on $Z_0$.
The difficult case is when $Z_0$ contains one unique irreducible rational curve with $K_{\sF_0}\cdot C_0=1$.
There are finitely many possibilities for the possible singularities on such a curve $C_0$.
The resolution of such singularities on $C_0$ presented in \autoref{sec-non-reduced} helps us to finish the proof of \eqref{eqn-5-8-2}
case-by-case.

When the image $\Sigma$ is of dimension one,
%After some elementary reductions, we are reduced to the case
%when the image $\Sigma$ is a curve and the canonical map $\varphi_{|K_{\sF}|}$ has no base point.
then the canonical map induces a fibration $f:\,S \to B$ by taking the normalization and Stein factorization:
$$\xymatrix{ &S \ar[rd]^-{\varphi} \ar[ld]_-{f} \\
	B \ar[rr] && \Sigma\,\ar@{^(->}[r] & \bbp^{p_g(\sF)-1}}$$
Consider the two decompositions in \eqref{eqn-1-2} and \eqref{eqn-1-3}.
Since the moving part $M$ is always nef,
it follows that $M\leq P$, or equivalently $N\leq Z$.
Let $F$ be a general fiber of $f$.
The moving part $M$ consists of several fibers, whose cardinality is at least $p_g(\sF)-1$ by the Riemann-Roch theorem.
Hence
\begin{equation*}%\label{eqn-1-4}
\vol(\sF)= P^2 \geq P\cdot M \geq \big(p_g(\sF)-1\big)P\cdot F = \big(p_g(\sF)-1\big)\big(K_{\sF}-N\big)\cdot F.
\end{equation*}
To obtain the Noether type inequality \eqref{eqn-5-4},
it suffices to get a lower bound on $\big(K_{\sF}-N\big)\cdot F$,
or equivalently, an upper bound on $N\cdot F$,
which will be done with the help of \autoref{prop-3-1}.
However, the proof of \eqref{eqn-5-11} for an algebraically integral foliation
is much more subtle.
First we may assume that $S$ is ruled on $B\cong \bbp^1$ based on \cite[Theorem\,1.7\,(ii)]{luxin-24} and its proof.
Let $\psi:\,S \to S_0$ be a sequence of blowing-down the exceptional curves in fibers
such that $f_0:\,S_0 \to \bbp^1$ is a $\bbp^1$-bundle
with $\sF_0=\psi_*(\sF)$ and $K_{\sF_0}=M_0+Z_0$,
where $M_0=\psi_*(M)$ and $Z_0=\psi_*(Z)$.
The key observation is, similar to the case when $\dim \Sigma=2$, that the divisor $Z_0$ contains at least one fiber of $f_0$.
By resolving the possible non-reduced singularities on such a fiber,
which is exhibited in \autoref{sec-non-reduced},
one proves that such a fiber would give a positive contribution
(almost $\frac12$) to the lower bound on the volume $\vol(\sF)$,
based on \autoref{lem-7-5}.
This helps us to complete the proof of \autoref{prop-5-1}.

\vspace{2mm}
The detailed proofs of \autoref{prop-5-2} and \autoref{prop-5-1} will be completed respectively in
\autoref{sec-noe-two} and \autoref{sec-noe-one}.

	\subsection{The canonical map is generically finite}\label{sec-noe-two}
	The aim of this subsection is to prove \autoref{prop-5-2}.
	Before going to the proof, let's do some general preparations.
	Given any foliated surface $(S,\sF)$,
	by Seidenberg's \autoref{thm-seidenberg} together with \cite[Proposition\,5.1]{bru-04},
	one may assume that $\sF$ is relatively minimal.
	Let $|K_{\sF}|=|M|+Z$ be the decomposition into moving and fixed parts as in \eqref{eqn-1-2}.
%	\begin{equation}\label{eqn-5-2}
%		|K_{\sF}|=|M|+Z,
%	\end{equation}
%	be the decomposition of the complete linear system $|K_{\sF}|$, where $Z$ is the fixed part.
	Let $\rho:\,Y\to \Sigma$ be the minimal desingularization of $\Sigma$,
	where $\Sigma=\varphi_{|K_{\sF}|}(S)$ is the image of $S$ under the canonical map as in \eqref{eqn-5-12}.
	By a sequence of blowing-ups $\sigma:\,\wt{S} \to S$ centered on the base points of $|M|$, we obtain a well-defined morphism $\phi:\, \wt{S} \to Y$ with the following diagram.
$$\xymatrix{\wt{S} \ar[d]^-{\phi} \ar[rrr]^-{\sigma}  &&& S \ar@{-->}[d]^-{\varphi=\varphi_{|K_{\sF}|}}\\
		Y \ar[rrr]^-{\rho}_-{\text{desingularization}} &&&\Sigma\, \ar@{^(->}[r] & \bbp^{p_g(\sF)-1}}$$
%	There are two possiblities of the dimension of the image $\Sigma$:
%	$\dim \Sigma=2$ or $\dim\Sigma=1$.

	\begin{proof}[{Proof of \autoref{prop-5-2}}]
		(i). By construction, the map 
		$$\rho\circ\phi:\,\wt{S} \lra \Sigma \hookrightarrow \bbp^N,$$
		is defined by the complete linear system $|\wt{M}|$ with $\wt{M}=\sigma^*M-\sum a_j\mathcal{E}_j$.
		Let $M_0=\rho^*(H)$ be the pulling-back of a hyperplane section $H$ on $\Sigma$. Then $\wt{M}=\phi^*(M_0)$, and
		%According to \cite[Lemme\,1.2]{bea-79},
		\begin{equation*}
			M^2=\wt{M}^2+\sum a_j^2 \geq \wt{M}^2=\deg(\phi)\cdot M_0^2= \deg(\varphi)\cdot \deg(\Sigma).
		\end{equation*}
		On the other hand, let $K_{\sF}=P+N$
		be the Zariski decomposition of $K_{\sF}$ as in \eqref{eqn-1-3}.
		Then
		$$\vol(\sF) =P^2 \geq M^2 \geq \deg(\varphi)\cdot \deg(\Sigma) \geq p_g(\sF)-2.$$
		The last inequality follows from \cite[Lemme\,1.4]{bea-79}.
		Moreover, if the equality holds, then $\deg(\varphi)=1$ and
		$\deg(\Sigma)=p_g(\sF)-2$ in $\bbp^{p_g(\sF)-1}$.
		
		(ii).
		According to the above argument, we may assume that
		\begin{equation}\label{eqn-7-30}
			\left\{\begin{aligned}
			& \text{$M^2=\wt M^2$, ~i.e.,~ $S=\wt S$};\\
			& \text{$\deg\varphi=1$, ~i.e.,~ $\varphi$ is birational};\\
			& \text{$\deg\Sigma=p_{g}(\sF)-2$}.
			\end{aligned}\right.
		\end{equation}
		Indeed, if any of the above three statements does not hold, then $\vol(\sF)\geq p_{g}(\sF)-1$ according to the argument above.
		Since $\Sigma$ is of minimal degree in $\bbp^{p_g(\sF)-1}$,
		its minimal desingularization $Y$ is either isomorphic to $\bbp^2$ or a Hirzebruch surface (cf. \autoref{prop-7-1}).
		Let $\sF_0$ be the induced foliation on $Y$.
		Then $\phi:\,S=\wt S \to Y$ is composed of a sequence of blowing-ups.
		By \eqref{eqn-1-2},
		$$K_{\sF_0}=\phi_*(M)+\phi_*(Z)=M_0+\phi_*(Z).$$
		\begin{claim}\label{claim-7-3}
			Denote by $Z_0=\phi_*(Z)$.
			Suppose that $\vol(\sF)\neq p_g(\sF)-2$.
			Then $Z_0\neq 0$, and there exists an irreducible curve $C_0\subseteq Z_0$ with $M_0\cdot C_0>0$.
		\end{claim}
	\begin{proof}[{Proof of \autoref{claim-7-3}}]
		We prove first that $Z_0\neq 0$ by contradiction. Suppose that $Z_0=0$, i.e., $K_{\sF_0}=M_0$.
		Then by \autoref{lem-7-1},
		$$Z+\phi^*(M_0)=Z+M=K_{\sF}=\phi^*K_{\sF_0}-\sum(\ell_i-1)\mathcal{E}_i=\phi^*(M_0)-\sum(\ell_i-1)\mathcal{E}_i,$$
		which implies that $Z=0$ and $K_{\sF}=\phi^*(M_0)$.
		This implies that $\vol(\sF)=M_0^2=p_g(\sF)-2$, which contradicts the assumption.
		
		We prove next that there exists an irreducible curve $D_0\subseteq Z_0$ with $M_0\cdot D_0>0$.
		Again, the proof is by contradiction.
		First note that if $M_0\cdot D_0=0$, then $Y$ is the Hirzebruch surface $\mathbb{P}_{\bbp^1}\big(\mathcal{O}_{\bbp^1}\oplus \mathcal{O}_{\bbp^1}(-e)\big)$ and $D_0$ is the rational curve with $D_0^2=-e\leq -2$.
		Suppose that there is no irreducible curve $D\subseteq Z_0$ with $M_0\cdot D>0$.
		Then $Z_0=aD_0$ for some $a>0$, and hence $K_{\sF_0}=M_0+aD_0$.
		If $D_0$ is not $\sF_0$-invariant, then $\tang(\sF_0,D_0)=K_{\sF_0}\cdot D_0+D_0^2<0$, which is a contradiction.
		Hence $D_0$ is $\sF_0$-invariant, and 	
		$$0\leq Z(\sF_0,D_0)=2+K_{\sF_0}\cdot D_0=2+aD_0^2=2-ae.$$
		It follows that $a=1$, $e=2$, and $Z(\sF_0,D_0)=0$.
		Since $K_{\sF_0}\cdot D_0<0$, it follows that $D_0$ is contained in the negative part $N_0$ of the Zariski decomposition of $K_{\sF_0}=P_0+N_0$.
		On the other hand, $Z(\sF_0,D_0)=0$ implies that there is no singularity of $\sF_0$ on $D_0$. This gives a contradiction to \autoref{thm-3-4}.
	\end{proof}
 Come back to the proof of \autoref{prop-5-2}.
		Let $D_0\subseteq Z_0$ be any irreducible curve with $M_0\cdot D_0=d\geq 1$, and $D\subseteq S$ be its strict transform. Then $D$ is clearly contained in the support of $Z$.
		Note that $M=\phi^*(M_0)$.
		
		If $D$ is not contained in the support of the negative part $N$,
		then $D$ is contained in the support of $P$, and hence
		\begin{equation}\label{eqn-7-1}
			\vol(\sF)=P^2\geq P\cdot M \geq (M+D)\cdot M =M^2+M\cdot D=M^2+M_0\cdot D_0\geq p_g(\sF)-1.
		\end{equation}
		Therefore, we may assume that $D$ is contained in the support of $N$.
		Let
		$$K_{\sF}=M+Z=\phi^*(M_0)+aD+Z',$$
		where $D$ is not contained in $Z'$.
		If $a\geq 2$, then
		$\phi^*(M_0)+D\subseteq P$, since $\lfloor N \rfloor=0$ by \autoref{thm-3-4}.
		Hence
		$$\vol(\sF)=P^2\geq \phi^*(M_0)\cdot P \geq \phi^*(M_0)\cdot \Big(\phi^*(M_0)+D\Big)=M_0^2+M_0\cdot D_0\geq p_g(\sF)-1.$$
		Hence we may assume that $a=1$.
		Suppose that the irreducible curve $D$ is contained in the maximal $\sF$-chain $C_1+\cdots +C_r$. Let $b$ be the coefficient of $D$ in the negative part $N$.
		\begin{enumerate}[(1)]
			\item If $r=1$ and $D=C_1$, then $b=\frac1n \leq \frac12$ by \eqref{eqn-3-3}. Hence $\phi^*(M_0)+\frac12D\subseteq P$, from which it follows that
			$$\begin{aligned}
			\vol(\sF)=P^2\geq \phi^*(M_0)\cdot P &\,\geq \phi^*(M_0)\cdot \Big(\phi^*(M_0)+\frac12D\Big)=M_0^2+\frac12M_0\cdot D_0=M_0^2+\frac{d}{2}\\
			&\,\geq \big(p_g(\sF)-2\big)+\frac{d}{2}\geq p_g(\sF)-\frac32.
			\end{aligned}$$
			\item If $D=C_r$ with $r\geq 2$, then $b=\frac1n \leq \frac13$ by \eqref{eqn-3-3}. Hence $\phi^*(M_0)+\frac23D\subseteq P$.
			Similar as above, it follows that
			$$\vol(\sF)=P^2\geq M_0^2+\frac23M_0\cdot D_0\geq \big(p_g(\sF)-2\big)+\frac{2d}{3}\geq p_g(\sF)-\frac43.$$
			\item If $D=C_j$ with $1<j<r$, then 
			$$0=K_{\sF}\cdot D\geq D^2+\phi^*(M_0)\cdot D+(C_{j-1}+C_{j+1})\cdot D \geq D^2+3.$$
			It follows that $D^2\leq -3$, and hence $b<\frac{1}{3}$ by \eqref{eqn-2-1}. Hence $\phi^*(M_0)+\frac23D\subseteq P$.
			Again, one obtains that
			$$\vol(\sF)=P^2\geq M_0^2+\frac23M_0\cdot D_0\geq \big(p_g(\sF)-2\big)+\frac{2d}{3}\geq p_g(\sF)-\frac43.$$
			\item If $r\geq 2$ and $D=C_1$, then
			$$-1=K_{\sF}\cdot D\geq D^2+\phi^*(M_0)\cdot D+D_2\cdot D \geq D^2+2.$$
			It follows that $D^2\leq -3$, and hence $b<\frac12$ by \eqref{eqn-2-1}. Hence $\phi^*(M_0)+\frac12D\subseteq P$.
			Therefore,
			$$\vol(\sF)=P^2\geq M_0^2+\frac12M_0\cdot D_0\geq \big(p_g(\sF)-2\big)+\frac{d}{2}\geq p_g(\sF)-\frac32.$$
		\end{enumerate}
	This completes the proof of (ii).
	
	\vspace{2mm}
	(iii).
	We first claim the following.
	\begin{claim}\label{claim-7-4}
		If $\sF$ is algebraically integral, then $\vol(\sF)\neq p_g(\sF)-2$.
	\end{claim}
\begin{proof}[{Proof of \autoref{claim-7-4}}]
	We prove by contradiction. Suppose that $\vol(\sF) = p_g(\sF)-2$.
	According to the proof of (i), the conditions in \eqref{eqn-7-30} are all satisfied, and $P^2=M^2$.
	Note that $M\leq P$, and the negative part $N$ satisfies $\lfloor N \rfloor=0$.
	It follows that $N\leq Z$, and that $\text{Supp}(N) \subseteq \text{Supp}(Z-N)=\text{Supp}(Z)$.
	Since both $P$ and $M$ are nef, one checks easily that
	$$P^2=P\cdot (M+Z-N)\geq P\cdot M+P\cdot (Z-N) \geq M^2+(P+M)\cdot (Z-N).$$
	Combining this with the condition that $P^2=M^2$, one obtains that
	$$P\cdot (Z-N)=M\cdot (Z-N)=0.$$
	Hence
	$$M^2=P^2=\big(M+(Z-N)\big)^2=M^2+(Z-N)^2,\quad\Longrightarrow\quad (Z-N)^2=0.$$
	By the Hodge index theorem, $Z-N=0$.
	Hence $N=Z=0$, since $\text{Supp}(Z-N)=\text{Supp}(Z)$.
	Thus $K_{\sF}=M=\phi^*(M_0)$,
	and $K_{\sF_0}=\phi_*(M)=M_0$ with $Z_0=\phi_*(Z)=0$.
	On the other hand, by \autoref{lem-7-1} one has
	$$\phi^*(M_0)=M=K_{\sF}=\phi^*(K_{\sF_0})-\sum(\ell_i-1)\mathcal{E}_i=\phi^*(M_0)-\sum(\ell_i-1)\mathcal{E}_i.$$
	It follows that $\sum(\ell_i-1)\mathcal{E}_i=0$, i.e., each $\ell_i=1$.
	Since $\sF$ is assumed to be algebraically integral, it follows that all singularity on $Y$ is reduced according to \autoref{lem-7-1}\,(ii).
	It implies that $S=Y$ and $\sF_0=\sF$, since $\sF$ is relatively minimal.
	As a relatively minimal algebraically integral foliation, $\sF_0=\sF$ is induced by a fibration of genus $g\geq 2$ with normal crossing fibers on the surface $S=Y$.
	However, the surface $Y$, as the minimal desingularization of the surface $\Sigma$ of minimal degree in $\bbp^{p_g(\sF)-1}$, is either $\bbp^2$ or a Hirzebruch surface, cf. \autoref{prop-7-1}.
	In particular, there is no fibration of genus $g\geq 2$ on $Y$.
	Therefore, $\vol(\sF)\neq p_g(\sF)-2$ as required.
\end{proof}
	
	According to \autoref{claim-7-4},
	we can assume that the conditions in \eqref{eqn-7-30} are all satisfied;
	otherwise, $\vol(\sF)\geq p_g(\sF)-1$ by the proof of (ii) above.

	\begin{claim}\label{claim-7-5}
		Let $Y$ be the minimal desingularization of $\Sigma$ with an induced morphism $\phi:\,S \to Y$ as in the proof of (ii).
		Suppose that $\vol(\sF)<p_g(\sF)-1$.
		\begin{enumerate}
			\item[(A-1).] Let $\sF_0=\phi_*(\sF)$. Then $K_{\sF_0}=M_0+Z_0$ with $Z_0\neq 0$.
			\item[(A-2).] There exists a unique irreducible curve $D_0\subseteq Z_0$ with $M_0\cdot D_0>0$. Moreover, such a curve $D_0$ is $\sF_0$-invariant whose strict transform $D\subseteq S$ is contained in the support of the negative part $N$ in the Zariski decomposition $K_{\sF}=P+N$,
			$d=M_0\cdot D_0=1$, and the coefficient of $D_0$ in $Z_0$ is $1$.
		\end{enumerate}
	\end{claim}
\begin{proof}[{Proof of \autoref{claim-7-5}}]
	This follows the proof of \autoref{prop-5-2}\,(ii) above.
	Indeed, by \autoref{claim-7-4} and \autoref{claim-7-3},
	$Z_0=\phi_*(Z)\neq 0$, and there exists at least one irreducible curve $D_0\subseteq Z_0$
	with $M_0\cdot D_0=d>0$.
	Moreover, any such a curve $D_0$ with $M_0\cdot D_0>0$ satisfies that
	\begin{enumerate}[$(1)\,$]
		\item $D_0$ is $\sF_0$-invariant, whose strict transform $D\subseteq S$ is contained in the support of the negative part $N$ in the Zariski decomposition $K_{\sF}=P+N$;
		\item the coefficient of $D_0$ in $Z_0$ is $1$;
		\item $d:=M_0\cdot D_0=1$.
	\end{enumerate}
Otherwise, if any of the above three conditions does not hold,
then it is proved that $\vol(\sF)\geq p_g(\sF)-1$ according the arguments in the proof of \autoref{prop-5-2}\,(ii) above.
Moreover, it is showed that any such a curve would contribute at least $1/2$ to the lower bound of the volume $\vol(\sF)$.
Hence such a curve $D_0$ with $M_0\cdot D_0>0$ is unique.
\end{proof}
	
Come back to the proof of \autoref{prop-5-2}\,(iii).
In order to prove \autoref{prop-5-2}\,(iii),
besides the conditions in \eqref{eqn-7-30},
we may also assume that the two conditions (A-1) and (A-2) in \autoref{claim-7-5} hold.
To go further, we recall the following description on a surface of minimal degree in the projective space (cf. \cite[Lemma\,1.2]{hor-76} or \cite[Chapter\,4.3]{gh-78}).
\begin{proposition}\label{prop-7-1}
	Let $\Sigma\subseteq \bbp^{m}$ be non-degenerate surface of minimal degree (equal to $m-1$), and $\rho:\,Y \to \Sigma$ its minimal desingularization with
	$M_0=\rho^*(H)$ be the pulling-back of a hyperplane section $H$ on $\Sigma$.
	Then $(Y,\Sigma,M_0)$ belongs to one of the following four cases:
	\begin{enumerate}[$(1).$]
		\item $Y=\Sigma=\mathbb{P}_{\mathbb{P}^1}\big(\mathcal{O}_{\mathbb{P}^1} \oplus \mathcal{O}_{\mathbb{P}^1}(e)\big)$ is a Hirzebruch surface,
		and $M_0=C_0+(e+1+k)F$, where $C_0$ is a section with $C_0^2=-e$ (unique if $e\geq 1$), $F$ is a general fiber of the ruling $Y\to \bbp^1$, and $k\geq 0$;
		\item $\Sigma$ is a cone over a rational curve of degree $m-1$ in $\bbp^{m-1}$ with $m\geq 3$,
		$Y=\mathbb{P}_{\mathbb{P}^1}\big(\mathcal{O}_{\mathbb{P}^1} \oplus \mathcal{O}_{\mathbb{P}^1}(m-1)\big)$, and $M_0=C_0+(m-1)F$,
		where $C_0$ is the unique section with $C_0^2=-(m-1)$, and $F$ is a general fiber of the ruling $Y\to \bbp^1$;
		\item $Y=\Sigma=\bbp^2$, $m=2$ and $M_0=L$, where $L$ is a general line in $\bbp^2$;
		\item $Y=\Sigma=\bbp^2$, $m=5$ and $M_0=2L$, where $L$ is a general line in $\bbp^2$.
	\end{enumerate}
\end{proposition}
Since $\Sigma$ is of minimal degree in $\bbp^{p_g(\sF)-1}$ by \eqref{eqn-7-30}.
Hence either $Y$ is a Hirzebruch surface or $Y\cong \bbp^2$ by \autoref{prop-7-1}.

{\noindent \bf Case (a).} 
The surface $Y$ is a Hirzebruch surface,
i.e., $Y=\Sigma=\mathbb{P}_{\mathbb{P}^1}\big(\mathcal{O}_{\mathbb{P}^1} \oplus \mathcal{O}_{\mathbb{P}^1}(e)\big)$.
Let $K_{\sF_0}=M_0+Z_0$.
By \eqref{eqn-7-30} together with two conditions (A-1) and (A-2) in \autoref{claim-7-5} and \autoref{prop-7-1},
there are two subcases:
\begin{enumerate}
	\item[\bf (a.1).] $Z_0=F_0$, and $M_0=C_0+(e+k)F$ for some $k\geq 0$,
	where $F_0$ is some fixed fiber of the ruling $Y\to \bbp^1$, such that its strict transform in $S$ is contained in the support of the negative part $N$ of the Zariski decomposition $K_{\sF}=P+N$;
	\item[\bf (a.2).] $Z_0=F_0+C_0$, $n=p_g(\sF)-2\geq 2$, and $M_0=C_0+eF=C_0+\big(p_g(\sF)-2\big)F$, where $C_0$
	is the unique section with $C_0^2=-e=2-p_g(\sF)$, and $F$ (resp. $F_0$) is a general fiber (some fixed fiber) of the ruling $Y\to \bbp^1$.
	Moreover, the strict transform of $F_0$ in $S$ is contained in the support of the negative part $N$ of the Zariski decomposition $K_{\sF}=P+N$
\end{enumerate}

{\bf Case (a.1).}
In this case,
$$K_{\sF_0}^2=(M_0+F_0)^2=M_0^2+2=\big(p_g(\sF)-2\big)+2=p_g(\sF).$$
By \autoref{lem-7-9}, all the possible non-reduced singularities of $\sF_0$ are on $F_0$.
Let $p_1,\cdots,p_k$ be all the singularities of $\sF_0$ on $F_0$.
By \autoref{prop-2-2},
\begin{equation}\label{eqn-7-37}
\sum_{i=1}^{k} Z(\sF_0,F_0,p_i)=Z(\sF_0,F_0)=K_{\sF_0}F_0+\chi(F_0)=3.
\end{equation}
In particular, $1\leq k \leq 3$.
We divide into three subcases to finish the proof according to the value of $k$.

{\bf Case (a.1.1).} Suppose that $k=1$.
In this case $Z(\sF_0,F_0,p_1)=3$ by \eqref{eqn-7-34}.
We will show that this subcase cannot happen.
First we claim that
\begin{claim}\label{claim-7-8}
	The vanishing order of $\sF_0$ at $p_1$ is $1$.
\end{claim}
\begin{proof}[{Proof of \autoref{claim-7-8}}]
	Suppose that the vanishing order of $\sF_0$ at $p_1$ is $a_{p_1}\geq 2$. We will derive a contradiction.
	To this aim, we will prove that
	\begin{equation}\label{eqn-7-38}
	\text{all the exceptional curves of the blowing-ups contained in $\phi:\,S \to Y$ are $\sF$-invariant.}
	\end{equation}
	We first derive a contradiction based on \eqref{eqn-7-38}.
	As a relatively minimal algebraically integral foliation of general type, $\sF$ is induced a fibration $f:\,S \to B$ of genus $g\geq 2$.
	By \autoref{lem-2-3} with \eqref{eqn-7-38}, such a fibration induces a fibration $f_0:\,Y \to B$ with $f=f_0\circ\phi$.
	However, $Y$ is a Hirzebruch surface, there is no fibration of genus $g\geq 2$ on $Y$. This gives a contradiction.
	Therefore, it suffices to prove \eqref{eqn-7-38}.
	
	Note that $p_1$ is a non-reduced singularity since $a_{p_1}\geq 2$.
	Let $\sigma_1:\,S_1 \to Y$ be the blowing-up centered at $p_1$ with exceptional curve $E_1$.
	Then $\phi:\,S \to Y$ factors through $\sigma_1$ as $\phi=\sigma_1\circ\phi_1$ by \autoref{lem-7-9}\,(ii).
	Let $\sF_1=\sigma_1^*(\sF_0)$ and $K_{\sF_1}=\sigma_1^*(K_{\sF_0})-(\ell_{p_1}-1)E_1$.
	Then by \autoref{lem-7-9}\,(ii),
	$$\ell_{p_1}\leq m_{p_1}(Z_0)+1=2.$$
	Combining this with \eqref{eqn-2-13}, $\ell_{p_1}=a_{p_1}=2$ and hence $E_1$ is $\sF_1$-invariant.
	It follows that
	$$K_{\sF_1}=\sigma_1^*(K_{\sF_0})-E_1=\sigma_1^*(M_0+F_0)-E_1=M_1+\ol F_0,$$
	where $M_1=\sigma_1^*(M_0)$, and $\ol F_0$ is the strict transforms of $F_0$ in $S_1$.
	Since $p_1$ is the unique non-reduced singularity of $\sF_0$,
	it follows that $q_1=E_1\cap \ol F_0$ is the unique possible non-reduced singularity
	of $\sF_1$, since all possible non-reduced singularities of $\sF_1$ would be on $Z_1:=\ol F_1$ by \autoref{lem-7-9}.
	Moreover, by \autoref{lem-7-8},
	$$Z(\sF_1,\ol F_0,q_1)=Z(\sF_0,F_0,p_1)-1=2.$$
	This implies in particular that the vanishing order of $\sF_1$ at $q_1$ is $a_{q_1} \leq 2$ by the definition of $Z$-index.
	By \autoref{cor-7-2}, $Z(\sF_1,E_1,q_1)\geq 2$ and $a_{q_1}=2$.
	Note also that
	$Z(\sF_1,E_1)=3$ by \autoref{prop-2-2}.
	Hence $Z(\sF_1,E_1,q_1)=2$ or $3$.
	
	Let $\sigma_2:\,S_2 \to S_1$ be the blowing-up centered at $q_1$ with exceptional curve $E_2$.
	Then $\phi_1$ factors through $\sigma_2$ as $\phi_1=\sigma_2\circ\phi_2$ by
	\autoref{lem-7-9}\,(ii).
	$K_{\sF_2}=\sigma_2^*(K_{\sF_1})-(\ell_{q_1}-1)E_2$.
	Then by \autoref{lem-7-9}\,(ii),
	$$\ell_{q_1}\leq m_{q_1}(Z_1)+1=2.$$
	Combining this with \eqref{eqn-2-13}, $\ell_{q_1}=a_{q_1}=2$ and hence $E_2$ is $\sF_2$-invariant.
	It follows that
	$$K_{\sF_2}=\sigma_2^*(K_{\sF_1})-E_2=\sigma_2^*(M_1+\ol F_0)-E_2=M_2+\ol F_0,$$
	where $M_2=\sigma_2^*(M_1)$, and we still denote by $\ol F_0$ the strict transforms of $F_0$ in $S_2$ by abuse of notations.
	Similar as above, one sees that the intersection $q_2=E_2\cap \ol F_0$ is the only possible non-reduced singularity of $\sF_2$ based on \autoref{lem-7-9}.
	Moreover, it cannot happen that $Z(\sF_1,E_1,q_1)=3$; indeed, if $Z(\sF_1,E_1,q_1)=3$, then similar as above one shows that $q_2'=\ol E_1\cap E_2$ is a singularity with $a_{q_2'}=2$ (and hence non-reduced),
	which is a contradiction because $q_2$ is only possible non-reduced singularity.
	Hence $Z(\sF_1,E_1,q_1)=2$, and hence there is another singularity $q_1'$ on $E_1$ with
	$$Z(\sF_1,E_1,q_1')=Z(\sF_1,E_1)-Z(\sF_1,E_1,q_1)=\big(K_{\sF_1}E_1+2\big)-2=1.$$
	Moreover, by \autoref{lem-7-8},
	$$Z(\sF_2,\ol F_0,q_2)=Z(\sF_1,\ol F_0,q_1)-1=1,\qquad 
	Z(\sF_2,\ol E_1,q_2')=Z(\sF_1,E_1,q_1)-1=1.$$
	Hence by \autoref{cor-7-2},
	$$Z(\sF_2,E_2,q_2)=Z(\sF_2,E_2,q_2')=1.$$
	Thus there is another singularity $q_2''$ on $E_2$ with
	$$Z(\sF_2,E_2,q_2'')=Z(\sF_2,E_2)-Z(\sF_2,E_2,q_2)-Z(\sF_2,E_2,q_2')=1.$$
	Hence all the singularities on $S_2$ are non-degenerate by \autoref{lem-7-2}; and except $q_2$, all the rest singularities are reduced.
	$${\setlength{\unitlength}{6mm}
		\begin{tikzpicture}
		\draw[thick] (14,1) -- (14,3);	
		%\draw[thick] (13.5,2.5) -- (15.5,2.5);		
		%\filldraw[black] (14,2.5) circle (1.5pt);
		%\filldraw[black] (14,2.3) node[anchor=west]{$p_0$};
		\filldraw[black] (14,2.5) circle (1.5pt);
		\filldraw[black] (14,2.3) node[anchor=west]{$p_1$};
		\filldraw[black] (13.6,0.7) node[anchor=west]{$F_0$};
		%\filldraw[black] (15.5,2.5) node[anchor=west]{$C_0$};

		\draw[->] (12,2) -- (13,2);
		\filldraw[black] (12.2,2.3) node[anchor=west]{\Large  $\sigma_1$};
		
		\draw[thick] (9,1) -- (9,3);	
		%\draw[thick] (8.5,2.5) -- (11,2.5);	
		\draw[thick] (8.5,2.5) -- (11,2.5);	
		%\filldraw[black] (9,2.5) circle (1.5pt);
		%\filldraw[black] (9,2.3) node[anchor=west]{$p_0$};
		\filldraw[black] (9,2.5) circle (1.5pt);
		\filldraw[black] (10,2.5) circle (1.5pt);
		\filldraw[black] (9,2.3) node[anchor=west]{$q_1$};
		\filldraw[black] (9.8,2.2) node[anchor=west]{$q_1'$};
		\filldraw[black] (8.6,0.7) node[anchor=west]{$\ol F_0$};
		%\filldraw[black] (11,2.5) node[anchor=west]{$\ol C_0$};
		\filldraw[black] (11,2.5) node[anchor=west]{$E_1$};
		
		\draw[->] (7,2) -- (8,2);
		\filldraw[black] (7.2,2.3) node[anchor=west]{\Large  $\sigma_2$};
		
		\draw[thick] (4,1) -- (4,3);	
		%\draw[thick] (3.5,2.5) -- (6,2.5);	
		\draw[thick] (3.5,2.7) -- (5.75,1.8);
		\draw[thick] (4.5,1.9) -- (6.5,2.7);
		%\filldraw[black] (4,2.5) circle (1.5pt);
		%\filldraw[black] (4,2.3) node[anchor=west]{$p_0$};
		\filldraw[black] (4,2.5) circle (1.5pt);
		\filldraw[black] (4.5,2.3) circle (1.5pt);
		\filldraw[black] (5,2.1) circle (1.5pt);
		\filldraw[black] (5.75,2.4) circle (1.5pt);
		\filldraw[black] (3.5,2.35) node[anchor=west]{$q_2$};
		\filldraw[black] (4.4,2.5) node[anchor=west]{$q_2''$};
		\filldraw[black] (4.7,1.8) node[anchor=west]{$q_2'$};
		\filldraw[black] (5.6,2.2) node[anchor=west]{$q_1'$};
		\filldraw[black] (3.6,0.7) node[anchor=west]{$\ol F_0$};
		%\filldraw[black] (6,2.5) node[anchor=west]{$\ol C_0$};
		\filldraw[black] (6.5,2.6) node[anchor=west]{$\ol E_1$};
		\filldraw[black] (5.6,1.6) node[anchor=west]{$E_2$};
\end{tikzpicture}}$$	
By \autoref{prop-2-2},
$$\cs(\sF_2,\ol F_0,q_2)=\ol F_0^2=-2.$$
It follows that $q_2$ is also a reduced singularity.
Since $\sF$ is relatively minimal, it follows that the induced birational morphism $\phi_2:\,S \to S_2$ is an isomorphism.
Hence the exceptional curves, which are $\ol E_1$ and $E_2$, are both $\sF$-invariant as required.
\end{proof}
Since the vanishing order of $\sF_0$ at $p_1$ is $1$.
The resolution of such a singularity $p_1$ into non-degenerate ones is given in \autoref{lem-7-7}.
The picture is as follows ($Z(\sF_0,F_0,p_1)=3$ in this case).
$${\setlength{\unitlength}{6mm}
	\begin{tikzpicture}
	\draw[thick] (6,1) -- (6,3);
	\filldraw[black] (6,2) circle (1.5pt);
	\filldraw[black] (6,2) node[anchor=west]{$p_1$};
	\filldraw[black] (6,1) node[anchor=west]{$F_0$};

	\draw[->] (4.3,2) -- (5.2,2);
	\filldraw[black] (4.5,2.3) node[anchor=west]{\Large  $\sigma$};

\draw[thick] (3,1) -- (3,3);
\draw[thick] (0.5,1) -- (0.5,3);
\draw[thick] (1.34,0.88) -- (3.5,2.5);
\draw[thick] (2.16,0.88) -- (0,2.5);
\filldraw[black] (0.5,2.12) circle (1.5pt);
\filldraw[black] (1.75,1.2) circle (1.5pt);
\filldraw[black] (2.4,1.68) circle (1.5pt);
\filldraw[black] (3,2.12) circle (1.5pt);
\filldraw[black] (3,2) node[anchor=west]{$q_3$};
\filldraw[black] (2.3,1.5) node[anchor=west]{$q_3'$};
\filldraw[black] (1.45,1.5) node[anchor=west]{$\bar q_2$};
\filldraw[black] (0.45,2.2) node[anchor=west]{$\bar q_1$};
\filldraw[black] (-0.1,1) node[anchor=west]{$\ol{E}_1$};
\filldraw[black] (3,1) node[anchor=west]{$\ol{F}_0$};
\filldraw[black] (-0.65,2.6) node[anchor=west]{$\ol E_2$};
\filldraw[black] (3.3,2.7) node[anchor=west]{$E_3$};
\end{tikzpicture}}$$
%Moreover, by \eqref{eqn-7-32},
%$$K_{\sF_3}=\sigma^*(K_{\sF_0})-(\mathcal{E}_2+\mathcal{E}_3)=\sigma^*(K_{\sF_0})-(\ol E_2+2E_3)=\sigma^*(M_0)+\ol F_0+\ol E_1+\ol E_2+E_3.$$
By \autoref{prop-2-2} and \autoref{lem-2-2}, one obtains that
\begin{equation*}
\left\{\begin{aligned}
&\cs(\sF_3,E_3,q_3)^{-1}=\cs(\sF_3,\ol F_0,q_3)=\ol F_0^2=-3;\\
&\cs(\sF_3,\ol E_2,\bar q_1)^{-1}=\cs(\sF_3,\ol E_1,\bar q_1)=\ol E_1^2=-2;\\
&\cs(\sF_3,E_3,\bar q_2)^{-1}=\cs(\sF_3,\ol E_2,\bar q_2)=\ol E_2^2-\cs(\sF_3,\ol E_2,\bar q_1)=-2+\frac12=-\frac32;\\
&\cs(\sF_3,E_3,q_3')=E_3^2-\cs(\sF_3,E_3,\bar q_2)-\cs(\sF_3,E_3,q_3)=-1+\frac23+\frac13=0.
\end{aligned}\right.
\end{equation*}
This gives a contradiction, since $\cs(\sF_3,E_3,q_3')$ is just the eigenvalue of $\sF_3$ at the non-degenerate singularity $q_3'$.
Hence this subcase cannot happen.

{\bf Case (a.1.2).} Suppose that $k=2$.
By \eqref{eqn-7-34}, we may assume without loss of generality that
$Z(\sF_0,F_0,p_1)=2$ and $Z(\sF_0, F_0, p_2)=1$.
Since $Z(\sF_0,F_0,p_1)=2$, the vanishing order of $\sF_0$ at $p_1$ is $a_{p_1}\leq 2$.

Suppose first that $a_{p_1}=2$. Let $\sigma_1:\,S_1 \to Y$ be the blowing-up centered at $p_1$ with exceptional curve $E_1$,
and let $K_{\sF_1}=\sigma_1^*(K_{\sF_0})-(\ell_{p_1}-1)E_1$.
By \autoref{lem-7-9}\,(ii), $\phi:\,S \to Y$ factors through $\sigma_1$ as $\phi=\sigma_1\circ\phi_1$, and $\ell_{p_1}\leq 2$.
Combining with \eqref{eqn-2-13},
one sees that $\ell_p=a_p=2$ and $E_1$ is $\sF_1$-invariant.
Moreover,
$$K_{\sF_1}=\sigma_1^*(K_{\sF_0})-(\ell_{p_1}-1)E_1=\sigma_1^*(M_0)+\ol F_0.$$
Hence the possible non-reduced singularities of $\sF_1$ are all on $\ol F_0$ by \autoref{lem-7-9}\,(i).
Let $q_1=\ol F_0\cap E_1$. By \autoref{lem-7-8}, $Z(\sF_1,\ol F_0,q_1)=1$,
which implies that $Z(\sF_1,E_1,q_1)=1$ as well by \autoref{cor-7-2}.
Note that $Z(\sF_1,E_1)=K_{\sF_1}\cdot E_1+\chi(E_1)=3$.
It follows that there are two another singularities $q_1',q_1''$ on $E_1$, which are both reduced.
$${\setlength{\unitlength}{6mm}
	\begin{tikzpicture}
	\draw[thick] (14,1) -- (14,3);	
	%\draw[thick] (13.5,2.5) -- (15.5,2.5);		
	%\filldraw[black] (14,2.5) circle (1.5pt);
	%\filldraw[black] (14,2.3) node[anchor=west]{$p_0$};
	\filldraw[black] (14,2.5) circle (1.5pt);
	\filldraw[black] (14,2.3) node[anchor=west]{$p_1$};
	\filldraw[black] (14,1.5) circle (1.5pt);
	\filldraw[black] (14,1.3) node[anchor=west]{$p_2$};
	
	\filldraw[black] (13.6,0.7) node[anchor=west]{$F_0$};
	%\filldraw[black] (15.5,2.5) node[anchor=west]{$C_0$};

	\draw[->] (12,2) -- (13,2);
	\filldraw[black] (12.2,2.3) node[anchor=west]{\Large  $\sigma_1$};
	
	\draw[thick] (8.5,1) -- (8.5,3);	
	%\draw[thick] (8.5,2.5) -- (11,2.5);	
	\draw[thick] (8,2.5) -- (11,2.5);	
	%\filldraw[black] (9,2.5) circle (1.5pt);
	%\filldraw[black] (9,2.3) node[anchor=west]{$p_0$};
	\filldraw[black] (8.5,2.5) circle (1.5pt);
	\filldraw[black] (9.5,2.5) circle (1.5pt);
	\filldraw[black] (10.5,2.5) circle (1.5pt);
	\filldraw[black] (8.5,2.3) node[anchor=west]{$q_1$};
	\filldraw[black] (9.3,2.2) node[anchor=west]{$q_1'$};
	\filldraw[black] (10.3,2.2) node[anchor=west]{$q_1''$};
	\filldraw[black] (8.1,0.7) node[anchor=west]{$\ol F_0$};
	\filldraw[black] (8.5,1.5) circle (1.5pt);
	\filldraw[black] (8.5,1.3) node[anchor=west]{$p_2$};
	\filldraw[black] (11,2.5) node[anchor=west]{$E_1$};
\end{tikzpicture}}$$
Note that all the singularities of $\sF_1$ are non-degenerate, and $K_{\sF_1}$ is nef.
According to \autoref{lem-7-5},
\begin{equation}\label{eqn-7-41}
	\vol(\sF)\geq K_{\sF_1}^2+\sum_{p\in T}\beta_p(\sF_1)
	=\big(p_g(\sF)-1\big)+\min\big\{0,\beta_{q_1}(\sF_1)\big\}+\min\big\{0,\beta_{p_2}(\sF_1)\big\},
\end{equation}
where $T$ is the set of all non-reduced singularities of $\sF_1$, which contains at most $q_1$ and $p_2$ by the above argument.
By \autoref{prop-2-2},
$$\cs(\sF_1,\ol F_1,q_1)+\cs(\sF_1,\ol F_1,p_2)=-\ol F_0^2=-1.$$
Hence there is at most one of the singularities $\{q_1,p_2\}$ non-reduced.
If both of them are reduced, then $\vol(\sF)\geq p_g(\sF)-1$ by \eqref{eqn-7-41}.
If $q_1$ is non-reduced, then $p_2$ is reduced and $\beta_{q_1}(\sF_1)\geq -\frac{1}{4}$;
indeed, if $\beta_{q_1}(\sF_1)<-\frac14$, then the eigenvalue of $\sF_1$ at $q_1$ is
\begin{equation}\label{eqn-7-44}
	\lambda_{q_1} \not\in \big\{1,2,1/2,3,1/3\big\}.
\end{equation}
According to the resolution of a non-degenerate singularity above \autoref{lem-2-2},
the minimal resolution $\tilde \phi:\,\wt{S} \to S_1$ of the singularity $q_1$ consists of at most $3$ blowing-ups.
Moreover, the contraction $\pi_1$ factors through $\tilde\phi$ by \autoref{lem-7-9}\,(ii).
Since $\sF$ is relatively minimal, it follows that $S=\wt{S}$.
This contradicts \autoref{lem-7-10}.
Hence by \eqref{eqn-7-41},
$$\vol(\sF)\geq \big(p_g(\sF)-1\big)+\beta_{q_1}(\sF_1)
\geq \big(p_g(\sF)-1\big)-\frac14=p_g(\sF)-\frac54.$$
If $p_2$ is non-reduced, the argument would be completely the same and we omit it here.

Suppose next that $a_{p_1}=1$. By \autoref{lem-7-7}, one can resolve $p_1$ as follows.
$${\setlength{\unitlength}{6mm}
	\begin{tikzpicture}
	\draw[thick] (14,1) -- (14,3);	
	%\draw[thick] (13.5,2.5) -- (15.5,2.5);		
	\filldraw[black] (14,1.5) circle (1.5pt);
	\filldraw[black] (14,1.3) node[anchor=west]{$p_2$};
	\filldraw[black] (14,2.5) circle (1.5pt);
	\filldraw[black] (14,2.3) node[anchor=west]{$p_1$};
	\filldraw[black] (13.6,0.7) node[anchor=west]{$F_0$};
	%\filldraw[black] (15.5,2.5) node[anchor=west]{$C_0$};

	\draw[->] (12,2) -- (13,2);
	\filldraw[black] (12.2,2.3) node[anchor=west]{\Large  $\sigma_1$};
	
	\draw[thick] (9,1) -- (9,3);	
	%\draw[thick] (8.5,2.5) -- (11,2.5);	
	\draw[thick] (8.5,2.5) -- (11,2.5);	
	\filldraw[black] (9,1.5) circle (1.5pt);
	\filldraw[black] (9,1.3) node[anchor=west]{$p_2$};
	\filldraw[black] (9,2.5) circle (1.5pt);
	\filldraw[black] (9,2.3) node[anchor=west]{$q_1$};
	\filldraw[black] (8.6,0.7) node[anchor=west]{$\ol F_0$};
	%\filldraw[black] (11,2.5) node[anchor=west]{$\ol C_0$};
	\filldraw[black] (11,2.5) node[anchor=west]{$E_1$};
	
	\draw[->] (7,2) -- (8,2);
	\filldraw[black] (7.2,2.3) node[anchor=west]{\Large  $\sigma_2$};
	
	\draw[thick] (4,1) -- (4,3);
	\draw[thick] (5.5,1) -- (5.5,3);	
	\draw[thick] (3.5,2.5) -- (6.3,2.5);	
	\filldraw[black] (4,1.5) circle (1.5pt);
	\filldraw[black] (4,1.3) node[anchor=west]{$p_2$};
	\filldraw[black] (4.75,2.5) circle (1.5pt);
	\filldraw[black] (4.55,2.25) node[anchor=west]{$q_2'$};
	\filldraw[black] (5.5,2.5) circle (1.5pt);
	\filldraw[black] (5.4,2.25) node[anchor=west]{$\bar q_1$};
	\filldraw[black] (4,2.5) circle (1.5pt);
	\filldraw[black] (3.5,2.3) node[anchor=west]{$q_2$};
	\filldraw[black] (3.6,0.7) node[anchor=west]{$\ol F_0$};
	%\filldraw[black] (6,2.5) node[anchor=west]{$\ol C_0$};
	\filldraw[black] (5.1,0.7) node[anchor=west]{$\ol E_1$};
	\filldraw[black] (6.25,2.5) node[anchor=west]{$E_2$};
\end{tikzpicture}}$$
Let $\sF_i$ be the induced foliation on $S_i$ as in \autoref{lem-7-7}.
By \autoref{prop-2-2} and \autoref{lem-2-2},
\begin{equation}\label{eqn-7-43}
	\left\{\begin{aligned}
	&\cs(\sF_2,E_2,\bar q_1)^{-1}=\cs(\sF_2,\ol E_1,\bar q_1)=\ol E_1^2=-2,\\
	&\cs(\sF_2,E_2,q_2)^{-1}+\cs(\sF_2,\ol F_0, p_2)=\cs(\sF_2,\ol F_0, q_2)+\cs(\sF_2,\ol F_0, p_2)=\ol F_0^2=-2,\\
	&\cs(\sF_2,E_2,q_2')+\cs(\sF_2,E_2,q_2)=E_2^2-\cs(\sF_2,E_2,\bar q_1)=-1+\frac12=-\frac12.
	\end{aligned}\right.
\end{equation}
According to \eqref{eqn-7-32},
$$K_{\sF_2}=\pi^*(K_{\sF_0})-E_2=\pi^*(M_0)+\ol F_0+E_2+\ol E_1.$$
Hence the Zariski decomposition of $K_{\sF_2}$ is as follows.
$$K_{\sF_2}=P_2+N_2, \qquad \text{where~}P_2=\pi^*(M_0)+\ol F_0+E_2+\frac{\ol E_1}{2},\,~\,N_2=\frac{\ol E_1}{2}.$$
By \autoref{lem-7-9}, $\phi:\,S \to Y$ factors through $\pi=\sigma_1\circ\sigma_2$ as $\phi=\pi\circ\phi_2$,
and that the possible non-reduced singularities are all on $\ol F_0+E_2+\ol E_1$.
Let $T_2$ be the set of non-reduced singularities of $\sF_2$.
According to \autoref{lem-7-5},
\begin{equation}\label{eqn-7-42}
\vol(\sF)\geq K_{\sF_2}^2+\beta_{\bar q_1}(\sF_1)+\sum_{q\in T_2}\beta_{q}(\sF_2).
\end{equation}
By \eqref{eqn-7-43}, $\bar q_1$ is reduced with $\beta_{\bar q_1}(\sF_1)=\frac12$.

If $q_2$ is non-reduced, i.e., $\cs(\sF_2,E_2,q_2)^{-1}=\cs(\sF_2,\ol F_0, q_2)$ is a positive rational number, then by \eqref{eqn-7-43}, both $\cs(\sF_2,E_2,q_2')$ and $\cs(\sF_2,\ol F_0, p_2)$ are negative rational,
which implies that both $q_2'$ and $p_2$ are reduced.
Moreover,  the eigenvalue $\lambda_{q_2}\neq 1$; otherwise,
$\phi:\,S \to Y$ consists of at most $3$ blowing-ups by a similar argument in proving \eqref{eqn-7-44}, a contradiction to \autoref{lem-7-10}.
Hence by \eqref{eqn-7-42},
$$\vol(\sF)\geq \big(p_g(\sF)-1\big)+\frac12+\sum_{q\in T_2}\beta_{q}(\sF_2)\geq \big(p_g(\sF)-1\big)+\frac12-\frac12=p_g(\sF)-1.$$

Suppose now that $q_2$ is reduced, from which it follows that $\cs(\sF_2,E_2,q_2)<0$.
If $T_2$ contains at most one singularity, i.e., $T_2=\{p_2\}$ or $T_2=\{q_2'\}$,
then similarly by \autoref{lem-7-10}, one shows that the eigenvalue $\lambda_q \neq 1$ for such a non-reduced singularity $q\in T_2$, and hence $\vol(\sF)\geq p_g(\sF)-1$.
Suppose next that $T_2=\{p_2,q_2'\}$, i.e., both $p_2$ and $q_2'$ are non-reduced.
In this case, the eigenvalue $\lambda_q\not \in \{1,2,1/2\}$ for any $q\in T_2$.
Indeed, if $\lambda_{p_2} \in \{1,2,1/2\}$, then $\lambda_{q_2'}<0$ by \eqref{eqn-7-43}; and similarly if $\lambda_{q_2'} \in \{1,2,1/2\}$, then $\lambda_{p_2}<0$. Hence by \eqref{eqn-7-42},
$$\vol(\sF)\geq \big(p_g(\sF)-1\big)+\frac12+\beta_{p_2}(\sF_2)+\beta_{q_2'}(\sF_2)\geq \big(p_g(\sF)-1\big)+\frac12-\frac13-\frac13=p_g(\sF)-\frac76.$$

{\bf Case (a.1.3).} Suppose that $k=3$.
By \eqref{eqn-7-34},
$$Z(\sF_0,F_0,p_1)=Z(\sF_0,F_0,p_2)=Z(\sF_0,F_0,p_3)=1.$$
In particular, all singularities of $\sF_0$ are non-degenerate.
According to \autoref{prop-2-2},
\begin{equation}\label{eqn-7-45}
	\cs(\sF_0,F_0,p_1)+\cs(\sF_0,F_0,p_2)+\cs(\sF_0,F_0,p_3)=F_0^2=0.
\end{equation}
It follows in particular that at most two of the above $\cs$-indices are positive rational.
We may assume without loss of generality that $\cs(\sF_0,F_0,p_3)$ is not positive rational.
By \autoref{lem-7-5},
\begin{equation}\label{eqn-7-46}
\vol(\sF)\geq K_{\sF_0}^2+\sum_{p\in T_0}\beta_{p}(\sF_0)=p_g(\sF)+\sum_{p\in T_0}\beta_{p}(\sF_0),
\end{equation}
where $T_0$ is the set of non-reduced singularities of $\sF_0$.
Suppose first that one of the $\cs$-indices equal to $1$, saying for instance $\cs(\sF_0,F_0,p_1)=1$. Then similar to prove \eqref{eqn-7-44}, by \autoref{lem-7-10} one shows that
$$\cs(\sF_0,F_0,p_2)\not \in \big\{1,2,3,1/2,1/3\}.$$
Hence by \eqref{eqn-7-46},
$$\vol(\sF)\geq p_g(\sF)-1-\frac14=p_g(\sF)-\frac54.$$
If $\cs(\sF_0,F_0,p_1)\neq 1$ and $\cs(\sF_0,F_0,p_2)\neq 1$, then by \eqref{eqn-7-46},
$$\vol(\sF)\geq p_g(\sF)-\frac12-\frac12=p_g(\sF)-1.$$

{\bf Case (a.2).} 
By \autoref{prop-2-1} one checks easily that $C_0$ is $\sF_0$-invariant.
As $F_0$ is also $\sF_0$-invariant, it follows that the intersection $p_0=F_0\cap C_0$ is a singularity of $\sF_0$.
Hence by \autoref{prop-2-2},
$$1\leq Z(\sF_0,C_0)=K_{\sF_0}\cdot C_0+\chi(C_0)=3-e.$$
Since $e\geq 2$ by assumption, it follows that $e=2$,
and hence $p_g(\sF)=2+e=4$ and
$$Z(\sF_0,C_0)=Z(\sF_0,C_0,p_0)=1.$$
In particular, $p_0$ is the unique singularity of $\sF_0$ on $C_0$,
and by \eqref{eqn-2-7},
$$\cs(\sF_0,C_0,p_0)=C_0^2=-2.$$
In particular, $p_0$ is a reduced singularity, and $Z(\sF_0,F_0,p_0)=1$ by \autoref{cor-7-2}.
In particular, all the possible non-reduced singularities of $\sF_0$ are on $F_0\setminus \{p_0\}$ by \autoref{lem-7-9}.
Let $p_0,p_1,\cdots,p_k$ be all the singularities of $\sF_0$ on $F_0$.
By \autoref{prop-2-2},
\begin{equation}\label{eqn-7-34}
	\sum_{i=1}^{k} Z(\sF_0,F_0,p_i)=Z(\sF_0,F_0)-Z(\sF_0,F_0,p_0)=3.
\end{equation}
In particular, $1\leq k \leq 3$. One can then apply a similar argument as in Case (a.1) to finish the proof according to the value of $k$.
We outline the main points in the following and omit the details.
Since $p_g(\sF)=4$ in this case, it suffices to show $\vol(\sF)\geq \frac{11}{4}$.

{\bf Case (a.2.1).} Suppose that $k=1$.
In this case $Z(\sF_0,F_0,p_1)=3$ by \eqref{eqn-7-34}.
First similar to \autoref{claim-7-8}, one shows that the vanishing order of $\sF_0$ at $p_1$ is $a_{p_1}=1$.
By \autoref{lem-7-7}, one can resolve $p_1$ into non-degenerate singularities as follows.
$${\setlength{\unitlength}{6mm}
	\begin{tikzpicture}
	\draw[thick] (8,1) -- (8,3);
	\draw[thick] (7.5,1.4) -- (9.4,1.4);
	\filldraw[black] (8,2.4) circle (1.5pt);
	\filldraw[black] (8,2.5) node[anchor=west]{$p_1$};
	\filldraw[black] (8,1.4) circle (1.5pt);
	\filldraw[black] (8,1.6) node[anchor=west]{$p_0$};
	\filldraw[black] (7.6,0.7) node[anchor=west]{$F_0$};
	\filldraw[black] (9.4,1.4) node[anchor=west]{$C_0$};

	\draw[->] (5.5,2) -- (6.7,2);
	\filldraw[black] (5.75,2.3) node[anchor=west]{\Large  $\sigma$};

	\draw[thick] (3,1) -- (3,3);
	\draw[thick] (0.5,1) -- (0.5,3);
	\draw[thick] (1.34,0.88) -- (3.5,2.5);
	\draw[thick] (2.16,0.88) -- (0,2.5);
	\filldraw[black] (0.5,2.12) circle (1.5pt);
	\filldraw[black] (1.75,1.2) circle (1.5pt);
	\filldraw[black] (2.4,1.68) circle (1.5pt);
	\filldraw[black] (3,2.12) circle (1.5pt);
	\draw[thick] (2.6,1.4) -- (4.4,1.4);
	\filldraw[black] (3,1.4) circle (1.5pt);
	\filldraw[black] (2.9,1.2) node[anchor=west]{$p_0$};
	\filldraw[black] (3,2) node[anchor=west]{$q_3$};
	\filldraw[black] (2,2) node[anchor=west]{$q_3'$};
	\filldraw[black] (1.45,1.5) node[anchor=west]{$\bar q_2$};
	\filldraw[black] (0.45,2.2) node[anchor=west]{$\bar q_1$};
	\filldraw[black] (-0.1,1) node[anchor=west]{$\ol{E}_1$};
	\filldraw[black] (2.6,0.7) node[anchor=west]{$\ol{F}_0$};
	\filldraw[black] (-0.65,2.6) node[anchor=west]{$\ol E_2$};
	\filldraw[black] (3.3,2.7) node[anchor=west]{$E_3$};
	\filldraw[black] (4.4,1.4) node[anchor=west]{$\ol C_0$};
\end{tikzpicture}}$$
Moreover, by \eqref{eqn-7-32},
$$\begin{aligned}
K_{\sF_3}&\,=\sigma^*(K_{\sF_0})-(\mathcal{E}_2+\mathcal{E}_3)=\sigma^*(K_{\sF_0})-(\ol E_2+2E_3)\\
&\,=\sigma^*(M_0)+\ol C_0+\ol F_0+\ol E_1+\ol E_2+E_3=P_3+N_3,
\end{aligned}$$
where $P_3=\sigma^*(M_0)+\frac15\ol C_0+\frac25\ol F_0+\frac13\ol E_1+\frac23\ol E_2+E_3$ and $N_3=\frac45\ol C_0+\frac35\ol F_0+\frac23\ol E_1+\frac13\ol E_2$
are respectively the nef and negative parts of the Zariski decomposition of $K_{\sF_3}$.
By \autoref{prop-2-2} and \autoref{lem-2-2}, one obtains that
\begin{equation}\label{eqn-7-39}
\left\{\begin{aligned}
&\cs(\sF_3,E_3,q_3)^{-1}=\cs(\sF_3,\ol F_0,q_3)=\ol F_0^2-\cs(\sF_3,\ol F_0,p_0)=-3+\frac12=-\frac{5}{2};\\
&\cs(\sF_3,\ol E_2,\bar q_1)^{-1}=\cs(\sF_3,\ol E_1,\bar q_1)=\ol E_1^2=-2;\\
&\cs(\sF_3,E_3,\bar q_2)^{-1}=\cs(\sF_3,\ol E_2,\bar q_2)=\ol E_2^2-\cs(\sF_3,\ol E_2,\bar q_1)=-2+\frac12=-\frac32;\\
&\cs(\sF_3,E_3,q_3')=E_3^2-\cs(\sF_3,E_3,\bar q_2)-\cs(\sF_3,E_3,q_3)=-1+\frac23+\frac25=\frac{1}{15}.
\end{aligned}\right.
\end{equation}
Hence by \autoref{lem-7-5},
$$\begin{aligned}
\vol(\sF)&\,\geq K_{\sF_3}^2+\beta_{p_0}(\sF_3)+\beta_{q_3}(\sF_3)+\beta_{\bar q_2}(\sF_3)+\beta_{\bar q_1}(\sF_3)+\beta_{q_3'}(\sF_3)\\
&\,=2+\frac12+\frac{1}{10}+\frac16+\frac12-\frac{1}{15}=\frac{16}{5}.
\end{aligned}$$

{\bf Case (a.2.2).} Suppose that $k=2$.
By \eqref{eqn-7-34}, we may assume without loss of generality that
$Z(\sF_0,F_0,p_1)=2$ and $Z(\sF_0, F_0, p_2)=1$.
Since $Z(\sF_0,F_0,p_1)=2$, the vanishing order of $\sF_0$ at $p_1$ is $a_{p_1}\leq 2$.

If the vanishing order $a_{p_1}=2$, then one can resolve $p_1$ into non-reduced singularities as follows.
$${\setlength{\unitlength}{6mm}
	\begin{tikzpicture}
	\draw[thick] (14,1) -- (14,3);	
	\draw[thick] (13.5,2.7) -- (15.5,2.7);		
	\filldraw[black] (14,2.7) circle (1.5pt);
	\filldraw[black] (14,2.5) node[anchor=west]{$p_0$};
	\filldraw[black] (14,2) circle (1.5pt);
	\filldraw[black] (14,1.8) node[anchor=west]{$p_1$};
	\filldraw[black] (14,1.3) circle (1.5pt);
	\filldraw[black] (14,1.1) node[anchor=west]{$p_2$};

	\filldraw[black] (13.6,0.7) node[anchor=west]{$F_0$};
	\filldraw[black] (15.5,2.7) node[anchor=west]{$C_0$};

	\draw[->] (12,2) -- (13,2);
	\filldraw[black] (12.2,2.3) node[anchor=west]{\Large  $\sigma_1$};
	
	\draw[thick] (8.5,1) -- (8.5,3);	
	\draw[thick] (8,2.7) -- (11,2.7);	
	\draw[thick] (8,2) -- (11,2);	
	\filldraw[black] (8.5,2.7) circle (1.5pt);
	\filldraw[black] (8.5,2.5) node[anchor=west]{$p_0$};
	\filldraw[black] (8.5,2) circle (1.5pt);
	\filldraw[black] (9.5,2) circle (1.5pt);
	\filldraw[black] (10.5,2) circle (1.5pt);
	\filldraw[black] (8.45,1.7) node[anchor=west]{$q_1$};
	\filldraw[black] (9.3,1.7) node[anchor=west]{$q_1'$};
	\filldraw[black] (10.3,1.7) node[anchor=west]{$q_1''$};
	\filldraw[black] (8.1,0.7) node[anchor=west]{$\ol F_0$};
	\filldraw[black] (8.5,1.3) circle (1.5pt);
	\filldraw[black] (8.5,1.1) node[anchor=west]{$p_2$};
	\filldraw[black] (11,2) node[anchor=west]{$E_1$};
	\filldraw[black] (11,2.7) node[anchor=west]{$\ol C_0$};
\end{tikzpicture}}$$
Moreover, it holds that $q_1'$ and $q_2''$ are reduced by \autoref{lem-7-9}, since
$$K_{\sF_1}=\sigma_1^*(K_{\sF_0})-E_1=\sigma_1^*(M_0)+\ol C_0+\ol F_0.$$
The Zariski decomposition is $K_{\sF_1}=\Big(\sigma_1^*(M_0)+\frac12\ol C_0+\ol F_0\Big)+\frac12\ol C_0$.
Moreover,
$$\begin{aligned}
%&K_{\sF_1}=\sigma_1^*(K_{\sF_0})-E_1=\sigma_1^*(M_0)+\ol C_0+\ol F_0
%=\Big(\sigma_1^*(M_0)+\frac12\ol C_0+\ol F_0\Big)+\frac12\ol C_0,\\
&\cs(\sF_1,\ol F_0,p_0)^{-1}=\cs(\sF_1,\ol C_0,p_0)=-\ol C_0^2=-2,\\
&\cs(\sF_1,\ol F_0,q_1)+\cs(\sF_1,\ol F_0,p_2)=\ol F_0^2-\cs(\sF_1,\ol F_0,p_0)=-\frac12.
\end{aligned}$$
Hence at most one of the two singularities $\{q_1,p_2\}$ is non-reduced; say $q_1$.
Similar to the proof of \eqref{eqn-7-44}, one shows by \autoref{lem-7-10} that
$\lambda_{q_1}\not \in \{1,2,1/2,3,1/3\}.$
Hence
$$\begin{aligned}
\vol(\sF)&\,\geq K_{\sF_1}^2+\beta_{p_0}(\sF_1)+\beta_{q_1}(\sF_1)\geq 3+\frac12-\frac14=\frac{13}{4}.
\end{aligned}$$

If the vanishing order $a_{p_1}=1$, then by \autoref{lem-7-7}, one can resolve $p_1$ as follows.
$${\setlength{\unitlength}{6mm}
	\begin{tikzpicture}
	\draw[thick] (14,1) -- (14,3.5);	
	\draw[thick] (13.5,3) -- (15.5,3);		
	\filldraw[black] (14,1.5) circle (1.5pt);
	\filldraw[black] (14,1.3) node[anchor=west]{$p_2$};
	\filldraw[black] (14,3) circle (1.5pt);
	\filldraw[black] (14,2.8) node[anchor=west]{$p_0$};
	\filldraw[black] (14,2.25) circle (1.5pt);
	\filldraw[black] (14,2.05) node[anchor=west]{$p_1$};
	\filldraw[black] (13.6,0.7) node[anchor=west]{$F_0$};
	\filldraw[black] (15.5,3) node[anchor=west]{$C_0$};

	\draw[->] (12,2) -- (13,2);
	\filldraw[black] (12.2,2.3) node[anchor=west]{\Large  $\sigma_1$};
	
	\draw[thick] (9,1) -- (9,3.5);	
	\draw[thick] (8.5,3) -- (11,3);	
	\draw[thick] (8.5,2.25) -- (11,2.25);	
	\filldraw[black] (9,1.5) circle (1.5pt);
	\filldraw[black] (9,1.3) node[anchor=west]{$p_2$};
	\filldraw[black] (9,3) circle (1.5pt);
	\filldraw[black] (9,2.8) node[anchor=west]{$p_0$};
	\filldraw[black] (9,2.25) circle (1.5pt);
	\filldraw[black] (9,2.05) node[anchor=west]{$q_1$};
	\filldraw[black] (8.6,0.7) node[anchor=west]{$\ol F_0$};
	\filldraw[black] (11,3) node[anchor=west]{$\ol C_0$};
	\filldraw[black] (11,2.25) node[anchor=west]{$E_1$};
	
	\draw[->] (7,2) -- (8,2);
	\filldraw[black] (7.2,2.3) node[anchor=west]{\Large  $\sigma_2$};
	
	\draw[thick] (4,1) -- (4,3.5);
	\draw[thick] (5.5,1) -- (5.5,2.85);	
	\draw[thick] (3.5,2.25) -- (6.3,2.25);
	\draw[thick] (3.5,3.1) -- (6,3.1);	
	\filldraw[black] (4,3.1) circle (1.5pt);
	\filldraw[black] (4,2.9) node[anchor=west]{$p_0$};	
	\filldraw[black] (4,1.5) circle (1.5pt);
	\filldraw[black] (4,1.3) node[anchor=west]{$p_2$};
	\filldraw[black] (4.75,2.25) circle (1.5pt);
	\filldraw[black] (4.55,2) node[anchor=west]{$q_2'$};
	\filldraw[black] (5.5,2.25) circle (1.5pt);
	\filldraw[black] (5.4,2) node[anchor=west]{$\bar q_1$};
	\filldraw[black] (4,2.25) circle (1.5pt);
	\filldraw[black] (3.5,2.05) node[anchor=west]{$q_2$};
	\filldraw[black] (3.6,0.7) node[anchor=west]{$\ol F_0$};
	\filldraw[black] (6,3.1) node[anchor=west]{$\ol C_0$};
	\filldraw[black] (5.1,0.7) node[anchor=west]{$\ol E_1$};
	\filldraw[black] (6.25,2.25) node[anchor=west]{$E_2$};
\end{tikzpicture}}$$
Moreover,
\begin{equation*}
\left\{\begin{aligned}
&\cs(\sF_2,E_2,\bar q_1)^{-1}=\cs(\sF_2,\ol E_1,\bar q_1)=\ol E_1^2=-2,\\
&\cs(\sF_2,E_2,q_2)^{-1}+\cs(\sF_2,\ol F_0, p_2)=\cs(\sF_2,\ol F_0, q_2)+\cs(\sF_2,\ol F_0, p_2)\\
&\hspace{5.65cm}=\ol F_0^2-\cs(\sF_2,\ol F_0, p_0)=-\frac32,\\
&\cs(\sF_2,E_2,q_2')+\cs(\sF_2,E_2,q_2)=E_2^2-\cs(\sF_2,E_2,\bar q_1)=-1+\frac12=-\frac12.
\end{aligned}\right.
\end{equation*}
According to \eqref{eqn-7-32},
$$K_{\sF_2}=\pi^*(K_{\sF_0})-E_2=\pi^*(M_0)+\ol C_0+\ol F_0+E_2+\ol E_1.$$
Hence the Zariski decomposition of $K_{\sF_2}$ is as follows.
$$K_{\sF_2}=P_2+N_2, \qquad \text{where~}P_2=\pi^*(M_0)+\ol F_0+E_2+\frac{\ol C_0+\ol E_1}{2},\,~\,N_2=\frac{\ol C_0+\ol E_1}{2}.$$
Let $T_2$ be the set of non-reduced singularities of $\sF_2$.
Similar to the proof in Case (a.1.2), one shows that $T_2$ contains at most two singularities and $\sum\limits_{q\in T_2}\beta_{q}(\sF_2)\geq -1$.
Hence by \autoref{lem-7-5} one obtains
\begin{equation*}
\vol(\sF)\geq K_{\sF_2}^2+\beta_{\bar p_0}(\sF_1)+\beta_{\bar q_1}(\sF_1)+\sum_{q\in T_2}\beta_{q}(\sF_2)=4+\sum_{q\in T_2}\beta_{q}(\sF_2)\geq 3.
\end{equation*}

{\bf Case (a.2.3).} Suppose that $k=3$. Then
$$\begin{aligned}
&Z(\sF_0,F_0,p_1)=Z(\sF_0,F_0,p_2)=Z(\sF_0,F_0,p_3)=1,\\
&\cs(\sF_0,F_0,p_1)+\cs(\sF_0,F_0,p_2)+\cs(\sF_0,F_0,p_3)=F_0^2-\cs(\sF_0,F_0,p_0)=\frac12.
\end{aligned}$$
Let $T_0$ be the set of non-reduced singularities of $\sF_0$.
Similar to Case (a.1.3), one has
\begin{equation*}
\vol(\sF)\geq K_{\sF_0}^2+\beta_{p_0}(\sF_0)+\sum_{p\in T_0}\beta_{p}(\sF_0)=4+\frac12+\sum_{p\in T_0}\beta_{p}(\sF_0)\geq \frac92-\frac32=3>\frac{11}{4}.
\end{equation*}
This completes the proof of Case (a).

{\vspace{2mm}\noindent \bf Case (b).} 
The surface $Y\cong \bbp^2$.
\autoref{prop-7-1} together with the two conditions (A-1) and (A-2) in \autoref{claim-7-5} imply that $K_{\sF_0}=M_0+Z_0$,
where $M_0=L$ is a general line in $\bbp^2$, and $Z_0=D_0$ is some fixed line in $\bbp^2$.
Moreover, the strict transform of the curve $D_0$ in $S$ is contained in the support of the negative part in the Zariski decomposition of $K_{\sF}$.
According to \autoref{lem-7-9}, all the possible non-reduced singularities of $\sF_0$ are on $D_0$.

Let $p$ be a non-reduced singularity of $\sF_0$, and
let $\sigma_1:\, Y_1 \to Y$ be the blowing-up centered at $p$ with exceptional curve $E$.
Then $Y_1\cong \mathbb{P}_{\bbp^1}\big(\calo_{\bbp^1}\oplus \calo_{\bbp^1}(1)\big)$ is  the Hirzebruch surface with value $e=1$.
Moreover, the birational map $\phi:\,S \to Y$ factors through $\sigma_1$ as $\phi=\sigma_1\circ\phi_1$ by \autoref{lem-7-9}\,(ii). Let $\sF_1=\sigma_1^*(\sF_0)$ and 
$$K_{\sF_1}=\sigma_1^*(K_{\sF_0})-(\ell_{p}-1)E_1=M_1+F_0+(2-\ell_p)E_1,$$
where $M_1=\sigma_1^*(M_0)=\sigma_1^*(L)$, and $F_0$ is the strict transform of $D_0$, which is nothing but a fiber of the ruling on $Y_1 \to \bbp^1$.
In particular, $\ell_p\leq 2$ by \autoref{lem-7-9}\,(ii).
If $\ell_p=2$, then $K_{\sF_1}=M_1+F_0$.
Note that $M_1\sim C_0+F$, where $C_0=E$ is the section with negative self-intersection number equal to $-e=-1$, and $F$ is a general fiber of the ruling on $Y_1 \to \bbp^1$.
Replacing $(Y,\sF_0)$ by $(Y_1,\sF_1)$, we reach the situation as in Case (a.1).
Applying a similar argument in Case (a.1), one proves \eqref{eqn-5-8-2}.
Thus we may assume that $\ell_p=1$ for any non-reduced singularity $p$ of $\sF_0$.
In particular, for any non-reduced singularity $p$ of $\sF_0$,
the vanishing order $a_p(\sF_0)=1$ by \eqref{eqn-2-13},
and the eigenvalue $\lambda_{p}\neq 1$, cf. the resolution process above \autoref{lem-2-2}.

Let $p_1,\cdots,p_k$ be all the singularities of $\sF_0$ on $D_0$.
By \autoref{prop-2-2},
$$\sum_{i=1}^{k}Z(\sF_0,D_0,p_i)=Z(\sF_0,D_0)=K_{\sF_0}D_0+\chi(D_0)=4.$$
We can prove \eqref{eqn-5-8-2} case-by-case depending on the value of $k$ (most arguments would be similar to that in Case (a.1)).
Since $p_g(\sF)=3$, it suffices to show $\vol(\sF)\geq \frac{7}{4}$.

{\noindent \bf Case (b.1).}
Suppose $k=1$. Then by \autoref{lem-7-7}, we can resolve the singularity $p_1$ as follows.
$${\setlength{\unitlength}{6mm}
	\begin{tikzpicture}
	\draw[thick] (6,1) -- (6,3);
	\filldraw[black] (6,2) circle (1.5pt);
	\filldraw[black] (6,2) node[anchor=west]{$p_1$};
	\filldraw[black] (6,1) node[anchor=west]{$D_0$};

	\draw[->] (4.3,2) -- (5.2,2);
	\filldraw[black] (4.5,2.3) node[anchor=west]{\Large  $\sigma$};

	\draw[thick] (3,1) -- (3,3);
	\draw[thick] (-0.5,1) -- (-0.5,3);
	\draw[thick] (1.34,0.88) -- (3.5,2.5);
	\draw[thick] (2.16,0.88) -- (0,2.5);
	\draw[thick] (-1,1) -- (1,2.5);
	\filldraw[black] (0.5,2.12) circle (1.5pt);
	\filldraw[black] (1.75,1.2) circle (1.5pt);
	\filldraw[black] (2.4,1.68) circle (1.5pt);
	\filldraw[black] (-0.5,1.37) circle (1.5pt);
	\filldraw[black] (3,2.12) circle (1.5pt);
	\filldraw[black] (3,2) node[anchor=west]{$q_4$};
	\filldraw[black] (2.3,1.5) node[anchor=west]{$q_4'$};
	\filldraw[black] (1.45,1.5) node[anchor=west]{$\bar q_3$};
	\filldraw[black] (0.25,1.8) node[anchor=west]{$\bar q_2$};
	\filldraw[black] (-0.55,1.2) node[anchor=west]{$\bar q_1$};
	\filldraw[black] (-0.8,0.7) node[anchor=west]{$\ol{E}_1$};
	\filldraw[black] (3,1) node[anchor=west]{$\ol{D}_0$};
	\filldraw[black] (-0.4,2.8) node[anchor=west]{$\ol E_3$};
	\filldraw[black] (0.8,2.8) node[anchor=west]{$\ol E_2$};
	\filldraw[black] (3.3,2.7) node[anchor=west]{$E_4$};
\end{tikzpicture}}$$
Moreover,
$$K_{\sF_4}=\sigma^*(K_{\sF_0})-\sum_{i=2}^{4}\mathcal{E}_i=\sigma^*(M_0)+\ol D_0+\sum_{i=1}^{3}\ol E_i+E_4.$$
Hence the Zariski decomposition is $K_{\sF_4}=P_4+N_4$, where
$$P_4=\sigma^*(M_0)+\frac{\ol D_0}{3}+\sum_{i=1}^{3}\frac{i\ol E_i}{4}+E_4,\qquad
N_4=\frac{2\ol D_0}{3}+\sum_{i=1}^{3}\frac{(4-i)\ol E_i}{4}.$$
By \autoref{prop-2-2} with \autoref{lem-2-2},
$$\left\{\begin{aligned}
&\cs(\sF_4,E_4,q_4)^{-1}=\cs(\sF_4,\ol D_0,q_4)=\ol D_0^2=-3,\\
&\cs(\sF_4,\ol E_2,\bar q_1)^{-1}=\cs(\sF_4,\ol E_1,\bar q_1)=\ol E_1^2=-2,\\
&\cs(\sF_4,\ol E_3,\bar q_2)^{-1}=\cs(\sF_4,\ol E_2,\bar q_2)=\ol E_2^2-\cs(\sF_4,\ol E_2,\bar q_1)=-\frac32,\\
&\cs(\sF_4,E_4,\bar q_3)^{-1}=\cs(\sF_4,\ol E_3,\bar q_3)=\ol E_3^2-\cs(\sF_4,\ol E_3,\bar q_2)=-\frac43,\\
&\cs(\sF_4,E_4,\bar q_4')=E_4^2-\cs(\sF_4,E_4,\bar q_3)-\cs(\sF_4,E_4,q_4)=\frac{1}{12}.
\end{aligned}\right.$$
Thus by \autoref{lem-7-5},
$$\vol(\sF)\geq K_{\sF_4}^2+\sum_{i=1}^{3}\beta_{\bar q_i}(\sF_4)+\beta_{q_4}(\sF_4)+\beta_{q_4'}(\sF_4)=1+\frac{3}{4}+\frac13-\frac{1}{12}=2.$$

{\noindent \bf Case (b.2).}
Suppose $k=2$.
Assume without loss of generality that $Z(\sF_0,D_0,p_1)\geq Z(\sF_0,D_0,p_2)$.
Then there are two possibilities:
either $Z(\sF_0,D_0,p_1)=3$ and $Z(\sF_0,D_0,p_2)=1$,
or $Z(\sF_0,D_0,p_1)=Z(\sF_0,D_0,p_2)=2$.
If it is the first possibility, then we can resolve the singularity $p_1$ as follows by \autoref{lem-7-7}.
$${\setlength{\unitlength}{6mm}
	\begin{tikzpicture}
	\draw[thick] (6,1) -- (6,3);
	\filldraw[black] (6,2.4) circle (1.5pt);
	\filldraw[black] (6,2.4) node[anchor=west]{$p_1$};
	\filldraw[black] (6,1.5) circle (1.5pt);
	\filldraw[black] (6,1.5) node[anchor=west]{$p_2$};
	\filldraw[black] (5.7,0.7) node[anchor=west]{$D_0$};

	\draw[->] (4.3,2) -- (5.2,2);
	\filldraw[black] (4.5,2.3) node[anchor=west]{\Large  $\sigma$};

	\draw[thick] (3,1) -- (3,3);
	\draw[thick] (0.5,1) -- (0.5,3);
	\draw[thick] (1.34,0.88) -- (3.5,2.5);
	\draw[thick] (2.16,0.88) -- (0,2.5);
	\filldraw[black] (0.5,2.12) circle (1.5pt);
	\filldraw[black] (1.75,1.2) circle (1.5pt);
	\filldraw[black] (2.4,1.68) circle (1.5pt);
	\filldraw[black] (3,2.12) circle (1.5pt);
	\filldraw[black] (3,2) node[anchor=west]{$q_3$};
	\filldraw[black] (2.3,1.5) node[anchor=west]{$q_3'$};
	\filldraw[black] (1.45,1.5) node[anchor=west]{$\bar q_2$};
	\filldraw[black] (0.45,2.2) node[anchor=west]{$\bar q_1$};
	\filldraw[black] (-0.1,1) node[anchor=west]{$\ol{E}_1$};
	\filldraw[black] (2.7,0.7) node[anchor=west]{$\ol{D}_0$};
	\filldraw[black] (-0.65,2.6) node[anchor=west]{$\ol E_2$};
	\filldraw[black] (3.3,2.7) node[anchor=west]{$E_3$};
	\filldraw[black] (3,1.3) circle (1.5pt);
	\filldraw[black] (3,1.3) node[anchor=west]{$p_2$};
\end{tikzpicture}}$$
Moreover,
$$K_{\sF_3}=\sigma^*(K_{\sF_0})-\sum_{i=2}^{3}\mathcal{E}_i=\sigma^*(M_0)+\ol D_0+\sum_{i=1}^{2}\ol E_i+E_3.$$
Hence the Zariski decomposition is $K_{\sF_4}=P_4+N_4$, where
$$P_4=\sigma^*(M_0)+\ol D_0+\frac{\ol E_1}{3}+\frac{2\ol E_2}{3}+E_4,\qquad
N_4=\frac{2\ol E_1}{3}+\frac{\ol E_2}{3}.$$
By \autoref{prop-2-2} with \autoref{lem-2-2},
\begin{equation}\label{eqn-7-47}
	\left\{\begin{aligned}
	&\cs(\sF_3,\ol E_2,\bar q_1)^{-1}=\cs(\sF_3,\ol E_1,\bar q_1)=\ol E_1^2=-2,\\
	&\cs(\sF_3,E_3,\bar q_2)^{-1}=\cs(\sF_3,\ol E_2,\bar q_2)=\ol E_2^2-\cs(\sF_3,\ol E_2,\bar q_1)=-\frac32,\\
	&\cs(\sF_3,E_3,q_3)+\cs(\sF_3,E_3,q_3')= E_3^2-\cs(\sF_3,E_3,\bar q_2)=-\frac13,\\
	&\cs(\sF_3,\ol D_0,q_3)+\cs(\sF_3,D_0,p_2)=\ol D_0^2=-2.
	\end{aligned}\right.
\end{equation}
Hence the possible non-reduced singularities belong to $\{p_2,q_3,q_3'\}$.
Let $T_3$ be the set of all non-reduced singularities of $\sF_3$.
By \autoref{lem-7-5},
$$\vol(\sF)\geq K_{\sF_3}^2+\sum_{i=1}^{2}\beta_{\bar q_i}(\sF_3)+\sum_{q\in T_3}\beta_{q}(\sF_3)=\frac83+\sum_{q\in T_3}\beta_{q}(\sF_3).$$	
Note that $\cs(\sF_3,E_3,q_3)=\cs(\sF_3,\ol D_0,q_3)^{-1}$ by \autoref{lem-2-2}.
Hence $\sF_3$ admits at most two non-reduced singularities by
the last two equalities in \eqref{eqn-7-47}.
Moreover, by \autoref{cor-7-3}, one shows that the eigenvalue $\lambda_q \neq 1$ for any $q\in T_3$.
If the eigenvalue $\lambda_q\not \in \{1,2,1/2\}$ for any $q\in T_3$, then
$$\vol(\sF)\geq \frac83+\sum_{q\in T_3}\beta_{q}(\sF_3) \geq \frac83-\frac13-\frac13=2.$$
If there is an eigenvalue $\lambda_{q}=2$ or $1/2$ for some $q\in T_3$, one can
prove \eqref{eqn-5-8-2} case-by-case.
We prove for instance the case when $\cs(\sF_3,D_0,p_2)=\lambda_{p_2}=\frac12$.
Then $\cs(\sF_3,E_3,q_3)^{-1}=\cs(\sF_3,\ol D_0,q_3)=-\frac52$, and
$\cs(\sF_3,E_3,q_3')=\frac{1}{15}$.
Hence
$$\vol(\sF)\geq \frac83+\sum_{q\in T_3}\beta_{q}(\sF_3) \geq \frac83-\frac12-\frac{1}{15}=\frac{21}{10}.$$

We consider next the second possibility where $Z(\sF_0,D_0,p_1)=Z(\sF_0,D_0,p_2)=2$.
The singularities $p_1$ and $p_2$ can be resolved as follows by \autoref{lem-7-7}.

$${\setlength{\unitlength}{6mm}
	\begin{tikzpicture}
	\draw[thick] (6,1) -- (6,3);
	\filldraw[black] (6,2.4) circle (1.5pt);
	\filldraw[black] (6,2.4) node[anchor=west]{$p_1$};
	\filldraw[black] (6,1.5) circle (1.5pt);
	\filldraw[black] (6,1.5) node[anchor=west]{$p_2$};
	\filldraw[black] (5.7,0.7) node[anchor=west]{$D_0$};

	\draw[->] (4.3,2) -- (5.2,2);
	\filldraw[black] (4.5,2.3) node[anchor=west]{\Large  $\sigma$};

	\draw[thick] (1,1) -- (1,3);
	\draw[thick] (0.5,1.7) -- (3,1);
	\draw[thick] (2,1) -- (3.5,1.7);
	\draw[thick] (0.5,2.3) -- (3,3);
	\draw[thick] (2,3) -- (3.5,2.3);
	\filldraw[black] (1,1.55) circle (1.5pt);
	\filldraw[black] (1.6,1.4) circle (1.5pt);
	\filldraw[black] (2.37,1.17) circle (1.5pt);
	\filldraw[black] (0.9,1.7) node[anchor=west]{$q_4$};
	\filldraw[black] (1.5,1.58) node[anchor=west]{$q_4'$};
	\filldraw[black] (2.1,1.42) node[anchor=west]{$\bar q_3$};
	
	\filldraw[black] (1,2.45) circle (1.5pt);
	\filldraw[black] (1.6,2.6) circle (1.5pt);
	\filldraw[black] (2.37,2.83) circle (1.5pt);
	\filldraw[black] (0.9,2.28) node[anchor=west]{$q_2$};
	\filldraw[black] (1.5,2.42) node[anchor=west]{$q_2'$};
	\filldraw[black] (2.1,2.55) node[anchor=west]{$\bar q_1$};
	
	\filldraw[black] (0.7,0.7) node[anchor=west]{$\ol D_0$};
	\filldraw[black] (-0.2,1.7) node[anchor=west]{$E_4$};
	\filldraw[black] (-0.2,2.3) node[anchor=west]{$E_2$};
	\filldraw[black] (3.5,1.7) node[anchor=west]{$\ol E_3$};
	\filldraw[black] (3.5,2.3) node[anchor=west]{$\ol E_1$};
\end{tikzpicture}}$$
Moreover, let $\sF_4$ be the induced foliation. Then
$$K_{\sF_4}=\sigma^*(K_{\sF_0})-E_2-E_4=\sigma^*(M_0)+\ol D_0+\ol E_1+\ol E_3+E_2+E_4.$$
Hence the Zariski decomposition is $K_{\sF_4}=P_4+N_4$, where
$$P_4=\sigma^*(M_0)+\ol D_0+\frac{\ol E_1+\ol E_3}{2}+E_2+E_4,\qquad
N_4=\frac{\ol E_1+\ol E_3}{2}.$$
By \autoref{prop-2-2} with \autoref{lem-2-2},
\begin{equation*}
\left\{\begin{aligned}
&\cs(\sF_4,E_2,\bar q_1)^{-1}=\cs(\sF_4,E_4,\bar q_3)^{-1}=\cs(\sF_4,\ol E_1,\bar q_1)=\cs(\sF_4,\ol E_3,\bar q_3)=-2,\\
&\cs(\sF_4,E_2,q_2)+\cs(\sF_4,E_2,q_2')=E_2^2-\cs(\sF_4,E_2,\bar q_1)=-\frac12,\\
&\cs(\sF_4,E_4,q_4)+\cs(\sF_4,E_4,q_4')=E_4^2-\cs(\sF_4,E_4,\bar q_3)=-\frac12,\\
&\cs(\sF_4,E_2,q_2)^{-1}+\cs(\sF_4,E_4,q_4)^{-1}=\cs(\sF_4,\ol D_0,q_2)+\cs(\sF_4,\ol D_0,q_4)=\ol D_0^2=-3.
\end{aligned}\right.
\end{equation*}
Hence the possible non-reduced singularities belong to $\{q_2,q_2',q_4,q_4'\}$,
and $\sF_4$ admits at most two non-reduced singularities.
Let $T_4$ be the set of all non-reduced singularities of $\sF_4$.
By \autoref{lem-7-5},
$$\vol(\sF)\geq K_{\sF_4}^2+\beta_{\bar q_1}(\sF_4)+\beta_{\bar q_3}(\sF_4)+\sum_{q\in T_4}\beta_{q}(\sF_4)=3+\sum_{q\in T_4}\beta_{q}(\sF_4).$$
Similarly,  by \autoref{cor-7-3}, one shows that the eigenvalue $\lambda_q \neq 1$ for any $q\in T_4$.
If the eigenvalue $\lambda_q\not \in \{1,2,1/2\}$ for any $q\in T_4$, then
$$\vol(\sF)\geq \frac83+\sum_{q\in T_3}\beta_{q}(\sF_3) \geq 3-\frac13-\frac13=\frac73.$$
If there is an eigenvalue $\lambda_{q}=2$ or $1/2$ for some $q\in T_3$, one can
prove \eqref{eqn-5-8-2} case-by-case similar as above.

{\noindent \bf Case (b.3).}
Suppose $k=3$.
In this case, we can assume without loss of generality that
$$Z(\sF_0,D_0,p_1)=2,\qquad Z(\sF_0,D_0,p_2)=Z(\sF_0,D_0,p_3)=1.$$
By \autoref{lem-7-7}, we can resolve the singularity $p_1$ as follows by \autoref{lem-7-7}.

$${\setlength{\unitlength}{6mm}
	\begin{tikzpicture}
	\draw[thick] (6,1) -- (6,3.2);
	\filldraw[black] (6,1.4) circle (1.5pt);
	\filldraw[black] (6,1.4) node[anchor=west]{$p_3$};
	\filldraw[black] (6,2.1) circle (1.5pt);
	\filldraw[black] (6,2.1) node[anchor=west]{$p_2$};
	\filldraw[black] (6,2.8) circle (1.5pt);
	\filldraw[black] (6,2.8) node[anchor=west]{$p_1$};
	
	\filldraw[black] (5.7,0.7) node[anchor=west]{$D_0$};

	\draw[->] (4.3,2) -- (5.2,2);
	\filldraw[black] (4.5,2.3) node[anchor=west]{\Large  $\sigma$};
	
	\draw[thick] (3,1) -- (3,3.2);
	\draw[thick] (1,2.8) -- (3.5,2.8);
	\draw[thick] (1.5,1.5) -- (1.5,3.2);
\filldraw[black] (3,1.4) circle (1.5pt);
\filldraw[black] (3,1.4) node[anchor=west]{$p_3$};
\filldraw[black] (3,2.1) circle (1.5pt);
\filldraw[black] (3,2.1) node[anchor=west]{$p_2$};
\filldraw[black] (3,2.8) circle (1.5pt);
\filldraw[black] (2.93,3) node[anchor=west]{$q_2$};
\filldraw[black] (2.25,2.8) circle (1.5pt);
\filldraw[black] (2,3.05) node[anchor=west]{$q_2'$};
\filldraw[black] (1.5,2.8) circle (1.5pt);
\filldraw[black] (1.43,2.55) node[anchor=west]{$\bar q_1$};

\filldraw[black] (2.7,0.7) node[anchor=west]{$\ol D_0$};
\filldraw[black] (0.3,2.8) node[anchor=west]{$E_2$};
\filldraw[black] (1.2,1.2) node[anchor=west]{$\ol E_1$};
\end{tikzpicture}}$$
Moreover, let $\sF_2$ be the induced foliation. Then
$$K_{\sF_2}=\sigma^*(K_{\sF_0})-E_2=\sigma^*(M_0)+\ol D_0+\ol E_1+E_2.$$
Hence the Zariski decomposition is $K_{\sF_2}=P_2+N_2$, where
$$P_2=\sigma^*(M_0)+\ol D_0+\frac{\ol E_1}{2}+E_2,\qquad
N_2=\frac{\ol E_1}{2}.$$
By \autoref{prop-2-2} with \autoref{lem-2-2},
\begin{equation*}
\left\{\begin{aligned}
&\cs(\sF_2,E_2,\bar q_1)^{-1}=\cs(\sF_2,\ol E_1,\bar q_1)=\ol E_1^2=-2,\\
&\cs(\sF_2,E_2,q_2)+\cs(\sF_2,E_2,q_2')=E_2^2-\cs(\sF_2,E_2,\bar q_1)=-\frac12,\\
&\cs(\sF_2,\ol D_0,q_2)+\cs(\sF_2,\ol D_0,p_2)+\cs(\sF_2,\ol D_0,p_3)=\ol D_0^2=-1.
\end{aligned}\right.
\end{equation*}
Hence the possible non-reduced singularities belong to $\{q_2,q_2',p_2,p_3\}$,
and $\sF_2$ admits at most three non-reduced singularities.
Let $T_2$ be the set of all non-reduced singularities of $\sF_2$.
By \autoref{lem-7-5},
$$\vol(\sF)\geq K_{\sF_2}^2+\beta_{\bar q_1}(\sF_2)+\sum_{q\in T_4}\beta_{q}(\sF_2)=\frac72+\sum_{q\in T_2}\beta_{q}(\sF_2).$$
Similar as above, one can prove \eqref{eqn-5-8-2} with the help of \autoref{cor-7-3}.

{\noindent \bf Case (b.4).}
Suppose $k=4$.
In this case,
$$Z(\sF_0,D_0,p_1)=Z(\sF_0,D_0,p_2)=Z(\sF_0,D_0,p_3)=Z(\sF_0,D_0,p_4)=1.$$
By the arguments at the beginning of Case (b), we may assume that the eigenvalue $\lambda_{p_i}\neq 1$ for any $1\leq i\leq 4$.
Let $T_0$ be the set of all non-reduced singularities of $\sF_0$.
By \autoref{lem-7-5},
$$\vol(\sF)\geq K_{\sF_0}^2+\sum_{q\in T_0}\beta_{q}(\sF_0)=4+\sum_{q\in T_0}\beta_{q}(\sF_0)\geq 4-4\,\cdot\,\frac12=2.$$
Thus we complete the proof of \eqref{eqn-5-8-2}.	
	\end{proof}
	
	\subsection{The canonical map induces a fibration}\label{sec-noe-one}
	We aim to prove \autoref{prop-5-1} in this subsection.
	Thus we will assume that $\dim \Sigma =1$, where $\Sigma$ is the image of the canonical map $\varphi$.
	Similar to the arguments at the beginning of \autoref{sec-noe-two},
	we may assume that $\sF$ is relatively minimal.
	By the Stein factorization, we obtain a diagram as follows.
	$$\xymatrix{&& \wt{S} \ar[dll]_-{f} \ar[d]^-{\phi} \ar[rrr]^-{\sigma}  &&& S \ar@{-->}[d]^-{\varphi=\varphi_{|K_{\sF}|}}\\
		B \ar[rr]^-{\pi} && Y \ar[rrr]^-{\rho}_-{\text{desingularization}} &&&\Sigma}$$
	Here $\pi:\, B \to Y$ is finite, and $f:\,\wt{S} \to B$ is a family of curves with connected fibers.
	By construction, the map 
	$$\rho\circ\phi:\,\wt{S} \lra \Sigma \hookrightarrow \bbp^N,$$
	is defined by the complete linear system $|\wt{M}|$,
	where $|\wt{M}|$ is obtained by blowing-up the base points of $|M|$,
	where $M$ is the moving part of $K_{\sF}$ as in \eqref{eqn-1-2}.
	Since $|\wt{M}|$ is base-point-free and induces a fibration $f:\,\wt{S} \to B$,
	it follows that
	$$p_g(\sF)=h^0(S,M)=h^0(\wt{S}, \wt{M})=h^0\big(B,f_*\mathcal{O}_{\wt{S}}(\wt M)\big).$$
	According to the Riemann-Roch theorem,
	$$\deg(L)\geq p_g(\sF)-1,$$
	where $L=f_*\mathcal{O}_{\wt{S}}(\wt M)$ is a line bundle on $B$.
	Note that $\wt{M}=f^*(L)$.
	Hence, we have numerically,
	\begin{equation}\label{eqn-5-9}
		\wt{M} \equiv_{num} \deg(L)\,F, \qquad \text{with~}\deg(L) \geq p_g(\sF)-1,
	\end{equation}
	where $F$ is a general fiber of $f$.
	\begin{lemma}\label{lem-5-1}
		Let $M$ be the moving part of $K_{\sF}$ as in \eqref{eqn-1-2}.
		If the linear system $|M|$ has a base point, then 
		$$\vol(\sF)\geq \big(p_g(\sF)-1\big)^2.$$
	\end{lemma}
	\begin{proof}
		Let $A=\sigma_*(F)$. If $|M|$ has a base point, then $A^2 \geq 1$.
		Hence
		\[\vol(\sF) \geq P^2 \geq M^2 \geq \big((p_g(\sF)-1)A\big)^2 \geq \big(p_g(\sF)-1\big)^2. \qedhere\]
	\end{proof}
	
	To prove \autoref{prop-5-1}, we assume from now on that $|M|$ is base-point-free.
	In other words,
	$\varphi:\,S \to \Sigma$ is already a morphism.
	Hence one gets a commutative diagram as follows.
	$$\xymatrix{ && S \ar[d]^-{\phi} \ar[lld]_-{f} \ar[rrrd]^-{\varphi=\varphi_{|K_{\sF}|}} &&& \\
		B \ar[rr]^-{\pi} &&Y \ar[rrr]^-{\rho}_-{\text{desingularization}} &&&\Sigma\, \ar@{^(->}[r] & \bbp^{p_g(\sF)-1}}$$

	\begin{proof}[{Proof of \autoref{prop-5-1}}]
		Since $\sF$ is of general type, i.e., $K_{\sF}$ is big,
		it follows that $K_{\sF}\cdot F \geq 1$.
		
		\vspace{2mm}
		(i). 
		As explained at the beginning of this section, we may assume that the foliation $\sF$ is relatively minimal.
		By \autoref{lem-5-1}, we may assume that $|M|$ is base-point-free.
		The relation in \eqref{eqn-5-9} rephrases as
		\begin{equation*} 
			M \equiv_{num} \deg(L)\,F, \qquad \text{with~}\deg(L) \geq p_g(\sF)-1.
		\end{equation*}
		The moving part $M$ is clearly nef,
		from which it follows that $M\leq P$, or equivalently $N\leq Z$,
		where $P$ is the nef part and $N$ is the negative part of $K_{\sF}$ in its Zariski decomposition, and $Z$ is the fixed part of $|K_{\sF}|$ as in \eqref{eqn-1-2}.
		Hence
		\begin{equation}\label{eqn-5-10}
			\vol(\sF)=P^2 \geq P\cdot M \geq \big(p_g(\sF)-1\big)P\cdot F=\big(p_g(\sF)-1\big)\,(K_{\sF}-N)\cdot F.
		\end{equation}
		Therefore, it suffices to prove a lower bound on $P\cdot F$, or equivalently an upper bound on $N\cdot F$.
		
		The fibration $f:\,S \to B$ defines a natural foliation $\mathcal{G}$ 
		by taking the saturation of the kernel $\ker(\df:\,T_{S} \to f^*T_{B})$ in $T_{S}$.
		The given foliation $\sF$ may be equal to or different from $\mathcal{G}$.
		
		Suppose that $\sF=\mathcal{G}$. Then the genus $g(F)\geq 2$ since $\sF$ is assumed to be of general type.
		Moreover, the canonical divisor is easy to compute:
		$$K_{\sF}=K_{\mathcal{G}}=K_S \otimes f^*K_B^{-1} \otimes \mathcal{O}_{S}\Big(\sum(1-a_i)C_i\Big),$$
		where the sum is taken over all components in fibers of $f$, and $a_i$ is the multiplicity of $C_i$ in its fiber.
		Note that the support of the negative part $N$ is a sum of $\sF$-chains (cf. \autoref{thm-3-4}), and thus contained in fibers of $f$.
		Combining this with \eqref{eqn-5-10},
		$$\begin{aligned}
			\vol(\sF) &\,\geq \big(p_g(\sF)-1\big)\,(K_{\sF}-N)\cdot F\\
			&\,=\big(p_g(\sF)-1\big)\,K_{\sF}\cdot F \\
			&\,=\big(2g(F)-2)\big)(p_g(\sF)-1) \geq 2(p_g(\sF)-1).
		\end{aligned}$$
		
		In the rest part of the proof, we will always assume that $\sF$ is different from the foliation $\mathcal{G}$ defined by taking the saturation of the kernel $\ker(\df:\,T_{S} \to f^*T_{B})$ in $T_{S}$.
%		By \eqref{eqn-5-10}, it suffices to prove that
%		\begin{equation}\label{eqn-5-5}
%			P\cdot F \geq \frac{p_g(\sF)-1}{p_g(\sF)},
%		\end{equation}

		Let $Z=Z_h+Z_v$,
		where $Z$ is the fixed part of $|K_{\sF}|$ as in \eqref{eqn-1-2},
		and each component in $Z_v$ is contained in fibers of $f$, while each component in $Z_h$ maps surjectively to the base $B$.
		Similarly, we can decompose the negative part as $N=N_h+N_v$.
		Let
		$$\begin{aligned}
			&Z_h=\sum a_CC,&\qquad& Z_v=\sum a_DD,\\
			&N_h=\sum b_CC,&\qquad& N_v=\sum b_DD.
		\end{aligned}$$
		Then the coefficients $\{a_C,a_D\}$'s are positive integers; while $\{b_C,b_D\}$'s belong to $[0,1)$ by \autoref{thm-3-4}, since the foliation $\sF$ is assumed to be relatively minimal.
		
		If there exists an irreducible component $C\subseteq Z_h$
		with $b_{C}=0$ (i.e., $C$ is NOT contained in the support of $N$),
		then $P\cdot F\geq a_{C} \geq 1$, from which with \eqref{eqn-5-10} it follows that
		\begin{equation}\label{eqn-7-2}
			\vol(\sF)\geq p_g(\sF)-1.
		\end{equation}
		Hence we may assume that the supports of $Z_h$ and $N_h$ are the same.
		Let $C\subseteq \text{Supp}(Z_h)=\text{Supp}(N_h)$ be an irreducible component.
		If $a_{C}\geq 2$, then
		$P\cdot F\geq a_{C}-b_{C} > 1$ since $b_{C}<1$.
		Hence one shows by \eqref{eqn-5-10} that
		\begin{equation}\label{eqn-7-3}
\vol(\sF) >p_g(\sF)-1.
\end{equation} 
		Therefore, we may assume that $a_{C}=1$.
		Let $C_1+\cdots+C_r$ be the $\sF$-chain containing $C$.
		Consider first the case when $r=1$. Then $C=C_1$ and
		$$-1=K_{\sF}\cdot C\geq \Big(C+\big(p_g(\sF)-1\big)F\Big)\cdot C.$$
		Hence $C^2\leq -p_g(\sF)$.
		As $r=1$, one sees easily that $b_{C}=\frac{1}{-C^2}$ by \autoref{lem-coefficient-N}.
		Hence by \eqref{eqn-5-10}
				\begin{equation}\label{eqn-7-4}
		\vol(\sF)\geq \big(p_g(\sF)-1\big)(1-b_{C})C\cdot F \geq p_g(\sF)-2+\frac{1}{p_g(\sF)}.
		\end{equation}
		
		Consider next the case when $r\geq 2$ and $C=C_1$.
		Then
		$$-1=K_{\sF}\cdot C_1\geq \Big(a_{C_1}C_1+a_{C_2}C_2+\big(p_g(\sF)-1\big)F\Big)\cdot C_1\geq a_{C_1}C_1^2+p_g(\sF)=C_1^2+p_g(\sF),$$
		since $a_{C_1}=a_C=1$ by assumption, and $a_{C_2}\geq 1$.
		Hence $C^2=C_1^2\leq -(p_g(\sF)+1)$.
		By \autoref{prop-3-1}, $b_{C}<\frac{1}{-C^2-1}\leq \frac{1}{p_g(\sF)}$.
		Hence by \eqref{eqn-5-10}
				\begin{equation}\label{eqn-7-5}
		\vol(\sF)\geq \big(p_g(\sF)-1\big)(1-b_{C})C\cdot F > p_g(\sF)-2+\frac{1}{p_g(\sF)}.
		\end{equation}
		
		Consider thirdly the case when $r\geq 2$ and $C=C_i$ with $1<i<r$.
		Then
		$$0=K_{\sF}\cdot C_i\geq \Big(a_iC_i+a_{C_{i-1}}C_{i-1}+a_{C_{i+1}}C_{i+1}+\big(p_g(\sF)-1\big)F\Big)\cdot C_i\geq a_iC_i^2+p_g(\sF)+1=C_i^2+p_g(\sF)+1,$$
		since $a_{C_i}=a_C=1$ by assumption, $a_{i-1}\geq 1$ and $a_{i+1}\geq 1$. 
		Hence $C^2=C_i^2\leq -(p_g(\sF)+1)$.
		By \autoref{prop-3-1}, $b_{C}<\frac{1}{-2C^2-3}\leq \frac{1}{2p_g(\sF)-1}$.
		Hence by \eqref{eqn-5-10}
				\begin{equation}\label{eqn-7-8}
				\begin{aligned}
				\vol(\sF)&\,\geq \big(p_g(\sF)-1\big)(1-b_{C})C\cdot F\\
				&\, > p_g(\sF)-1-\frac{p_g(\sF)-1}{2p_g(\sF)-1}=p_g(\sF)-\frac32+\frac{1}{2\big(2p_g(\sF)-1\big)}.
				\end{aligned}
		\end{equation}
		
		Finally, we consider the case when $r\geq 2$ and $C=C_r$.
		Then
		$$0=K_{\sF}\cdot C\geq \Big(C+a_{C_{r-1}}C_{r-1}+\big(p_g(\sF)-1\big)F\Big)\cdot C\geq C^2+p_g(\sF).$$
		Hence $C^2=C_r^2\leq -p_g(\sF)$.
		In this case, according to \autoref{lem-coefficient-N},
		$b_{C}=b_{r}=\frac1n$, where by the proof of \autoref{prop-3-1},
		$$n\geq 2\xi_{r-1}-\xi_r =-2C_r^2\xi_r-\xi_r=-(2C_r^2+1)\geq 2p_g(\sF)-1.$$
		Hence by \eqref{eqn-5-10}
				\begin{equation}\label{eqn-7-7}
		\vol(\sF)\geq \big(p_g(\sF)-1\big)(1-b_{C})C\cdot F \geq p_g(\sF)-\frac32+\frac{1}{2\big(2p_g(\sF)-1\big)}.
		\end{equation}
		
		\vspace{2mm}
		(ii).
		We use the same notations above.
		Similar as in (i), one may assume that
		$\sF$ is relatively minimal and different from the foliation $\mathcal{G}$ defined by taking the saturation of the kernel $\ker(\df:\,T_{S} \to f^*T_{B})$ in $T_{S}$.
		Furthermore, according to the proof of (i), we may also assume that
		$Z_h=C$ consists of only one component such that $C$ is contained in the support of $N_h$ and $C\cdot F=1$.
		In fact, according to the above proof, we may first assume that the coefficient $C$ is contained in the support of $N_h$ and $a_{C}=1$ for any possible component $C\subseteq Z_h$; otherwise $\vol(\sF)\geq p_g(\sF)-1$
		by \eqref{eqn-7-2} and \eqref{eqn-7-3}.
		Moreover, if $Z_h$ contains one component $C$ with $C\cdot F\geq 2$, or contains at least two components,
		then by a similar argument as above, one shows that
		$$\vol(\sF)\geq 2\Big(p_g(\sF)-2+\frac{1}{p_g(\sF)}\Big).$$
		Thus one may write
		\begin{equation}\label{eqn-7-9}
			K_{\sF}=M+C+Z_v,
		\end{equation}
		where $C$ is a section (i.e., $C\cdot F=1$) of $f$ contained in the support of $N_h$,
		$Z_v$ is contained in fibers of $f$,
		and $M=\big(p_g(\sF)-1\big)F$ with $F$ being a general fiber of $f$.
		All the statements are clear except the last one, which we simply argue as follows.
		Since $C$ is contained in the support of $N_h$, it is a rational curve,
		which implies that the base curve $B\cong \bbp^1$.		
		Note that $h^0(S,M)=h^0(B,L)=p_g(\sF)$.
		By the Riemann-Roch theorem, $\deg(L)= p_g(\sF)-1$, and hence
		$M=\big(p_g(\sF)-1\big)F$ as required.
		Let $C_1+\cdots+C_r$ be the $\sF$-chain containing $C$.
		Then by \eqref{eqn-7-8} and \eqref{eqn-7-7},
		we may assume that $C=C_1$ is the first component of the $\sF$-chain.
		In other words,
		\begin{equation}\label{eqn-7-51}
			\text{there is exactly one singularity of $\sF$ on $C$.}
		\end{equation} 
		
		According to \cite[Corollay\,4.9]{luxin-24}, we may assume that the general fiber of $f$ is a $\bbp^1$.
	    In other words, $f$ is a ruled surface over $B\cong \bbp^1$.
	    Let $\psi:\,S\to S_0$ be a contraction of the vertical exceptional curves to a $\bbp^1$-bundle such that $C_0^2=C^2$;
	    namely we contract exceptional curve disjoint with $C$,
	    where $C_0=\psi(C)$ is the image of $C$.
	    It implies that $\psi$ induces an isomorphism around neighborhoods of $C$ and $C_0$.
	    Hence there is exactly one singularity of $\sF_0=\psi_*(\sF)$ on $C_0$ by \eqref{eqn-7-51}.
	    $$\xymatrix{
	    S \ar[rr]^-{\psi} \ar[dr]_-{f} && S_0 \ar[dl]^-{f_0}\\
        &B\cong \bbp^1&}$$
	    By \eqref{eqn-7-9},
	    \begin{equation}\label{eqn-7-10}
	    	K_{\sF_0}=\psi_*K_{\sF}=M_0+C_0+Z_{v0},
	    \end{equation}
	    where $M_0=(p_g(\sF)-1)F_0$ with $F_0$ being a general fiber of $f_0$,
	    and $Z_{v0}=\psi_*Z_v$ is contained in fibers of $f_0$.
	    We claim first that $Z_{v0}\neq \emptyset$;
	    
	    Since $f_0$ is a $\bbp^1$-bundle, we may write
	    \begin{equation}\label{eqn-7-11}
	    	Z_{v0}=\sum_{i=1}^{k}t_iF_{i0},
	    \end{equation}
	    where $t_i\geq 1$ and $F_{i0}$'s are pairwise different fibers of $f_0$.
	    By construction, there is no non-reduced singularity of $\sF_0$ on $C_0$,
	    i.e., $\psi$ consists of no blowing-up center at a point on $C_0$.
	    Hence $Z_{v0}\neq 0$ according to \autoref{lem-7-9};
	    indeed, if $Z_{v0}= 0$, then $\sF_0$ has no non-reduced singularity by \autoref{lem-7-9}, and hence $\psi:\,S \cong S_0$ since $\sF$ is assumed to be relatively minimal.
	    However, this is impossible, since $S \cong S_0$ is a Hirzebruch surface, which
	    does not admit any fibration of genus $g\geq 2$; see also \autoref{lem-7-10}.
	    Conversely, 
	    \begin{claim}\label{claim-7-2}
	    	For any fiber $F_{i0}\subseteq Z_{v0}$, there exists at least one non-reduced singularity on $F_{i0}\setminus C_0$.
	    \end{claim}
	    \begin{proof}   
	    Suppose that there is no non-reduced singularity on $F_{i0}\setminus C_0$.
	    Then all the possible singularities of $\sF_0$ on $F_{i0}$ is reduced.
	    This implies that $\psi$ is an isomorphism in a neighborhood of $F_{i0}$,
	    since $\sF$ is relatively minimal.
	    Hence $\psi^{-1}(F_{i0}) \cong F_{i0}$ is a fiber of $f$, which is movable.
	    On the other hand, it is also contained in the fixed part $Z_v$ of $K_{\sF}$, because the moving part of $K_{\sF}$ is $M=\psi^*(M_0)$.
	    This gives a contradiction.
	    \end{proof}
        
        Based on the resolution of non-reduced singularities in \autoref{sec-non-reduced},
        we can resolve the possible singularities of $\sF_0$ on $Z_{v0}$ in the following two lemmas.
        \begin{lemma}
        	Let $F_{i0}$ be a fiber of $f_0$ contained in $Z_{v0}$ as in \eqref{eqn-7-11}, $p_0=C_0\cap F_{i0}$, and $e=-C_0^2$.
        	Suppose that $F_{i0}$ is $\sF_0$-invariant.
        	Then there exist a sequence blowing-ups centered over $F_{i0}$
        	$$\pi=\sigma_1\circ\cdots\circ\sigma_n:~S_n \overset{\sigma_n}{\lra} S_{n-1}
        	\overset{\sigma_{n-1}}{\lra} \cdots \overset{\sigma_2}{\lra} S_{1}\overset{\sigma_1}{\lra} S_{0},$$
        	such that the singularities of $\sF_{n}$ on the fiber $\pi^{-1}(F_{i0})$ are all non-degenerate and belong to one of the following (We denote by $T_n$ the set of singularities of $\sF_n$ over $\pi^{-1}(F_{i0})$ other than $\pi^{-1}(p_0)$, which are either non-reduced or contained in $\text{Supp}(N_n)$, where $K_{\sF_n}=P_n+N_n$ is the Zariski decomposition of $K_{\sF_n}$)).
        	\begin{enumerate}[$(i)$]
        		\item There are two non-degenerate singularities $p_1$ and $p_2$
        		(other than $p_0$) on $F_{i0}$, i.e., there is no need to blow up.
        		In this case,
        		\begin{equation}\label{eqn-7-18}
        			\cs(\sF_0,F_{i0},p_1)+\cs(\sF_0,F_{i0},p_2)=\frac{1}{e}.
        		\end{equation}
        		Moreover, $CS(\sF_0,F_{i0},p_1)\neq 1$ and $CS(\sF_0,F_{i0},p_2)\neq 1$.
        		Hence
        		\begin{equation}\label{eqn-7-20}
        			\sum_{p_j\in T_0}\beta_{p_j}(\sF_0)\geq -\frac12.
        		\end{equation}
        		
%        		In this case, $t_i\geq 1$ and neither of the corresponding two eigenvalues $\{\lambda_{p_1}, \lambda_{p_2}\}$ is equal to $1$.
        		%\item The inverse image $\pi^{-1}(p)$ is a chain of $\sF_n$-invariant curves ($n\geq 1$) as in \autoref{cor-7-1}\,(i).
        		%In this case, $t_i\geq n+1$.
        	
        	\item There is a unique singularity $p$ of $\sF_0$ other than $p_0$ on $F_{i0}$,
        	and the inverse image $\pi^{-1}(p)$ is two rational $\sF_n$-invariant curves as in \autoref{lem-7-3}\,(iii).
        	In particular, $n=2$, and using the same notations in \autoref{lem-7-3},
        	it holds that
        	\begin{equation}\label{eqn-7-19}
        	\left\{\begin{aligned}
        	\cs(\sF_2,E_2,q_2)&\,=-\frac12,\\
        	\cs(\sF_2,E_2,q_2')&\,=-\frac{e}{2e-1},\\
        	\cs(\sF_2,E_2,q_2'')&\,=\frac{1}{2(2e-1)}.
        	\end{aligned}\right.
        	\end{equation}
        	Hence
        	\begin{equation}\label{eqn-7-21}
        	\begin{aligned}
        	K_{\sF_2}^2+\sum_{q\in T_2}\beta_{q}(\sF_2)
        	&\,=K_{\sF_2}^2+\beta_{q_2}(\sF_2)+\beta_{q_2'}(\sF_2)+\beta_{q_2''}(\sF_2)\\
        	&\,=K_{\sF_0}^2-1+\frac{e^2-e+1}{e(2e-1)}.
        	\end{aligned}
        	\end{equation}
    \end{enumerate}
In any case, it holds
\begin{equation}\label{eqn-7-29}
	K_{\sF_n}^2+\sum_{q\in T_n}\beta_{q}(\sF_n) \geq K_{\sF_0}^2-1+\frac{e^2-e+1}{e(2e-1)}.
\end{equation}
        \end{lemma}
    \begin{proof}
    	Note that $K_{\sF_0}\cdot F_{i0}=1$.
    	Hence $Z(\sF_0,F_{i0})=K_{\sF_0}F_{i0}+\chi(F_{i0})=3$.
    	Moreover, since $p_0$ is a reduced singularity by assumption.
    	It follows that either there are two other singularities $\{p_1,p_2\}$ of $\sF_0$ on $F_{i0}$
    	with
    	$$Z(\sF_0,F_{i0},p_1)=Z(\sF_0,F_{i0},p_2)=1,$$
    	or there is only one other singularity $p$ with $Z(\sF_0,F_{i0},p)=2$.
    	
    	If it is the first case, the formula \eqref{eqn-7-18} follows from \eqref{eqn-2-7} and \eqref{eqn-2-6}.
    	Indeed, $\cs(\sF_0,C_0,p_0)=-e$ by \eqref{eqn-2-7},
    	where $e=-C_0^2\geq 2$.
    	Hence $\cs(\sF_0,F_{i0},p_0)=-\frac{1}{e}$ by \eqref{eqn-2-6}.
    	Using again \eqref{eqn-2-7}, one obtains \eqref{eqn-7-18}.
    	Finally, suppose for instance that $\cs(\sF_0,F_{i0},p_1)=1$.
    	Then $\cs(\sF_0,F_{i0},p_2)=\frac{1-e}{e}<0$, which implies that $p_2$ is a reduced singularity in view of \eqref{eqn-2-6}.
    	Let $\rho':\, S' \to S_0$ be the blowing-up centered at $p_1$,
    	and $\ol F_{i0}$ be its strict transform.
    	Clearly the contraction $\psi$ factorizes through $\rho'$.
    	$$\psi:~S \overset{\rho}{\lra} S_1 \overset{\rho'}{\lra} S_0.$$
    	Moreover, there are exactly two singularities of $(\rho')^*\sF_0$ on $\ol F_{i0}$,
    	which are the inverse image of $p_0$ and $p_2$.
    	Both singularities are reduced.
    	It follows that $\rho$ is an isomorphism in a neighborhood of $\ol F_{i0}$.
    	We denote its inverse image in $S$ still by $\ol F_{i0}$ by abuse of notations.
    	According to the above arguments,
    	$\ol F_{i0}$ is smooth rational curve with $\ol F_{i0}^2=-1$ and two reduced non-degenerate singularities.
    	Hence $\ol F_{i0}$ is an $\sF$-exceptional curve, cf. \cite[\S\,5.1]{bru-04}.
    	This contradicts the relative minimality of $\sF$.
    	Note that the CS-indices of an algebraically integral foliation are just the eigenvalues of the corresponding singularities by \autoref{lem-2-2} together with \autoref{lem-7-2}.
    	Hence \eqref{eqn-7-20} follows from the definition of $\beta$-invariant \eqref{eqn-def-beta} and \eqref{eqn-7-18} with restrictions that $\cs(\sF_0,F_{i0},p_1)\neq 1$ and $\cs(\sF_0,F_{i0},p_2)\neq 1$.

    	We consider next the second case, i.e., there is only one other singularity $p$ with $Z(\sF_0,F_{i0},p)=2$.
    	The resolution of such a singularity $p$ was presented in \autoref{cor-7-1},
    	where $C=F_{i0}$ in our case.
    	Except the case stated in (ii) of our lemma,
    	in all other cases, $\ol F_{i0}^2=F_{i0}^2-1=-1$ and there are exactly two singularities of $\sF_n$ on $\ol F_{i0}$ which are reduced non-degenerate.
    	By a similar argument as above, one shows that this is impossible since $\sF$ is relatively minimal.
    	This proves that the inverse image $\pi^{-1}(p)$ is two rational $\sF_n$-invariant curves as in \autoref{lem-7-3}\,(iii) (equivalently the case when $n=2$ in \autoref{cor-7-1}(ii)).
    	In particular, $n=2$.
    	It remains to prove \eqref{eqn-7-19} and \eqref{eqn-7-21}.
    	By \eqref{eqn-2-7},
    	$$\cs(\sF_2,\ol E_1,q_2)=\ol E_1^2=-2.$$
    	Hence $\cs(\sF_2,E_2,q_2)=-\frac12$ by \autoref{lem-2-2}.
    	Again by \eqref{eqn-2-7},
    	$$\cs(\sF_2,C_0,p_0)=\cs(\sF_0,C_0,p_0)=C_0^2=-e.$$
    	Hence $\cs(\sF_2,\ol F_{i0},p_0)=-\frac{1}{e}$ by \autoref{lem-2-2}.
    	Thus
    	$$\cs(\sF_2,\ol F_{i0},q_2')=\ol F_{i0}^2-\cs(\sF_2,\ol F_{i0},p_0)=-\frac{2e-1}{e}.$$
    	So $\cs(\sF_2, E_2,q_2')=-\frac{e}{2e-1}$ by \autoref{lem-2-2}.
    	And $\cs(\sF_2,E_2,q_2'')=\frac{1}{2(2e-1)}$ again by \eqref{eqn-2-7}.
    	Clearly both $q_2$ and $q_2'$ lies on the support of the negative part, and $q_2''$ is non-reduced.
    	Hence $T_2=\{q_2,q_2',q_2''\}$.
    	Finally, \eqref{eqn-7-21} follows by a similar argument as proving \eqref{eqn-7-20} with the help of \eqref{eqn-7-17}.
    \end{proof}
    
    \begin{lemma}
    	Let $F_{i0}$ be a fiber of $f_0$ contained in $Z_{v0}$ as in \eqref{eqn-7-11} and $e=-C_0^2$.
    	Suppose that $F_{i0}$ is not $\sF_0$-invariant.
    	Then there exist a sequence blowing-ups center over $F_{i0}$
    	$$\pi=\sigma_1\circ\cdots\circ\sigma_n:~S_n \overset{\sigma_n}{\lra} S_{n-1}
    	\overset{\sigma_{n-1}}{\lra} \cdots \overset{\sigma_2}{\lra} S_{1}\overset{\sigma_1}{\lra} S_{0},$$
    	such that the singularities of $\sF_{n}$ on $\pi^{-1}(F_{i0})$ are all non-degenerate and belong to one of the following
    	($\ol F_{i0}$ is the strict transform of $F_{i0}$, $\ol E_i$ is the strict transform of the $i$-th blowing-up ($E_n=\ol E_n$ by abuse of notation), all the non-degenerate singularities of $\sF_n$ on $\pi^{-1}(F_{i0})$ are marked with dark dots in the figures of $\pi^{-1}(F_{i0})$, and $T_n$ denotes the set of singularities of $\sF_n$ over $\pi^{-1}(F_{i0})$ which are either non-reduced or contained in $\text{Supp}(N_n)$, where $K_{\sF_n}=P_n+N_n$ is the Zariski decomposition of $K_{\sF_n}$).
    	\begin{enumerate}[$(i)$]
    		\item There is exactly one singularity $q_0$ of $\sF_0$ on $F_{i0}$, which is already non-degenerate, i.e., there is no need to blow up.
    		In this case, $\beta_{q_0}(\sF_0)\geq -\frac12$.
%    		$\pi^{-1}(F_{i0})=\ol F_{i0}+E_1$, and
%    		\begin{equation}\label{eqn-7-22}
%    			\left\{\begin{aligned}
%    			&K_{\sF_1}^2=K_{\sF_0}^2,\\
%    			&\beta_{q_1}(\sF_1)+\beta_{q_2}(\sF_1)\geq -\frac12.
%    			\end{aligned}\right.
%    		\end{equation}
%    		$${\setlength{\unitlength}{6mm}
%    			\begin{tikzpicture}
%    			\draw[very thick, dashed] (10.5,1) -- (10.5,3);
%    			\draw[thick] (8,2) -- (11,2);
%    			\filldraw[black] (9.5,2) circle (1.5pt);
%    			\filldraw[black] (8.5,2) circle (1.5pt);
%    			\filldraw[black] (9.5,1.7) node[anchor=west]{$q_1$};
%    			\filldraw[black] (8.3,1.7) node[anchor=west]{$q_2$};
%    			\filldraw[black] (10.5,1) node[anchor=west]{$\ol F_{i0}$};
%    			\filldraw[black] (7.3,2) node[anchor=west]{$E_1$};
%    	\end{tikzpicture}}$$
    		
    		\item $\pi^{-1}(F_{i0})=\ol F_{i0}+\sum\limits_{j=1}^n\ol E_j$, $n\geq 2$, and
    		\begin{equation}\label{eqn-7-23}
    		\left\{\begin{aligned}
    		&K_{\sF_n}^2=K_{\sF_0}^2-(n-1),\\
    		&\sum_{q_j\in T_n}\beta_{q_j}(\sF_n) > (n-1)-t_i+\frac{t_i-1}{t_i}.
    		\end{aligned}\right.
    		\end{equation}
    		$${\setlength{\unitlength}{6mm}
    			\begin{tikzpicture}
    			\draw[very thick, dashed] (10.5,1) -- (10.5,3);
    			\draw[thick] (11.5,1) -- (8.5,3);
    			\draw[thick] (10,3) -- (7,1);
    			\filldraw[black] (9.25,2.5) circle (1.5pt);
    			
    			\filldraw[black] (8.25,1.82) circle (1.5pt);
    			\draw[thick] (9.5,1) -- (6.5,3);
    			\draw[thick] (8,3) -- (6.5,2);
    			\filldraw[black] (7.25,2.5) circle (1.5pt);

    			\filldraw[black] (9,2.2) node[anchor=west]{$q_1$};
    			\filldraw[black] (8,1.5) node[anchor=west]{$q_2$};
    			\filldraw[black] (7,2.2) node[anchor=west]{$q_3$};
    			\filldraw[black] (10.1,0.7) node[anchor=west]{$\ol F_{i0}$};
    			\filldraw[black] (11.4,0.7) node[anchor=west]{$\ol E_1$};
    			\filldraw[black] (9,0.7) node[anchor=west]{$\ol E_3$};
    			\filldraw[black] (9.5,3.3) node[anchor=west]{$\ol E_2$};
    			\filldraw[black] (7.5,3.3) node[anchor=west]{$\ol E_4$};
    			
    			\filldraw[black] (6.2,2) circle (1pt);
    			\filldraw[black] (6,2) circle (1pt);
    			\filldraw[black] (5.8,2) circle (1pt);
    			\filldraw[black] (5.6,2) circle (1pt);
    			\filldraw[black] (5.4,2) circle (1pt);
    			\filldraw[black] (5.2,2) circle (1pt);
    			
    			\filldraw[black] (4.25,1.82) circle (1.5pt);
    			\draw[thick] (5.5,1) -- (2.5,3);
    			\draw[thick] (4,3) -- (1,1);
    			\filldraw[black] (3.25,2.51) circle (1.5pt);
    			
    			\filldraw[black] (2.5,2) circle (1.5pt);
    			\filldraw[black] (1.5,1.34) circle (1.5pt);
    			
    			\filldraw[black] (5,0.7) node[anchor=west]{$\ol E_{n-1}$};
    			\filldraw[black] (3.5,3.3) node[anchor=west]{$E_n=\ol E_n$};
    			
    			\filldraw[black] (3.4,2.5) node[anchor=west]{$q_{n-1}$};
    			\filldraw[black] (3.6,1.5) node[anchor=west]{$q_{n-2}$};
    			\filldraw[black] (2,2.2) node[anchor=west]{$q_n$};
    			\filldraw[black] (0.5,1.5) node[anchor=west]{$q_{n+1}$};
    			\end{tikzpicture}}$$
    		
    	\item $\pi^{-1}(F_{i0})=\ol F_{i0}+\sum\limits_{j=1}^n\ol E_i$, $n\geq 3$, and
    	\begin{equation}\label{eqn-7-24}
    	\left\{\begin{aligned}
    	&K_{\sF_n}^2=K_{\sF_0}^2-(n-2),\\
    	&\sum_{q_j\in T_n}\beta_{q_j}(\sF_n) =\left\{\begin{aligned}
    	&\frac{1}{2}+\frac{1}{3}=\frac56, &&\text{if~}n=3,\\[2pt]
    	&\frac12+\frac{1}{(2n-1)(n-1)}+\frac{n-3}{n-2}, &&\text{if~}n\geq 4,
    	\end{aligned}\right.\\
    	&t_i\geq n-2.
    	\end{aligned}\right.
    	\end{equation}
    	$${\setlength{\unitlength}{6mm}
    		\begin{tikzpicture}
    		\draw[very thick, dashed] (10.5,1) -- (10.5,3);
    		\draw[thick] (11.5,1) -- (8.5,3);
    		\draw[thick] (10,3) -- (7,1);
    		\filldraw[black] (9.25,2.5) circle (1.5pt);
    		
    		\filldraw[black] (8.25,1.82) circle (1.5pt);
    		\draw[thick] (9.5,1) -- (6.5,3);
    		\draw[thick] (8,3) -- (6.5,2);
    		\filldraw[black] (7.25,2.5) circle (1.5pt);

    		\filldraw[black] (9,2.2) node[anchor=west]{$q_1$};
    		\filldraw[black] (8,1.5) node[anchor=west]{$q_2$};
    		\filldraw[black] (7,2.2) node[anchor=west]{$q_3$};
    		\filldraw[black] (10.1,0.7) node[anchor=west]{$\ol F_{i0}$};
    		\filldraw[black] (11.4,0.7) node[anchor=west]{$\ol E_1$};
    		\filldraw[black] (9,0.7) node[anchor=west]{$\ol E_3$};
    		\filldraw[black] (9.5,3.3) node[anchor=west]{$\ol E_2$};
    		\filldraw[black] (7.5,3.3) node[anchor=west]{$\ol E_4$};
    		
    		\filldraw[black] (6.2,2) circle (1pt);
    		\filldraw[black] (6,2) circle (1pt);
    		\filldraw[black] (5.8,2) circle (1pt);
    		\filldraw[black] (5.6,2) circle (1pt);
    		\filldraw[black] (5.4,2) circle (1pt);
    		\filldraw[black] (5.2,2) circle (1pt);
    		
    		\filldraw[black] (4.25,1.82) circle (1.5pt);
    		\draw[thick] (5.5,1) -- (2.5,3);
    		\draw[thick] (4,3) -- (1,1);
    		\filldraw[black] (3.25,2.51) circle (1.5pt);
    		
    		\draw[thick] (2,1) -- (-1,3);
    		
    		\filldraw[black] (2.5,2) circle (1.5pt);
    		\filldraw[black] (1.5,1.34) circle (1.5pt);
    		
    		\filldraw[black] (5,0.7) node[anchor=west]{$\ol E_{n-2}$};
    		\filldraw[black] (3.5,3.3) node[anchor=west]{$E_n=\ol E_n$};
    		\filldraw[black] (-1.4,3.3) node[anchor=west]{$\ol E_{n-1}$};
    		
    		\filldraw[black] (3.4,2.5) node[anchor=west]{$q_{n-2}$};
    		\filldraw[black] (3.6,1.5) node[anchor=west]{$q_{n-3}$};   		
    		\filldraw[black] (1.9,2.2) node[anchor=west]{$q_{n}$};
    		\filldraw[black] (0.4,1.3) node[anchor=west]{$q_{n-1}$};
    \end{tikzpicture}}$$	
    	\end{enumerate}
    In any case, it holds
    \begin{equation}\label{eqn-7-27}
    	K_{\sF_n}^2+\sum_{q_j\in T_n}\beta_{q_j}(\sF_n)\geq K_{\sF_0}^2-t_i+\max\left\{\frac12,\frac{t_i-1}{t_i}\right\}.
    \end{equation}
    \end{lemma}
    \begin{proof}
    Since $F_{i0}$ is not $\sF_0$-invariant,
    it follows that
    $$tang(\sF_0,F_{i0})=K_{\sF_0}F_{i0}+F_{i0}^2=1.$$
    Hence there is a unique singularity $q_0$ of $\sF_0$ on $F_{i0}$, which must be non-reduced by \autoref{claim-7-2}.
    Let $\sigma_1:\,S_1 \to S_0$ be the blowing-up centered at $q_0$.
    We claim first that the exceptional curve $E_1$ is $\sF_1=\sigma_1^*\sF_0$-invariant;
    indeed, if it is not the case, then according to the proof of \autoref{lem-7-4},
    $$tang(\sF_1,\ol F_{i0})=K_{\sF_1}\cdot\ol F_{i0}+ \ol F_{i0}^2=(\sigma_1^*K_{\sF_0}-E_1)(\sigma_1^*F_{i0}-E_1)+(\sigma_1^*F_{i0}-E_1)^2=-1,$$
    where we denote also by $\ol F_{i0}$ the strict transform of $F_{i0}$ in $S_1$ by abuse of notation.
    This gives a contradiction.
    Hence $E_1$ is $\sF_1$-invariant. Moreover, it follows from \autoref{lem-7-4} that $Z(\sF_1,E_1)=2$ and that
    the intersection $q=\ol C\cap E_1$ is not a singularity of $\sF_1$.
    
    Suppose that there are two points $q_1,q_2$ on $E_1$ with
    $$Z(\sF_1,E_1,q_1)=Z(\sF_1,E_1,q_2)=1.$$
    Then we are in case (i) by \autoref{lem-7-??}.
    It follows that the singularity $q_0$ is already a non-degenerate singularity of $\sF_0$. This is case (i) in our lemma.
    Moreover, in this case, $\lambda_{q_0}(\sF_0)\neq 1$;
    otherwise, the exceptional curve $E_1$ would be not $\sF_1$-invariant, which is impossible as argued above, where $\sigma_1:\,S_1 \to S_0$ is the blowing-up centered at $q_0$ with $E_1$ being the exceptional curve and $\sF_1=\sigma_1^*\sF_0$.
    Hence $\beta_{q_0}(\sF_0)\geq -\frac12$ by the definition of $\beta$-invariant in \eqref{eqn-def-beta-2}.
    
%    In this case, $K_{\sF_1}=\sigma_1^*K_{\sF_0}$ and hence $K_{\sF_1}^2=K_{\sF_0}^2$.
%    Moreover, by \eqref{eqn-2-7},
%    $$\lambda_{q_1}+\lambda_{q_2}=\cs(\sF_1,E_1,q_1)+\cs(\sF_1,E_1,q_2)=E_1^2=-1,$$
%    where $\lambda_{q_i}$ is the eigenvalue of $\sF_1$ at $q_i$.
%    By the definition of the $\beta$-invariant \eqref{eqn-def-beta},
%    one checks easily that $\beta_{q_1}(\sF_1)+\beta_{q_2}(\sF_1)\geq -\frac12$,
%    with the equality iff $\{\lambda_{q_1},\lambda_{q_2}\}=\{1,-2\}$.
    
    Suppose next that there is exactly one point $p_1$ on $E_1$ with $Z(\sF_1,E_1,p_1)=2$.
    The resolution of such a singularity $p_1$ is exhibited in \autoref{cor-7-1}.
    We only remark that $E_1$ is the curve $C$ in \autoref{cor-7-1}.
    The cases (i) and (ii) in \autoref{cor-7-1} are respectively the cases (ii) and (iii) in this lemma.
    It remains to prove \eqref{eqn-7-23} and \eqref{eqn-7-24}, and to prove that the case (iii) in \autoref{cor-7-1} can not happen.
    
    \vspace{2mm}
    We prove first \eqref{eqn-7-23}.
    %The first equation in \eqref{eqn-7-23} follows directly from \eqref{eqn-7-16}.
    According to the resolution exhibited in \autoref{cor-7-1},
    $$K_{\sF_n}=\pi^*K_{\sF_0}-\sum_{j=2}^{n}\mathcal{E}_j=\pi^*K_{\sF_0}-\sum_{j=2}^{n}(j-1)\ol E_j,\qquad \pi^*(F_{i0})=\mathcal{E}_1+\ol F_{i0}=\sum_{j=1}^{n}\ol E_j+\ol F_{i0},$$
    where $\mathcal{E}_{j}=\sum\limits_{s=j}^{n}\ol E_s$ is the total inverse image of the exceptional curve of $\sigma_j$ as in \autoref{cor-7-1}.
    Remark again that the indices here different from that in \autoref{cor-7-1},
    because $E_1$ here is the curve $C$ in \autoref{cor-7-1}.
    Thus the first equation in \eqref{eqn-7-23} follows immediately.
    Moreover, $D:=\bigcup\limits_{j=1}^{n-1}\ol E_j$ is contained in the support of the negative part $N_n$ since $D$ is actually a Hirzebruch-Jung chain with $K_{\sF_n} \cdot \ol E_1=-1$ and $K_{\sF_n}\cdot \ol E_j=0$ for $2\leq j \leq n-1$.
    In particular, $q_j\in T_n$ for $1\leq j \leq n-1$.
    Note also that the contraction $\psi$ factorizes through $\pi$ as
    $$\psi:\,S \overset{\rho}{\lra} S_n \overset{\pi}{\lra} S_0.$$
    Hence
    \begin{eqnarray}
    &&\rho_*(M)+\rho_*(C)+\rho_*(Z_v)=\rho_*(K_{\sF})\nonumber\\
    &=&K_{\sF_n}=\pi^*K_{\sF_0}-\sum_{j=2}^{n}(j-1)\ol E_j\nonumber\\
    &=&\pi^*(M_0)+\pi^*(C_0)+\pi^*\Big(\sum_{i'\neq i}t_{i'}F_{i'0}\Big)+\pi^*(t_iF_{i0})-\sum_{j=2}^{n}(j-1)\ol E_j\nonumber\\
    &=&\pi^*(M_0)+\pi^*(C_0)+\pi^*\Big(\sum_{i'\neq i}t_{i'}F_{i'0}\Big)
    +t_i\ol F_{i0}+\sum_{j=1}^{n}(t_i-j+1)\ol E_j. \label{eqn-7-25}
    \end{eqnarray}
    In particular, $t_i-j+1\geq 0$ for any $1\leq j \leq n$, i.e., $t_i\geq n-1$.
    If both $q_n$ and $q_{n+1}$ are reduced, then
    $$\sum_{q_j\in T_n}\beta_{q_j}(\sF_n)=\sum_{j=1}^{n-1}\beta_{q_j}(\sF_n)=\frac{n-1}{n}> (n-1)-t_i+\frac{t_i-1}{t_i}.$$
    We explain a little about the equality $\sum\limits_{j=1}^{n-1}\beta_{q_j}(\sF_n)=\frac{n-1}{n}$ above.
    Since $\bigcup\limits_{j=1}^{n-1} \ol E_j$ is a chain of rational curves with $\ol E_j^2=-2$ for any $1\leq j \leq n-1$.
    Thus by \eqref{eqn-2-7} and \eqref{eqn-2-6},
    $\cs(\sF_n, \ol E_2, q_1)^{-1}=\cs(\sF_n, \ol E_1, q_1)=\ol E_1^2=-2$,
    and hence
    $$\cs(\sF_n, \ol E_3, q_2)^{-1}=\cs(\sF_n, \ol E_2, q_2)=\ol E_2^2-\cs(\sF_n, \ol E_2, q_1)=-\frac{3}{2}.$$
    By induction, one proves that for any $2\leq j \leq n-1$,
    $$\cs(\sF_n, \ol E_{j+1}, q_{j})^{-1}=\cs(\sF_n, \ol E_j, q_{j})=\ol E_j^2-\cs(\sF_n, \ol E_j, q_{j-1})=-\frac{j+1}{j}.$$
    Hence
    \begin{equation}\label{eqn-7-26}
    	\sum_{j=1}^{n-1}\beta_{q_j}(\sF_n)=\sum_{j=1}^{n-1}\frac{1}{j(j+1)}=\frac{n-1}{n}.
    \end{equation}
    We assume next that at least one of $\{q_n,q_{n+1}\}$ is a non-reduced.
    Then by a similar proof as \autoref{claim-7-2}, one has $t_i-j+1> 0$ for any $1\leq j \leq n$ according to \eqref{eqn-7-25}.
    In particular, taking $j=n$, we get $t_i\geq n$.
    Note also that
    $$\cs(\sF_n, E_n, q_n)+\cs(\sF_n, E_{n}, q_{n+1})=E_n^2-\cs(\sF_n, E_n, q_{n-1})=-1+\frac{n-1}{n}=-\frac{1}{n}.$$
    Hence $\beta_{q_n}(\sF_n)+\beta_{q_{n+1}}(\sF_n)>-1$.
    Together with \eqref{eqn-7-26} it follows that
    $$\sum_{q_j\in T_n}\beta_{q_j}(\sF_n)\geq-1+ \sum_{j=1}^{n-1}\beta_{q_j}(\sF_n)=-1+\frac{n-1}{n}\geq (n-1)-t_i+\frac{t_i-1}{t_i}.$$
    This proves \eqref{eqn-7-23}.
    
    \vspace{2mm}
    One can proves similarly \eqref{eqn-7-24} in case (iii),
    and in fact easier than \eqref{eqn-7-23}.
    Indeed, the first equality in \eqref{eqn-7-24} follows directly from \eqref{eqn-7-17}.
    Moreover, by \eqref{eqn-2-7} and \eqref{eqn-2-6}, if $n\geq 4$, then
    $$\left\{\begin{aligned}
    &\cs(\sF_n, E_{n}, q_{n-1})^{-1}=\cs(\sF_n,\ol E_{n-1}, q_{n-1})=\ol E_{n-1}^2=-2,\\
    &\cs(\sF_n, \ol E_{j+1}, q_{j})^{-1}=\cs(\sF_n, \ol E_j, q_{j})=\ol E_j^2-\cs(\sF_n, \ol E_j, q_{j-1})=-\frac{j+1}{j}, \quad \forall\,2\leq j \leq n-3,\\
    &\cs(\sF_n, E_{n}, q_{n-2})^{-1}=\cs(\sF_n, \ol E_{n-2}, q_{n-2})=\ol E_{n-2}^2-\cs(\sF_n, \ol E_{n-2}, q_{n-3})=-\frac{2n-1}{n-1},\\
    &\cs(\sF_n, E_{n}, q_{n})=E_n^2-\cs(\sF_n, E_{n}, q_{n-1})-\cs(\sF_n, E_{n}, q_{n-2})=-\frac{1}{2(2n-1)}.
    \end{aligned}\right.$$
    Moreover, similar as above, one shows that $D:=\bigcup\limits_{j=1}^{n-2}\ol E_j$ and $\ol E_{n-1}$ are contained in the support of the negative part $N_n$.
    In particular, $q_{j}\in T_n$ for $1\leq j \leq n-1$.
    Note that $q_n$ is reduced, and hence $q_n\not\in T_n$.
    $$\sum_{q_j\in T_n}\beta_{q_j}(\sF_n)
    =\sum_{j=1}^{n-1}\beta_{q_j}(\sF_n)=\frac12+\frac{1}{(2n-1)(n-1)}+\sum_{j=1}^{n-3}\frac{1}{j(j+1)}=\frac{1}{(2n-1)(n-1)}+\frac{3n-8}{2(n-2)}.$$
    If $n=3$, then $$\cs(\sF_3,\ol E_1,q_1)=\ol E_1^2=-3,\quad
    \cs(\sF_3,\ol E_2,q_2)=\ol E_2^2=-2,\quad \cs(\sF_3,\ol E_3,q_3)=\ol E_3^2+\frac12+\frac13=-\frac16.$$
    Hence
    $$\sum_{q_j\in T_3}\beta_{q_j}(\sF_3)
    =\sum_{j=1}^{2}\beta_{q_j}(\sF_3)=\frac12+\frac13=\frac56.$$
    The inequality $t_i\geq n-2$ can be proved using a similar argument as the proof of $t_i\geq n-1$ in case (ii) above, cf. \eqref{eqn-7-25}.
    
    \vspace{2mm}
    Finally, we prove that the case (iii) in \autoref{cor-7-1} can not happen.
    This follows from the formulas \eqref{eqn-2-7} and \eqref{eqn-2-6}.
    In fact, if the case (iii) in \autoref{cor-7-1} happens,
    then one obtains a chain of rational curves $\bigcup\limits_{j=1}^{n-1} \ol E_j$
    with $\ol E_j^2=-2$ for any $1\leq j \leq n-1$.
    $${\setlength{\unitlength}{6mm}
    	\begin{tikzpicture}
    	\draw[very thick, dashed] (10.5,1) -- (10.5,3);
    	\draw[thick] (11.5,1) -- (8.5,3);
    	\draw[thick] (10,3) -- (7,1);
    	\filldraw[black] (9.25,2.5) circle (1.5pt);
    	
    	\filldraw[black] (8.25,1.82) circle (1.5pt);
    	\draw[thick] (9.5,1) -- (6.5,3);
    	\draw[thick] (8,3) -- (6.5,2);
    	\filldraw[black] (7.25,2.5) circle (1.5pt);

    	\filldraw[black] (9,2.2) node[anchor=west]{$q_1$};
    	\filldraw[black] (8,1.5) node[anchor=west]{$q_2$};
    	\filldraw[black] (7,2.2) node[anchor=west]{$q_3$};
    	\filldraw[black] (10.1,0.7) node[anchor=west]{$\ol F_{i0}$};
    	\filldraw[black] (11.4,0.7) node[anchor=west]{$\ol E_1$};
    	\filldraw[black] (9,0.7) node[anchor=west]{$\ol E_3$};
    	\filldraw[black] (9.5,3.3) node[anchor=west]{$\ol E_2$};
    	\filldraw[black] (7.5,3.3) node[anchor=west]{$\ol E_4$};
    	
    	\filldraw[black] (6.2,2) circle (1pt);
    	\filldraw[black] (6,2) circle (1pt);
    	\filldraw[black] (5.8,2) circle (1pt);
    	\filldraw[black] (5.6,2) circle (1pt);
    	\filldraw[black] (5.4,2) circle (1pt);
    	\filldraw[black] (5.2,2) circle (1pt);
    	
    	\filldraw[black] (4.25,1.82) circle (1.5pt);
    	\draw[thick] (5.5,1) -- (2.5,3);
    	\draw[very thick, dashed] (4,3) -- (1,1);
    	
    	\filldraw[black] (5,0.7) node[anchor=west]{$\ol E_{n-1}$};
    	\filldraw[black] (3.5,3.3) node[anchor=west]{$E_n=\ol E_n$};
    	
    	\filldraw[black] (3.6,1.5) node[anchor=west]{$q_{n-2}$};
\end{tikzpicture}}$$
    By \eqref{eqn-2-7} and \eqref{eqn-2-6},
    $$\left\{\begin{aligned}
    &\cs(\sF_n,\ol E_1,q_1)=\ol E_1^2=-2,\\
    &\cs(\sF_n, \ol E_{j+1}, q_{j})^{-1}=\cs(\sF_n, \ol E_j, q_{j})=\ol E_j^2-\cs(\sF_n, \ol E_j, q_{j-1})=-\frac{j+1}{j}, \quad \forall\,2\leq j \leq n-2,\\
    &\cs(\sF_n,\ol E_{n-2},q_{n-2})^{-1}=\cs(\sF_n,\ol E_{n-1},q_{n-2})=\ol E_{n-1}^2=-2.
    \end{aligned}\right.$$
    This gives a contradiction by computing
    the invariant $\cs(\sF_n,\ol E_{n-2},q_{n-2})$.
    Indeed, taking $j=n-3$ in the second equality above, $\cs(\sF_n,\ol E_{n-2},q_{n-2})=-\frac{n-2}{n-3}$;
    while $\cs(\sF_n,\ol E_{n-2},q_{n-2})=-\frac12$ by the last equality above.
    \end{proof}
	
	We now come back to the proof of \eqref{eqn-5-11}.
	Let $\pi:\,S' \to S_0$ be a sequence of blowing-ups resolving
	the singularities (including the infinitely close ones) over $\text{Supp}(Z_{v0})$, such that all the singularities of
	$\sF'=\pi^*(\sF_0)$ on $S'$ are non-degenerate.
%	Suppose $F_{i0} \subseteq \text{Supp}(Z_{v0})$ is $\sF_0$-invariant.
%	Then the intersection $p_i=F_{i0}\cap C_0$ is a singularity of $\sF_0$ on $C_0$.
	Since there is exactly one singularity $p_0$ of $\sF_0$ on $C_0$,
	it follows that there is at most one fiber $F_{i0} \subseteq \text{Supp}(Z_{v0})$ which is $\sF_0$-invariant.
	Let $T$ be the set of singularities of $\sF'$,
	which are either non-reduced or on $\text{Supp}(N')$,
	where $N'$ is the negative part in the Zariski decomposition $K_{\sF'}=P'+N'$.
	Then $\pi^{-1}(p_0) \in T$ by the construction with \eqref{eqn-7-51}.
    Moreover,
	$$1=Z(\sF_0,C_0)=2+K_{\sF_0}\cdot C_0=p_g(\sF)+1+C_0^2+\sum_{i=1}^{k}t_i,\quad \Longrightarrow \quad
	e:=-C_0^2=p_g(\sF)+\sum\limits_{i=1}^{k}t_i.$$
	By \autoref{prop-2-2}, the eigenvalue $\lambda_{q_0}=-e$.
	Hence $\beta_{\pi^{-1}(p_0)}(\sF')=\beta_{p_0}(\sF_0)=\frac1e$.
	Note also that $$K_{\sF_0}^2=(M_0+C_0+\sum_{i=1}^{k}t_iF_{i0})
	=\big(p_g(\sF)-1\big)-1+\sum_{i=1}^{k}t_i=(p_g(\sF)-2)+\sum_{i=1}^{k}t_i.$$
	
	We consider first the case when none of $F_{i0} \subseteq \text{Supp}(Z_{v0})$ is $\sF_0$-invariant.
	Then by \eqref{eqn-7-12} and \eqref{eqn-7-27},
	$$\begin{aligned}
	\vol(\sF)\geq &\,\beta_{\pi^{-1}(p_0)}(\sF')+K_{\sF'}+\sum_{q\in T\setminus \{\pi^{-1}(p_0)\}}\beta_{q}(\sF')\\
	\geq&\, \frac1e+K_{\sF_0}^2+\sum_{i=1}^{k}\left(-t_i+\max\Big\{\frac12,\frac{t_i-1}{t_i}\Big\}\right)\\
	=&\,\frac{1}{e}+\big(p_g(\sF)-2\big)+\sum_{i=1}^{k}\max\Big\{\frac12,\frac{t_i-1}{t_i}\Big\}
	\geq \left\{\begin{aligned}
	&p_g(\sF)-1, &&\text{if~}k\geq 2\\
	&p_g(\sF)-\frac32+\frac{1}{p_g(\sF)+2}, &&\text{if~}k=1.
	\end{aligned}\right.
	\end{aligned}$$
	
	We consider next the case when one of $F_{i0} \subseteq \text{Supp}(Z_{v0})$, saying $F_{10}$ is $\sF_0$-invariant, and the rest $F_{i0}$'s (might be empty) are not $\sF_0$-invariant.
	Then by \eqref{eqn-7-12}, \eqref{eqn-7-29} and \eqref{eqn-7-27},
	$$\begin{aligned}
	\vol(\sF)\geq &\,\beta_{\pi^{-1}(p_0)}(\sF')+K_{\sF'}+\sum_{q\in T\setminus \{\pi^{-1}(p_0)\}}\beta_{q}(\sF')\\
	\geq&\, \frac1e+K_{\sF_0}^2-1+\frac{e^2-e+1}{e(2e-1)}+\sum_{i=2}^{k}\left(-t_i+\max\Big\{\frac12,\frac{t_i-1}{t_i}\Big\}\right)\\
%	=&\,\frac{1}{e}+\big(p_g(\sF)+t_1-3\big)+\frac{e^2-e+1}{e(2e-1)}+\sum_{i=2}^{k}\max\Big\{\frac12,\frac{t_i-1}{t_i}\Big\}\\
	=&\,p_g(\sF)+t_1-\frac52+\frac{3}{2(2e-1)}+\sum_{i=2}^{k}\max\Big\{\frac12,\frac{t_i-1}{t_i}\Big\}\\
	\geq&\, \left\{\begin{aligned}
	&p_g(\sF)-1, &&\text{if~}k\geq 2\\
	&p_g(\sF)-\frac32+\frac{3}{2\big(2p_g(\sF)+1\big)}, &&\text{if~}k=1.
	\end{aligned}\right.\\
	\end{aligned}$$
	If the equality holds, then $k=1$ and $t_1=1$.
	This completes the proof of \eqref{eqn-5-11}.
	\end{proof}	
	
	\section{Examples}\label{sec-example} %and miscellaneous discussions
	In this section, we will construct several examples.
	\autoref{exam-6-1}, \autoref{exam-6-2} and \autoref{exam-6-3} show that the three Noether type inequalities in \autoref{thm-main} are all sharp;
	while \autoref{exam-6-4} illustrates that the Chern numbers can not be bounded from above by the geometric genus $p_g(\sF)$.
	
	\begin{example}\label{exam-6-1}
		For any integer $n\geq 1$, we construct a sequence of reduced foliated surfaces $(S_n,\sF_n)$ of general type,
		such that the volume $\vol(\sF_n)=n$ and the geometric genus $p_g(\sF_n)=n+2$,
		and hence the following equality holds
		\begin{equation}\label{eqn-6-1}
			\vol(\sF_n) = p_g(\sF_n)-2.
		\end{equation}
		
		\vspace{2mm}
		Let $\sF_0$ be a foliation of degree two on $\bbp^2$ with reduced singularities.
		Such a foliation exists, cf. \cite[Proposition\,3.2]{ls-20}. In fact, any foliation of degree $d$ on $\bbp^2$ can be generated by a vector of the form (\cite{go-89,bru-04})
		$$v=\big(P(x,y)+xR(x,y)\big)\frac{\partial}{\partial x}+\big(Q(x,y)+yR(x,y)\big)\frac{\partial}{\partial y},$$
		where $(x,y)$ is an affine coordinate of $\bbp^2$, $P(x,y), Q(x,y)$ are polynomials of degree $\leq d$,
		and $R(x,y)$ is a homogeneous polynomial of degree $d$ (plus some nondegeneracy conditions).
		Let
		\begin{equation}\label{eqn-6-3}
			v_{\alpha,\gamma}=x\Big(-\alpha^2+x^2+y^2\Big)\frac{\partial}{\partial x}
			+y\Big(2\alpha\gamma+(2\alpha+\gamma) y
			+x^2+y^2\Big)\frac{\partial}{\partial y},
		\end{equation}
		where $\alpha,\gamma \in \mathbb{C} \setminus \mathbb{Q}$ are general.
		The foliation $\sF_0$ defined by $v_{\alpha,\gamma}$ admits $7$ singularities:
		$$\left\{\begin{aligned}
			&(0,0),\quad (0,-2\alpha),\quad (0,-\gamma),\quad (\alpha,0),\quad (-\alpha,0),\\
			&\Big(~\frac{\alpha \sqrt{3(\alpha^2-\gamma^2)}}{2\alpha+\gamma},~\frac{-\alpha(\alpha+2\gamma)}{2\alpha+\gamma} ~\Big),\quad
			\Big(~\frac{-\alpha \sqrt{3(\alpha^2-\gamma^2)}}{2\alpha+\gamma},~ \frac{-\alpha(\alpha+2\gamma)}{2\alpha+\gamma}~\Big)
		\end{aligned}\right\}.$$
		The eigenvalues at these $7$ singularities are respectively equal to
		$$\left\{\begin{aligned}
			&\lambda_1=\frac{-\alpha}{2\gamma},\quad \lambda_2=\frac{4\alpha-2\gamma}{3\alpha},\quad \lambda_3=\frac{\gamma(\gamma-2\alpha)}{\gamma^2-\alpha^2},\quad \lambda_4=\lambda_5=\frac{\alpha+2\gamma}{2\alpha},\\
			&\lambda_6=\lambda_7=\frac{(\alpha-2\gamma)(2\alpha-\gamma)-\sqrt{(\alpha-2\gamma)^2(2\alpha-\gamma)^2-24\alpha(\alpha+2\gamma)(\gamma^2-\alpha^2)\,}~}{(\alpha-2\gamma)(2\alpha-\gamma)+\sqrt{(\alpha-2\gamma)^2(2\alpha-\gamma)^2-24\alpha(\alpha+2\gamma)(\gamma^2-\alpha^2)\,}~}
		\end{aligned}\right\}.$$
		If $\alpha,\gamma$ are sufficiently general (for instance if $\{\alpha,\gamma \}\in \mathbb{C} \setminus \mathbb{Q}$ are algebraically independent over $\mathbb{Q}$), then the eigenvalue $\lambda_i \in \mathbb{C} \setminus \mathbb{Q}$ for $1\leq i\leq 7$, and hence these are all reduced singularities.
		Moreover, the two lines $L_0:=\{x=0\}$ and $L_{\infty}:=\{y=0\}$ are both $\sF_0$-invariant in view of \eqref{eqn-6-3}.
		Let $\sigma:\,S_1 \to \bbp^2$ be the blowing-up centered at $(0,0)$,
		and $\sF_1$ be the induced foliation on $S_1$.
		Then $\sF_1$ is still reduced, and
		$$K_{\sF_1} = \sigma^*K_{\sF_0} =\sigma^*\mathcal{O}_{\bbp^2}(1) \sim C_0+F,$$
		where '$\sim$' stands for the linear equivalence, $C_0=\mathcal{E}\subseteq S_1$ is the unique section (also the exceptional curve of $\sigma$) with $C_0^2=-1$,
		and $F_1$ is a general fiber of the induced ruling $f_1:\,S_1 \to \bbp^1$.   	
		Hence 
		$$\vol(\sF_1)=\vol(\sF_0)=1,\qquad p_g(\sF_1)=p_g(\sF_0)=3.$$
		It follows that the foliations $\sF_0$ and $\sF_1$ (they are birational to each other) satisfy \eqref{eqn-6-1}.
		By construction, $S_1$ is isomorphic to the Hirzebruch surface
		$\mathbb{P}_{\mathbb{P}^1}\big(\mathcal{O}_{\mathbb{P}^1} \oplus \mathcal{O}_{\mathbb{P}^1}(1)\big)$.
		Let $F_0, F_{\infty}$ be the strict transforms of the two lines $L_0,L_{\infty}$ respectively.
		As both $L_0$ and $L_{\infty}$ are $\sF_0$-invariant,
		it follows that both $F_0$ and $F_{\infty}$ are $\sF_1$-invariant.
		Let $\pi_n:\,\bbp^1 \to \bbp^1$ be the cyclic cover of degree $n\geq 2$ branched over $f_1(F_0)$ and $f_1(F_{\infty})$, and $S_n=S_1\times _{\pi_n} \bbp^1$ be the fiber product as follows.
		$$\xymatrix{ S_n \ar[rr]^-{\Pi_n} \ar[d]_-{f_n} && S_1 \ar[d]^-{f_1} \\
			\bbp^1 \ar[rr]^-{\pi_n} && \bbp^1}$$
		
		It is clear that $S_n \cong \mathbb{P}_{\mathbb{P}^1}\big(\mathcal{O}_{\mathbb{P}^1} \oplus \mathcal{O}_{\mathbb{P}^1}(n)\big)$.
		Let $\sF_n$ be the induced foliation on $S_n$.
		Then $\sF_n$ is a reduced foliation since the eigenvalues $\lambda_i \in \mathbb{C} \setminus \mathbb{Q}$ at each of the $7$ singularities of $\sF_0$.
		Since both $F_0$ and $F_{\infty}$ are $\sF_1$-invariant,
		one obtains that (cf. \cite[\S\,2.3(4)]{bru-04}),
		$$K_{\sF_n}=\Pi_n^*(K_{\sF_1})=\Pi_n^*(C_0+F_1)=\Pi_n^{-1}(C_0)+nF_n,$$
		where $F_n$ is a general fiber of $f_n$, and $\Pi_n^{-1}(C_0)$ is the strict transform of $C_0$ in $S_n$ satisfying $\Pi_n^{-1}(C_0)^2=-n$.
		Hence
		$$\vol(\sF_n)=n, \qquad p_g(\sF_n)=n+2.$$
		Therefore, the equality \eqref{eqn-6-1} holds for the foliation $\sF_n$.
		This completes the construction.
	\end{example}
	
	\begin{example}\label{exam-6-2}
		For any integer $n\geq 2$, we construct a sequence of reduced foliated surfaces $(S_n,\sF_n)$ of general type,
		such that the volume $\vol(\sF_n)=n-2+\frac{1}{n}$ and the geometric genus $p_g(\sF_n)=n$, and hence the following equality holds
		\begin{equation}\label{eqn-6-2}
			\vol(\sF_n) = p_g(\sF_n)-2+\frac{1}{p_g(\sF_n)}.
		\end{equation}
		
		\vspace{2mm}
		Let $n\geq 2$ and $S_n=\mathbb{P}_{\mathbb{P}^1}\big(\mathcal{O}_{\mathbb{P}^1} \oplus \mathcal{O}_{\mathbb{P}^1}(n)\big)$ 
		be the Hirzebruch surface admitting a unique section $C_0$ with $C_0^2=-n<0$.
		Let $f:\,S_n \to \bbp^1$ be the geometrical ruling on $S_n$, which makes $S_n$ as a $\bbp^1$-bundle over the projective line $\bbp^1$.
		Note that any $\bbp^1$-bundle over an affine space is necessarily trivial.
		One can obtain the Hirzebruch surface $S_n$ by gluing the two trivial $\bbp^1$-bundles $\mathbb{C} \times \bbp^1$ by
		$$\mathbb{C}\times \bbp^1 \qquad\longrightarrow \qquad\mathbb{C}\times \bbp^1,$$
		$$\qquad\big(x,\,[Y_0,Y_1]\big) \quad \mapsto \quad \left(\frac{1}{x},\, [Y_0,x^nY_1]\right).$$
		Equivalently, one may obtain $S_n$ by gluing four affine spaces as follows.
		Let $(x_i,y_i)$ be the affine coordinate on $U_i\cong \mathbb{C}^2=\mathbb{C} \times \mathbb{C}$ for $1\leq i \leq 4$.
		The transition functions on their overlaps are given by
		$$\left(x_1,\,y_1\right)=\left(x_2,\,\frac{1}{y_2}\right)=\left(\frac{1}{x_3},\,x_3^ny_3\right)=\left(\frac{1}{x_4},\,\frac{x_4^n}{y_4}\right).$$
		Moreover, $C_0\cap U_1=\{y_1=0\}$.

		Let $\sF_n$ be the foliation on $S_n$ defined by $\big\{(U_i,v_i)\big\}_{i=1}^4$, where
		$$\left\{\begin{aligned}
			&v_1=h(x_1,y_1) \frac{\partial}{\partial x_1}+y_1^2g(x_1,y_1)\frac{\partial}{\partial y_1},\\
			&v_2=y_2h(x_2,1/y_2) \frac{\partial}{\partial x_2}-y_2g(x_2,1/y_2)\frac{\partial}{\partial y_2},\\
			&v_3=-x_3h(1/x_3,x_3^ny_3) \frac{\partial}{\partial x_3}+\Big(ny_3h(1/x_3,x_3^ny_3)+x_3^{n-1}y_3^2g(1/x_3,x_3^ny_3)\Big)\frac{\partial}{\partial y_3},\\
			&v_4=-x_4y_4h(1/x_4,x_4^n/y_4) \frac{\partial}{\partial x_4}-\Big(ny_4^2h(1/x_4,x_4^n/y_4)+x_4^{n-1}y_4g(1/x_4,x_4^n/y_4)\Big)\frac{\partial}{\partial y_4},
		\end{aligned}\right.$$
		where $$h(x_1,y_1)=a_0+y_1\cdot\sum_{i=1}^{n} \tilde{a}_ix_1^i;\qquad
		g(x_1,y_1)=\sum_{j=1}^{n-1}b_jx_1^j+y_1\cdot \sum_{j=1}^{n-1}\tilde b_jx_1^j.$$
		Here $a_i,\tilde a_i, b_j, \tilde b_j$ are some complex numbers satisfying certain non-degeneracy conditions to insure these $v_i$'s contain no one-dimensional zeros.
		Then one checks easily by \eqref{eqn-2-8} that
		$$K_{\sF_n}=C_0+(n-1) F,$$
		where $F$ is a general fiber of the ruling $f:\,S_n \to \bbp^1$.
		It follows that the Zariski decomposition of $K_{\sF_n}$ is the following.
		$$K_{\sF_n}=P+N,$$
		where $P=(n-1)F+\frac{n-1}{n}C_0$ is the nef part,
		and $N=\frac{1}{n}C_0$ is the negative part.
		Hence
		$$\vol(\sF_n)=n-2+\frac{1}{n},\qquad p_g(\sF_n)=n,$$
		from which the equality \eqref{eqn-6-2} follows immediately.
		To complete the construction, we should ensure that the foliation $\sF_n$ is reduced.
		A sufficient condition to ensure a singularity $p\in U_i$ of $\sF_n$ to be reduced is that
		both of the eigenvalues $\lambda_1,\lambda_2$ of $(Dv_i)(p)$ are non-zero and the quotient $\lambda_1/\lambda_2$ is not a positive rational number.
		This should be satisfied for a general choice of these complex numbers $\{a_i,\tilde a_i, b_j, \tilde b_j\}$, similar to the situation on $\bbp^2$,
		\cite[Proposition\,3.2]{ls-20}.
		For instance, one can take
		$$h(x_1,y_1)=1+y_1x_1^n,\qquad\qquad g(x_1,y_1)=\alpha x_1^{n-1}+y_1,\quad\text{where~}\alpha \in \mathbb{C} \setminus \mathbb{Q}.$$
		The number of singularities of $\sF_n$ is 
		$\# Sing(\sF)=2n+2.$
		There are $2n-1$ singularities in $U_1$:
		$$Sing(\sF) \cap U_1=\{p_i=(\xi_i,-\alpha \xi_i^{n-1}),\,~i=1,\cdots,2n-1\},\qquad \text{where~}\xi_i^{2n-1}=\frac{1}{\alpha}.$$
		By direct computation, the two eigenvalues of $(Dv_1)(p_i)$ are $$\frac{(\alpha-n) \pm \sqrt{(\alpha+n)^2+4(n-1)\alpha^2\xi_i^{2n-2}\,}\,}{2\xi_i}.$$
		Hence these $p_i$'s are reduced singularities of $\sF_n$.
		The rest three singularities of $\sF_n$ are all on the fiber
		$$F_{\infty} = \{x_3=0\} \cup \{x_4=0\}.$$
		To be explicit, the rest three singularities are
		$$\Big\{(0,0), \Big(0, \frac{-n}{n+\alpha}\Big)\Big\} \subseteq U_3, \quad \text{~and~}\quad \big\{(0,0)\big\} \subseteq U_4.$$
		The eigenvalues of $\sF_n$ at these three singularities are respectively
		equal to $-n$, $\frac{n(n+\alpha)}{\alpha}$ and $n+\alpha$.
		Hence the foliation $\sF_n$ is reduced as required.
	\end{example}
	
	\begin{remark}\label{rem-6-2}
		By \eqref{eqn-noe3}, the foliations in the above example are all transcendental.
		Moreover, it holds that $|K_{\sF_n}|=|(n-1) F|+C_0$.
		Hence the moving part of $|K_{\sF_n}|$ is $M=(n-1)F$ and the positive part in its Zariski decomposition is $P=(n-1)F+\frac{n-1}{n}C_0$. It follows that
		$$\vol(\sF)=n-2+\frac{1}{n} =P\cdot M< K_{\sF} \cdot M.$$
	\end{remark}
	
	\begin{example}\label{exam-6-3}
		For any integer $g\geq 2$, we construct an algebraically integral foliated surfaces $(S,\sF)$ of general type which is induced by a fibration of genus $g$,
		such that
		\begin{equation}\label{eqn-6-4}
		\left\{\begin{aligned}
		p_g(\sF)&\,=g,\\
		\vol(\sF) &\,=\frac{2g(g-1)}{2g+1}= p_g(\sF)-\frac32+\frac{3}{2\big(2p_g(\sF)+1\big)}.
		\end{aligned}\right.
		\end{equation}
	\end{example}
	\vspace{2mm}
	Let $g\geq 2$ and $D_0=\{x\big(1-y^{2g}\big)+y^{2g+1}=0\}\subseteq Y_0=\bbp^1\times \bbp^1$,
	where $x,y$ are respectively the affine coordinates of the first and the second factor $\bbp^1$ of $Y_0$.
	Let $pr_1:\,Y_0 \to \bbp^1$ be the projection to the first factor.
	Then the restricted map
	$pr_1\big|_{D_0}:~D_0 \to \bbp^1$
	has $2g+1$ ramified points: $p_0=(0,0)$ and $p_i=\Big(\frac{(2g+1)\xi_i}{2g},\xi_i\Big)$, where $\xi_i$'s are mutually different and $\xi_i^{2g}=2g+1$ for $1\leq i \leq 2g$.
	Moreover, the ramification indices are respectively $r_0=2g$ and $r_i=1$ for $1\leq i \leq 2g$.
	Let $\Gamma_0=\{x=0\}$ and $C_0=\{y=0\}$.
	Then $R_0=D_0+\Gamma_0+C_0$ is two-divisible.
	Let $\pi_0:\,S_0 \to Y$ be the double cover ramified exactly over $R_0$,
	and $f_0=pr_1\circ \pi_0:\,S \to \bbp^1$.
	Let $\rho:\,S \to S_0$ be the desingularization and $f=f_0\circ \rho$.
	$$\xymatrix{S \ar[rr]^-{\rho} \ar[d]^-{f}
	&& S_0 \ar[rr]^-{\pi_0} \ar[d]^-{f_0} 
&& Y_0 \ar[d]^-{pr_1}\\
\bbp^1 \ar@{=}[rr] &&\bbp^1 \ar@{=}[rr] &&
\bbp^1}$$
Since $R_0$ has a unique singularity $p_0=(0,0)$,
$S_0$ is singular exactly over $p_0$.
Such a singularity can be resolved canonically as follows, cf. \cite[\S\,V.22]{bhpv}.
$${\setlength{\unitlength}{6mm}
	\begin{tikzpicture}
	[place/.style={circle,draw,fill,inner sep=0.7mm},
	place2/.style={circle,draw,inner sep=0.7mm},
	place3/.style={circle,draw,fill,inner sep=0.3mm},]
	
	\draw[thick] (1,6) -- (1,8);
	\draw[thick] (0.4,7) -- (2.5,7);

	\filldraw[black] (0.7,5.7) node[anchor=west]{$\Gamma_0$};
	\filldraw[black] (1,7) circle (1.5pt);
	\filldraw[black] (0.5,6.8) node[anchor=west]{$p_0$};
	\filldraw[black] (2.5,6.9) node[anchor=west]{$C_0$};
	\draw[domain=1.15:2.85] plot(2*\x*\x-8*\x+9,\x+5);
	\filldraw[black] (2.4,6) node[anchor=west]{$D_0$};

	\draw[->] (4.7,7) -- (3.7,7);
	\filldraw[black] (3.8,7.3) node[anchor=west]{\Large  $\sigma_1$};
	
	\node[place] (v1) at (6.5,7) [label=above:{$\big(1,-(2g+1)\big)$}] [label=below:{$\ol \Gamma_0$}]  {};
	\node[place] (v2) at (9.5,7) [label=above:{$\big((2g+1),-1\big)$}]  {};
	\node[place] (v3) at (9.5,6) [label=right:$\ol D_0$]  {};
	\node[place] (v4) at (11.8,7) [label=above:{$\big(2g,-2\big)$}]  {};
	
	\node[place3] at (12.5,7) {};
	\node[place3] at (12.8,7) {};
	\node[place3] at (13.1,7) {};

	\node[place] (v5) at (14,7) [label=above:{$\big(2,-2\big)~$}]  {};
	\node[place] (v6) at (15.5,7) [label=above:{$~\big(1,-2\big)$}]  {};
	\node[place] (v7) at (15.5,6) [label=right:$\ol C_0$]  {};
	
	\draw (v1)--(v2); \draw (v2)--(v3); \draw (v2)--(v4);
	\draw (v5)--(v6); \draw (v6)--(v7);

	\draw[->] (9,4.7) -- (9,5.5);
	\filldraw[black] (9,5.1) node[anchor=west]{\Large  $\sigma_2$};
	
	\node[place] (v8) at (1,3.5) [label=above:{$\big(1,-(2g+2)\big)$}] [label=below:{$\ol \Gamma_0$}]  {};
	\node[place2] (v9) at (3.8,3.5) [label=above:{$\big((2g+2),-1\big)$}]  {};
	\node[place] (v10) at (6.5,3.5) [label=above:{$\big((2g+1),-4\big)$}]  {};
	\node[place2] (v11) at (9.3,3.5) [label=above:{$\big((4g+1),-1\big)$}]  {};
	\node[place2] (v12) at (6.5,2.7) [label=right:{$\big((2g+1),-1\big)$}]  {};
	\node[place] (v13) at (6.5,1.9) [label=right:{$\ol D_0$}]  {};
	
	\node[place3] at (10.5,3.5) {};
	\node[place3] at (10.8,3.5) {};
	\node[place3] at (11.1,3.5) {};
	
	\node[place] (v14) at (12,3.5) [label=above:{$\big(2,-4\big)~$}]  {};
	\node[place2] (v15) at (13.5,3.5) [label=above:{$\big(3,-1\big)~$}]  {};
	\node[place] (v16) at (15,3.5) [label=above:{$~\big(1,-4\big)$}]  {};
	\node[place2] (v17) at (15,2.7) [label=right:{$\big(1,-1\big)$}]  {};
	\node[place] (v18) at (15,1.9) [label=right:$\ol C_0$]  {};
	
	\draw (v8)--(v9); \draw (v9)--(v10); \draw (v10)--(v11);
	\draw (v10)--(v12); \draw (v12)--(v13); \draw (v14)--(v15);
	\draw (v15)--(v16); \draw (v16)--(v17); \draw (v17)--(v18);

	\draw[->] (9,0.7) -- (9,1.5);
	\filldraw[black] (9,1.1) node[anchor=west]{\Large  $\pi$};
	
	\node[place] (v28) at (1,-0.5) [label=above:{$\big(2,-(g+1)\big)$}] [label=below:{$\ol \Gamma_0$}]  {};
	\node[place2] (v29) at (3.8,-0.5) [label=above:{$\big((2g+2),-2\big)$}] 
	[label=below:{$E_{4g+3}$}] {};
	\node[place] (v30) at (6.5,-0.5) [label=above:{$\big((4g+2),-2\big)$}] [label=below:{$\quad \qquad E_{4g+2}$}] {};
	\node[place2] (v31) at (9.3,-0.5) [label=above:{$\big((4g+1),-2\big)$}] [label=below:{$E_{4g+1}$}] {};
	\node[place2] (v32) at (6.5,-1.7) [label=right:{$\big((2g+1),-2\big)$}] [label=below:{$E_{4g+4}$}]
	  {};
	%\node[place] (v33) at (6.5,-2.1) [label=right:{$\ol D_0$}]  {};
	
	\node[place3] at (10.5,-0.5) {};
	\node[place3] at (10.8,-0.5) {};
	\node[place3] at (11.1,-0.5) {};
	
	\node[place] (v34) at (12,-0.5) [label=above:{$\big(4,-2\big)~$}] [label=below:{$E_4$}] {};
	\node[place2] (v35) at (13.5,-0.5) [label=above:{$\big(3,-2\big)~$}] [label=below:{$E_3$}] {};
	\node[place] (v36) at (15,-0.5) [label=above:{$~\big(2,-2\big)$}] [label=right:{$E_2$}] {};
	\node[place2] (v37) at (15,-1.7) [label=right:{$\big(1,-2\big)$}] 
	[label=below:{$E_1$}] {};
	%\node[place] (v38) at (15,-2.1) [label=right:$\ol C_0$]  {};
	
	\draw (v28)--(v29); \draw (v29)--(v30); \draw (v30)--(v31);
	\draw (v30)--(v32); %\draw (v32)--(v33); 
	\draw (v34)--(v35);
	\draw (v35)--(v36); \draw (v36)--(v37); %\draw (v37)--(v38);
\end{tikzpicture}}$$

In the above, $\sigma_1:\,Y_1 \to Y_0$ is the resolution of the ramification divisor $R_0$ into normal crossing one $R_1$, and $\sigma_2:\,Y \to Y_1$ is the further resolution to the smooth one $R_2$.
Except the ramification divisor $R_0$, we draw the rest ones using the dual graphs.
The number $(a,-b)$ over a marked point stands that the corresponding curve $C$ is of multiplicity $a$ in the fibers and $C^2=-b$.
A solid point means that the corresponding curve is ramified; while a hollow point means the corresponding curve is not ramified.
Note that the local intersection $I_{p_0}(\Gamma_0,D_0)=2g+1$, there are $2g+1$ blowing-ups contained in $\sigma_1$.
By abuse of notations, we always use $\ol \Gamma_0$ (res. $\ol C_0$ and $\ol D_0$)
to denote the strict transform of $\Gamma_0$ (resp. $C_0$ and $D_0$).
Remark that the curves $\ol C_0$ and $\ol D_0$ are not contained in the fibers.
Moreover, in the last step, we only draw the curves contained in the fiber $F_q:=f^{-1}(q)$, where $q=pr_1(p_0)\in \bbp^1$.
$$\xymatrix{S \ar[rr]^-{\rho_2} \ar[d]^-{\pi}
	&& S_1 \ar[rr]^-{\rho_1} \ar[d]
	&& S_0 \ar[d]^-{\pi_0}\\
	Y \ar[rr]^-{\sigma_2} &&Y_1 \ar[rr]^-{\sigma_1} &&
	Y_0}$$

Let $\Pi=\pi_0\circ\rho=\sigma_1\circ\sigma_2\circ\pi$.
By the formulas of double covers (cf. \cite[\S\,V.22]{bhpv}),
$$K_{S/\bbp^1}=\Pi^*\Big(K_{Y_0/\bbp^1}+\frac{R_0}{2}\Big)=\Pi^*\big(\Gamma_0+(g-1)C_0\big).$$
By construction, $f$ is relatively minimal,
and all the fibers of $f$ are normal crossing (in fact, except the fiber $F_q=f^{-1}(q)$, all the other fibers of $f$ are semi-stable).
Let $\sF$ be the induced foliation. Then $\sF$ is relatively minimal, and by \eqref{eqn-2-3},
$$K_{\sF}=K_{S/\bbp^1}-\big(F_q-F_{q,red}\big)=\Pi^*\big((g-1)C_0\big)+F_{q,red},$$
where $F_{q,red}$ is the reduced part of $F_{q}=\Pi^*(\Gamma_0)$.
Thus the Zariski decomposition of $K_{\sF}$ is $K_{\sF}=P+N$, where
$$\begin{aligned}
P&\,=\Pi^*\big((g-1)C_0\big)+\sum_{i=1}^{4g+1}\frac{iE_i}{4g+2}+E_{4g+2}+\frac{(2g-1)\ol \Gamma_0}{2g+1}+\frac{2gE_{4g+3}}{2g+1}+\frac{E_{4g+4}}{2},\\
N&\,=\sum_{i=1}^{4g+1}\frac{(4g+2-i)E_i}{4g+2}+\frac{2\,\ol \Gamma_0}{2g+1}+\frac{E_{4g+3}}{2g+1}+\frac{E_{4g+4}}{2}.
\end{aligned}$$
Indeed, one checks directly that
$\text{Supp}(N)=F_{q,red}-E_{4g+2}$, and hence the intersection matrix of $\text{Supp}(N)$ is negative definite.
Moreover,
$$P\cdot \Pi^*(C_0)=\frac{(2g-1)\ol \Gamma_0}{2g+1}\cdot \Pi^*(C_0)=\frac{2g-1}{2g+1};\qquad
P\cdot E_{4g+2}=\frac{g-1}{2g+1};\qquad
P\cdot E_i=0,\quad \forall\,i\neq 4g+2.$$
Hence $K_{\sF}=P+N$ is the Zariski decomposition of $K_{\sF}$.
Therefore,
\begin{equation}\label{eqn-8-4}
	\vol(\sF)=P^2=P\cdot \Big(\Pi^*\big((g-1)C_0\big)+ E_{4g+2}\Big)=\frac{(2g-1)(g-1)}{2g+1}+\frac{g-1}{2g+1}=\frac{2g(g-1)}{2g+1}.
\end{equation}
In particular, $\sF$ is of general type.
Note that any irreducible curve in the support of $N$ must be contained in the fixed part of $|K_{\sF}|$,
i.e., $\text{Supp}(N)=F_{q,red}-E_{4g+2}$ is contained in the fixed part of $|K_{\sF}|$.
Hence
$$\begin{aligned}
|K_{\sF}|\,&=\Big|K_{\sF}-\big(F_{q,red}-E_{4g+2}\big)\Big|+\big(F_{q,red}-E_{4g+2}\big)\\
&=\Big|\Pi^*\big((g-1)C_0\big)+E_{4g+2}\Big|+\big(F_{q,red}-E_{4g+2}\big)\\
&=\Big|\Pi^*\big((g-1)C_0\big)\Big|+E_{4g+2}+\big(F_{q,red}-E_{4g+2}\big)=\Big|\Pi^*\big((g-1)C_0\big)\Big|+F_{q,red}.
\end{aligned}$$
In particular,
$$p_g(\sF)=h^0(S,K_{\sF})=h^0\Big(S,\Pi^*\big((g-1)C_0\big)\Big)=h^0\big(Y,(g-1)C_0\big)=g.$$
Combining this with \eqref{eqn-8-4},
we obtain a foliation $\sF$ satisfying \eqref{eqn-6-4} as required.

\begin{example}\label{exam-6-4}
	In this example, we construct a sequence of foliated surfaces $(S_n,\sF_n)$ such that their geometric genera are fixed, while the three Chern numbers $\chi(\sF_n)$, $c_1^2(\sF_n)$ and $c_2(\sF_n)$ go to the infinity when $n$ goes to the infinity.
\end{example}

Let $f_1:\,S_1 \to B_1$ be a semi-stable fibration of genus $g\geq 2$ with
$$K_{f_1}=K_{S_1/B_1}^2>0,\qquad \chi_f=\deg \big( (f_1)_*\mathcal{O}_{S_1}(K_{S_1/B_1})\big)>0,\qquad e_{f_1}>0.$$
Such a fibration clearly exists, cf. \cite{ch-88,xia-87}.
Let $\sF_1$ be the induced foliation. Then by \eqref{eqn-chern-1-2},
$$c_1^2(\sF_1)=\kappa(f_1)=K_{f_1}^2,\qquad \chi(\sF_1)=\lambda(f_1)=\chi_{f_1},\qquad c_2(\sF_1)=\delta(f_1)=e_{f_1}.$$
Let $\call\in \Pic(B_1)$ be a sufficiently ample divisor such that
$$\dim H^0\big(S_1,K_{\sF_1}\otimes f_1^*(\call^{-1})\big) =\dim H^0\big(B_1,(f_1)_*\big(\mathcal{O}_{S_1}(K_{\sF_1})\big)\otimes \call^{-1}\big)=0.$$
Clearly such an ample divisor exists.
Moreover, for such a divisor $\call$ and any integer $k\geq 1$,
$$\dim H^0\big(S_1,K_{\sF_1}\otimes f_1^*(\call^{-k})\big) =\dim H^0\big(B_1,(f_1)_*\big(\mathcal{O}_{S_1}(K_{\sF_1})\big)\otimes \call^{-k}\big)=0.$$
For any number $n\geq 2$, let $D_n\in \big|\call^{\otimes n}\big|$ be a general element, such that
$D_n$ is reduced and that any fiber over $D_n$ is smooth.
Then there is a cyclic cover $\pi_n:\,B_n \to B_1$ defined by the relation $D_n \sim \call^{\otimes n}$. Let $f_n:\,S_n \to B_n$ be the fiber-product.
$$\xymatrix{ S_n \ar[rr]^-{\Pi_n} \ar[d]_-{f_n} && S_1 \ar[d]^-{f_1}\\
	B_n \ar[rr]^-{\pi_n} && B_1
}$$
By construction, $f_n$ is semi-stable and $\Pi$ is a cyclic cover defined by the relation $$\Lambda_n=f_1^{-1}(D_n) \sim f_1^*(\call)^{\otimes n}.$$
Hence $(\Pi_n)_*\mathcal{O}_{S_n}=\sum\limits_{i=1}^{n-1} \call^{-i}$.
Let $\sF_n$ be the induced foliation on $S_n$.
Then
$$c_1^2(\sF_n)=nc_1^2(\sF_1),\qquad \chi(\sF_n)=n\chi(\sF_1),\qquad c_2(\sF_n)=nc_2(\sF_1).$$
Moreover, $K_{\sF_n}=K_{S_n/B_n}=\Pi_n^*(K_{S_1/B_1})=\Pi_n^*(K_{\sF_1})$, and hence
$$\begin{aligned}
p_g(\sF_n)&\,=\dim H^0\big(S_n,K_{\sF_n}\big)=\dim H^0\big(S_n,\Pi_n^*(K_{\sF_1})\big)\\
&\,=\dim H^0\big(S_1, \mathcal{O}_{S_1}(K_{\sF_1}) \otimes (\Pi_n)_*\mathcal{O}_{S_n} \big)=\sum_{i=0}^{n-1} \dim H^0\big(S_1, \mathcal{O}_{S_1}(K_{\sF_1}) \otimes \call^{-i}\big)\\
&\,=\dim H^0\big(S_1, \mathcal{O}_{S_1}(K_{\sF_1})\big)=p_g(\sF_1).
\end{aligned}$$
This completes the construction.

%\bibliographystyle{a-alpha}
%\bibliography{surface-foliation}
	
\newcommand{\etalchar}[1]{$^{#1}$}

\end{document}